\documentclass[12pt]{amsart}
\usepackage{graphics, amsmath, amsfonts, amssymb, amsthm, amscd, mathrsfs}
\textwidth = 157mm
\textheight = 239mm
\evensidemargin=0mm
\oddsidemargin=0mm
\hoffset=4mm
\voffset=-25mm
\parskip =0.5mm
\parindent = 6mm
\linespread{1.07}

\newtheorem{theorem}{Theorem}[section]
\newtheorem{proposition}[theorem]{Proposition}
\newtheorem{lemma}[theorem]{Lemma}
\newtheorem{corollary}[theorem]{Corollary}

\theoremstyle{definition}
\newtheorem{definition}[theorem]{Definition}
\newtheorem{example}[theorem]{Example}
\newtheorem{remark}[theorem]{Remark}

\numberwithin{equation}{section}

\newcommand{\A}{{\mathscr A}}

\newcommand{\C}{{\mathbb C}}

\newcommand{\NN}{{\mathcal N}}
\newcommand{\N}{{\mathbb N}}
\renewcommand{\O}{{\mathscr O}}
\newcommand{\R}{{\mathbb R}}
\newcommand{\Z}{{\mathbb Z}}
\newcommand{\F}{{\mathcal F}}
\newcommand{\LF}{{\mathcal {LF}}}
\newcommand{\Aut}{{\operatorname{Aut}}}
\newcommand{\Der}{{\operatorname{Der}}}
\newcommand{\ci}{{\mathcal C^\infty}}

\newcommand{\inv}{{^{-1}}}
\renewcommand{\sl}{{\operatorname{/\!\!/}}}
\newcommand{\pr}{{\operatorname{pr}}}
\newcommand{\ql}{{\operatorname{q\ell}}}
\newcommand{\lie}{\mathfrak}
\renewcommand{\phi}{\varphi}
\newcommand{\GL}{\operatorname{GL}}
\newcommand{\SL}{\operatorname{SL}}
\newcommand{\SO}{\operatorname{SO}}
\newcommand{\Orth}{\operatorname{O}}
\newcommand{\pt}{\partial}
\newcommand{\alg}{{\operatorname{alg}}}
\newcommand{\supp}{{\operatorname{supp}}}
\newcommand{\FF}{{\mathfrak F}}
\newcommand{\LFF}{{\mathfrak {LF}}}
\newcommand{\CC}{{\mathcal C^0}}
\newcommand{\vb}{{\operatorname{vb}}}
\newcommand{\ad}{{\operatorname{ad}}}
\newcommand{\id}{{\operatorname{id}}}
\newcommand{\gf}{{\operatorname{fin}}}
\newcommand{\red}{{\operatorname{red}}}
\newcommand{\Iso}{{\operatorname{Iso}}}
\renewcommand{\Im}{{\operatorname{Im}}}
\newcommand{\Id}{{\operatorname{id}}}
\newcommand{\Ad}{{\operatorname{Ad}}}
\newcommand{\comptensor}{{\,\widehat\otimes\,}}
\newcommand{\hr}{{\operatorname{hr}}}
\newcommand{\I}{\mathcal I}

\newcommand{\Mor}{\operatorname{Mor}}
\begin{document}

\date{August 13, 2016}
\title{Homotopy principles for equivariant isomorphisms}

\author{Frank Kutzschebauch, Finnur L\'arusson, Gerald W.~Schwarz}

\address{Frank Kutzschebauch, Institute of Mathematics, University of Bern, Sidlerstrasse 5, CH-3012 Bern, Switzerland}
\email{frank.kutzschebauch@math.unibe.ch}

\address{Finnur L\'arusson, School of Mathematical Sciences, University of Adelaide, Adelaide SA 5005, Australia}
\email{finnur.larusson@adelaide.edu.au}

\address{Gerald W.~Schwarz, Department of Mathematics, Brandeis University, Waltham MA 02454-9110, USA}
\email{schwarz@brandeis.edu}

\thanks{F.~Kutzschebauch was partially supported by Schweizerischer Nationalfond grant 200021-140235/1.  F.~L\'arusson was partially supported by Australian Research Council grants DP120104110 and DP150103442.  F.~ L\'arusson and G.~W.~Schwarz would like to thank the University of Bern for hospitality and financial support and F.~Kutzschebauch and G.~W.~Schwarz would like to thank the University of Adelaide for hospitality and the Australian Research Council for financial support.}
\subjclass[2010]{Primary 32M05.  Secondary 14L24, 14L30, 32E10, 32M17, 32Q28. }
\keywords{Oka principle, geometric invariant theory, Stein manifold, complex Lie group, reductive group, categorical quotient, Luna stratification.}

\begin{abstract}  
Let $G$ be a reductive complex Lie group acting holomorphically on  Stein manifolds $X$ and $Y$. Let $p_X\colon X\to Q_X$ and $p_Y\colon Y\to Q_Y$ be the quotient mappings. When is there an equivariant biholomorphism of $X$ and $Y$?  A necessary condition is that the categorical quotients $Q_X$ and $Q_Y$ are biholomorphic and that the biholomorphism $\phi$ sends the Luna strata of $Q_X$ isomorphically onto the corresponding  Luna strata of $Q_Y$.  Fix $\phi$. We demonstrate two homotopy principles  in this situation.  The first result says that if there is a $G$-diffeomorphism $\Phi\colon X\to Y$, inducing $\phi$, which is $G$-biholomorphic on the reduced fibres of the quotient mappings, then $\Phi$ is homotopic, through $G$-diffeomorphisms satisfying the same conditions, to a $G$-equivariant biholomorphism from $X$ to $Y$. The second result roughly says that if we have a $G$-homeomorphism $\Phi\colon X\to Y$  which induces a continuous family of $G$-equivariant biholomorphisms of the fibres $p_X\inv(q)$ and $p_Y\inv(\phi(q))$ for $q\in Q_X$  and if $X$ satisfies an auxiliary property (which holds for most $X$), then $\Phi$ is homotopic, through $G$-homeomorphisms satisfying the same conditions,  to a $G$-equivariant biholomorphism from $X$ to $Y$. Our results improve upon those of \cite{KLS} and use new ideas  and techniques.
\end{abstract}

\maketitle
\tableofcontents

\section{Introduction}  \label{sec:introduction}

\noindent  
Let $G$ be a reductive complex Lie group.  Let $X$ and $Y$ be  Stein manifolds (always taken to be connected) on which $G$ acts holomorphically.   We have quotient mappings $p_X\colon X\to Q_X$ and $p_Y\colon Y\to Q_Y$ where $Q_X$ and $Q_Y$ are normal Stein spaces, the categorical quotients of $X$ and $Y$. Let $q$, $q'\in Q_X$. We say that \emph{$q$ and $q'$ are in the same Luna stratum of $Q_X$\/} if the fibres $X_q=p_X\inv(q)$ and $X_{q'}=p_X\inv(q')$ are $G$-biholomorphic. The fibres are affine $G$-varieties, not necessarily reduced. The Luna strata form a locally finite stratification of $Q_X$ by locally closed smooth subvarieties.  A necessary condition for $X$ and $Y$ to be $G$-equivariantly biholomorphic is that there is a biholomorphism $\phi\colon Q_X\to Q_Y$ which preserves the Luna strata, i.e., $X_q$ is $G$-biholomorphic to $Y_{\phi(q)}$ for all $q\in Q_X$. Suppose that such a $\phi$ exists.  Our problem then is to find a $G$-equivariant biholomorphism $\Phi\colon X\to Y$ inducing $\phi\colon Q_X\to Q_Y$.  It is possible that one has made a poor choice of $\phi$ (see Example \ref{ex:badchoice}) or it could be that no choice of $\phi$ admits a lift (see Example \ref{ex:nochoice}). 

Use $\phi$ to identify the quotients, and  call the common quotient $Q$ with quotient maps $p_X\colon X\to Q$ and $p_Y\colon Y\to Q$.  
We say that \emph{$X$ and $Y$ have common quotient $Q$}. More specifically, we replace $Y$ by $\phi^*Y=\{(q,y) \in Q_X\times Y\mid  p_Y(y)=\phi(q)\}$. Then $\phi^*Y$ is a Stein $G$-manifold whose quotient mapping   is projection onto the first factor and $Q=Q_X$ is the common quotient.   Our   problem then is to find a $G$-equivariant biholomorphism $\Phi\colon X\to \phi^*Y$ which induces $\Id_Q$, the identity map of $Q$. 
So we can always reduce to the case that $X$ and $Y$ have a common quotient $Q$ and our problem is to lift $\Id_Q$ to a $G$-biholomorphism of $X$ and $Y$.   In the spirit of Gromov's work \cite{Gromov}, we show that there is a $G$-biholomorphic lift of $\Id_Q$ if there are appropriate continuous or smooth lifts of $\Id_Q$.
 
Set 
$$
\Iso(X,Y)=\prod_{q\in Q}\Iso(X_q,Y_q)
$$ 
where $\Iso(X_q,Y_q)$ denotes the set of $G$-biholomorphisms of $X_q$ and $Y_q$. Let $\pi$ denote the natural projection of $\Iso(X,Y)$ to  $Q$. Then $\Iso(X_q,Y_q)$ is a principal homogeneous space for the group $\Iso(X_q,X_q)$ and the global sections of $\Iso(X,Y)$ form a principal homogeneous space for the group of  global sections of  $\Iso(X,X)$.  In general, there is no reasonable structure of complex variety on $\Iso(X,Y)$ or $\Iso(X,X)$ (see \cite[Section 3]{KLS}).  However, we can say what the sections of $\pi$ of various kinds are. Clearly a holomorphic section of $\Iso(X,Y)$ over an open subset $U\subset Q$ should be a $G$-biholomorphism $\Phi\colon p_X\inv(U)\to p_Y\inv(U)$ inducing $\Id_U$.
 We are also able to define what a continuous section of $\Iso(X,Y)$ over $U$ is, which we call a \emph{strong $G$-homeomorphism\/} (see Section \ref{sec:strong}). 
 
 Let $\Phi\colon X\to Y$ be a $G$-diffeomorphism inducing $\Id_Q$. We  say that $\Phi$ is \emph{strict\/} if it induces a $G$-biholomorphism of $(X_q)_\red$ with $(Y_q)_\red$  for all $q\in Q$ where the subscript   means that we are considering the reduced structures on the fibres  (see Example \ref{ex:not strong}). Let $\Iso(X,Y)_\red$ denote the product of the $\Iso((X_q)_\red,(Y_q)_\red)$ with the obvious projection to $Q$. Then the smooth sections of $\Iso(X,Y)_\red$ are the strict $G$-diffeomorphisms.  A strict $G$-diffeomorphism is not necessarily a strong $G$-homeomorphism (Example \ref{ex:not strong}). Our definition of \emph{strict\/} is more general than in \cite{KLS}; see Remark \ref{rem:strict}.
 
 Here is our first main result.
 \begin{theorem}\label{thm:main1}
Let $X$ and $Y$ be Stein $G$-manifolds with common quotient $Q$. Suppose that there is a strict $G$-diffeomorphism $\Phi\colon X\to Y$. Then $\Phi$ is homotopic, through strict $G$-diffeomorphisms, to a $G$-biholomorphism from $X$ to $Y$.
\end{theorem}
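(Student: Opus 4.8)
\smallskip
\noindent\emph{Outline of a proof.}
The plan is to prove an Oka principle for sections of the isomorphism ``bundle'' $\Iso(X,Y)\to Q$. A strict $G$-diffeomorphism is exactly a smooth section of $\Iso(X,Y)_\red$, and the holomorphic sections of $\Iso(X,Y)$ form a principal homogeneous space under the group $\mathcal A$ of holomorphic $G$-automorphisms of $X$ over $Q$; so the theorem says that any smooth strict section of $\Iso(X,Y)$ can be deformed, through strict sections, to a holomorphic one. I would obtain this by a Cartan-style induction over the (locally finite) Luna stratification of $Q$: ordering the strata from the most degenerate ones outward, one successively deforms $\Phi$ so as to be holomorphic over a shrinking neighborhood of a growing stratified subset, adding one stratum at a time.

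The local analysis rests on the holomorphic slice theorem: near any fibre $X_q$ the Stein $G$-manifold $X$ is $G$-biholomorphic to a saturated neighborhood of the zero fibre in $G\times_H W$, with $H=G_x$ reductive and $W$ an $H$-module, the quotient being $W\sl H$ locally; the same model serves for $Y$, since $X$ and $Y$ carry the same Luna data. In such a model the lifting problem degenerates into problems about sections of honest holomorphic fibre bundles over open subsets of $Q$, arranged in a finite tower indexed by the nilradical filtration of the (possibly non-reduced) fibre coordinate ring, the first quotient being the reduced fibre; the fibres of these bundles are Oka manifolds — complex vector spaces and homogeneous spaces of reductive groups. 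Gromov's Oka principle \cite{Gromov} and the parametric Oka--Grauert methods then furnish local holomorphic strict lifts extending a prescribed one, together with the parametric refinements needed for homotopies. The relaxation of strictness to reduced fibres (the improvement over \cite{KLS}) enters precisely here: strictness makes the given smooth section, after local straightening, close along the fibres to a holomorphic one, which is what lets the local deformation be carried out order by order up the tower.

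The crux, and what I expect to be the main obstacle, is the attaching lemma needed to cross a Cartan pair. If $\Phi$ is already holomorphic over neighborhoods of a compact $A\subset Q$ and a ``bump'' $B$, the transition automorphism $\alpha\in\mathcal A$ over a neighborhood of $A\cap B$, which one may take close to $\Id$, must be split as $\alpha=\beta_B\inv\circ\beta_A$ with $\beta_A,\beta_B\in\mathcal A$ close to $\Id$ over neighborhoods of $A$ and $B$ respectively. This is a non-linear Cousin problem for the infinite-dimensional, non-algebraic group $\mathcal A$, and is not accessible by plain sheaf cohomology. I would handle it by establishing a \emph{relative flexibility property}: $\mathcal A$ is generated by time-one flows of complete $G$-invariant holomorphic vector fields on $X$ that project to zero on $Q$, so that $\mathcal A$ admits a dominating family of sprays over $Q$; the splitting then follows from an implicit function argument in the Banach spaces of such vector fields over $A$ and over $B$, in the manner of Forstneri\v c. Constructing enough complete $G$-invariant, fibre-tangent vector fields — out of the slice model, the reductive structure, and averaging over $G$ — is the main technical labor and is where the new techniques of the paper are concentrated.

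Finally I would splice the countably many deformations into a single homotopy: exhaust $Q$ by relatively compact open sets, arrange the $n$-th deformation to act trivially where holomorphy has already been secured and to be small enough to preserve convergence, and pass to the limit, which is a $G$-biholomorphism $X\to Y$. Strictness is preserved at every stage, since each $\beta\in\mathcal A$ restricts to a $G$-biholomorphism of $(X_q)_\red$ for every $q$, so the entire homotopy consists of strict $G$-diffeomorphisms. Running the same scheme with an auxiliary parameter gives a parametric version; this is in fact what guarantees that the homotopy, and not merely its endpoints, stays within the strict $G$-diffeomorphisms.
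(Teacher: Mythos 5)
Your outline follows the right high-level contour (reduce to a nonlinear Cousin problem for the group of $G$-biholomorphisms over $Q$, run a Cartan-style induction, splice), but it glosses over the step that is actually the first main theorem of the paper: establishing that $X$ and $Y$ are \emph{locally $G$-biholomorphic over $Q$}, i.e.\ over the identity on the quotient (Theorem \ref{thm:main3}). You write that ``the same model serves for $Y$, since $X$ and $Y$ carry the same Luna data,'' but having the same Luna data only gives, near each $q_0\in Q$, a $G$-biholomorphism $X_U\to Y_{\psi(U)}$ inducing some strata-preserving $\psi$ with $\psi(q_0)=q_0$ — not the identity on $U$. Examples~\ref{ex:badchoice} and \ref{ex:nochoice} show that such a $\psi$ need not be liftable at all, so this is a genuine obstruction. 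In the paper the hypothesis that a strict $G$-diffeomorphism exists is used \emph{here}, not just at the Oka--Grauert stage: strictness forces the normal derivative along the fibre to be complex linear (Lemma~\ref{lem:delta-strict}), which lets one contract $\psi$ to a quasilinear automorphism, integrate a strata-preserving time-dependent vector field on $Q$, and lift it to $X$ via Proposition~\ref{prop:lift-family} and Lemma~\ref{lem:lift-if-smooth-lift}. None of that is visible in your proposal, and without it the local models over $Q$ you want to glue simply do not exist.

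Separately, your plan to solve the Cartan-pair splitting by treating $\mathcal A=\Aut_Q(X)^G$ as an infinite-dimensional group with a dominating family of sprays rests on structural claims that are not obviously available: $\Iso(X,Y)\to Q$ has no reasonable complex structure or local triviality in general (see \cite[Section 3]{KLS}, cited in Section~\ref{sec:strong}), so there is no fibre bundle with Oka fibres to apply Gromov's or Forstneri\v c's machinery to, and it is not clear that $\mathcal A$ is generated by time-one flows of complete fibre-tangent holomorphic vector fields. The paper sidesteps this by introducing the sheaves $\F$ and $\LF$ (Section~\ref{sec:typeF}), proving closedness of $\LF$ in the smooth fields (Theorem~\ref{thm:closed}) and the existence of a continuous local logarithm $\F\to\LF$ (Theorem~\ref{thm:Dexists}, using the algebraic subgroup $L_\hr$ of reachable automorphisms of a fibre), and then running Cartan's 1958 proof of Grauert's theorem with Grauert's Hilfssatz on matrix splitting (Lemma~\ref{lem:matrixsplitting}) rather than sprays. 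Your ``nilradical filtration'' tower and your spray-based splitting would thus have to be justified from scratch, and the paper's machinery (type $\F$, logarithms, the closedness theorem) exists precisely because the more naive routes you sketch do not apply to this non-bundle situation.
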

The theorem says that a smooth section of $\Iso(X,Y)_\red$ is homotopic to a holomorphic section.

There is also a version of the theorem for continuous sections of $\Iso(X,Y)$, but we need an additional assumption.   Let $D$ be a vector field on $Q$. We say that $D$ is \emph{strata preserving\/} if for all Luna strata $Z$ of $Q$ and $z\in Z$, $D(z)\in T_z(Z)$. We say that $X$ has the \emph{infinitesimal lifting property\/} if every holomorphic strata preserving vector field $D$ defined on a neighbourhood $U$ of $q\in Q$ has a   lift to  a $G$-invariant holomorphic vector field $A$ on $p\inv(U')$ where $U'$ is a neighbourhood of $q$ contained in $U$. This means that $A(p^*f)=p^*(D(f))$ for all $f\in \O(U')$. The infinitesimal lifting property  really  only depends upon the isomorphism classes of the fibres of $p\colon X\to Q$; equivalently, on the slice representations of $X$ (see Section \ref{sec:background}).
For most representations of reductive groups, the infinitesimal lifting property holds (Remark \ref{rem:generic}) and for most representations all holomorphic vector fields on the quotient automatically preserve the strata \cite{Schwarz2013}.  

Here is our second main result.
\begin{theorem}\label{thm:main2}
Let $X$ and $Y$ be Stein $G$-manifolds with common quotient $Q$. Suppose that there is a strong $G$-homeomorphism $\Phi\colon X\to Y$. If $X$ has the infinitesimal lifting property, then $\Phi$ is homotopic, through strong $G$-homeomorphisms, to a $G$-biholomorphism from $X$ to $Y$.
\end{theorem}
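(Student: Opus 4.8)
The plan is to recast the statement as an Oka principle for sections of a ``fibre bundle'' over $Q$. View the projection $\pi\colon\Iso(X,Y)\to Q$ as a bundle acted on by the sheaf of groups $\NN=\Iso(X,X)$, so that $\Iso(X,Y)$ is a principal $\NN$-bundle; under the dictionary set up above, strong $G$-homeomorphisms $X\to Y$ inducing $\Id_Q$ are exactly the continuous sections of $\pi$, and $G$-biholomorphisms are the holomorphic ones. Thus it suffices to show that a continuous section is homotopic, through continuous sections, to a holomorphic section. The relevant Lie-algebra sheaf is the sheaf of $G$-invariant holomorphic vector fields on $X$ projecting to $0$ on $Q$; since $Q$ is Stein this sheaf is cohomologically trivial enough to run Cartan-type patching, and near the identity $\NN$ is modelled on it by the flow.

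First I would dispose of the case in which $Q$ is a single Luna stratum. Then all fibres $X_q$ are mutually $G$-biholomorphic, the isotropy data is locally constant, $Q$ is a smooth Stein manifold, and, via Luna slices, $\pi$ is a genuine holomorphic fibre bundle whose fibre is a complex homogeneous space for the group of $G$-biholomorphisms of a model fibre; such fibres are Oka and $\NN$ is a sheaf of complex Lie groups, so Grauert's Oka principle gives a holomorphic section homotopic through continuous sections to $\Phi$. In general one then argues by induction over the Luna stratification, which is locally finite and can be ordered so that the closure of each stratum meets only earlier strata (working over an exhaustion of $Q$ by Runge Stein subsets if necessary): at each stage one has a holomorphic section on a neighbourhood of the union $F$ of the strata already treated, homotopic there to $\Phi$, and one must extend it, up to a homotopy controlled near a slightly shrunk $F$, across the next stratum $Z$.

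This extension step is the heart of the argument and the place where the infinitesimal lifting property is used. Near a point $q\in Z$ a Luna slice presents a neighbourhood of $q$ in $Q$ as a product of an open piece of $Z$ with the quotient of the slice representation, and presents $p_X\inv$ and $p_Y\inv$ of it compatibly. Strata-preserving vector fields on $Q$ that push transversally into the slice directions lift, by the hypothesis on $X$ and hence also on $Y$ (the infinitesimal lifting property depends only on the isomorphism classes of the fibres, and these agree for $X$ and $Y$ since the $G$-homeomorphism $\Phi$ exists), to $G$-invariant holomorphic vector fields; flowing $X$ and $Y$ by matching lifts and conjugating $\Phi$ gives a holomorphic transport of sections in the transverse directions. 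Combining this transport with fibrewise holomorphic approximation reduces the extension across $Z$ to a bundle Oka problem over an open piece of $Z$, of the type already treated, together with a gluing problem: the local holomorphic sections obtained over the members of a Stein Leray cover of a neighbourhood of $Z$ must be patched into one. The gluing is a non-abelian Cousin problem for $\NN$, handled in the usual way by Runge approximation of sections of $\NN$ and splitting of $\NN$-valued cocycles close to the identity through the Lie-algebra sheaf, whose higher cohomology vanishes over the Stein base; again the infinitesimal lifting property is what lets the corrections be taken holomorphic and small while keeping all intermediate sections \emph{strong}, i.e.\ continuous sections of $\Iso(X,Y)$ rather than only of $\Iso(X,Y)_\red$.

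Carrying a parameter $t\in[0,1]$ through the base case, the induction, and the patching turns the output into a homotopy of continuous sections from $\Phi$ to the holomorphic section, which unravels to the desired homotopy through strong $G$-homeomorphisms from $\Phi$ to a $G$-biholomorphism $X\to Y$. The scheme parallels the proof of Theorem \ref{thm:main1}; the difference is that to match the possibly non-reduced fibres and remain in the continuous rather than smooth reduced category one must move sections holomorphically in directions transverse to the strata, which is exactly what the infinitesimal lifting property supplies. I expect the main obstacle to be precisely the stratum-extension step and its non-abelian patching: controlling, compatibly over $X$ and $Y$ at once, the flows that carry sections transversally to a stratum while preserving both holomorphy and the strong-homeomorphism property.
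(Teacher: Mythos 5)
Your overall shape---lift vector fields via the infinitesimal lifting property, then run a Cartan--Grauert patching argument over $Q$---matches the paper's strategy, but the proposal slides past the two technical obstructions that consume most of the paper.

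The first and largest gap is the phrase ``near the identity $\NN$ is modelled on it by the flow.'' As the paper notes (citing Section 3 of \cite{KLS}), $\Iso(X,Y)$ and $\Iso(X,X)$ in general carry no reasonable structure of complex space, so one cannot literally treat $\pi\colon\Iso(X,Y)\to Q$ as a holomorphic principal bundle and apply Grauert directly. More to the point, a strong $G$-homeomorphism near the identity is merely \emph{continuous}; there is no differentiation available to pass to the ``Lie algebra,'' and even a smooth $G$-diffeomorphism inducing $\Id_Q$ need not be the time-one flow of a $G$-invariant vector field annihilating invariants. This is exactly why the paper interposes the intermediate category of $G$-diffeomorphisms of \emph{type $\F$} and vector fields of \emph{type $\LF$} (Definitions \ref{def:typeF} and \ref{def:typeLF}): the logarithm theorem (Theorem \ref{thm:Dexists}, together with Propositions \ref{prop:all-type-LF} and \ref{prop:local-LF-implies-global}) is the substitute for your ``modelled by the flow,'' and it in turn rests on the closedness result Theorem \ref{thm:closed}, which says $\ci(X)^G\cdot\Der_Q(X)^G$ is closed in the Fr\'echet space of all smooth invariant vector fields and is proved by a delicate Whitney/Luna-type argument in Section \ref{sec:topologies}. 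None of this is free, and omitting it leaves the Cartan patching without a local model.

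The second gap is the stratum-by-stratum Oka induction. Across the closure of a lower-dimensional Luna stratum, $p_X\colon X\to Q$ is no longer a fibre bundle, the fibres change type and are non-reduced, and the purported ``bundle Oka problem over an open piece of $Z$'' does not cleanly separate from the behaviour near $\overline Z\setminus Z$. The paper avoids this by a different decomposition: the infinitesimal lifting property is consumed \emph{only} in Section \ref{sec:locallifting} to prove Theorem \ref{thm:main3} (local $G$-biholomorphism over $Q$, via Theorem \ref{thm:holomorphic-lift} and vector-field integration). After that, no further stratification-transverse flows are needed; the Luna stratification induction reappears only in the comparatively soft Theorem \ref{thm:deformstrong} (deforming a strong $G$-homeomorphism to a $G$-diffeomorphism of type $\F$ by a small local cut-off trick), and the actual Oka patching (Sections \ref{sec:NHC}--\ref{sec:Grauert}, Theorem \ref{thm:main5}) runs over all of $Q$ at once using special compact sets, not stratum by stratum. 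In short, your proposal uses the right tools but in the wrong architecture, and the key lemmas that make the architecture work---existence of logarithms of type $\LF$ and closedness of $\LF(Q)$---are absent.
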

See Section \ref{sec:strong} for the definition of a homotopy of strong $G$-homeomorphisms. The theorem says that a continuous section of $\Iso(X,Y)$ is homotopic to a holomorphic section, provided that $X$ (equivalently, $Y$) has the infinitesimal lifting property.

Our proofs of Theorems \ref{thm:main1} and \ref{thm:main2} have two steps, where we first reduce our homotopy principles to  Oka principles of the form considered by Grauert.
Let $X$, $Y$ and $Q$ be as before. We say that \emph{$X$ and $Y$ are locally $G$-biholomorphic over $Q$} if there is an open cover $\{U_i\}$ of $Q$ and $G$-biholomorphisms $\Phi_i\colon p_X\inv(U_i)\to p_Y\inv(U_i)$ inducing the identity on $U_i$. This condition says that there are no local obstructions to the existence of a global $G$-biholomorphism $\Phi\colon X\to Y$ inducing $\Id_Q$.

\begin{theorem}\label{thm:main3}
Let $X$ and $Y$ be Stein $G$-manifolds with common quotient $Q$. Suppose that one of the following holds. 
\begin{enumerate}
\item There is a strict $G$-diffeomorphism from $X$ to $Y$.
\item There is a strong $G$-homeomorphism from $X$ to $Y$  and $X$ has the infinitesimal lifting property.
\end{enumerate}
Then $X$ and $Y$ are locally $G$-biholomorphic over $Q$.
\end{theorem}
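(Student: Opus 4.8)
The plan is to reduce this (purely local) statement to a standard form via the holomorphic version of Luna's slice theorem and then to analyse the resulting local models. Since restricting a strict $G$-diffeomorphism (resp.\ a strong $G$-homeomorphism) $X\to Y$ to $p_X\inv(U)\to p_Y\inv(U)$ retains the same property, it suffices to produce, for each $q\in Q$, a neighbourhood $U$ of $q$ and a $G$-biholomorphism $p_X\inv(U)\to p_Y\inv(U)$ inducing $\Id_U$. Fix $q$ and choose $x\in X_q$ with closed orbit; put $H=G_x$, which is reductive. In case~(1) the strict $G$-diffeomorphism restricts to a $G$-biholomorphism $(X_q)_\red\to(Y_q)_\red$, and in case~(2) the strong $G$-homeomorphism restricts to a $G$-biholomorphism $X_q\to Y_q$; in either case the unique closed orbit $Gx\cong G/H$ of $(X_q)_\red$ is carried to that of $(Y_q)_\red$, so after translating we may assume it is $Gy$ for some $y\in Y_q$ with $G_y=H$. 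The holomorphic slice theorem then gives, after shrinking $U$, $G$-biholomorphisms $p_X\inv(U)\cong G\times_H\Omega_X$ and $p_Y\inv(U)\cong G\times_H\Omega_Y$ over $U$, where $\Omega_X\subset W_X$ and $\Omega_Y\subset W_Y$ are saturated open neighbourhoods of the origin in the slice representations $W_X$, $W_Y$ of $H$, with $\Omega_X\sl H$ and $\Omega_Y\sl H$ both identified with $U$. Two observations reduce everything to a problem about $H$: to build a $G$-biholomorphism $G\times_H\Omega_X\to G\times_H\Omega_Y$ over $U$ it is enough to build an $H$-biholomorphism $\psi\colon\Omega_X\to\Omega_Y$ over $U$ (then take $[g,w]\mapsto[g,\psi(w)]$); and the given strict $G$-diffeomorphism (resp.\ strong $G$-homeomorphism), normalised so that $x\mapsto y$, restricts near $x$ to a strict $H$-diffeomorphism (resp.\ strong $H$-homeomorphism) $\Omega_X\to\Omega_Y$ over $U$, since a $G$-map over $Q$ carries orbits to orbits and hence slices to slices.

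We are thus reduced to the linear case: $H$ reductive, acting linearly on $W_X$ and $W_Y$ with the origin a common closed orbit and $W_X\sl H\cong W_Y\sl H$ near $0$, and we are given a strict $H$-diffeomorphism (resp., assuming $X$ has the infinitesimal lifting property, a strong $H$-homeomorphism) between neighbourhoods of $0$ inducing the identity on the quotient; we must produce an $H$-biholomorphism of (perhaps smaller) neighbourhoods of $0$ inducing the identity on the quotient. The target is to show that $W_X$ and $W_Y$ are isomorphic as $H$-modules by an isomorphism compatible with the identifications of their quotients with a neighbourhood of $q$ in $Q$; a linear such isomorphism then solves the problem, and unwinding the reductions above produces the desired local $G$-biholomorphism inducing $\Id_U$.

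I expect this linear step to be the main obstacle. One must recover the $H$-module $W_X$, up to isomorphism and not merely its reduced null cone, from smooth or even merely topological data, and moreover obtain the isomorphism $W_X\to W_Y$ compatibly with the identifications of the quotients with a neighbourhood of $q$ in $Q$. In case~(1) the extra rigidity of a diffeomorphism that is biholomorphic on reduced fibres makes this feasible: one differentiates at $0$ and, using holomorphicity along the fibres, promotes the resulting $\R$-linear $H$-isomorphism to an $H$-module isomorphism, adjusting it over the quotient as needed. In case~(2) the relevant infinitesimal information is simply not present in a homeomorphism, and this is exactly the role of the infinitesimal lifting property: it furnishes $H$-invariant holomorphic vector fields lifting strata-preserving vector fields on the quotient, and these — integrated — both supply a substitute for the missing derivative and allow the necessary correction over the quotient to be carried out. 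Once an $H$-module isomorphism over $\Id_Q$ is obtained, the slice-theorem reductions deliver the local $G$-biholomorphism, proving the theorem.
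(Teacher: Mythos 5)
Your reduction to the slice model is a reasonable first step, but the key claim of your second and third paragraphs — that the problem is solved by finding a linear $H$-module isomorphism $W_X\to W_Y$ ``compatible with the identifications of their quotients with a neighbourhood of $q$ in $Q$'' — asks for something that generically does not exist, and thus papers over the actual content of the theorem. Once you choose slice charts $p_X\inv(U)\cong G\times^H\Omega_X$ and $p_Y\inv(U)\cong G\times^H\Omega_Y$, the two induced identifications of $U$ with $\Omega_X\sl H$ and $\Omega_Y\sl H$ differ by a strata-preserving biholomorphism $\gamma$ of a neighbourhood of $0$ in $W\sl H$, and a linear $H$-isomorphism $W_X\to W_Y$ induces a quasilinear map on the quotient; so your ``compatibility'' would force $\gamma$ to be quasilinear, which is false in general. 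The substantive difficulty is precisely to lift the nonlinear residual $\gamma$ to a local $G$-biholomorphism, and this is what the paper's Theorem~\ref{thm:holomorphic-lift} does: after the slice theorem gives a $G$-biholomorphism $\Psi\colon X_U\to Y_{\psi(U)}$ over some $\psi$, one composes with $(\Psi')\inv$ to get a strict/strong lift of $\psi$ from $X_U$ to $X_{\psi(U)}$, scales by the $\C^*$-action to deform $\psi$ to its quasilinear part $\psi_0$ (which is holomorphically liftable since $p_*\colon\Aut_\vb(T_W)^G\to\Aut_\ql(Q)$ is a surjection of algebraic groups onto its image with local sections), and then lifts the remaining homotopy $\psi_t$ by integrating a time-dependent strata-preserving vector field $D_t$ whose holomorphic $G$-invariant lift $A_t$ is produced by Proposition~\ref{prop:lift-family}. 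Your phrase ``adjusting it over the quotient as needed'' is exactly this argument, and it is the whole point, not an afterthought.

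There is a second, independent gap in your treatment of case~(1). There the infinitesimal lifting property is \emph{not} assumed, so you cannot simply invoke it to produce holomorphic lifts of the strata-preserving $D_t$. The paper handles this by observing that the strict $G$-diffeomorphism supplies a \emph{smooth} $G$-invariant lift of $D_t$ (from the vector field generating the flow $\Phi_t$), and then uses Lemma~\ref{lem:lift-if-smooth-lift} — smooth liftability implies holomorphic liftability, via a Taylor-series and faithful-flatness argument — to upgrade to a holomorphic lift, after which Proposition~\ref{prop:lift-family} applies. Without this lemma, case~(1) cannot be closed. Also, a smaller issue in your setup: a $G$-map $G\times^H\Omega_X\to G\times^H\Omega_Y$ over $\Id_U$ need not restrict to an $H$-map $\Omega_X\to\Omega_Y$ (the image of $[e,w]$ may leave the slice), so the passage from the $G$-problem to an $H$-problem requires more care than ``carries slices to slices.''
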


Once we have no local obstructions we are able to establish the following versions of Grauert's Oka principle.

\begin{theorem}\label{thm:main4}
Let $X$ and $Y$ be Stein $G$-manifolds locally $G$-biholomorphic over a common quotient $Q$.  
\begin{enumerate}
\item Any strict $G$-diffeomorphism $\Phi\colon X\to Y$  is homotopic, through strict $G$-diffeo\-mor\-phisms, to a $G$-biholomorphism from $X$ to $Y$.
\item  Any strong $G$-homeomorphism $\Phi\colon X\to Y$  is homotopic, through strong $G$-homeomorphisms, to a $G$-biholomorphism from $X$ to $Y$.
\end{enumerate}
\end{theorem}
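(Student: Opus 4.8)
The plan is to prove both parts at once by a Grauert-style Oka principle, treating $\Iso(X,Y)$ and $\Iso(X,X)$ as sheaves of sets, respectively groups, over $Q$, despite the absence of a complex structure on them. The first reduction is to a splitting problem. Local $G$-biholomorphy over $Q$ gives a locally finite Stein open cover $\{U_i\}$ of $Q$ and $G$-biholomorphisms $\Phi_i\colon p_X\inv(U_i)\to p_Y\inv(U_i)$ over $\Id$; then $c_{ij}=\Phi_i\inv\circ\Phi_j$ is a holomorphic $1$-cocycle, each $c_{ij}$ a holomorphic section of $\Iso(X,X)$ over $U_i\cap U_j$. A $G$-biholomorphism $\Psi\colon X\to Y$ over $\Id_Q$ is the same as a tuple of holomorphic sections $t_i$ of $\Iso(X,X)$ over $U_i$ with $t_i\circ t_j\inv=c_{ij}$, via $\Psi=\Phi_i\circ t_i$ on $p_X\inv(U_i)$; a strict $G$-diffeomorphism (resp.\ strong $G$-homeomorphism) $\Phi$ gives such a tuple $s_i=\Phi_i\inv\circ\Phi$ in the smooth-strict (resp.\ continuous-strong) sense. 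So the task is to deform $(s_i)$, through smooth-strict (resp.\ continuous-strong) tuples splitting $(c_{ij})$, to a holomorphic tuple; the homotopy itself is produced in the course of the construction, starting from $\Phi$.

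The core is a Cartan-type gluing lemma. Suppose $U'$, $U''$ are Stein open subsets of $Q$ forming a Cartan pair, $\Psi'$, $\Psi''$ are holomorphic sections of $\Iso(X,Y)$ over $U'$, $U''$, and $\gamma=(\Psi')\inv\circ\Psi''$, a holomorphic section of $\Iso(X,X)$ over $U'\cap U''$, is close to $\Id$. One wants $\gamma=\alpha'\circ(\alpha'')\inv$ with $\alpha'$, $\alpha''$ holomorphic sections of $\Iso(X,X)$ over $U'$, $U''$, close to $\Id$, so that $\Psi'\circ\alpha'=\Psi''\circ\alpha''$ patches to a holomorphic section over $U'\cup U''$. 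I would obtain this by linearization: write $\gamma$ as the time-one map of a time-dependent, fibre-tangent ($p_X$-vertical) $G$-invariant holomorphic vector field $A_t$ on $p_X\inv(U'\cap U'')$, the time dependence circumventing the possible non-completeness of such fields; split $A_t=A'_t-A''_t$ with $A'_t$, $A''_t$ vertical $G$-invariant holomorphic fields over $U'$, $U''$ --- a Cousin problem solved with bounds by Cartan's Theorem~B, since the pushforward to $Q$ of the sheaf of $p_X$-vertical $G$-invariant holomorphic vector fields on $X$ is a coherent sheaf on the Stein space $Q$; integrate $A'_t$, $A''_t$ to the required $\alpha'$, $\alpha''$; and absorb the resulting error by an iteration whose convergence rests on the closeness of $\gamma$ to $\Id$.

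Globalization is then routine: exhaust $Q$ by compacta, build an increasing chain of Stein neighbourhoods, apply the gluing lemma over each bump while simultaneously deforming the tuple $(s_i)$ to agree, over the smaller set, with the holomorphic tuple just produced, and arrange the successive corrections to decay geometrically so that the infinite composition converges and the deformations assemble into a single homotopy from $\Phi$ to a $G$-biholomorphism. The passage from "close to $\Id$" to "joined to $\Id$ by a path of sections" is handled by subdividing the path so that each sub-step lies in the close regime. Strictness (resp.\ strongness) is a local condition, stable under composition and under the limits taken above, hence inherited by the patched sections and by the homotopy --- immediate from the definitions in Section~\ref{sec:strong} --- so the two parts of the theorem run in parallel.

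The main obstacle is the gluing lemma, and within it the two ingredients that have no classical counterpart. First, constructing a "logarithm": realizing a near-identity holomorphic section of $\Iso(X,X)$ as the time-one map of a time-dependent vertical $G$-invariant holomorphic vector field with controlled domain, so that $\alpha'$ is genuinely defined on all of $p_X\inv(U')$ and not merely on a shrinking. Second, choosing a topology on holomorphic sections of $\Iso(X,X)$ --- and a matching notion of bounds for the Cartan-B splitting --- fine enough that the iteration converges even though the fibres of $p_X$ may be non-reduced and of unbounded complexity along the Luna strata. Carrying the whole scheme through inside the class of strong $G$-homeomorphisms and their homotopies for the second part is the remaining delicate bookkeeping. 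This is precisely where the methods must improve on those of \cite{KLS}.
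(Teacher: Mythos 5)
Your proposal correctly identifies the broad Grauert/Cartan framework (local holomorphic data, cocycle $c_{ij}$, Cartan-pair splitting of near-identity sections via a Cousin problem for $G$-invariant vertical vector fields, iteration and globalization) and this is indeed the route the paper follows in Sections~\ref{sec:NHC}--\ref{sec:Grauert}. But there are several genuine gaps, and the most serious is the sentence declaring that strictness and strongness are ``stable under the limits taken above, hence inherited by the patched sections and by the homotopy --- immediate from the definitions.'' That is precisely what is \emph{not} immediate. The map from a strong $G$-homeomorphism to a representing matrix $(a_{ij})$ is not canonical, and there is no a priori reason that a sequence of strong $G$-homeomorphisms converging uniformly on compacts has a strong limit. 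The paper resolves this by introducing the intermediate class of $G$-diffeomorphisms of type $\F$ (Section~\ref{sec:typeF}) and vector fields of type $\LF$; the entire point of Section~\ref{sec:topologies} (Theorem~\ref{thm:closed}, a difficult Bierstone--Milman-type closedness result adapted to the non-proper quotient map) is to prove that $\LF(Q)$ is closed in the Fr\'echet space of smooth vector fields, which in turn gives closure of the type-$\F$ class under limits (Corollary~\ref{cor:typeFclosed}). Your proposal also does not explain how to obtain the \emph{local} homotopy from $s_i=\Phi_i^{-1}\circ\Phi$ to a holomorphic section over a shrinking of $U_i$ --- you say only that ``the homotopy itself is produced in the course of the construction'' --- but this local step is substantial: it is the content of Section~\ref{sec:reductiontoF} (using the $\C^*$-deformation $\Phi_t$ along a slice, the normal derivative $\delta\Phi$, and the algebraic structure of $L_\vb$ and $L_\hr$), and in particular for the strong case it rests on the delicate Lemmas~\ref{lem:helpfundam}--\ref{lem:fundamental} about the behaviour of the matrix entries $a_{ij}$ under scaling.

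Two further points. First, your logarithm step invokes a ``time-dependent'' vector field ``circumventing the possible non-completeness,'' but completeness is not the difficulty: vector fields in $\Der_Q(X)^G$ and of type $\LF$ are complete (Lemma~\ref{lem:LFcomplete}, since on each fibre they lie in the Lie algebra of a linear algebraic group). The real difficulty is showing that a near-identity section \emph{has} a logarithm in the right sheaf, and this requires every fibre to have property~(LF) (Proposition~\ref{prop:all-type-LF}), which in turn uses the structure theorem for $L_\hr$ (Theorem~\ref{thm:reachable}) and the extension homomorphism $\nu\colon L_\hr\to\Aut_Q(T_W)^G$ (Corollary~\ref{cor:extendL0}). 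Second, your global exhaustion scheme eventually needs a base case over a single fibre (showing a parameterised section over a point is null-homotopic), which is exactly statement~(i)$_0$ in the paper's proof of Theorem~\ref{thm:main}, and again relies on the reachable-automorphism structure of $L_\hr$; without it the induction does not start. So the architecture of your proposal is correct, but the content --- the type-$\F$ reduction, the closedness theorem, property~(LF), and the $L_\hr$ structure theory --- is exactly where the theorem is proved, and these are absent from the sketch.
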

Note that Theorems \ref{thm:main3} and \ref{thm:main4} establish Theorems \ref{thm:main1} and \ref{thm:main2}.
The proof of Theorem \ref{thm:main4} is along the lines of Grauert's Oka principle for principal  bundles of complex Lie groups (Section \ref{sec:Grauert}).
A main result  of our previous paper \cite{KLS} is a weaker version of Theorem \ref{thm:main4}. In   (1) and (2) we were only able to state the existence of a $G$-biholomorphism, but not that it was homotopic to $\Phi$. Also, we had to assume that $X$ (equivalently $Y$) is generic, which means that  the set of closed orbits  with trivial isotropy group is open in $X$ and that the complement (which is a closed $G$-stable subvariety of $X$) has codimension at least two.  

We briefly mention here the Linearisation Problem.
Suppose that $X=\C^n$ and that $Y$ is a $G$-module such that we have a $G$-biholomorphism of $X$ and $Y$. Then the $G$-action on $\C^n$ is linearisable, i.e., there is a biholomorphic automorphism $\Phi$ of $\C^n$ such that $\Phi\circ g\circ\Phi\inv$ is linear for every $g\in G$.
The problem of linearising actions of reductive groups on $\C^n$ has attracted much attention both in the algebraic and holomorphic settings (\cite{Huckleberry}, \cite{ Kraft1996}). 
The first counterexamples for the algebraic linearisation problem were constructed by Schwarz \cite{Schwarz1989} for $n\geq 4$.  His examples are holomorphically linearisable.  Derksen and Kutz\-sche\-bauch \cite{Derksen-Kutzschebauch} showed that for $G$ nontrivial, there is an  $N_G\in\mathbb N$ such that there are nonlinearisable actions of $G$ on $\C^n$ for all $n\geq N_G$.  Their method was to construct actions whose stratified quotients cannot be isomorphic to the stratified quotient of a linear action. In  \cite{KLSb}, we show that, most of the time,  a holomorphic $G$-action on $\C^n$ is linearisable if and only if the stratified quotient is isomorphic to the stratified quotient of a $G$-module.

Here is a brief summary of the contents of the paper. In Section \ref{sec:background} we review general results about quotients and the Luna stratification. In Section \ref{sec:strong} we recall facts about $G$-finite functions and use them to define the notion of strong $G$-homeomorphism. Section \ref{sec:examples} gives examples showing problems that can arise in finding local or global lifts of strata preserving biholomorphisms of  quotients. In Section \ref{sec:locallifting} we establish Theorem \ref{thm:main3}. Here we use two techniques: deforming an  automorphism of $Q$ to a  liftable automorphism  and lifting 
homotopies on the quotient by lifting  associated vector fields.  After establishing Theorem \ref{thm:main3} we are able to assume that $X$ and $Y$ are locally $G$-biholomorphic over $Q$. In Section \ref{sec:typeF} we define a type of $G$-diffeomorphism from $X$ to $Y$, those of type $\F$, which roughly are those $G$-diffeomorphisms inducing $\Id_Q$ whose restriction  to each fibre $X_q$ has a biholomorphic $G$-equivariant extension to a neighbourhood of $X_q$. We also define the notion of a $G$-invariant vector field on $X$ of type $\LF$. These are roughly the smooth $G$-invariant  vector fields, annihilating the $G$-invariant holomorphic functions, whose restrictions to each fibre  $X_q$ extend in a neighbourhood of $X_q$ to a $G$-invariant holomorphic vector field annihilating the $G$-invariant holomorphic functions. We establish important properties of the $G$-diffeomorphisms of type $\F$ (assuming the results of Section  \ref{sec:topologies}). In Section \ref{sec:topologies} we prove several technical results, among them the fact that the $G$-invariant vector fields of type $\LF$ are closed in the Fr\'echet space of all smooth $G$-invariant vector fields on $X$.  In Section \ref{sec:reductiontoF} we show that any strong $G$-homeomorphism from $X$ to $Y$  is homotopic, through strong $G$-homeomorphisms from $X$ to $Y$, to one of type $\F$. The analogous result for strict $G$-diffeomorphisms  follows similarly. In Sections \ref{sec:NHC} and \ref{sec:Grauert} we   modify the techniques of Cartan \cite{Cartan58} to show that any $G$-diffeomorphism from $X$ to $Y$ of type $\F$   is homotopic, through $G$-diffeomorphisms of type $\F$, to a $G$-biholomorphism from $X$ to $Y$, completing the proof of Theorem \ref{thm:main4}.  

\smallskip\noindent
\textit{Acknowledgement.}  We thank E.~Bierstone for useful discussions.

\section{Background}  \label{sec:background}

For details of what follows see \cite{Luna} and \cite[Section~6]{Snow}.  Let $X$ be a normal Stein space with a holomorphic action of a reductive complex Lie group $G$.  The categorical quotient $Q_X=X\sl G$ of $X$ by the action of $G$ is the set of closed orbits in $X$ with a reduced Stein structure that makes the quotient map $p_X\colon X\to Q_X$ the universal $G$-invariant holomorphic map from $X$ to a Stein space. When $X$ is understood, we drop the subscript $X$ in $p_X$ and $Q_X$. Since $X$ is normal, $Q$ is normal.  If $U$ is an open subset of $Q$, then 
$p^*$ induces isomorphisms of $\C$-algebras $\O_X(p^{-1}(U))^G \simeq \O_Q(U)$  and $\CC(p\inv(U))^G\simeq\CC(U)$. We say that a subset of $X$ is \textit{$G$-saturated\/} if it is a union of fibres of $p$. If $X$ is an affine  $G$-variety, then $Q$ is just the complex space corresponding to the affine algebraic variety with coordinate ring  $\O_\mathrm{alg}(X)^G$. If $V$ is a $G$-module and $p\colon V\to V\sl G$ is the quotient mapping, then the fibre $\NN(V)=p\inv(p(0))$ is the \emph{null cone of $V$\/}.

  If $Gx$ is a closed orbit in $X$, then the stabiliser (or isotropy group) $G_x$ is reductive.  We say that closed orbits $Gx$ and $G{y}$ have the same \textit{isotropy type} if $G_x$ is $G$-conjugate to $G_{y}$. Thus we get the \textit{isotropy type  stratification} of $Q$ with strata whose labels are  conjugacy classes of reductive subgroups of $G$. 

Assume that $X$ is smooth and connected, and let $Gx$ be a closed orbit. Then we can consider the \emph{slice representation\/} which is the action of $G_x$ on $T_xX/T_x(Gx)$. We say that closed orbits $Gx$ and $Gy$ have the same \emph{slice type\/} if they have the same isotropy type and, after arranging that $G_x=G_y$, the slice representations are isomorphic representations of $G_x$. The stratification by slice type is finer than that by isotropy type, but the slice type strata are unions of irreducible components of the isotropy type strata \cite[Proposition 1.2]{Schwarz1980}. Hence if the isotropy type strata are irreducible, the slice type strata and isotropy type strata are the same. This occurs for the case of a $G$-module \cite[Lemma 5.5]{Schwarz1980}. 
Let $H=G_x$ and $W=T_xX/T_x(Gx)$ be the slice representation as above. Write $W=W^H\oplus W'$ where $W'$ is an $H$-module. The Zariski tangent space to the fibre $X_{p(x)}$ at  $x$ is isomorphic to $\lie g/\lie h\oplus W'$ as $H$-module, and 
$$
\dim W^H=\dim X-\dim G+\dim H-\dim W',
$$
 so that the fibre determines the slice representation (and vice versa). Hence   the Luna stratification of the introduction is the same as the slice type stratification.  

There is a unique open stratum $Q_\pr\subset Q$, corresponding to the closed orbits with minimal stabiliser. We call this the \emph{principal stratum\/} and the closed orbits above $Q_\pr$ are called \emph{principal orbits\/}. The isotropy groups of principal orbits are called \emph{principal isotropy groups\/}. 
By definition, $X$ is generic when the principal isotropy groups are trivial and $p\inv (Q\setminus Q_\pr)$ has codimension at least two in $X$. 

\begin{remark}  \label{rem:generic}
 If $G$ is simple, then, up to isomorphism, all but finitely many $G$-modules $V$ with $V^G=0$ are generic and have the infinitesimal lifting property \cite[Corollary~11.6 (1)]{Schwarz1995}.  The same result holds for semisimple groups but one needs to assume that every irreducible component of $V$ is a faithful module for the Lie algebra   of $G$ \cite[Corollary~11.6 (2)]{Schwarz1995}.    A \lq\lq random\rq\rq\ $\C^*$-module is generic and has the infinitesimal lifting property, although infinite families of counterexamples exist.  More precisely, a faithful $n$-dimensional $\C^*$-module without zero weights is generic (and has the infinitesimal  lifting property) if and only if it has at least two positive weights and at least two negative weights and no $n-1$ weights have a common prime divisor.  Finally, $X$ is generic (or has the infinitesimal lifting property) if and only if every slice representation does. Hence these properties only depend upon  the Luna stratification of $Q$. If one is in the situation where all slice representations are orthogonal, then \cite[Theorems 3.7 and 6.7]{Schwarz1980} shows that one has the infinitesimal lifting property.
 \end{remark}

   \section{$G$-finite functions and strong homeomorphisms}\label{sec:strong}
Let $X$ and $Y$ be Stein $G$-manifolds.   Despite the fact that we can state  our main  theorems in the case that   $\phi\colon Q_X\to Q_Y$ is the identity, our proofs (especially in  Section \ref{sec:locallifting}) require us to consider the case that $\phi$ is an arbitrary strata preserving biholomorphism.
   
 If $U$ is a subset of $Q_X$, we denote $p_X\inv(U)$ by $X_U$, and $Y_U$ for $U\subset Q_Y$ is defined analogously.  The group $G$ acts on $\O(X)$, $f\mapsto g\cdot f$, where $(g\cdot f)(x)=f(g\inv x)$, $x\in X$, $g\in G$, $f\in\O(X)$.
Let $\O_\gf(X)$ denote the holomorphic functions  $f$ such that the span of $\{g\cdot f\mid g\in G\}$ is finite dimensional. They are called the  \emph{$G$-finite  holomorphic functions on $X$} and obviously form an $\O(Q)=\O(X)^G$-algebra.  If $X$ is a smooth affine $G$-variety, then the techniques of \cite[Proposition 6.8, Corollary 6.9]{Schwarz1980} show that for $U\subset Q$ open and Stein we have
$$
\O_\gf(X_U)\simeq \O(U)\otimes_{\O_\alg(Q)}\O_\alg(X).
$$

Let $H$ be a reductive subgroup of $G$ and let $B$ be an $H$-saturated neighbourhood of the origin of an $H$-module $W$. We always assume   that $B$ is Stein, in which case $B\sl H$ is also Stein. Let $G\times^HB$ (or $T_B$) denote the quotient of $G\times B$ by the (free) $H$-action sending $(g,w)$ to $(gh\inv,hw)$ for $h\in H$, $g\in G$ and $w\in B$. We denote the image of $(g,w)$ in $G\times^HB$ by $[g,w]$.  By the slice theorem,  $X$ is locally $G$-biholomorphic to  such   tubes $T_B$.   
 If $V$ is an irreducible  nontrivial $G$-module, let $\O(X)_V$ denote the elements of $\O_\gf(X)$ contained in a copy of $V$, and similarly define $\O_\alg(T_W)_V$. 
Then $\O_\alg(T_W)_V$ generates $\O(T_B)_V$ over $\O(B)^H$. By Nakayama's Lemma, $f_1,\dots,f_m\in\O(X)_V$ restrict to minimal   generators of the $\O(U)$-module $\O(X_U)_V$   for some neighbourhood $U$ of $q\in Q$  if and only if the restrictions of the $f_i$ to $X_q$ form a basis of  $\O(X_q)_V=\O_\alg(X_q)_V$.

Let $\Phi\colon X\to Y$ be a $G$-biholomorphism inducing $\phi\colon Q_X\to Q_Y$. Let $q\in Q_X$,  let $ f_1,\dots, f_m$ be  elements of $\O(X)_V$ whose restrictions to $\O(X_q)$ span $\O(X_q)_V$ and let $f_1',\dots,  f_n'$ be elements of $\O(Y)_V$ whose restrictions to $\O(Y_{\phi(q)})$ span $\O(Y_{\phi(q)})_V$. Then the $f_i$ generate $\O(X)_V$ over a neighbourhood of $q$ and the $f_j'$ generate $\O(Y)_V$ over a neighbourhood of $\phi(q)$. Over a neighbourhood $U$ of $q$  we have  $\Phi^*  f_i'=  f_i'\circ \Phi=\sum a_{ij} f_j$ where  the $a_{ij}$ are in $\O(U)\simeq\O(X_U)^G$. The $a_{ij}$ are generally not unique. However, if the $f_i$ and $f_j'$ are linearly independent when restricted to $X_q$ and $Y_{\phi(q)}$, respectively, then $m=n$ and  the matrix $(a_{ij}(q))$ is unique and  invertible. It follows that $(\Phi\inv)^*  f_i=\sum b_{ij}  f_j'$ over a neighbourhood of  $\phi(q)$ where the matrix valued function $(b_{ij})$ equals   $(a_{ij}\circ\phi\inv)\inv$.

Let $\Phi\colon X\to Y$ be  a $G$-equivariant homeomorphism inducing a strata preserving biholomorphism $\phi\colon Q_X\to Q_Y$. Let $V$ and the $ f_i$, $  f_j'$ and $q$ be as above. We say that $\Phi$ is \emph{strong for $V$ over $\phi$ at $q$\/} if $\Phi^*f_i'=\sum a_{ij}f_j$   where the $a_{ij}$   are continuous in a neighbourhood of $q$,  inducing an isomorphism $\O(Y_{\phi(q)})_V\to\O(X_q)_V$.
 Note that this condition is independent of our choice of the $f_i$ and $ f_j'$.  We say that \emph{$\Phi$ is strong over $\phi$  at $q$\/} if $\Phi$ is strong over $\phi$ for $V$ at $q$ for all irreducible nontrivial $V$. One does not actually need to worry about all $V$. Let $V_1,\dots,V_r$ be irreducible nontrivial $G$-modules such that $\O_\alg(X_q)$ is generated by  $\bigoplus_j \O_\alg(X_q)_{V_j}$. If $\Phi$ is strong over $\phi$ for the $V_j$ at $q$, then it is strong over $\phi$ at $q$. Note that if $\Phi$ is strong  over $\phi$ at $q$, then it is strong over $\phi$ at $q'$ for $q'$ sufficiently close to $q$ and that $\Phi\inv$ is strong over $\phi\inv$  at $\phi(q)$. Finally, we say that $\Phi$ is a \emph{strong $G$-homeomorphism over $\phi$\/} if it is strong over $\phi$ at  all $q\in Q_X$.  When we omit the phrase ``over $\phi$'' we mean that $Q_X=Q_Y=Q$ and $\phi=\Id_Q$ as in the introduction. The strong $G$-homeomorphisms of $X$ form a group under composition.   As we saw above, $G$-biholomorphisms inducing $\phi$ are strong.

 If $\Phi\colon X\to Y$ is a strong $G$-homeomorphism, then the various $a_{ij}(q)$ determine   $G$-isomorphisms of the  (not necessarily reduced) algebraic $G$-varieties $X_q$ and $Y_q$, so one may consider $\Phi$ as a continuous family of isomorphisms of the fibres $X_q$ and $Y_q$, $q\in Q$. 
 If $\Phi_t$ is a family of strong $G$-homeomorphisms for $t\in[0,1]$, we say that the family is a \emph{homotopy of strong $G$-homeomorphisms\/} if the corresponding $a_{ij}(t,q)$ can be chosen to be continuous in $t$ and $q$.
 
 We  consider strong $G$-homeomorphisms  to be the continuous analogues of $G$-biholo\-mor\-phisms of $X$ and $Y$. This is especially evident in the case that the automorphism group scheme associated to $X$ exists.  For $U\subset Q$ open, let $\Aut_U(X_U)^G$ denote the group of $G$-biholomorphisms of $X_U$ inducing $\Id_U$. Suppose that there is a complex space $\mathfrak G$ over $Q$ whose fibres are complex Lie  groups such that we have a canonical identification of the holomorphic sections $\Gamma(U,\mathfrak G)$ with $\Aut_U(X_U)^G$ for all $U$ open in $Q$. Then we say that  \emph{$\mathfrak G$ is the  group  scheme associated to  the Stein $G$-manifold $X$}.  Most of the time $\mathfrak G$ does not exist (see \cite[Section 3]{KLS}). However, it does exist in case  $p\colon X\to Q$ is flat (compare \cite[Chapter III, Section 2]{KraftSchReductive}).

\begin{example}\label{ex:group scheme}This example is from Kraft-Schwarz \cite[Chapter III, Section 2]{KraftSchReductive}.  Let $V=\C^2$ with coordinate functions $x$ and $y$, and let $G$ be the normaliser of the diagonal matrices in $\SL_2(\C)$. Let $s=xy$ and $t=s^2$. Then $\O_\alg(Q)=\C[t]$ (so $Q=\C$), and $\O_\alg(V)$ is generated by $\O_\alg(V)_{V^*}$ which has minimal generators $x$, $y$, $sx$ and $sy$.    An element  $\Phi\in\Aut_Q(V)^G$ sends $x$ to $\alpha x+\beta sx$ for some $\alpha$, $\beta\in \O(Q)$ and it sends $y$ to $\alpha y-\beta sy$. Then $\Phi^*(s)=(\alpha^2-\beta^2 t)s$ where $\Phi^*(s^2)=s^2$. Hence   $\alpha^2-\beta^2 t=\pm 1$. Conversely, given $\alpha$, $\beta\in\O(Q)$ such that $\alpha^2-\beta^2 t=\pm 1$, there is a corresponding $\Phi\in\Aut_Q(V)^G$ with $\Phi^*x=(\alpha+s\beta)x$ and $\Phi^*y=(\alpha-s\beta) y$. We can see our automorphisms  as sections of a group scheme $\mathfrak G$, as follows.
As variety, 
$$
\mathfrak G=\{(t,a,b)\in\C^3\mid a^2-b^2t=\pm 1\},
$$ 
with projection $\pi\colon \mathfrak G\to\C$ sending $(t,a,b)$  to $t$. The group structure on $\Aut_Q(V)^G$ induces a group structure on the fibres of $\pi$:
$$
(t,a',b')\cdot (t,a,b)=(t,aa'+\epsilon bb't,a'b+\epsilon ab') \text{ where }\epsilon=a^2-b^2t.
$$
The inverse to $(t,a,b)$ is $(t,\epsilon a,-b)$.
The group $\Aut_Q(V)^G$ is isomorphic to the group of holomorphic sections of $\pi\colon \mathfrak G\to \C$. The strong $G$-homeomorphisms of $V$ are the same thing as the continuous sections of $\pi$.  
\end{example}
\begin{example}\label{ex:not strong}
Let $(V,G)$ be as in the previous example. Choose the branch of the square root in a neighbourhood of $1\in \C$ with $\sqrt 1=1$ and let $a(t)=\sqrt{1+\bar t}$, $t\in \C$. Then   $a(t)$ is smooth in a neighbourhood of $0\in\C$. Let $\Phi$ be the   $G$-diffeomorphism   which sends $x$ to $(a(t) +\bar s)x$ and $y$ to $(a(t)-\bar s)y$ for $t$ near $0$. Then the corresponding $b(t)$ is $\bar t/|t|=\bar s/s$ which has no limit at $t=0$. The fibre of $V$ over $0$ is not reduced, and the reduced fibre is $\{xy=0\}$ on which the restriction of $\Phi$ is the identity. Thus $\Phi$ is a strict $G$-diffeomorphism, but it is not a strong $G$-homeomorphism.
\end{example}

We now establish some differentiability properties of strong $G$-homeomorphisms inducing a strata preserving biholomorphism $\phi$. Our results are a generalisation of \cite[Lemma 24]{KLS} (see Remark \ref{rem:mistake} below).

Let $H$, $W$, $B$, etc.\ be as in the beginning of this section. We always assume that $B$ is stable under multiplication by $t\in[0,1]$. 
Let $f\in\O_\alg(T_W)$. We say that $f$ has degree $n$ if $f([g,tw])=t^nf([g,w])$, $g\in G$, $w\in W$, $t\in\C$.  We denote the elements of degree $n$ by $\O_\alg(T_W)_n$. The elements of degree zero are the pullbacks to $\O_\mathrm{alg}(T_W)$ of the elements of $\O_\mathrm{alg}(Z)$ where $Z\simeq G/H$ is the zero section of $T_W$.  For the moment, we also assume that $W^H=0$.     

By   \cite[Lemma 23]{KLS} we have
 \begin{lemma}  \label{lem:gens}
$\O_\mathrm{alg}(T_W)$ is generated by $\O_\mathrm{alg}(T_W)_1$ as an $\O_\mathrm{alg}(G/H)$-algebra.
\end{lemma}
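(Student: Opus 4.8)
We want to show that the coordinate ring $\O_{\alg}(T_W)$ is generated, as an algebra over $\O_{\alg}(G/H)=\O_{\alg}(T_W)_0$, by the degree-one elements $\O_{\alg}(T_W)_1$. Recall $T_W = G \times^H W$, so $\O_{\alg}(T_W) = (\O_{\alg}(G) \otimes \O_{\alg}(W))^H$, and the $\C^*$-action $t\cdot[g,w]=[g,tw]$ induces the grading, with $\O_{\alg}(T_W)_n = (\O_{\alg}(G) \otimes \O_{\alg}(W)_n)^H$ where $\O_{\alg}(W)_n$ is the space of homogeneous polynomials of degree $n$ on $W$. The decomposition is finite in each degree because $G/H$ is affine and $\O_{\alg}(W)_n$ is finite-dimensional. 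So the statement is really a statement about the $H$-module structure: we must show that the multiplication map
\[
\bigl(\O_{\alg}(G)\otimes \O_{\alg}(W)_1\bigr)^H \;\otimes_{\;(\O_{\alg}(G))^H}\; \cdots \;\longrightarrow\; \bigl(\O_{\alg}(G)\otimes \O_{\alg}(W)_n\bigr)^H
\]
is surjective for every $n$, i.e. that degree-$n$ $H$-invariants factor through products of degree-one $H$-invariants and degree-zero invariants $\O_{\alg}(G/H)$.

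First I would reduce to a pointwise/fibrewise statement using the fact that $G \to G/H$ is a (locally trivial, in the étale or even Zariski sense after passing to a cover) principal $H$-bundle, so that $\O_{\alg}(G)$ is a faithfully flat $\O_{\alg}(G/H)$-module and, more to the point, $\O_{\alg}(G) \otimes_{\O_{\alg}(G/H)} \O_{\alg}(G) \cong \O_{\alg}(G) \otimes_{\C} \O_{\alg}(H)$. The key algebraic input is then: for the polynomial algebra $\O_{\alg}(W) = \C[W]$ with its linear $H$-action, the graded algebra $\C[W]$ is generated in degree one — this is trivial. The content is transporting this through the $H$-invariants functor. Here the crucial point is that taking $H$-invariants is \emph{exact} on the category of rational $H$-modules (because $H$ is reductive, hence linearly reductive in characteristic zero), and that $\O_{\alg}(G)$ is a free (hence flat) $H$-module — indeed $\O_{\alg}(G) \cong \O_{\alg}(G/H) \otimes \O_{\alg}(H)$ as $H$-modules when the bundle is trivial, and in general $\O_{\alg}(G)$ as an $H$-module is a direct sum of copies of the regular representation, so it is "relatively injective/projective" in the right sense. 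Concretely: write $\O_{\alg}(W)_n = \bigoplus_\lambda m_{n,\lambda}\, V_\lambda$ as an $H$-module (finite sum over irreducibles); then $(\O_{\alg}(G) \otimes \O_{\alg}(W)_n)^H = \bigoplus_\lambda m_{n,\lambda}\, \O_{\alg}(G)_{V_\lambda^*}$, and the claim becomes a statement about how multiplication interacts with these isotypic pieces.

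The cleanest route, which I expect the authors take (it is cited to \cite[Lemma 23]{KLS}), is to argue as follows. Since $H$ is reductive and $W$ is a linear $H$-module, the Reynolds-type argument gives: any $H$-equivariant polynomial map is generated from linear pieces. Precisely, pick a basis $w_1, \dots, w_k$ of $W^*$ (degree-one functions on $W$). An element $F \in (\O_{\alg}(G) \otimes \O_{\alg}(W)_n)^H$ can be written as $F = \sum_{|\alpha|=n} c_\alpha \otimes w^\alpha$ with $c_\alpha \in \O_{\alg}(G)$, and $H$-invariance of the whole sum is the statement that the tuple $(c_\alpha)$ transforms under $H$ contragradiently to the tuple $(w^\alpha)$ inside $\Sym^n W^*$. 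Now use that $\Sym^n W^*$ is a quotient of $(W^*)^{\otimes n}$ as an $H$-module — and since $H$ is reductive the surjection $(W^*)^{\otimes n} \twoheadrightarrow \Sym^n W^*$ splits $H$-equivariantly — to lift $F$ to an element $\widetilde F \in (\O_{\alg}(G) \otimes (W^*)^{\otimes n})^H = (\O_{\alg}(G) \otimes W^*)^H \otimes_{\O_{\alg}(G/H)} \cdots \otimes_{\O_{\alg}(G/H)} (\O_{\alg}(G) \otimes W^*)^H$, the $n$-fold tensor product over $\O_{\alg}(G/H)$ of the degree-one invariants. (This identification is the faithfully-flat base-change isomorphism $\O_{\alg}(G) \otimes_{\O_{\alg}(G/H)} \O_{\alg}(G) \cong \O_{\alg}(G) \otimes \O_{\alg}(H)$ together with linear reductivity of $H$.) Pushing $\widetilde F$ forward along the multiplication map $\O_{\alg}(T_W) \otimes_{\O_{\alg}(G/H)} \O_{\alg}(T_W) \to \O_{\alg}(T_W)$ recovers $F$ as a polynomial in degree-one invariants with coefficients in $\O_{\alg}(G/H)$, which is exactly what we want.

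The main obstacle — and the step that deserves care rather than a one-line dismissal — is the identification $(\O_{\alg}(G) \otimes V)^H \otimes_{\O_{\alg}(G/H)} (\O_{\alg}(G) \otimes V')^H \;\xrightarrow{\sim}\; (\O_{\alg}(G) \otimes V \otimes V')^H$, i.e. that forming the associated bundle $G \times^H (-)$ commutes with tensor products of $H$-modules over the base $G/H$, and that the $H$-equivariant section of $(W^*)^{\otimes n} \twoheadrightarrow \Sym^n W^*$ can indeed be spread out over $\O_{\alg}(G)$ compatibly. Both are consequences of: (i) $\O_{\alg}(G)$ is flat (even faithfully flat) over $\O_{\alg}(G/H)$ since $G \to G/H$ is faithfully flat; (ii) $(-)^H$ is an exact functor on rational $H$-modules and commutes with the flat base change $\O_{\alg}(G/H) \to \O_{\alg}(G)$ because over $\O_{\alg}(G)$ the module $\O_{\alg}(G)\otimes M$ becomes $H$-equivariantly trivial (the "unitarian trick" / the fact that $G$ is an $H$-torsor over $G/H$ makes $\O_{\alg}(G)\otimes M \cong \O_{\alg}(G)\otimes M^{\mathrm{triv}}$ as $\O_{\alg}(G)$-$H$-modules, where $M^{\mathrm{triv}}$ is $M$ with trivial $H$-action). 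Granting these standard facts about reductive group actions, the rest is bookkeeping with gradings, and the inductive structure on $n$ (if one prefers induction: degree $n$ = degree one times degree $n-1$, using the surjectivity of $W^* \otimes \Sym^{n-1}W^* \to \Sym^n W^*$ split over $H$) closes the argument.
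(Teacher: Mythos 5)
The paper does not actually prove this lemma in the text at hand; it is stated and attributed to \cite[Lemma~23]{KLS}, so there is no in-paper proof to compare against. Judged on its own, your argument is correct, and the two ingredients you isolate are indeed the right ones: (a) linear reductivity of $H$ splits the symmetrization $(W^*)^{\otimes n}\twoheadrightarrow S^nW^*$ $H$-equivariantly, and (b) on the affine variety $G/H$ the functor $V\mapsto (\O_{\alg}(G)\otimes V)^H=\Gamma\bigl(G/H,\,G\times^H V\bigr)$ from rational $H$-modules to $\O_{\alg}(G/H)$-modules is exact and monoidal. Item (b) is your isomorphism $(\O_{\alg}(G)\otimes V)^H\otimes_{\O_{\alg}(G/H)}(\O_{\alg}(G)\otimes V')^H\cong(\O_{\alg}(G)\otimes V\otimes V')^H$; your deduction via faithfully flat descent along $G\to G/H$ (pulling the bundle back to $G$ trivializes it, and then descending) is valid, but it can be stated more economically: on an affine scheme the equivalence between quasi-coherent sheaves and modules over the coordinate ring is a tensor equivalence, and $G\times^H(-)$ is a tensor functor, so $\Gamma$ of $G\times^H(V\otimes V')=(G\times^H V)\otimes_{\O_{G/H}}(G\times^H V')$ is the tensor product over $\O_{\alg}(G/H)$. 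Combining (a) and (b), the $\O_{\alg}(G/H)$-linear multiplication map $\O_{\alg}(T_W)_1^{\otimes_{\O_{\alg}(G/H)}\,n}\to\O_{\alg}(T_W)_n$ is exactly $\Gamma(G/H,\cdot)$ applied to the $H$-equivariant surjection $(W^*)^{\otimes n}\to S^nW^*$ of associated bundles, hence is split surjective for every $n$; this is the lemma. One small remark: your argument does not use the running hypothesis $W^H=0$, which is fine — the statement is true without it — but it is worth being aware that the hypothesis is not needed here.
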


 Let $V_1,\dots,V_r$ be  nonisomorphic irreducible  nontrivial  $G$-modules which appear in  $\O_\alg(T_W)_0$ and $\O_\alg(T_W)_1$ such that $\O_\alg(T_W)$ is generated by  
 $$
 M= \O_\alg(T_W)_{V_1}\oplus\cdots\oplus \O_\alg(T_W)_{V_r}.
 $$
 We may assume that $V_1,\dots,V_r$ are minimal with the above property. Let $f_1,\dots,f_n$ minimally generate $M$ where each $f_i$ is homogeneous and in $\O_\alg(T_W)_{V_j}$ for some $j$.
  \begin{definition}\label{def:standard-generators}
 Let $V_1,\dots,V_r$ and $f_1,\dots,f_n$ be as above.  We call $\{f_i\}$  a \emph{standard set of   generators of $\O_\gf(T_B)$\/}  or a \emph{standard set of   generators of $\O_\alg(T_W)$}. 
  \end{definition}
The span of the $f_i$ is $G$-stable and the $f_i$ are linearly independent on $F=G\times^H\NN(W)$.  Let $d_i$ denote the degree of $f_i$, $i=1,\dots,n$. We arrange that $d_i=0$   for $ i\leq  \ell $, $d_i=1$   for $ \ell < i\leq m$ and   $d_i>1$ for $i>m$.
 
  Let $X$ denote $T_B$ so that $Q=T_B\sl G\simeq B\sl H$. Let $q_0\in Q$ be the image of the  point $x_0=[e,0]\in X$. Let $U$ be a 
  neighbourhood of $q_0$ and let $\phi\colon U\to \phi(U)\subset Q$ be a strata preserving biholomorphism. Then $\phi(q_0)=q_0$.  
Let $\Phi\colon X_U\to X_{\phi(U)}\subset X$ be a strong $G$-homeomorphism over  $\phi$.  

Shrinking  $U$ we may assume that there is an $(n\times n)$-matrix $(a_{ij})$   of continuous invariant functions
such that  $\Phi^*f_i=\sum a_{ij} f_j$. 
Since $\Phi$ preserves $F$, it preserves the closed orbit $Z\subset F$. 
Let $\I$ denote the ideal in $\CC(X_U)^G$  generated by $\bigoplus_{k\geq 2}\O_\alg(T_W)_k^G$. Note that since $W^H=0$, $\O_\alg(T_W)^G_1=0$. 

 \begin{lemma}\label{lem:helpfundam}
\begin{enumerate}
\item The matrix $(a_{ij}(x_0))$ is invertible, $1\leq i$, $j\leq  \ell $.
\item Perhaps shrinking $U$, we have  $a_{ij}\in\I$ for $\ell < i\leq m$ and  $1\leq j\leq  \ell$. 
\end{enumerate}
\end{lemma}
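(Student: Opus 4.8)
The plan is to reduce everything to the fibre $F=X_{q_0}$ and its unique closed orbit $Z\simeq G/H$, exploiting two elementary facts: $\Phi$ maps $F$ onto $F$ and hence $Z$ onto $Z$, and a $G$-invariant function is constant on $F$, so on $F$ each coefficient $a_{ij}$ collapses to the scalar $a_{ij}(q_0)$. Recall first that, since $W^H=0$, the invariant ring $\O_\alg(T_W)^G=\O_\alg(Q)$ has no part in degree $1$, so its maximal ideal $\mathfrak m_{q_0}$ at $q_0$ is exactly $\bigoplus_{k\ge2}\O_\alg(T_W)^G_k$; hence $\I=\mathfrak m_{q_0}\,\CC(X_U)^G$, and since $\mathfrak m_{q_0}$ also generates the holomorphic maximal ideal of $q_0$ we get $\mathfrak m_{q_0}\O(U)\subseteq\I$. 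The degree-$0$ generators $f_1,\dots,f_\ell$ are pullbacks along the $G$-retraction $p_Z\colon T_W\to Z$ of linearly independent functions $\bar f_1,\dots,\bar f_\ell\in\O_\alg(Z)$, while every $f_i$ with $d_i>0$ vanishes identically on $Z$.

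For (1), I restrict the identities $\Phi^*f_i=\sum_j a_{ij}f_j$ to $Z$. Since $a_{ij}|_Z=a_{ij}(q_0)$, $f_j|_Z=\bar f_j$ for $j\le\ell$ and $f_j|_Z=0$ otherwise, and $\Phi(Z)=Z$ gives $(f_i\circ\Phi)|_Z=(\Phi|_Z)^*\bar f_i$ for $i\le\ell$, one obtains $(\Phi|_Z)^*\bar f_i=\sum_{j\le\ell}a_{ij}(q_0)\bar f_j$. Thus the automorphism $(\Phi|_Z)^*$ of $\O_\alg(Z)$ carries the finite-dimensional space $S:=\langle\bar f_1,\dots,\bar f_\ell\rangle$ into itself with matrix $(a_{ij}(q_0))_{i,j\le\ell}$. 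Doing the same for $\Phi\inv$, which is a strong $G$-homeomorphism over $\phi\inv$ with $(\Phi\inv)|_Z=(\Phi|_Z)\inv$, shows that $((\Phi|_Z)^*)\inv$ preserves $S$ with matrix $(b_{ij}(q_0))_{i,j\le\ell}$; as these are mutually inverse operators on $S$, the matrix $(a_{ij}(q_0))_{i,j\le\ell}$ is invertible.

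For (2), when $\ell<i\le m$ we have $d_i=1>0$, so $f_i|_Z=0$, and restricting $\Phi^*f_i=\sum_j a_{ij}f_j$ to $Z$ yields $\sum_{j\le\ell}a_{ij}(q_0)\bar f_j=0$, whence $a_{ij}(q_0)=0$ for $j\le\ell$. To promote this to $a_{ij}\in\I$ I pass from the fibre to a neighbourhood: shrinking $U$, the $f_j$ lying in a fixed irreducible $V_k$ minimally generate the $\O(U)$-module $\O(X_U)_{V_k}$, so by Nakayama (here is where $\O_\alg(T_W)^G_1=0$ is used) every holomorphic relation among them has coefficients in $\mathfrak m_{q_0}\O(U)$. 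Consequently the kernel of the restriction map $\O(X_U)_{V_k}\to\O_\alg(Z)_{V_k}$ — which contains $\Phi^*f_i$ since $f_i\circ\Phi$ vanishes on $Z$ — is spanned over $\O(U)$ by the positive-degree $f_j$ together with $\mathfrak m_{q_0}\O(U)$ times the degree-$0$ $f_j$, so $\Phi^*f_i$ has an expansion whose degree-$0$ coefficients lie in $\mathfrak m_{q_0}\O(U)\subseteq\I$. One then transfers this to the continuous coefficients of the given $\Phi$ using the base-change results of Section~\ref{sec:topologies}, which realise the module of continuous relations among the $f_j$ as the one generated by the holomorphic relations, hence still with coefficients in $\I$; since the $a_{ij}$ are determined only modulo this relation module, it follows that $a_{ij}\in\I$.

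The main obstacle is precisely the last step of (2): the minimal-generators argument is clean for holomorphic coefficients, but a priori a continuous invariant function vanishing at $q_0$ need \emph{not} lie in $\I$, so one really must invoke the structural input of Section~\ref{sec:topologies} — that continuous relations among the $f_j$ arise from the holomorphic ones by base change, equivalently that one has enough control of $\CC(X_U)_{V_k}$ over $\CC(X_U)^G$ — to keep the continuous coefficients in $\I$. By contrast, part (1) and the vanishing $a_{ij}(q_0)=0$ in part (2) are purely formal consequences of restricting to the closed orbit $Z$ and of the fact that $\Phi$ preserves $Z$.
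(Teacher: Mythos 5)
Your proof of part (1) is correct and essentially the paper's argument: both restrict $\Phi^*f_i=\sum_j a_{ij}f_j$ to $Z$, observe that the $G$-invariant $a_{ij}$ collapse to scalars there, and conclude that the degree-zero block $(a_{ij}(x_0))_{i,j\le\ell}$ is the matrix of the automorphism $(\Phi|_Z)^*$ of $\langle\bar f_1,\dots,\bar f_\ell\rangle\subset\O_\alg(Z)$ and hence invertible. You make the invertibility slightly more explicit by producing the inverse matrix from $\Phi^{-1}$, which is fine.

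Part (2) has a genuine gap, and you have put your finger on exactly where it is but not closed it. Restricting to $Z$ only gives $a_{ij}(q_0)=0$ for $\ell<i\le m$, $j\le\ell$, and—as you note yourself—a continuous invariant function vanishing at $q_0$ need not lie in $\I$. Your attempt to bridge this appeals to a purported ``base-change'' statement, namely that continuous relations among the $f_j$ are generated over $\CC(X_U)^G$ by the holomorphic ones. No such result is in Section~\ref{sec:topologies}: Theorem~\ref{thm:closed} asserts that $\ci(X)^G\cdot\O(X)_R$ is \emph{closed} in $\ci(X)^c$ for \emph{smooth} coefficients, which is neither a statement about $\CC$ coefficients nor a statement about the structure of the relation module, and in any case Lemma~\ref{lem:helpfundam} appears two sections earlier, so invoking Section~\ref{sec:topologies} would be logically out of order. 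Moreover $\CC(X_U)^G$ is far from flat over $\O(U)$, so there is no reason to expect the relation module to behave under base change in the way your argument requires. In short, the Nakayama-style argument that cleanly handles holomorphic coefficients does not transfer to continuous ones, and the needed lemma is neither proved nor cited.

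The paper's proof of (2) avoids this entirely by a different mechanism. Assuming some $a_{m1}\neq 0$, it pairs $\Phi^*f_m$ against $\Phi^*f_i'$ where $f_1',\dots,f_s'$ is a basis of $\O_\alg(Z)_{V^*}$ dual to $f_1,\dots,f_s$ under the nondegenerate pairing $(f,f')\mapsto\rho(f\cdot f')$ given by the Reynolds operator $\rho$. Since $\rho(f_i'\cdot f_m)=0$ (the product has degree $1$, and there are no degree-$1$ invariants because $W^H=0$), expanding
$0=\rho(\Phi^*f_i'\cdot\Phi^*f_m)=\sum_{j,k}b_{ij}a_{mk}\rho(f_j'\cdot f_k)$
and discarding the terms that visibly lie in $\I$ yields $\sum_{j=1}^s b_{ij}a_{mj}\in\I$ with $b_{ij}\in\CC(X_U)^G$. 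Part (1), applied to the $f_i'$, shows $(b_{ij}(x_0))$ is invertible, so $(b_{ij})$ is invertible over $\CC$ near $x_0$, and one concludes $a_{mj}\in\I$ directly. The crucial point is that the Reynolds pairing produces an equation that lands \emph{in $\I$ by degree considerations alone}, with continuous coefficients throughout, sidestepping any flatness or base-change issue. If you wish to salvage your route, you would need to prove the base-change statement about continuous relations independently; otherwise the pairing argument is the intended mechanism.
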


 \begin{proof}
 If we restrict $f_1,\dots,f_\ell$ and the $\Phi^*f_i$ to $Z$, then $\Phi^*$ is an isomorphism, with matrix $(a_{ij}(x_0))$,  and we have (1). 
 
 Let $\rho$ denote the Reynolds operator, i.e., the $G$-equivariant projection from $\O_\alg(T_W)$ to $\O_\alg(T_W)^G$. It extends to a projection of $\CC(X_U)^G\cdot\O_\alg(T_W)$ to $\CC(X_U)^G$.
  Let us assume that  some $a_{ij}\neq 0$ for $\ell < i\leq m$ and  $1\leq j\leq  \ell$, say   $a_{m 1}\neq 0$. Then $f_ m $ and $f_1$ correspond to the same irreducible $G$-module $V$. We may assume that $f_1,\dots,f_s$ are the polynomials corresponding to $V$ among $f_1,\dots,f_\ell$. Then $a_{mj}=0$ for $s<j\leq \ell$.  Now $V$ occurs in $\O_\alg(Z)\simeq\O_\alg(T_W)_0$ and we have a non-degenerate pairing of $\O_\alg(Z)$ with itself, which sends a pair of functions $f$, $f'$ to   $\rho(f\cdot f')$. Thus $\O_\alg(Z)_{V^*}$ has  a basis   $f_1',\dots,f_s'$ of cardinality $s$. We may assume that $\rho(f_i'\cdot f_j)=\delta_{ij}$ for $1\leq i$, $j\leq s$. There is a homogeneous minimal generating set $f_1',\dots,f_s'$, $f_{s+1}',\dots,f_{s+t}'$ of the $\O_\alg(T_W)^G$-module $\O_\alg(T_W)_{V^*}$. Then $\deg f'_i>0$ for $i>s$. There are $b_{ij}\in\CC(X_U)^G$ such that $\Phi^*f_i'=\sum b_{ij}f_j'$. Let  $1\leq  i\leq s$. Since $f_m$ has degree $1$, $\rho(f_i'\cdot f_m)=0$.    Thus   
    $$
0= \rho(\Phi^*f_i'\cdot\Phi^*f_m)= \sum_{j,\, k} b_{ij}a_{mk}\rho(f_j'\cdot f_k)\in\sum_{j,\, k=1}^s b_{ij}a_{mk}\rho(f_j'\cdot f_k)+\I=\sum_{j=1}^s b_{ij}a_{mj}+\I
 $$
 Applying (1) to the matrix $b_{ij}$, $1\leq i$, $j\leq s$, we see that $(b_{ij}(x_0))$ is invertible. In a $G$-saturated neighbourhood of $x_0$ we can invert $(b_{ij})$ and then the equation above shows that $a_{mj}\in\I$, $j=1,\dots,s$. This gives (2).
 \end{proof}
 
\begin{remark}\label{rem:mistake}
In the proof of \cite[Lemma 24]{KLS} we tacitly  assumed that $\Phi^*$ sends the $f_i$ of degree 1 to    $\sum_j a_{ij}f_j$ where $j> \ell $, i.e., to terms only involving the $f_j$ of degree at least 1. This, of course, is not justified.   The lemma   above  is what is required. Lemma \ref{lem:fundamental} and Corollary \ref{cor:fundam} below replace  \cite[Lemma 24]{KLS}.
\end{remark}

We may assume that $U$ is the image of a tube $T_{B'}$. Then we have a scalar action of $[0,1]$ on $X_U$ as follows. Let $x=[g,w]\in X_U$. Then $t\cdot [g,w]=[g,tw]$.    Let $\Phi_t(x)=t\inv\cdot \Phi(t\cdot x)$, $x\in X_U$, $t\in(0,1]$. Let   $W_g$ denote the fibre of $T_W$ at $[g,0]\in Z$. Let $a_{ij}^t(x)$ denote $t^{d_j-d_i}a_{ij}(t\cdot x)$, $t\in(0,1]$, $x\in X_U$, $1\leq i$, $j\leq n$.
Finally, let $\Aut_\vb(T_W)^G$ denote the complex 
algebraic group of $G$-vector bundle automorphisms of $T_W$ (see Corollary \ref{cor:Lvbreductive}).

   \begin{lemma}  \label{lem:fundamental}
The following hold.
\begin{enumerate}
\item  $(\Phi_t^*f_i)(x)=\sum_j  a^t_{ij}(x) f_j(x)$, $x\in X_U$, $t\in(0,1]$, $1\leq i \leq n$. 
\item  The limit as $t\to 0$ of $\Phi_t$ acting on the $f_i$, $\ell< i\leq  m$, is given by the matrix $L$ with entries $a_{ij}(x_0)$, $\ell< i$, $j\leq  m $.
\item  $\Phi$ has a normal derivative $\delta\Phi $ along $Z$, and $\delta\Phi\in\Aut_\vb(T_W)^G$. If $\phi$ is the identity, then $\delta\Phi$ fixes $\O_\alg(T_W)^G$.
\item The limit as $t\to 0$ of the $\Phi_t$ is $\delta\Phi$, uniformly on compact subsets of $X_U$.
 \end{enumerate}
\end{lemma}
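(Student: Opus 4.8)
The plan is to take the four assertions in turn, the substance lying in (3) and~(4).

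\emph{Part (1)} is immediate from homogeneity: since $\Phi_t(x)=t\inv\cdot\Phi(t\cdot x)$ and $f_i$ has degree $d_i$ for the scalar action, $(\Phi_t^*f_i)(x)=f_i\bigl(t\inv\cdot\Phi(t\cdot x)\bigr)=t^{-d_i}(\Phi^*f_i)(t\cdot x)=t^{-d_i}\sum_j a_{ij}(t\cdot x)f_j(t\cdot x)=\sum_j t^{d_j-d_i}a_{ij}(t\cdot x)f_j(x)$, which is the asserted formula. \emph{Part (2):} fix $\ell<i\le m$, so $d_i=1$, and study $a^t_{ij}(x)=t^{d_j-1}a_{ij}(t\cdot x)$ as $t\to 0^+$, using that for $x$ in a compact set $K\subset X_U$ the point $t\cdot x$ stays in a fixed compact subset of $X_U$, so $a_{ij}(t\cdot x)\to a_{ij}(q_0)=a_{ij}(x_0)$ uniformly on $K$. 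If $j>m$, then $d_j-1\ge 1$ and $a^t_{ij}\to 0$ uniformly on $K$. If $\ell<j\le m$, then $a^t_{ij}=a_{ij}(t\cdot x)\to a_{ij}(x_0)$ uniformly on $K$. If $j\le\ell$, then, after shrinking $U$, Lemma~\ref{lem:helpfundam}(2) lets us write $a_{ij}=\sum_p c_p h_p$ with $c_p$ continuous and $h_p\in\bigoplus_{k\ge 2}\O_\alg(T_W)_k^G$, so $a_{ij}(t\cdot x)=\sum_p c_p(t\cdot x)\,t^{\deg h_p}h_p(x)=O(t^2)$ uniformly on $K$ and $a^t_{ij}=t^{-1}a_{ij}(t\cdot x)\to 0$ uniformly on $K$. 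Hence $\Phi_t^*f_i\to\sum_{\ell<j\le m}a_{ij}(x_0)f_j$ uniformly on compacta, which is the action of the matrix $L$.

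For \emph{Parts (3) and (4)} the crucial observation is that, although $\Phi$ is only a homeomorphism, $\Phi^*$, and hence each $\Phi_t^*$, is multiplicative on \emph{continuous} functions, so the algebra structure of $\O_\alg(T_W)$ can be used. From the definition of a standard set of generators, Lemma~\ref{lem:gens}, and the hypothesis $W^H=0$ (which gives $\O_\alg(T_W)_1^G=0$), one checks that $\O_\alg(Z)=\C[f_1,\dots,f_\ell]$ and $\O_\alg(T_W)_1=\sum_{\ell<i\le m}\O_\alg(Z)f_i$, whence $\O_\alg(T_W)=\C[f_1,\dots,f_m]$ and, for $i>m$, $f_i=P_i(f_1,\dots,f_m)$ for some polynomial $P_i$. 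Now for $i\le\ell$ the argument of Part~(2) (with $d_i=0$) gives $\Phi_t^*f_i\to\sum_{j\le\ell}a_{ij}(x_0)f_j$ uniformly on compacta; since $a_{ij}|_Z=a_{ij}(x_0)$ and $f_j|_Z=0$ for $j>\ell$, this limit realises $(\Phi|_Z)^*$, an automorphism of $\O_\alg(Z)$ (consistent with Lemma~\ref{lem:helpfundam}(1)). For $\ell<i\le m$, Part~(2) already gives uniform-on-compacta convergence of $\Phi_t^*f_i$. For $i>m$, $\Phi_t^*f_i=P_i(\Phi_t^*f_1,\dots,\Phi_t^*f_m)$ by multiplicativity, so $\Phi_t^*f_i$ converges uniformly on compacta too. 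As $f_1,\dots,f_m$ realise $T_W$ as a closed subvariety of $\C^m$, it follows that $\Phi_t$ converges to a map $\delta\Phi$, uniformly on compact subsets of $X_U$, which is Part~(4).

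Passing to the limit in $\Phi_t^*(fg)=(\Phi_t^*f)(\Phi_t^*g)$, in the $G$-equivariance of $\Phi_t$, and in the identity $(\Phi_t^*f)(t_0\cdot x)=t_0^{\deg f}(\Phi_{tt_0}^*f)(x)$ valid for homogeneous $f$, one sees that $\delta\Phi^*$ is a $G$-equivariant, grading-preserving algebra endomorphism of $\O_\alg(T_W)$; equivalently, $\delta\Phi$ is a $G$-vector bundle endomorphism of $T_W$ covering the automorphism $\Phi|_Z$ of $Z\simeq G/H$. The identity $\delta(\Psi\circ\Phi)=\delta\Psi\circ\delta\Phi$ (which follows from $(\Psi\circ\Phi)_t=\Psi_t\circ\Phi_t$), applied to $\Phi\inv$, again a strong $G$-homeomorphism, shows that $\delta(\Phi\inv)$ is a two-sided inverse of $\delta\Phi$, so $\delta\Phi\in\Aut_\vb(T_W)^G$; and if $\phi=\Id$, then $\Phi$ and each $\Phi_t$ induce the identity on $U$, so $\Phi_t^*$ fixes $\O(U)\supseteq\O_\alg(T_W)^G$, hence so does $\delta\Phi^*$. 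The main obstacle is the organisational heart of this: arranging the standard generators so that the degree-$\le 1$ ones generate $\O_\alg(T_W)$ as a $\C$-algebra, so that the degree-$0$ and degree-$1$ information furnished by Lemma~\ref{lem:helpfundam} propagates — through multiplicativity of $\Phi_t^*$ — to all degrees, while keeping every convergence uniform on compacta and verifying that the limiting bundle map is invertible rather than merely an endomorphism.
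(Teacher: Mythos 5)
Your proof is correct, and parts (1)--(2) are essentially the paper's computation. For parts (3)--(4) you take a genuinely different, and in some ways tidier, route. The paper proves (3) first by an explicit fibre-by-fibre construction: it identifies the matrix $L=(a_{ij}(x_0))_{\ell<i,j\le m}$ with a linear map $W_g^*\to W_e^*$ (where $\Phi([e,0])=[g,0]$), dualises to get $\delta\Phi(x_0)\colon W_e\to W_g$, asserts this is the normal derivative, extends along $Z$ by equivariance, and then asserts $\delta\Phi\in\Aut_\vb(T_W)^G$ without much further argument; (4) then follows because the $f_i$ with $i\le m$ supply local coordinates near any point of $T_W$. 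You instead establish (4) first, using the multiplicativity of $\Phi_t^*$ on continuous functions to propagate convergence of $\Phi_t^*f_i$ from $i\le m$ (where it comes from Lemma~\ref{lem:helpfundam}, as in the paper) to all $i\le n$, and then read off convergence of $\Phi_t$ itself from the closed embedding $T_W\hookrightarrow\C^m$ via $(f_1,\dots,f_m)$. You then obtain (3) structurally: $\delta\Phi^*$ is a $G$-equivariant, grading-preserving algebra endomorphism of $\O_\alg(T_W)$ (grading-preservation coming from the homogeneity identity for $\Phi_t^*$), which is exactly the condition for $\delta\Phi$ to be a $G$-vector bundle endomorphism, and invertibility comes for free from the formal identity $\delta(\Psi\circ\Phi)=\delta\Psi\circ\delta\Phi$ applied to $\Phi^{-1}$. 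What this buys you: the delicate ``dual map on the conormal bundle'' bookkeeping in the paper's proof of (3) is replaced by a clean algebraic characterisation, and the invertibility of $\delta\Phi$, which the paper dismisses as ``clear,'' is given an actual one-line argument. The paper's version is more geometric and makes visible \emph{which} linear map $\delta\Phi$ restricts to on the normal bundle; your version is more formal and arguably more self-contained.
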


\begin{proof} For (1) we compute that
\begin{align*}
(\Phi_t^*f_i)(x)&=f_i(t\inv\Phi(t\cdot x))=t^{-d_i}(\Phi^*f_i)(t\cdot x)=t^{-d_i}\sum_j a_{ij}(t\cdot x)f_j(t\cdot x) \\ &=\sum_j t^{d_j-d_i}a_{ij}(t\cdot x)f_j(x). \end{align*}
  Now let $\ell< i\leq  m $, so that $d_i=1$.  By Lemma \ref{lem:helpfundam}, for $1\leq j\leq \ell$, $a_{ij}(t\cdot x)=t^2d_{ij}(t,x)$ where $d_{ij}$ is continuous. Hence, uniformly on compact subsets of $X_U$,
\begin{equation} \label{eq:limit}
\lim_{t\to 0}\Phi_t^*f_i(x)=\lim_{t\to 0}\sum_{j>\ell}  t^{d_j-1}a_{ij}(t\cdot x) f_j(x)=\sum\limits_{\ell< j\leq  m }a_{ij}(x_0)f_j(x),  
\end{equation}
giving (2).
For now, just consider indices $i$ between $\ell+1$ and $m$. We have $\Phi(x_0)=[g,0]$ for some $g\in N_G(H)$, and $\Phi_t(x_0)=[g,0]$ for all $t\in(0,1]$. The $f_i$  are sections of $T_{W^*}$ and they span each fibre of $T_{W^*}$. If $f=\sum  c_if_i$, $c_i\in \C$,   vanishes at $[g,0]$, then $f\circ\Phi_t(x_0)=0$ for all $t\in(0,1]$. By \eqref{eq:limit}, $\sum  c_ia_{ij}(x_0)f_j(x_0)=0$. Thus the matrix $L$ induces a linear mapping from $W^*_{g}$ to $W^*_e$. Let $\delta\Phi(x_0)\colon W_e\to W_{g}$ be the dual mapping. It follows easily from \eqref{eq:limit} that $\delta\Phi(x_0)$ is the normal derivative of $\Phi$ at $x_0$, and the analogous fact at other points of $Z$ follows by equivariance of $\Phi$.
Clearly, $\delta\Phi\in\Aut_\vb(T_W)^G$. If $\phi$ is the identity, then the $\Phi_t$ and $\delta\Phi$ fix the invariants. Hence we have established 
(3).  

We have shown that   $a_{ij}^t$ converges uniformly as $t\to 0$ on compact subsets of $X_U$ for $\ell<i\leq m$, and this is trivially true for $1\leq i\leq\ell$. Since the $f_i$ for $i\leq m$ generate $\O_\alg(T_W)$, a subset of them gives local coordinates at any point of $T_W$. Hence $\Phi_t$ converges to $\delta\Phi$ uniformly on compact subsets of $X_U$ as $t\to 0$ and we have (4).
 \end{proof}
 
It is not clear that $\lim\limits_{t\to 0}a_{ij}^t(x)$ exists for all $i$ and $j$. To remedy this we replace   $(a_{ij})$ by a matrix $(b_{ij})$ for which $\lim\limits_{t\to 0}b_{ij}^t(x)$ does exist.

\begin{lemma}\label{lem:Puv}
Let $m<u\leq n$. For $1\leq v\leq n$ there are polynomials 
$$
P_{uv}(x,z_{ij})\in\O(T_W)^G[z_{11},\dots z_{1n},\dots,z_{m1},\dots,z_{mn}]
$$
such that  
$$
\Phi^*f_u =\sum_{v}P_{uv}(x,a_{ij})f_v
$$
for any strong $G$-homeomorphism $\Phi$ with associated matrix $(a_{ij})$.
 \end{lemma}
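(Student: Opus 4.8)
\emph{Proof proposal.} The plan is to use the degree constraints to see that the subset $f_1,\dots,f_m$ of a standard generating set already generates the whole algebra $\O_\alg(T_W)$, then for $u>m$ to write $f_u$ as a polynomial $Q_u$ in $f_1,\dots,f_m$, substitute $\Phi^*f_i=\sum_j a_{ij}f_j$, and kill the spurious higher-degree terms by projecting onto the isotypic component carrying $f_u$. For the generation claim, note that the scalar action grades $\O_\alg(T_W)=\bigoplus_{k\ge 0}\O_\alg(T_W)_k$ $G$-equivariantly, so $M=\bigoplus_j\O_\alg(T_W)_{V_j}$ is a graded $\O_\alg(T_W)^G$-submodule; in degree $0$ only the degree-zero generators contribute, so $M\cap\O_\alg(T_W)_0=\operatorname{span}(f_1,\dots,f_\ell)$, and since $W^H=0$ we have $\O_\alg(T_W)^G_1=0$, so $M\cap\O_\alg(T_W)_1=\operatorname{span}(f_{\ell+1},\dots,f_m)$. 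As $\O_\alg(T_W)$ is generated as an algebra by $M$, its degree-zero part is $\C[f_1,\dots,f_\ell]$ and its degree-one part lies in $\C[f_1,\dots,f_\ell]\cdot\operatorname{span}(f_{\ell+1},\dots,f_m)$; combining this with Lemma~\ref{lem:gens}, which says $\O_\alg(T_W)$ is generated over $\O_\alg(G/H)=\O_\alg(T_W)_0$ by $\O_\alg(T_W)_1$, gives $\O_\alg(T_W)=\C[f_1,\dots,f_m]$.

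Now fix $u>m$ and let $V\in\{V_1,\dots,V_r\}$ be the type with $f_u\in\O_\alg(T_W)_V$. Choose $Q_u\in\C[z_1,\dots,z_m]$ with $Q_u(f_1,\dots,f_m)=f_u$, which can be taken weighted homogeneous of degree $d_u$ for the weights $(d_1,\dots,d_m)$. Introducing indeterminates $z_{ij}$ ($1\le i\le m$, $1\le j\le n$) and expanding $Q_u\bigl(\sum_j z_{1j}f_j,\dots,\sum_j z_{mj}f_j\bigr)$ produces a finite sum $\sum_\beta c_\beta(z_{ij})\,f^\beta$ with $c_\beta\in\C[z_{ij}]$ and $f^\beta=f_1^{\beta_1}\cdots f_n^{\beta_n}\in\O_\alg(T_W)$. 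Let $p_V$ denote the projection onto the $V$-isotypic component. Then $p_V(f^\beta)\in\O_\alg(T_W)_V$, and since this is generated over $\O_\alg(T_W)^G$ by the $f_v$ of type $V$ we may write $p_V(f^\beta)=\sum_v r_{\beta v}f_v$ with $r_{\beta v}\in\O_\alg(T_W)^G$. Put
$$
P_{uv}(x,z_{ij})=\sum_\beta r_{\beta v}(x)\,c_\beta(z_{ij})\in\O_\alg(T_W)^G[z_{ij}]\subset\O(T_W)^G[z_{ij}].
$$

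To check that this works, let $\Phi$ be a strong $G$-homeomorphism with associated matrix $(a_{ij})$. The identity $Q_u(f_1,\dots,f_m)=f_u$ holds pointwise on $X$, and $\Phi(X_U)\subset X$, so $\Phi^*f_u=Q_u(\Phi^*f_1,\dots,\Phi^*f_m)=Q_u\bigl(\sum_j a_{1j}f_j,\dots,\sum_j a_{mj}f_j\bigr)=\sum_\beta c_\beta(a)\,f^\beta$. Since $\Phi$ is $G$-equivariant, $\Phi^*f_u$ lies in the $V$-isotypic component of the $G$-finite continuous functions on $X_U$, so $p_V(\Phi^*f_u)=\Phi^*f_u$; moreover $p_V$ is $\C$-linear and commutes with multiplication by each invariant $c_\beta(a)\in\CC(X_U)^G$, because multiplication by an invariant is a $G$-module endomorphism and hence preserves isotypic components. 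Applying $p_V$ gives $\Phi^*f_u=\sum_\beta c_\beta(a)\,p_V(f^\beta)=\sum_v\bigl(\sum_\beta c_\beta(a)\,r_{\beta v}\bigr)f_v=\sum_v P_{uv}(x,a)f_v$, as wanted.

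The obstacle here is conceptual rather than computational: one has to notice that the naive substitution expresses $\Phi^*f_u$ as a polynomial of degree $d_u$ in the $f_j$, which becomes linear in the $f_v$ only after projecting to $\O_\alg(T_W)_V$, and one has to establish—this is exactly where $W^H=0$ and Lemma~\ref{lem:gens} are used—that $f_1,\dots,f_m$, rather than all of $f_1,\dots,f_n$, already generate $\O_\alg(T_W)$. Taking $Q_u$ weighted homogeneous, with the $z_{ij}$ weighted accordingly, makes each $P_{uv}$ weighted homogeneous, which is what will be needed afterwards to control $\lim_{t\to 0}$ of the modified matrix.
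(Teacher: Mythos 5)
Your proposal is correct and takes essentially the same route as the paper's own (quite terse) proof: express $f_u=P_u(f_1,\dots,f_m)$ via Lemma~\ref{lem:gens}, substitute $\Phi^*f_i=\sum_j a_{ij}f_j$, expand, and re-express in terms of invariants times the $f_v$. You supply the details the paper leaves implicit, namely the verification that $\O_\alg(T_W)=\C[f_1,\dots,f_m]$ (using $W^H=0$ to kill degree-one invariants and $\O_\alg(T_W)^G_0=\C$ to see that $f_1,\dots,f_\ell$ generate $\O_\alg(Z)$), and the use of the isotypic projection $p_V$, commuting with multiplication by invariants, to make the "re-expressing" step precise.
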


\begin{proof}
By Lemma \ref{lem:gens}, for $m<u\leq n$, $f_u= P_u(f_1,\dots,f_ m )$, where $P_u(z_1,\dots,z_m)$ is a polynomial with constant coefficients. We may assume that each monomial in $P_u(z_1,\dots,z_m)$ is homogeneous of degree $d_u$ where we give $z_i$ degree 0 for $i\leq \ell$ and degree 1 for $i>\ell$. Now $\Phi^*f_u=P_u(\Phi^*f_1,\dots,\Phi^*f_m)$. Replace   $\Phi^*f_i$ by $\sum a_{ij}f_j$, $1\leq i\leq m$. Expanding 
and re-expressing in terms of invariants times the $f_v$ we obtain
$$
\Phi^*f_u=\sum_v P_{uv}(x,a_{11},\dots,a_{1n},\dots,a_{m1},\dots,a_{mn})f_v
$$
where $P_{uv}$ is independent of the $a_{ij}$ (which we may treat as indeterminants). 
\end{proof}

\begin{remark}
By equivariance of $\Phi$, some of the $a_{ij}$ are necessarily zero.
\end{remark}

We break up $P_{uv}(x,a_{11},\dots,a_{mn})$ into a sum  of terms $P_{uv\alpha\beta}(x,a_{11},\dots,a_{mn})$ as follows.
Let  $\tau=h(x) a_{11}^{\gamma_{11}}\cdots a_{mn}^{\gamma_{mn}}$ be a term in $P_{uv}(x,a_{ij})$. Let $\alpha_0(\tau)$ be the sum of the $\gamma_{ij}$ for which $d_j-d_i=-1$ and let $\beta_0(\tau)$ be the sum of the $\gamma_{ij}(d_j-d_i)$ for which $d_j-d_i>0$. Then $h(x)$ is homogeneous of degree $d_u-d_v+\beta_0(\tau)-\alpha_0(\tau)$. For $\alpha$, $\beta\in\Z^+$ let $P_{uv\alpha\beta}$ be the sum of the terms $\tau$ in $P_{uv}$ for which $\alpha_0(\tau)=\alpha$ and $\beta_0(\tau)=\beta$. Then $P_{uv\alpha\beta}\in \O_\alg(T_W)^G_{d_u-d_v+\beta-\alpha}[a_{11},\dots,a_{mn}]$.
From Lemma \ref{lem:helpfundam} we know that $a_{ij}(t\cdot x)=t^2\tilde a_{ij}(t,x)$, $\ell<i\leq m$, $1\leq j\leq \ell$, where   $\tilde a_{ij}$ is continuous and invariant.  Let $\tilde a_{ij}(t,x)=a_{ij}(t\cdot x)$ for  $1\leq i\leq \ell$ or $\ell<i\leq m$ and $\ell<j\leq n$. One easily shows:
 \begin{lemma}\label{lem:limit-P-alpha-beta}
 Let $\alpha$, $\beta\in\Z^+$. Then 
$$
t^{d_v-d_u}P_{uv\alpha\beta}(t\cdot x,a_{ij}(t\cdot x))=t^{\alpha+\beta}P_{uv\alpha\beta}(x,\tilde a_{ij}(t,x))
$$
is 
jointly continuous in $t\in[0,1]$ and $x\in X_U$. Moreover, uniformly on compact subsets of $X_U$ we have
$$
\lim_{t\to 0} t^{d_v-d_u}P_{uv\alpha\beta}(t\cdot x,a_{ij}(t\cdot x))= \begin{cases}
0&\alpha+\beta>0\\
P_{uv00}(x,\delta_{d_id_j}a_{ij}(x_0))&\alpha+\beta=0 
\end{cases}
$$
where $P_{uv00}(x,\delta_{d_id_j}a_{ij}(x_0))\in\O_\alg(T_W)^G_{d_u-d_v}$.
 \end{lemma}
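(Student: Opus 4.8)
The plan is to verify the claimed scaling identity by tracking degrees term by term, and then to pass to the limit $t\to0$ using the continuity facts already established in Lemma \ref{lem:helpfundam} and Lemma \ref{lem:fundamental}. First I would fix a single term $\tau=h(x)\,a_{11}^{\gamma_{11}}\cdots a_{mn}^{\gamma_{mn}}$ occurring in $P_{uv\alpha\beta}$, where by construction $h\in\O_\alg(T_W)^G_{d_u-d_v+\beta-\alpha}$, the sum of the $\gamma_{ij}$ with $d_j-d_i=-1$ equals $\alpha$, and the sum of the $\gamma_{ij}(d_j-d_i)$ over the indices with $d_j-d_i>0$ equals $\beta$. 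Substituting $t\cdot x$ and using homogeneity of $h$, one gets $h(t\cdot x)=t^{\,d_u-d_v+\beta-\alpha}h(x)$. For the remaining factors I would substitute the relation from Lemma \ref{lem:helpfundam}: for $\ell<i\le m$, $1\le j\le\ell$ (so $d_j-d_i=-1$) we have $a_{ij}(t\cdot x)=t^2\tilde a_{ij}(t,x)$ with $\tilde a_{ij}$ continuous and invariant, while for all other relevant index pairs (those with $d_j-d_i\ge0$) we simply set $\tilde a_{ij}(t,x)=a_{ij}(t\cdot x)$. Collecting powers of $t$: each of the $\alpha$ factors of the first kind contributes $t^2$, i.e.\ $t^{2\alpha}$ total, and substituting $t\cdot x$ into an $a_{ij}$ of the second kind just renames it $\tilde a_{ij}(t,x)$ without producing a power of $t$. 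The monomial in the $\tilde a_{ij}$ is then exactly $P_{uv\alpha\beta}(x,\tilde a_{ij}(t,x))$ up to the factor $h$. Hence $P_{uv\alpha\beta}(t\cdot x, a_{ij}(t\cdot x)) = t^{\,d_u-d_v+\beta-\alpha}\,t^{2\alpha}\,h(x)\,(\text{monomials in }\tilde a_{ij}) = t^{\,d_u-d_v+\alpha+\beta}\,P_{uv\alpha\beta}(x,\tilde a_{ij}(t,x))$, which rearranges to the asserted identity $t^{d_v-d_u}P_{uv\alpha\beta}(t\cdot x,a_{ij}(t\cdot x))=t^{\alpha+\beta}P_{uv\alpha\beta}(x,\tilde a_{ij}(t,x))$; summing over the finitely many terms $\tau$ in $P_{uv\alpha\beta}$ gives it in general.

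For joint continuity, I note that all the functions entering the right-hand side are continuous in $(t,x)\in[0,1]\times X_U$: the coefficient polynomials $h$ are holomorphic, the $\tilde a_{ij}(t,x)$ are continuous by Lemma \ref{lem:helpfundam} (for the degree-lowering pairs) and by definition together with continuity of $(t,x)\mapsto t\cdot x$ and of the $a_{ij}$ (for the others), and $t^{\alpha+\beta}$ is continuous; a polynomial in continuous functions is continuous. This handles the continuity assertion. For the limit, observe that if $\alpha+\beta>0$ the prefactor $t^{\alpha+\beta}\to0$ while $P_{uv\alpha\beta}(x,\tilde a_{ij}(t,x))$ stays bounded on compact subsets of $X_U$ (being continuous on a compact set $[0,\varepsilon]\times K$), so the product tends to $0$ uniformly on compacta. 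If $\alpha+\beta=0$, then no degree-lowering and no degree-raising factors appear, so every $\gamma_{ij}$ with $d_j-d_i\ne0$ vanishes and only the $a_{ij}$ with $d_i=d_j$ survive; the prefactor is $t^0=1$, and as $t\to0$ each $\tilde a_{ij}(t,x)=a_{ij}(t\cdot x)$ tends to $a_{ij}(x_0)$ (this is part of what was shown, via Lemma \ref{lem:fundamental}(2)--(4), since $t\cdot x\to x_0$ and these are the entries that converge). Thus the limit is $P_{uv00}(x,\delta_{d_id_j}a_{ij}(x_0))$, the Kronecker delta recording that only the equal-degree entries appear. Finally, a degree count shows $h$ has degree $d_u-d_v$ when $\alpha=\beta=0$, so $P_{uv00}(x,\delta_{d_id_j}a_{ij}(x_0))\in\O_\alg(T_W)^G_{d_u-d_v}$, as claimed.

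The only mildly delicate point — really the crux of why the statement is true — is the bookkeeping in the first paragraph: one must be sure that substituting $t\cdot x$ into the argument $x$ of $P_{uv\alpha\beta}$ and simultaneously into the arguments $a_{ij}(t\cdot x)$ produces precisely the combined power $t^{\alpha+\beta}$ and nothing more, which rests on the fact that the $a_{ij}$ indexed by pairs with $d_j-d_i\ge0$ contribute no power of $t$ when their argument is scaled (they are being treated as the raw continuous data $\tilde a_{ij}(t,x)$), whereas the $\alpha$ factors of degree-lowering type each gain a $t^2$ from Lemma \ref{lem:helpfundam}, and these $2\alpha$ powers combine with the $\beta-\alpha$ from the homogeneity of $h$ to give $\alpha+\beta$. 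Everything else is routine: continuity of compositions, boundedness on compacta, and an elementary degree check. I would therefore present the proof as "one easily shows" promises — a short homogeneity computation followed by the two-case limit argument — rather than belabouring it.
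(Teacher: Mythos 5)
Your proof is correct and is exactly the computation the paper intends — it says ``one easily shows'' before the lemma and gives no proof, and your term-by-term degree bookkeeping (homogeneity of $h$ contributes $t^{d_u-d_v+\beta-\alpha}$, the $\alpha$ degree-lowering factors contribute $t^{2\alpha}$ via Lemma~\ref{lem:helpfundam}, and the rest contribute nothing) together with the two-case limit argument and $G$-invariance of the $a_{ij}$ to identify $\lim_{t\to 0}a_{ij}(t\cdot x)=a_{ij}(x_0)$ is precisely what is needed.
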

\begin{corollary}\label{cor:fundam}
Let $b_{ij}(x)$ denote $P_{ij}(x,a_{11}(x),\dots,a_{mn}(x))$ for $i>m$, $1\leq j\leq n$ and let $b_{ij}(x)=a_{ij}(x)$ for $i\leq m$, $1\leq j\leq n$. Then $\Phi^*f_i=\sum b_{ij}f_j$, $1\leq i\leq n$. The $b_{ij}^t(x)$ are 
jointly continuous in $t$ and $x$ and $(\delta\Phi)^*f_i=\sum b_{ij}^0(x)f_j$ where $b^0_{ij}(x)\in\O_\alg(T_W)^G_{d_i-d_j}$, $1\leq i$, $j\leq n$.
Hence $\Phi_t$ is a homotopy of strong $G$-homeomorphisms where $\Phi_0$ is $G$-biholomorphic.
 \end{corollary}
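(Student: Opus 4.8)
The plan is to deduce the corollary by assembling Lemmas \ref{lem:helpfundam}, \ref{lem:fundamental}, \ref{lem:Puv} and \ref{lem:limit-P-alpha-beta}; no new idea is needed beyond these, and the work is careful tracking of degrees. First I would check the identity $\Phi^*f_i=\sum_j b_{ij}f_j$ for $1\le i\le n$. For $i\le m$ we have $b_{ij}=a_{ij}$ and this is the relation defining the $a_{ij}$; for $i>m$ it is precisely Lemma \ref{lem:Puv}, since $b_{ij}=P_{ij}(x,a_{11},\dots,a_{mn})$. Composing with the scalar action exactly as in the proof of Lemma \ref{lem:fundamental}(1) then yields, for every $i$ and every $t\in(0,1]$, $\Phi_t^*f_i(x)=t^{-d_i}(\Phi^*f_i)(t\cdot x)=\sum_j t^{d_j-d_i}b_{ij}(t\cdot x)f_j(x)=\sum_j b_{ij}^t(x)f_j(x)$, which reduces the whole statement to understanding the scalars $t^{d_j-d_i}b_{ij}(t\cdot x)$.

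For joint continuity I would argue by cases. When $i\le\ell$, $b_{ij}^t(x)=t^{d_j}a_{ij}(t\cdot x)$ is continuous because $d_j\ge0$ and $a_{ij}$ is continuous. When $\ell<i\le m$, the only entries with $d_j-d_i<0$ are those with $j\le\ell$, and there Lemma \ref{lem:helpfundam}(2) gives $a_{ij}(t\cdot x)=t^2\tilde a_{ij}(t,x)$ with $\tilde a_{ij}$ continuous (as recorded in the run-up to Lemma \ref{lem:limit-P-alpha-beta}), so $t^{-1}a_{ij}(t\cdot x)=t\,\tilde a_{ij}(t,x)$ is continuous; the remaining entries have $d_j\ge d_i$ and are visibly continuous. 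When $i>m$, write $b_{ij}=\sum_{\alpha,\beta}P_{ij\alpha\beta}$; Lemma \ref{lem:limit-P-alpha-beta} says that $t^{d_j-d_i}P_{ij\alpha\beta}(t\cdot x,a_{kl}(t\cdot x))=t^{\alpha+\beta}P_{ij\alpha\beta}(x,\tilde a_{kl}(t,x))$ is jointly continuous on $[0,1]\times X_U$, and summing the finitely many pieces gives joint continuity of $b_{ij}^t$. Letting $t\to0$ in the same three cases — using Lemma \ref{lem:fundamental}(2) for $i\le m$ (where the limit is the constant $\delta_{d_id_j}a_{ij}(x_0)$) and the vanishing clause of Lemma \ref{lem:limit-P-alpha-beta} for $i>m$ (where only the $\alpha=\beta=0$ term survives, with limit $P_{ij00}(x,\delta_{d_kd_l}a_{kl}(x_0))$) — shows that $b_{ij}^0(x)=\lim_{t\to0}b_{ij}^t(x)$ exists, lies in $\O_\alg(T_W)^G_{d_i-d_j}$, and that the convergence is uniform on compact subsets of $X_U$.

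It remains to identify the limiting map. By Lemma \ref{lem:fundamental}(4), $\Phi_t\to\delta\Phi$ uniformly on compact sets, so $\Phi_t^*f_i\to(\delta\Phi)^*f_i$; comparing with $\Phi_t^*f_i=\sum_j b_{ij}^tf_j\to\sum_j b_{ij}^0f_j$ gives $(\delta\Phi)^*f_i=\sum_j b_{ij}^0f_j$. Finally, for $t\in(0,1]$ each $\Phi_t=\sigma_{1/t}\circ\Phi\circ\sigma_t$ (with $\sigma_s(x)=s\cdot x$) is a composition of the $G$-biholomorphism $\sigma_t$, the strong $G$-homeomorphism $\Phi$, and $\sigma_{1/t}$, hence is itself a strong $G$-homeomorphism, over the strata preserving biholomorphism obtained by conjugating $\phi$ with scaling, and with associated matrix $(b_{ij}^t)$; since these matrices are jointly continuous and extend continuously to $t=0$, the family $\{\Phi_t\}_{t\in[0,1]}$ is a homotopy of strong $G$-homeomorphisms, and $\Phi_0=\delta\Phi\in\Aut_\vb(T_W)^G$ by Lemma \ref{lem:fundamental}(3) is $G$-biholomorphic.

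The one point that warrants attention is that $\Phi_t$ really is defined on $X_U$ (so that conjugating by $\sigma_t$ makes sense), which is where the standing assumption that $B$, and hence $B'$, is stable under multiplication by $t\in[0,1]$ is used, together with the uniform convergence in Lemma \ref{lem:fundamental}(4). Everything else is the degree-bookkeeping already isolated in Lemmas \ref{lem:Puv} and \ref{lem:limit-P-alpha-beta}, so the main obstacle — that the scalings of the $a_{ij}$ themselves need not converge — has effectively been dealt with before we reach the corollary, and what remains here is careful assembly rather than a new argument.
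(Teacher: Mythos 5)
Your proposal is correct and takes essentially the same route the paper intends: the corollary is stated without a separate proof precisely because it is the assembly of Lemmas \ref{lem:helpfundam}, \ref{lem:Puv}, \ref{lem:limit-P-alpha-beta} and \ref{lem:fundamental}, which is what you carry out. The only place I would tighten the wording is in the degree-bookkeeping for $i\le m$: Lemma \ref{lem:fundamental}(2) as stated concerns only $\ell<i\le m$, so the case $i\le\ell$ (where $b_{ij}^t(x)=t^{d_j}a_{ij}(t\cdot x)$ and the limit is obviously $\delta_{0,d_j}a_{ij}(x_0)$) should be flagged as a direct computation rather than cited to that item — but that is cosmetic, and your final observation about $B'$ being stable under $[0,1]$-scaling (so that $\Phi_t$ is defined on $X_U$) is exactly the hypothesis the paper fixes in advance.
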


 We have been considering a special case, where the slice representation of the closed orbit $Gx_0$ has no nonzero fixed vectors. Now we consider the general case. So the slice representation is of the form $(\C^d\oplus W,H)$ where $H$ acts trivially on $\C^d$ and $W^H=0$. Then the slice theorem says that a neighbourhood of our closed orbit   is $G$-biholomorphic to $S\times T_B$ where $T_B$ is a tube as before and $S$ is an open subset of $\C^d$ which we may take to be Stein and connected. 

\begin{definition}\label{def:standard-neighbourhood}
Let $S$, $W$, $H$ and $B$ be as above. We call $S\times T_B$ a \emph{standard neighbourhood in $X$} or a  \emph{standard open set in $X$}.
\end{definition}

Note that our standard generators $\{f_i\}$ of $\O_\alg(T_W)$, considered as functions on $S\times T_B$, generate $\O_\gf(S\times T_B)$ as an $\O(S\times T_B)^G$-algebra. Thus we also call $\{f_i\}$ a standard generating set  for $\O_\gf(S\times T_B)$.   Now let $s_0\in S$ and consider a $G$-saturated neighbourhood of $(s_0,x_0)$ of the form $S_0\times T_{B'}$ where $B'\subset B$. Let $U$ denote $p(S_0\times T_{B'})$. Then $U$ is a neighbourhood of $q_0=p(s_0,x_0)$. Suppose that  we have a strata preserving biholomorphism $\psi\colon U\to\psi(U)\subset S\times T_B$  where $\psi(q_0)=q_0$. We may write $\psi$ as $(\theta,\phi)$ where $\theta$ maps to $S$ and $\phi$ to $T_B$. Further suppose that we have a strong $G$-homeomorphism  $\Psi=(\Theta,\Phi)\colon S_0\times T_{B'}\to S\times T_B$ inducing $\psi$. Then $\Theta(s,x)=\theta(s,p_{B'}(x))$, $s\in S_0$, $x\in T_{B'}$. Hence $\Theta$ is holomorphic. Since $\Psi$ is strong,   $\Phi^*f_i=\sum a_{ij}(s,x)f_j(x)$ where the $a_{ij}$ are continuous and invariant. For $t\in[0,1]$ and $(s,x)\in S\times T_B$ let $t\cdot(s,x)$ denote $(s,t\cdot x)$. Then we have the 
homotopy   $\Psi_t(s,x)=(\Theta_t(s,x),\Phi_t(s,x))$ where $\Theta_t(s,x)=\Theta(s,t\cdot x)$. Define $\Psi^s(x)=\Psi(s,x)$ for $(s,x)\in S_0\times T_{B'}$, and similarly define $\Phi^s$ and $\Theta^s$.
Then applying Lemma \ref{lem:fundamental} and Corollary \ref{cor:fundam} and perhaps shrinking our neighbourhoods  we have the following:

\begin{corollary}\label{cor:fundam-with-S}
Let $U$, $\Psi$, etc.\ be as above.  
\begin{enumerate}
\item  The limit as $t\to 0$ of $\Phi^s_t$ acting on the $f_i$, $\ell< i\leq  m $, is given by the matrix $L(s)$ with entries $a_{ij}(s,x_0)$, $\ell< i$, $j\leq  m $.
\item  $\Phi^s$ has a normal derivative $\delta\Phi^s$ along $Z$, and $\delta\Phi^s\in\Aut_\vb(T_W)^G$ depends continuously on $s$. If $\phi$ is the identity, then $\delta\Phi^s$ fixes $\O_\alg(T_W)^G$  for all $s$.
\item The limit as $t\to 0$ of the $\Psi_t^s$ is $(\Theta(s,x_0),\delta\Phi^s)$, uniformly on compact subsets of $S_0\times T_{B'}$.
\item  There are invariant $b_{ij}(s,x)$ such that $b_{ij}=a_{ij}$ for $i\leq m$, $\Phi^*f_i=\sum b_{ij}f_j$ for $1\leq i\leq n$, and the $b_{ij}^t(s,x)$ are 
jointly continuous in $t\in[0,1]$, $s$ and $x$. Moreover,  
$
(\delta\Phi)^*f_i(s,x) =\sum b_{ij}^0(s,x)f_j(x)
$ 
where $b^0_{ij}(s,x)$ 
lies in $\CC(S_0)\otimes\O_\alg(T_W)^G_{d_i-d_j}$, $1\leq i$, $j\leq n$. 
\end{enumerate}
\end{corollary}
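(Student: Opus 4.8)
The plan is to run the machinery of Lemmas~\ref{lem:helpfundam} and~\ref{lem:fundamental} and Corollary~\ref{cor:fundam} with the point $s\in S_0$ serving as a spectator parameter, using that the scalar action $t\cdot(s,x)=(s,t\cdot x)$ leaves the $S$-factor fixed and that the standard generators $\{f_i\}$ of $\O_\alg(T_W)$ still generate $\O_\gf(S\times T_B)$ over $\O(S\times T_B)^G$. Two easy observations set the stage. First, $\Theta(s,x)=\theta(s,p_{B'}(x))$ is holomorphic, and $\Theta_t(s,x)=\theta(s,t\cdot p_{B'}(x))$ converges uniformly on compacta to $\theta(s,p_{B'}(x_0))=\Theta(s,x_0)$ as $t\to0$; this is the $S$-component of~(3), and the $S$-factor contributes nothing to the normal-derivative analysis. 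Second, since $W^H=0$, the algebra $\O(S\times T_B)^G$ has no nonzero elements of $W$-degree $1$, so the ideal $\I$, now generated by $\bigoplus_{k\ge2}(\O(S)\otimes\O_\alg(T_W)_k)^G$, plays exactly the role it did in the case $W^H=0$.

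For each fixed $s$, the map $x\mapsto\Phi(s,x)$ is a $G$-equivariant continuous map $T_{B'}\to T_B$ carrying $Z$ to $Z$ (its restriction to $Z$ is the restriction of the strong $G$-homeomorphism $\Psi$ to the reduced fibre $\{s\}\times Z$, hence a $G$-biholomorphism of $Z\cong G/H$), and its pullback sends $f_i$ to $\sum_j a_{ij}(s,\cdot)f_j$ with each $a_{ij}(s,\cdot)$ continuous and invariant. This is precisely the input needed to rerun the proof of Lemma~\ref{lem:helpfundam}: restriction to $Z$ shows $(a_{ij}(s,x_0))_{1\le i,j\le\ell}$ is invertible, and the Reynolds-operator argument shows $a_{ij}(s,\cdot)\in\I$ for $\ell<i\le m$ and $1\le j\le\ell$, after possibly shrinking $S_0$ and $B'$. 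Feeding this into the proof of Lemma~\ref{lem:fundamental} yields, for each $s$: the limit formula~(1) with matrix $L(s)=(a_{ij}(s,x_0))_{\ell<i,j\le m}$; the existence of the normal derivative $\delta\Phi^s\in\Aut_\vb(T_W)^G$, built from $L(s)$ and from $\Phi^s|_Z$ as in the proof of Lemma~\ref{lem:fundamental}(3); and the uniform convergence $\Phi^s_t\to\delta\Phi^s$. When $\phi$ is the identity, $\Phi^*$ fixes $\O_\alg(T_W)^G$, hence so does each $(\Phi^s_t)^*$ by the degree computation in that proof, hence so does $\delta\Phi^s$. Finally Lemmas~\ref{lem:Puv} and~\ref{lem:limit-P-alpha-beta} and Corollary~\ref{cor:fundam} produce the $b_{ij}(s,x)$: for $i>m$ put $b_{ij}(s,x)=P_{ij}(x,a_{11}(s,x),\dots,a_{mn}(s,x))$ and for $i\le m$ put $b_{ij}=a_{ij}$, noting that the polynomials $P_{uv}$ and their pieces $P_{uv\alpha\beta}$ have coefficients in $\O(T_W)^G$ independent of $s$.

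The real work, and the main obstacle, is upgrading everything from ``for each fixed $s$'' to joint continuity in $(t,s,x)$. Here one checks that every operation used is continuous in $s$: the $a_{ij}(s,x)$ are jointly continuous by hypothesis; substitution into the $P_{uv}$ and $P_{uv\alpha\beta}$ preserves joint continuity since their coefficients do not involve $s$; the two matrix inversions invoked, namely that of $(a_{ij}(s,x_0))_{i,j\le\ell}$ in Lemma~\ref{lem:helpfundam}(1) and that of the companion matrix inverted over a $G$-saturated neighbourhood in the proof of Lemma~\ref{lem:helpfundam}(2), are inversions of matrices whose entries are jointly continuous and which are invertible for each $s$, hence have jointly continuous inverses after shrinking $S_0$ and $B'$ once more; and the limits of Lemma~\ref{lem:limit-P-alpha-beta}, being monomials in $t$ with jointly continuous coefficients, converge uniformly on compacta in $(s,x)$. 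This gives joint continuity of the $b_{ij}^t(s,x)$ (so that $\Psi_t$ is a homotopy of strong $G$-homeomorphisms) and the identity $(\delta\Phi)^*f_i(s,x)=\sum_j b_{ij}^0(s,x)f_j(x)$; the surviving coefficients $b_{ij}^0(s,x)$ are built from the $s$-continuous entries of $L(s)$ together with degree-$0$ data, times degree-$(d_i-d_j)$ algebraic invariants of $T_W$, so $b_{ij}^0(s,x)\in\CC(S_0)\otimes\O_\alg(T_W)^G_{d_i-d_j}$, which is~(4); combined with the second paragraph this also gives~(1)–(3), the continuity of $\delta\Phi^s$ in $s$ asserted in~(2) being immediate from its continuous dependence on $L(s)$ and $\Phi^s|_Z$.
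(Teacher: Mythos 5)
Your proposal is correct and takes exactly the route the paper intends: the text immediately preceding the corollary simply says ``applying Lemma~\ref{lem:fundamental} and Corollary~\ref{cor:fundam} and perhaps shrinking our neighbourhoods we have the following,'' and your write-up is a careful unpacking of that claim, running Lemmas~\ref{lem:helpfundam}, \ref{lem:fundamental}, \ref{lem:Puv}, \ref{lem:limit-P-alpha-beta} and Corollary~\ref{cor:fundam} with $s$ as a spectator parameter and verifying that every inversion, substitution and limit is jointly continuous (indeed the only nontrivial content the paper leaves implicit). The one point worth stating a bit more explicitly is why the matrix $(b_{ij}(s,x))$ appearing in the Reynolds-operator step of Lemma~\ref{lem:helpfundam}(2) can be inverted on a full $G$-saturated neighbourhood of $\{s_0\}\times Z$ after shrinking $S_0$ and $B'$: since $\det(b_{ij})$ is $G$-invariant it descends to a continuous function on $S_0\times Q_{B'}$ nonvanishing at $(s_0,q_0')$, so nonvanishing on a neighbourhood there, and the preimage of that neighbourhood is the required $G$-saturated set — this is the compactness-free reason the shrinking works.
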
  

We usually denote $\delta\Phi$ by $\Phi_0$. Then $\Psi_t$ has limit $\Psi_0=(\Theta_0,\Phi_0)$ where $\Theta_0(s,x)=\Theta(s,x_0)$ for $(s,x)\in S_0\times T_{B'}$.

\begin{corollary}\label{cor:homotopy-strong}
Let $\Psi=(\Theta,\Phi)$ be a strong $G$-homeomorphism inducing $\psi=(\theta,\phi)$ as above. Then the family $t\mapsto\Psi_t$ is a homotopy of strong $G$-homeomorphisms inducing a family of biholomorphisms $\psi_t$. There is a continuous 
map $\sigma\colon S\to\Aut_\vb(T_W)^G$ such that $\Phi_0(s,x)=\sigma(s)(x)$ for $(s,x)\in S_0\times T_{B'}$. Moreover,  $\Theta_0(s,x)=\Theta(s,x_0)$ is a holomorphic map of $S_0\times T_{B'}$ into $S$ which is   biholomorphic when restricted to $S_0\times\{x_0\}$. If $\psi$ is the identity, then   $\Psi_0$ fixes $\O(S_0\times T_{B'})^G$.
\end{corollary}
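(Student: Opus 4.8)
The plan is to read Corollary~\ref{cor:homotopy-strong} as a packaging of the facts already assembled in Corollary~\ref{cor:fundam-with-S}, so the work is organisational rather than computational. First I would verify that $t\mapsto\Psi_t$ is a homotopy of strong $G$-homeomorphisms: by part~(4) of Corollary~\ref{cor:fundam-with-S} we have $\Phi^*f_i=\sum b_{ij}f_j$ with $b_{ij}^t(s,x)$ jointly continuous in $(t,s,x)$ and $b_{ij}=a_{ij}$ for $i\le m$, and the $\Theta_t(s,x)=\Theta(s,t\cdot x)$ component is manifestly continuous in $t$ since $\Theta$ is holomorphic and the scalar action is continuous; together these say precisely that the matrices describing $\Psi_t$ in the standard generators are continuous in $(t,s,x)$, which is the definition of a homotopy of strong $G$-homeomorphisms. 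That each $\Psi_t$ (for $t\in(0,1]$) is itself a strong $G$-homeomorphism inducing $\psi$ follows because $\Psi_t(s,x)=t\inv\cdot\Psi(t\cdot x)$ composed with the scalar action, both of which preserve fibres over $Q$ and preserve the Luna strata, and because conjugating a strong $G$-homeomorphism by the (biholomorphic) scalar action keeps it strong.

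Next I would identify the limit. By part~(3) of Corollary~\ref{cor:fundam-with-S}, $\Psi_t^s\to(\Theta(s,x_0),\delta\Phi^s)$ uniformly on compacta, so $\Psi_0=(\Theta_0,\Phi_0)$ with $\Theta_0(s,x)=\Theta(s,x_0)$ and $\Phi_0(s,x)=\delta\Phi^s(x)$. Part~(2) gives $\delta\Phi^s\in\Aut_\vb(T_W)^G$ depending continuously on $s$, so setting $\sigma(s)=\delta\Phi^s$ defines the required continuous map $\sigma\colon S\to\Aut_\vb(T_W)^G$ with $\Phi_0(s,x)=\sigma(s)(x)$. For $\Theta_0$: it is $(s,x)\mapsto\Theta(s,x_0)$, hence the composite of the projection $S_0\times T_{B'}\to S_0$, the inclusion $S_0\hookrightarrow S_0\times\{x_0\}$, and $\Theta$, all holomorphic, so $\Theta_0$ is holomorphic into $S$; its restriction to $S_0\times\{x_0\}$ is just $s\mapsto\Theta(s,x_0)=\Theta^{x_0}(s)$, which is biholomorphic onto its image because $\Psi$ is a $G$-homeomorphism (so $\Theta^{x_0}$ is a homeomorphism) and it is holomorphic. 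Finally, if $\psi$ is the identity, part~(2) says each $\delta\Phi^s$ fixes $\O_\alg(T_W)^G$ and $\Theta_0(s,x)=\Theta(s,x_0)=s$ on the zero section direction, so $\Psi_0^*$ acts trivially on $\O(S_0\times T_{B'})^G=\O(S_0)\,\widehat\otimes\,\O(T_{B'})^G$; invariants in the $T_{B'}$ factor are fixed by $\delta\Phi^s$ for every $s$ and the $S_0$ factor is fixed because $\Theta_0$ restricted to $S_0\times\{x_0\}$ is the identity when $\psi=\Id$.

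The only point requiring a little care — and the one I would flag as the main obstacle — is checking that $\Phi_0=\sigma(s)(\cdot)$ really does restrict to a $G$-biholomorphism on each fibre $T_W$, i.e.\ that $\sigma(s)\in\Aut_\vb(T_W)^G$ is genuinely invertible rather than merely a $G$-morphism, and correspondingly that $\Psi_0$ is a bona fide strong $G$-homeomorphism (the endpoint $t=0$ of the homotopy, not just a limit of maps). This is handled by noting that $\Phi_0=\delta\Phi$ is the normal derivative of the $G$-homeomorphism $\Phi$ along $Z$, so its fibrewise action is the derivative of a homeomorphism and hence invertible; alternatively, $\delta(\Phi\inv)$ provides the inverse vector-bundle automorphism, using that $\Phi\inv$ is also strong (over $\psi\inv$) and has its own normal derivative, and that $\delta(\Phi\inv)\circ\delta\Phi$ is computed fibrewise from the matrices $b_{ij}^0$ and their counterparts for $\Phi\inv$, which multiply to the identity by continuity from $t>0$. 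Once $\sigma(s)$ is known invertible, $\Psi_0=(\Theta_0,\Phi_0)$ is invertible with continuous inverse, the defining matrices $b_{ij}^0$ lie in $\CC(S_0)\otimes\O_\alg(T_W)^G_{d_i-d_j}$ by part~(4), and $\Psi_0$ being $G$-biholomorphic follows since a strong $G$-homeomorphism whose structure matrices are holomorphic (here, algebraic on each $T_W$-fibre and holomorphic in $s$) is a $G$-biholomorphism, as observed at the end of Section~\ref{sec:strong}. This completes the verification of every clause in the statement.
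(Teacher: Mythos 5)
Your proof follows the paper's own (implicit) route exactly — the corollary is meant to be read off from Corollary~\ref{cor:fundam-with-S}, and the bulk of your verification is correct. Two points should be corrected.

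The final paragraph contains a real error. You write that the matrices $b^0_{ij}$ are ``algebraic on each $T_W$-fibre and holomorphic in $s$'' and conclude that $\Psi_0$ is a $G$-biholomorphism. But part~(4) of Corollary~\ref{cor:fundam-with-S} places $b^0_{ij}$ in $\CC(S_0)\otimes\O_\alg(T_W)^G_{d_i-d_j}$: the factor $\CC(S_0)$ means \emph{continuous}, not holomorphic, in $s$. Consequently $\Psi_0$ is only a continuous family of fibre-wise $G$-biholomorphisms; it need not be $G$-biholomorphic, and indeed this is exactly why the corollary's conclusion is phrased as ``there is a \emph{continuous} map $\sigma\colon S\to\Aut_\vb(T_W)^G$'' rather than asserting holomorphy of $\Psi_0$. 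The overclaim is tangential — the corollary only requires $\Psi_0$ to be a strong $G$-homeomorphism, which follows from $\sigma(s)\in\Aut_\vb(T_W)^G$ being invertible (part~(2)) and continuous in $s$, together with $\theta(\cdot,0)$ being biholomorphic — but the sentence asserting $G$-biholomorphy of $\Psi_0$ should be removed, and the citation of ``the end of Section~\ref{sec:strong}'' does not support it (what is shown there is the converse direction, that $G$-biholomorphisms are strong).

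A smaller imprecision: you say ``each $\Psi_t$ (for $t\in(0,1]$) is itself a strong $G$-homeomorphism inducing $\psi$.'' For $t<1$ this is false: $\Psi_t$ induces the rescaled map $\psi_t(s,q)=(\theta(s,t\cdot q),\,t^{-1}\cdot\phi(s,t\cdot q))$, not $\psi$. The corollary's own phrase ``inducing $\psi$'' is loose in the same way and is best read as describing $\Psi_1=\Psi$; a careful proof should spell this out rather than propagate the ambiguity.
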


\begin{remark}\label{rem:fundam}
Suppose that the $a_{ij}(s,x)$ are smooth (resp.\ holomorphic). Then $\Psi$ is smooth (resp.\ holomorphic). By Lemma \ref{lem:helpfundam} we have:

\begin{enumerate}
\item For $\ell<i\leq m$ and $1\leq j\leq \ell$, there are smooth  functions $\tilde a_{ij}(t,s,x)$ (resp.\ holomorphic in $s$ and $x$) such that   $a_{ij}(s,t\cdot x)=t^2\tilde a_{ij}(t,s,x)$.    \end{enumerate}
Let $\tilde a_{ij}(t,s,x)=a_{ij}(s,t\cdot x)$ for $1\leq i\leq\ell$ or  $\ell<j\leq n$.
Let the $P_{uv}(x,a_{ij})$ be the polynomials of Lemma \ref{lem:Puv}. For $1\leq v\leq n$, set $b_{uv}=a_{uv}$ if $u\leq m$ and set $b_{uv}(s,x)=P_{uv}(x,a_{ij}(s,x))$ for $u>m$. Then $\Phi^*f_i=\sum b_{ij}f_j$, $1\leq i\leq n$. The functions $b_{ij}^t(s,x)$ are polynomials in $t$, $x$ and the $\tilde a_{ij}(t,s,x)$. Hence:
\begin{enumerate}
\addtocounter{enumi}{1}
\item The functions $b^t_{ij}(s,x)$ are smooth   in $t$, $s$ and $x$  (resp.\ holomorphic in $s$ and $x$) and we have $\Phi_t^*f_i(s,x)=\sum b_{ij}^t(s,x)f_j(x)$, $1\leq i$, $j\leq n$.
\end{enumerate}
\end{remark}

\section{Examples}\label{sec:examples}

First we exhibit a biholomorphism  of quotients which does not have local $G$-equivari\-ant lifts.
\begin{example}\label{ex:badchoice}
(See \cite[Example 4.4]{Schwarz2014}.)\ Let $G=\SL_2(\C)$ and $H=\SL_4(\C)$ and consider the canonical action of $G\times H$ on $V=(\C^4)^*\otimes\C^2$. Then the $G$-invariant functions of $V$ are generated by determinants $d_{ij}$, $1\leq i<j\leq 4$, with the relation
$$
d_{12}d_{34}-d_{13}d_{24}+d_{14}d_{23}=0.
$$
We may identify the quotient with  the set of  $2$-forms  $\omega=\sum d_{ij}e_i\wedge e_j\in\wedge^2(\C^4)$ with the property that $\omega\wedge\omega\in\wedge^4(\C^4)\simeq\C$ vanishes. Now $\omega\mapsto\omega\wedge\omega\in\C$ is the $\SO_6(\C)\simeq(\SL_4(\C)/\{\pm I\})$-invariant bilinear form on $\C^6\simeq\wedge^2(\C^4)$. Hence $Q$ can be identified with the null cone of the action of $L^0=\SO_6(\C)$ on $\C^6$. There is an action of $L=\Orth_6(\C)$ on $Q$ as well  and $L\setminus L^0$ acts by outer automorphisms on $L^0$. Suppose that some $\ell
\in L\setminus L^0$ has a local biholomorphic lift $\Phi$ to $V$. Then $\Phi'(0)$ is a lift of $\ell$, induces an outer automorphism of $H$ and is in the normaliser of $G$ \cite[Proposition 2.9]{Schwarz2014}. But $G$ has no outer automorphisms, hence changing $\Phi'(0)$ by an element of $G$ one can assume that $\Phi'(0)$ lies in $\GL(V)^G=\GL_4(\C)$.  But no element of $\GL_4(\C)$ induces an outer automorphism of $H$. Hence we have a contradiction, and $\ell$ has no local lift.
\end{example}

By modifying the example above, we find $X$ and $Y$ such that $Q_X$ and $Q_Y$ have a unique strata preserving biholomorphism which does not have local lifts (let alone a global lift).

\begin{example}\label{ex:nochoice}
 Let $V$, $G$ and $Q$ be as above. Let $Q_0=Q\cap \Delta$ where $\Delta$ is the open ball of radius one in $\C^6$. Since  $\Delta$ is hyperbolic, so is $Q_0$, and this implies that the automorphism group of $Q_0$ is a real Lie group $H$ with the property that every isotropy group is compact \cite[Theorem 5.4.2]{Kobayashi}. Since the origin is the  unique singular point of $Q_0$, it is fixed by $H$ and  $H$  is compact.   We show that $H=\Orth_6(\R)$. We also show that there is a homogeneous complex polynomial $f$ of degree 3 on $\C^6$ which does not vanish everywhere on $Q_0$ and is fixed only by the identity of $H$. Let $0\neq z_0\in Q_0$ such that $f(z_0)=c\neq 0$ and let $Q_0'$ denote the complement 
in $Q_0$ of $\{z\in Q_0\mid f(z)=c\}$. Any holomorphic automorphism $\phi$ of $Q_0'$ extends to $Q_0$ and is an element $h\in H$. Suppose that  $\{z\in Q_0\mid (h\cdot f)(z)=c\}=\{z\in Q_0\mid f(z)=c\}$ (and $h\neq e$). Then $\{z\in Q_0\mid f(z)=c\}$ is a union of  irreducible components of $\{z\in Q_0\mid (h\cdot f)(z)-f(z)=0\}$ which are cones in $Q$ passing through the origin. Then  $f(0)=c$, which is absurd. Thus $h=e$. Now let $X$ denote $p\inv(Q_0')\subset V$ and let $Y$ denote $p\inv(\phi(Q_0'))$ where $\phi\in\Orth_6(\R)\setminus\SO_6(\R)$. Note that $\phi$ preserves $Q_0$. Now we have a unique biholomorphism $\phi\colon Q_X=Q_0'\to Q_Y=\phi(Q_0')$ and by the previous example there is no lift of $\phi$ near the origins.   
 
 We show that $H=\Orth_6(\R)$. We have the representation of $H$ on the Zariski tangent space of $Q_0$ at $0$, which is $\C^6$.  Let $G=H_\C$ be the complexification of $H$. Then $G$ acts on $Q$. By the slice theorem, the kernel of $G\to\GL(T_0(Q))=
 \GL_6(\C)$ acts trivially in a neighbourhood of $0$, hence trivially on $Q$. But $G$ has to act faithfully on $Q$ since $H$ does. Hence 
 $G\to\GL_6(\C)$ is faithful and $G$ acts linearly on $\C^6$, and it clearly has to lie in $\Orth_6(\C)$. Now the maximal compact subgroups of $\Orth_6(\C)$ are all conjugate to $\Orth_6(\R)$ where $\Orth_6(\R)\subset H$. Hence $H=\Orth_6(\R)$. 
  
 We now show that there is an $f$ as claimed above. As a real representation of $H$, the space of polynomials of degree three contains two copies of $S^3(\R^6)$. We show that the principal isotropy group for the action on one copy of $S^3(\R^6)$ is trivial. The isotropy group of the vector $e_1^3$ in  $S^3(\R^6)$ is a copy of 
 $\Orth_5(\R)$ whose slice representation is $S^3(\R^5)\oplus S^2(\R^5)\oplus \R$ with trivial action on $\R$ and the obvious actions on $S^3(\R^5)$ and $S^2(\R^5)$. The principal isotropy group of $S^2(\R^5)$ is finite (a product of copies of $\Z/2\Z$), and its action on $S^3(\R^5)$ is faithful. Hence the principal isotropy group is trivial. This means that there is an open set of homogenous polynomials $f$ of degree 3 on $\C^6$ whose $H$-isotropy is trivial, and we can choose such an $f$ which does not vanish on $Q_0$. Then as above one constructs a Stein open set $Q_0'\subset Q_0$ with trivial holomorphic automorphism group.
  \end{example}
 
\begin{example}\label{ex:badchoice2}
Let $G=\C^*$ and $V=\C^4$ with basis $\{a_1,b_1,a_2,b_2\}$ where the $a_i$ have weight 1 and the $b_i$ have weight $-1$.
Let $\Phi\colon V\to V$ sending $(a_1,b_1,a_2,b_2)$ to $(b_1,a_1,b_2,a_2)$. Then 
$\Phi(\lambda v)=\lambda\inv\Phi(v)$ for $\lambda\in G$ and $v\in V$. Hence  $\Phi$ is not equivariant in the usual sense. Let $\phi$ be the automorphism of $Q$ induced by $\Phi$. If $\Psi$ is a lift of $\phi$, then so is its derivative $\Psi'(0)$ and one can show that $\Psi'(0)$ has to have the same equivariance property as $\Phi$ does. Hence $\phi$ has a lift, but there is no lift which is equivariant in the usual sense.
\end{example}

The $G$-modules in the examples above have the infinitesimal lifting property. Here is an 
example where this property fails.
\begin{example}
This is due to H.\ M.\ Meyer. Let $G=\C^*$ act on $V=\C^3$ with coordinate functions $x$, $y$, $z$ of weights $-1$, $1$, $2$, respectively. Let $u=xy$ and $w=x^2z$. Then $u$ and  $w$ are coordinate functions on $Q=\C^2$ and the Luna strata are $\{0\}$ and $\C^2\setminus \{0\}$. The vector field $u\pt/\pt w$ is strata preserving. A lift would have to send $x^2z$ to $xy$ but no smooth vector field with this property exists.
\end{example}

 \section{Local lifting of automorphisms} \label{sec:locallifting}

 Let $X$ and $Y$ be Stein $G$-manifolds with common quotient $Q$. We establish Theorem \ref{thm:main3} which gives sufficient conditions for the existence of local $G$-biholomorphic lifts of $\Id_Q$.  The main idea behind the proof is the lifting of vector fields defined on the quotients. We begin with results about this problem. For the moment we only deal with $X$.

For $U$ open in $Q$, let $\Der(U)$ denote the holomorphic vector fields on $U$, i.e., the derivations of $\O(U)$. Let $\Der(X_U)$ denote the holomorphic vector fields on $X_U$. Then we have an $\O(U)$-module homomorphism   $p_*\colon\Der(X_U)^G\to\Der(U)$ which is just restriction of $A\in \Der(X_U)^G$ to $\O(X_U)^G\simeq\O(U)$. If $D=p_*A$, we say that \emph{$A$ is a lift of $D$}. 

\begin{remark}\label{rem:llifting-property}
Recall that an element $D\in \Der(U)$ is strata preserving if for each Luna stratum $Z$ of $U$, $D(z)\in T_zZ$ for every $z\in Z$. Equivalently, $D$ preserves the ideals of the closures of the strata. 
Recall that  $X$ has the infinitesimal lifting property if any strata preserving $D\in\Der(U)$ has local lifts to $\Der(X_U)^G$. It is easy to see that if $A\in\Der(X_U)^G$, then $p_*A$ is strata preserving. 
Hence, when we have the infinitesimal lifting property, the locally  liftable vector fields are precisely the strata preserving vector fields.  
\end{remark}

\begin{lemma}\label{lem:sheaves-derivations}
\begin{enumerate}
\item The sheaves of $\O_Q$-modules $U\mapsto \Der(X_U)^G$ and $U\mapsto\Der(U)$   are  coherent. 
\item Suppose that $U\subset Q$ is Stein, $D\in\Der(U)$ and $\{U_i\}$ is an open cover of $U$ such that $D|_{U_i}$ lifts to $\Der(X_{U_i})^G$ for all $i$. Then $D$ lifts to $\Der(X_U)^G$.
\item Let $W$ be an $H$-module where $H$ is a reductive subgroup of $G$. Then $T_W$ has the infinitesimal lifting property if and only if $W$ does.
\end{enumerate}
\end{lemma}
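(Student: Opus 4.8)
The plan is to handle the three parts in order, with Part~(1) carrying essentially all of the weight.

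For Part~(1), the sheaf $U\mapsto\Der(U)$ is the tangent sheaf of the complex space $Q$: a local embedding of $Q$ in a domain in $\C^N$ exhibits $\Omega^1_Q$ as a coherent $\O_Q$-module, and $\Der(U)=\mathcal{H}om_{\O_Q}(\Omega^1_Q,\O_Q)$ is coherent because the $\mathcal{H}om$ sheaf of two coherent sheaves is coherent (Oka's theorem). For $U\mapsto\Der(X_U)^G$, note that $\Der(X_U)=\Gamma(X_U,\Theta_X)$ with $\Theta_X$ the locally free, hence coherent, $G$-linearized tangent sheaf of the manifold $X$, so that $U\mapsto\Der(X_U)^G$ is the sheaf of $G$-invariants of $p_*\Theta_X$. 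I would prove its coherence by working locally on $Q$ and using the slice theorem to reduce to $X=T_B=G\times^H B$ with $B$ an $H$-saturated Stein neighbourhood of $0$ in an $H$-module $W$; then, along the lines of \cite[Proposition 6.8, Corollary 6.9]{Schwarz1980} and \cite[Section~6]{Snow} (compare the formula $\O_\gf(X_U)\simeq\O(U)\otimes_{\O_\alg(Q)}\O_\alg(X)$), the $\O_Q$-module $U\mapsto\Der(X_U)^G$ is over $B\sl H$ the coherent analytic sheaf associated to the $\O_\alg(W\sl H)$-module $\Der_\alg(T_W)^G$, and $\Der_\alg(T_W)^G$ is finitely generated because $G$ is reductive and $\Der_\alg(T_W)$ is a finitely generated $G$-equivariant $\O_\alg(T_W)$-module. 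Hence $U\mapsto\Der(X_U)^G$ is coherent.

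For Part~(2), let $\mathcal K$ be the kernel of the $\O_Q$-linear sheaf morphism $p_*\colon\Der(X_U)^G\to\Der(U)$, i.e.\ the sheaf of $G$-invariant holomorphic vector fields tangent to the fibres of $p$; by Part~(1), $\mathcal K$ is coherent. Over each $U_i$ pick a lift $A_i\in\Der(X_{U_i})^G$ of $D|_{U_i}$. On overlaps the differences $c_{ij}=A_i-A_j$ lie in $\mathcal K(U_i\cap U_j)$ and form a \v{C}ech $1$-cocycle for the cover $\{U_i\}$ with values in $\mathcal K$. Since $U$ is Stein and $\mathcal K$ is coherent, $H^1(U,\mathcal K)=0$ by Cartan's Theorem~B, and since $\check H^1(\{U_i\},\mathcal K)\to H^1(U,\mathcal K)$ is injective, the cocycle is a coboundary: $c_{ij}=B_i-B_j$ with $B_i\in\mathcal K(U_i)$. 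Then the $A_i-B_i$ agree on overlaps and glue to $A\in\Der(X_U)^G$ with $p_*A=p_*A_i=D$ on each $U_i$, so $A$ is a global lift of $D$.

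For Part~(3), observe first that $T_W\sl G\simeq W\sl H$ compatibly with the quotient maps, and under this identification the Luna stratifications agree, so ``strata preserving'' and ``lift'' mean the same for the two situations; fix $q\in W\sl H$, a strata preserving $D$ near $q$, and write $p$, $\bar p$ for the quotient maps of $T_W$, $W$. Using the $H$-equivariant slice inclusion $j\colon W\hookrightarrow T_W$, $w\mapsto[e,w]$, together with the $H$-equivariant decomposition $\Der(G\times^H W_0)^G\cong\Der(W_0)^H\oplus(\O(W_0)\otimes(\lie g/\lie h))^H$ (a $G$-invariant vector field is determined by its $H$-invariant restriction to the slice, which splits into a tangential part and a normal part valued in $\lie g/\lie h$), the argument runs as follows. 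If $A$ is an $H$-invariant lift of $D$ on an $H$-saturated Stein $W_0\subset W$, let $\widetilde A\in\Der(G\times^H W_0)^G$ have tangential part $A$ and normal part $0$; it is holomorphic, and it lifts $D$ because for $f$ holomorphic on the quotient $p^*f$ is $G$-invariant, so $d(p^*f)$ annihilates the $\lie g/\lie h$-directions and restricts along the slice to $d(\bar p^*f)$, whence $\widetilde A(p^*f)$ and $p^*(Df)$ are $G$-invariant functions agreeing on the slice (where the first equals $A(\bar p^*f)=\bar p^*(Df)$) and therefore agreeing. Conversely, if $\widetilde A\in\Der(G\times^H W_0)^G$ lifts $D$, then its tangential part $A\in\Der(W_0)^H$ along the slice lifts $D$ by the same computation. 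Hence $T_W$ has the infinitesimal lifting property if and only if $W$ does.

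The main obstacle is the coherence of $U\mapsto\Der(X_U)^G$ in Part~(1): it rests on the finiteness of modules of invariants of reductive groups together with the base-change/coherence statement for invariant direct images of coherent $G$-sheaves under a Stein quotient, with the slice theorem used to pass from the local model $T_B$ to the affine variety $T_W$. Once that is granted, Part~(2) is a standard Theorem~B gluing argument and Part~(3) is a direct computation with the slice theorem.
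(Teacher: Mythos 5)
Your proof is correct and follows essentially the same route as the paper. For Part~(1) the paper simply cites Roberts (1986) for coherence of $U\mapsto\Der(X_U)^G$ and Fischer for coherence of $U\mapsto\Der(U)$; your sketch (slice theorem reduction plus finite generation of invariants for reductive groups, and $\mathcal{H}om(\Omega^1_Q,\O_Q)$ for the tangent sheaf) is a summary of what those references do. For Part~(2) the paper notes that the kernel sheaf $\mathcal K=\Der_U(X_U)^G$ is coherent and invokes Theorem~B; your \v{C}ech $1$-cocycle gluing argument is the standard way to make that precise (equivalently one can use the long exact sequence for $0\to\mathcal K\to\Der(X_\cdot)^G\to\mathrm{image}\to 0$). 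For Part~(3), the paper splits $\Der(G\times W)^{G\times H}=(\lie g\otimes\O(W))^H\oplus(1_G\otimes\Der(W)^H)$ and uses that the first summand kills invariants, while you split $\Der(G\times^H W_0)^G\cong\Der(W_0)^H\oplus(\O(W_0)\otimes(\lie g/\lie h))^H$ after passing to the quotient; these are the same decomposition, one stated upstairs on $G\times W$ and one downstairs on $T_W$, and both lead to the same conclusion that the locally liftable vector fields on $T_W\sl G=W\sl H$ are identical for the two actions.
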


\begin{proof} By \cite{Roberts1986}, $U\mapsto\Der(X_U)^G$ is coherent 
and by \cite[Chapter 2]{Fischer}, so is $U\mapsto\Der(U)$. Hence we have (1).

Let $\Der_U(X_U)^G$ denote the kernel of $p_*\colon \Der(X_U)^G\to\Der(U)$. Then the corresponding sheaf of $\O_Q$-modules is coherent and part  (2) follows from Cartan's  Theorem B.

Since $G\times W\to T_W$ is a principal $H$-bundle, hence locally trivial, the mapping $\Der(G\times W)^{G\times H}\to\Der(T_W)^G$ is surjective. Now 
$$
\Der(G\times W)^{G\times H}=(\lie g\otimes\O(W))^H\oplus(1_G\otimes \Der(W)^H)
$$
 where $1_G$ denotes the constant function 1 on $G$ and $\lie g$ denotes the  Lie algebra of $G$. Since $(\lie g\otimes\O(W))^H$   obviously acts trivially on $\O(G\times W)^{G\times H}$, we see that the images of $\Der(T_W)^G$ and $\Der(W)^H$ in $\Der(Q)$ are the same. The same reasoning works for $Q$ replaced by an open subset $U\subset Q$. Hence we have (3).
\end{proof}

 \begin{remark}\label{rem:smooth-derivations}
We say that $T_W$ (resp.\ $W$) has the \emph{smooth infinitesimal lifting property\/} if elements in $\Der(U)$, $U$ open in $Q$, have local lifts to smooth invariant vector fields on open sets of  $T_W$ (resp.\ $W$). Then (3) above holds with ``infinitesimal'' replaced by ``smooth infinitesimal.''
 \end{remark}

We now show that the smooth infinitesimal lifting property implies the infinitesimal lifting property.
 \begin{lemma}\label{lem:lift-if-smooth-lift}
 Let $W$ be an $H$-module where $H$ is a reductive subgroup of $G$.
Let $D\in\Der(U)$ where $U$ is open and Stein in   $Q=W\sl H$. Suppose that $D$ has a smooth $H$-invariant lift 
 on $p\inv(U)$. Then $D$ has an $H$-invariant holomorphic lift over $U$.
 \end{lemma}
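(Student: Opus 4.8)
The plan is to encode everything in coherent $\O_Q$-modules and reduce the claim to a statement about a single stalk, which will then follow from the flatness of smooth functions over holomorphic functions on $Q$ (the singular analogue of Malgrange's theorem). By Lemma \ref{lem:sheaves-derivations}(1) the assignments $U\mapsto\Der(p\inv(U))^H=:\mathcal E(U)$ and $U\mapsto\Der(U)=:\mathcal D(U)$ are coherent $\O_Q$-modules and $p_*\colon\mathcal E\to\mathcal D$ is an $\O_Q$-module homomorphism; as in the proof of that lemma its kernel $U\mapsto\Der_U(p\inv(U))^H$ is coherent. Let $\mathcal I=p_*\mathcal E\subset\mathcal D$ be the image and $\mathcal K=\mathcal D/\mathcal I$ the cokernel; both are coherent, since coherent sheaves on a complex space form an abelian category. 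Now $D\in\Der(U)$ has a holomorphic $H$-invariant lift over $U$ as soon as its class $\bar D\in\Gamma(U,\mathcal K)$ vanishes: then $D\in\Gamma(U,\mathcal I)$, so near each $q\in U$ some germ of $\mathcal E$ maps to $D_q$, i.e.\ $D$ admits a holomorphic $H$-invariant lift on $p\inv(V)$ for a neighbourhood $V$ of $q$, and Lemma \ref{lem:sheaves-derivations}(2) (using that $U$ is Stein) assembles these into a lift on $p\inv(U)$. Since $\mathcal K$ is coherent, $\bar D=0$ iff $\bar D_q=0$ in $\mathcal K_q$ for every $q$. So it suffices to show: \emph{if $D$ has a smooth $H$-invariant lift near $q$, then $\bar D_q=0$.}

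\textbf{The stalk at $q$ and the smooth inputs.} Fix $q$, put $R=\O_{Q,q}$, and let $R^\infty$ be the ring of germs at $q$ of smooth functions on $Q$ — equivalently, by the invariant theory of smooth functions (Schwarz), of germs of smooth $H$-invariant functions on $p\inv(\,\cdot\,)$. I will use two facts: (i) $R\to R^\infty$ is faithfully flat; and (ii) the germs at $q$ of smooth vector fields on $Q$, respectively of smooth $H$-invariant vector fields on $p\inv(\,\cdot\,)$, are the base changes $\mathcal D_q\otimes_R R^\infty$, respectively $\mathcal E_q\otimes_R R^\infty$. Granting these: a smooth $H$-invariant lift of $D$ near $q$ is, via (ii), a germ in $\mathcal E_q\otimes_R R^\infty$ whose $p_*$-image equals $D_q\otimes 1\in\mathcal D_q\otimes_R R^\infty$ (this image is indeed $D_q\otimes1$, because $\mathcal D_q\hookrightarrow\mathcal D_q\otimes_R R^\infty$ by flatness, so the holomorphic $D$ has a unique smooth extension). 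Hence $D_q\otimes 1$ lies in the image of $\mathcal E_q\otimes_R R^\infty\to\mathcal D_q\otimes_R R^\infty$, which by flatness applied to the surjection $\mathcal E\twoheadrightarrow\mathcal I$ and the inclusion $\mathcal I\hookrightarrow\mathcal D$ equals $\mathcal I_q\otimes_R R^\infty$.

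\textbf{Conclusion.} Tensoring the exact sequence $0\to\mathcal I_q\to\mathcal D_q\to\mathcal K_q\to 0$ with the flat module $R^\infty$ gives $\mathcal K_q\otimes_R R^\infty=(\mathcal D_q\otimes_R R^\infty)/(\mathcal I_q\otimes_R R^\infty)$, in which the image of $D_q\otimes1$, namely $\bar D_q\otimes 1$, is therefore $0$. Faithful flatness makes $\mathcal K_q\to\mathcal K_q\otimes_R R^\infty$ injective, so $\bar D_q=0$, as required; then $D\in\Gamma(U,\mathcal I)$ and Lemma \ref{lem:sheaves-derivations}(2) finishes the proof.

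\textbf{The main obstacle.} The real content is fact (i): faithful flatness of the ring of smooth germs over the ring of holomorphic germs on $Q$. If $Q$ is smooth this is Malgrange's classical flatness theorem, but $Q=W\sl H$ is in general badly singular, and one must invoke the finer results of Bierstone--Schwarz on smooth functions on quotients (this is where the discussions with Bierstone acknowledged above enter). Fact (ii) — that ``smooth vector field on $Q$'' (and ``smooth $H$-invariant vector field upstairs'') is the smooth base change of its holomorphic counterpart — is then a formal consequence of (i) together with the coherence of the sheaves of derivations and of their relations recorded in Lemma \ref{lem:sheaves-derivations}(1); it requires only some care in pinning down the definition of a smooth vector field on the singular space $Q$, and no new analytic input. (An alternative, essentially equivalent, route avoids explicit flatness: take the $(1,0)$-part $B$ of the given smooth lift, observe that $\bar\partial B$ is a $\bar\partial$-closed $H$-invariant $(0,1)$-form valued in the trivial bundle $T^{1,0}W$ which annihilates $d(p^*f)$ for all holomorphic $f$, and solve $\bar\partial C=\bar\partial B$ with $C$ a vertical $H$-invariant $(1,0)$-field by exhibiting the coherent sheaf $U\mapsto\Der_U(p\inv(U))^H$ as the kernel of a fine resolution built from such invariant annihilating forms and applying Cartan's Theorem B on the Stein set $U$; proving that this resolution is exact is again the hard step, and is what the Bierstone--Schwarz input supplies.)
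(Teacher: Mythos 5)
Your reduction to the stalk via coherence is fine, and the logical skeleton — express $D$ over a faithfully flat extension of $\O_{Q,q}$ and descend — matches the paper's. But your choice of flat extension creates a genuine gap. Your fact (i), faithful flatness of the ring $R^\infty$ of smooth germs at $q$ on the singular quotient $Q=W\sl H$ over $R=\O_{Q,q}$, is not a known theorem and you offer no proof. Malgrange handles $\ci$ over $\O$ on a smooth space, but $\O(W)^H\hookrightarrow\O(W)$ is not flat in general, $\ci(W)^H$ is a direct summand of $\ci(W)$ only over itself (via averaging over a maximal compact), and no formal argument yields flatness of $\ci(W)^H$ over $\O(W)^H$. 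The attribution to a Bierstone--Schwarz result is also off: the Bierstone--Milman input in the paper is used in Section~\ref{sec:topologies} for a closedness statement (Theorem~\ref{thm:closed}), and no smooth-over-holomorphic flatness theorem appears anywhere in the paper. Your fact (ii) is likewise unproven, and as literally stated it is false: $\mathcal E_q\otimes_R R^\infty$ contains only $(1,0)$-fields with $\ci$-invariant coefficients, whereas smooth $H$-invariant vector fields on $W$ include $\pt/\pt\bar z_j$ components. Even after passing to the $(1,0)$-part of the lift (which one must, and which suffices because $D$ acts through the $\pt/\pt z_j$ on holomorphic functions), the assertion that every smooth $H$-invariant $(1,0)$-field near $p\inv(q)$ is an $\ci(W)^H$-combination of holomorphic $H$-invariant ones is precisely the kind of surjectivity/division statement the paper only proves a weaker closedness version of in Section~\ref{sec:topologies}; it is not a formal consequence of (i) and coherence.

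The paper's own proof sidesteps all of this by using a much tamer flat extension: the Krull completion $\widehat\O_{Q,0}$. It reduces (via the slice theorem and Lemma~\ref{lem:sheaves-derivations}) to a neighbourhood of the origin, takes the $(1,0)$-part $\sum a_j(z,0)\pt/\pt z_j$ of the smooth invariant lift, Taylor-expands at $0$ to get a formal $H$-invariant lift $\widehat A$, and observes that $D=p_*\widehat A$ lies in the $\widehat\O_{Q,0}$-submodule generated by the finitely many homogeneous polynomial generators $D_1,\dots,D_k$ of the algebraically liftable derivations. Descending from $\widehat\O_{Q,0}$ to $\O_{Q,0}$ then uses only faithful flatness of the completion of a Noetherian local ring — an unconditional, elementary fact. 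If you replace $R^\infty$ by $\widehat\O_{Q,q}$ and replace ``smooth lift'' by ``Taylor series of the $(1,0)$-part of the smooth lift,'' your argument becomes essentially the paper's; as written, the analytic flatness you rely on is an open-ended claim the paper never needs.
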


\begin{proof} Let $p=(p_1,\dots,p_m)\colon W\to\C^m$ where the $p_i$ are homogeneous generators of $\O_\alg(W)^H$. Then we may identify $Q$ with $\Im\, p$. By the slice theorem, Lemma \ref{lem:sheaves-derivations} and Remark \ref{rem:smooth-derivations} it is enough to consider the case that $U$ is a neighbourhood of  $0\in Q$ and to produce a local lift over a perhaps smaller neighbourhood of $0$. 
Give $\O_\alg(Q)\simeq\O_\alg(W)^H$ the grading of $\O_\alg(W)^H$.
The $\O_\alg(Q)$-module $\Der_\alg(Q)$ is graded   where $E\in\Der_\alg(Q)$ has degree $k$ if it sends polynomials of degree $s$ to ones of degree $s+k$ for all $s$. By a Taylor series argument, one sees that $E\in\Der_\alg(Q)$ has a local $H$-invariant holomorphic lift if and only it has a global $H$-invariant polynomial  lift.  Let  $\widehat \O_{Q,0}$ denote the completion of the local ring $\O_{Q,0}$.    If $E$ is a derivation of $\widehat \O_{Q,0}$ and $E_j$ is the component of $E$ of degree $j$, then $E$ has a  lift to an  $H$-invariant formal power series vector field on $W$ if and only if the same is true for each $E_j$. The $\O_\alg(Q)$-submodule of $\Der_\alg(Q)$ of algebraically liftable vector fields  is homogeneous and finitely generated, say by the homogeneous elements $D_1,\dots,D_k$.

 Now let $A$ be a smooth $H$-invariant lift of $D$.  Then
$$
A=\sum_{j=1}^n a_j(z,\bar z)\pt/\pt z_j+b_j(z,\bar z)\pt/\pt \bar z_j,
$$
where $n=\dim W$ and the $z_j$, $\bar z_j$ are the usual holomorphic and antiholomorphic coordinate functions. For any $H$-invariant holomorphic function $f$, $A(f)$ is holomorphic, hence
$$
A(f)=\sum_{j=1}^n a_j(z,0)\pt f/\pt z_j.
$$
Let   $\widehat A= \sum_{j=1}^n \hat a_j(z,0)\pt  /\pt z_j$ where $\hat a_j(z,0)$ is the Taylor series of $a_j(z,0)$ at $0$. Then $\widehat A$ is an $H$-invariant formal vector field lifting $D$. Any such $\widehat A$  is of the form $\sum \hat a_iA_i$ where the $A_i$ are homogeneous $\O_\alg(W)^H$-module generators of $\Der_{Q,\alg}(W)^H$ and the $\hat a_i$ are invariant formal power series. Thus $D=p_*\widehat A=\sum_i \hat b_i p_*A_i$ where the $\hat b_i$ are elements of $\widehat \O_{Q,0}$ which pull back to the $\hat a_i$.  Since the $p_*A_j$ are obviously liftable, we see that  $D$ is  in the  $\widehat\O_{Q,0}$-module generated by the $D_i$.  Hence $D=\sum_{i=1}^k d_iD_i$ where the $d_i\in \O_{Q,0}$. This follows from the fact that the inclusion of $\O_{Q,0}$ into  $\widehat \O_{Q,0}$ is   faithfully flat. Hence $D$ has a holomorphic lift over a neighbourhood of $0\in Q$.  
\end{proof}

We now use some parts of the theory of topological tensor products. Let $N$ and $P$ be smooth manifolds.
Then $\ci(N\times P)$ is isomorphic to a completion of $\ci(N)\otimes\ci(P)$. The completed tensor product is denoted by $\ci(N)\comptensor\ci(P)$. Since both factors are nuclear, the completion is the same for either of the two main topologies on the tensor product ($\pi$ and $\varepsilon$)   \cite[Theorem\ 44.1 and Theorem\ 50.1]{Treves}. If $P$ is a complex manifold, then the  space of functions smooth on $N\times P$ and holomorphic for fixed $x\in N$ is isomorphic to $\ci(N)\comptensor\O(P)$ and is Fr\'echet and nuclear.

\begin{proposition}\label{prop:lift-family}
Let $U\subset Q=Q_X$ be open    and let $D_t$ be a smooth vector field on $[0,1]\times U$ which is in $\Der(U)$ for each fixed $t$. If each $D_t$ is liftable, then there is a smooth vector field $A_t$ on $[0,1]\times X_U$ such that,  for each fixed $t$,  $A_t\in\Der(X_U)^G$ and $p_*A_t=D_t$. 
\end{proposition}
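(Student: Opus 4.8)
The plan is to reduce the parametrised lifting statement to the non-parametrised one (the infinitesimal lifting hypothesis applied pointwise in $t$, together with Lemma \ref{lem:sheaves-derivations}(2)) by means of the topological tensor product machinery just introduced. First I would observe that, since $Q$ can be covered by countably many relatively compact Stein open sets and liftability is a local condition on $Q$ patched by Lemma \ref{lem:sheaves-derivations}(2), it suffices to produce the lift over $[0,1]\times U'$ for $U'\Subset U$ Stein; a standard exhaustion/bump-function argument then glues these into a smooth $A_t$ on all of $[0,1]\times X_U$, using that the space of smooth sections of a vector bundle is a (soft) sheaf in the smooth category so there is no cohomological obstruction to the gluing in $t$ and $x$. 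So from now on $U$ is Stein and relatively compact in a larger Stein set on which everything is defined.

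Next I would recast the problem in functional-analytic terms. The sheaf $U\mapsto\Der_U(X_U)^G$ (the kernel of $p_*$) and the sheaf $U\mapsto\Der(X_U)^G$ are coherent by Lemma \ref{lem:sheaves-derivations}(1), so over a Stein $U$ the module $\Der(X_U)^G$ is a Fr\'echet space and $p_*\colon\Der(X_U)^G\to\Der(U)$ has closed image equal to the space $\Der(U)_{\mathrm{lift}}$ of (globally, hence locally) liftable vector fields — closedness because the image is the kernel of the connecting map into $H^1(U,\Der_U(X_\bullet)^G)=0$, equivalently the cokernel sheaf's sections, and a coherent-sheaf quotient map between section spaces over a Stein space is a topological homomorphism onto a closed subspace (open mapping theorem for Fr\'echet spaces). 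Now $A_t$ smooth in $t$ with each $A_t\in\Der(X_U)^G$ is exactly an element of $\ci([0,1])\comptensor\Der(X_U)^G$, and likewise $D_t\in\ci([0,1])\comptensor\Der(U)$; the map $\mathrm{id}\comptensor p_*$ between these completed tensor products has, as its image, $\ci([0,1])\comptensor\Der(U)_{\mathrm{lift}}$, because $p_*$ is a surjective topological homomorphism onto its closed image and completed tensoring with the nuclear Fr\'echet space $\ci([0,1])$ preserves exactness of such sequences (this is where nuclearity, invoked already in the excerpt via \cite{Treves}, does the work). Since the hypothesis says $D_t$ lands in $\Der(U)_{\mathrm{lift}}$ for each $t$ and is smooth in $t$, i.e.\ $D_\bullet\in\ci([0,1])\comptensor\Der(U)_{\mathrm{lift}}$, we conclude $D_\bullet$ is in the image and obtain $A_\bullet\in\ci([0,1])\comptensor\Der(X_U)^G$ with $p_*A_t=D_t$.

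The main obstacle I anticipate is the claim that $D_\bullet$, a priori only an element of $\ci([0,1])\comptensor\Der(U)$ all of whose ``slices'' $D_t$ lie in the closed subspace $\Der(U)_{\mathrm{lift}}$, actually lies in $\ci([0,1])\comptensor\Der(U)_{\mathrm{lift}}$. For a closed subspace $F\subset E$ of Fr\'echet spaces and $N$ a manifold, an element of $\ci(N)\comptensor E=\ci(N,E)$ whose values all lie in $F$ indeed lies in $\ci(N,F)$ — this is true because $\ci(N,E)$ is just the space of smooth $E$-valued maps and $F$ being closed means a smooth map into $E$ with values in $F$ is a smooth map into $F$ (smoothness is tested by difference quotients, which stay in $F$). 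One must be slightly careful that $\Der(U)_{\mathrm{lift}}$ is genuinely closed, which is why I route through the coherent cokernel sheaf above rather than arguing by hand. A secondary technical point is the interchange of $p_*$ with the completed tensor product, i.e.\ that $\mathrm{id}_{\ci([0,1])}\comptensor p_*$ is still a topological homomorphism with image $\ci([0,1])\comptensor(\text{image of }p_*)$; for nuclear Fr\'echet spaces and a homomorphism with closed image this is standard (e.g.\ \cite[Ch.\ 43--45]{Treves}), but it should be cited precisely. Everything else — the reduction to relatively compact Stein $U$, the smooth gluing over $Q$ in both $t$ and $x$, and the bookkeeping with $p_*$ — is routine.
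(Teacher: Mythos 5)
Your core argument is essentially the one the paper uses: identify the space $\Der_\ell(U)=p_*\Der(X_U)^G$ of liftable vector fields as a closed Fr\'echet subspace of $\Der(U)$, note that a smooth $D_\bullet\in\ci([0,1],\Der(U))$ whose slices all lie in the closed subspace $\Der_\ell(U)$ is actually an element of $\ci([0,1])\comptensor\Der_\ell(U)$, and then use nuclearity (via \cite[Proposition 43.9]{Treves}) to tensor the surjection $\Der(X_U)^G\to\Der_\ell(U)$ with $\ci([0,1])$ and preserve surjectivity. You have correctly singled out the ``main obstacle'' and resolved it the same way the paper does.

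Where you deviate, though, is the opening reduction to relatively compact Stein $U'$ followed by a gluing step, and this is a place where your argument actually has a gap. You invoke softness of the sheaf of smooth sections to ``glue'' the locally defined lifts $A_t$ in both $t$ and $x$, but softness in the smooth category only produces a vector field that is smooth in $(t,x)$: it will destroy holomorphy of $A_t$ in $x$ for fixed $t$, which is exactly the condition $A_t\in\Der(X_U)^G$ you need to preserve. A genuine gluing here would be an additive Cousin problem for a smooth family of sections of the coherent kernel sheaf $\Der_\bullet(X)^G$ (Cartan~B with a $\ci([0,1])\comptensor(\cdot)$ twist), not a partition-of-unity argument. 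The paper sidesteps this entirely: closedness of $\Der_\ell(U)$ follows directly from the Closedness Theorem \cite[p.\ 169]{SteinSpaces} for morphisms of coherent $\O_Q$-sheaves, which needs no Stein or relative-compactness hypothesis on $U$, so there is nothing to glue. If you replace your cohomological detour (kernel of the connecting map into $H^1$) by a direct citation of that theorem, your reduction and gluing steps become superfluous and the gap disappears; the rest of your argument is then the paper's.
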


\begin{proof}
The sections of coherent sheaves of $\O_Q$-modules have natural Fr\'echet space structures \cite[Ch.\ V, \S 6]{SteinSpaces}.   Let $\Der_{\ell}(U)=p_*\Der(X_U)^G$ denote the liftable vector fields. Then they form a closed  subspace of $\Der(U)$ \cite[p.\ 169, Closedness Theorem]{SteinSpaces}. Now our smooth vector field $D_t$ is the same thing as an  element of $\ci([0,1],\Der(U))$, the smooth mappings of $[0,1]$ into $\Der(U)$ \cite[Sec.\ 44]{Treves}. Since each $D_t$ is liftable, we are actually in $\ci([0,1],\Der_{  \ell}(U))\simeq\ci([0,1])\comptensor\Der_{  \ell}(U)$ \cite[Theorem 44.1]{Treves}. We have a surjection $\Der(X_U)^G\to\Der_{  \ell}(U)$, hence 
$$
\ci([0,1])\comptensor\Der(X_U)^G\to\ci([0,1])\comptensor\Der_{\mathrm \ell}(U)
$$
 is surjective \cite[Proposition 43.9]{Treves}. 
 It follows that there is a smooth  family $A_t$ in $\Der(X_U)^G$ covering $D_t$.
\end{proof}

 Let $U\subset Q$ and let $\psi\colon U\to\psi(U)\subset Q$ be a strata preserving biholomorphic map.  We say that a $G$-diffeomorphism $\Psi\colon X_U\to X_{\psi(U)}$  inducing $\psi$ is a \emph{strict $G$-diffeo\-mor\-phism over $\psi$\/} if it induces a biholomorphism on reduced fibres. As before, when we say that $\Psi$ is  a strict $G$-diffeomorphism,  we mean that it is over (or induces) the identity on the quotients.
 We will prove Theorem \ref{thm:main3} using the following result.
 
 \begin{theorem}\label{thm:holomorphic-lift}
 Let $\psi$ be as above.
 Suppose that there is a strict $G$-diffeomorphism $\Psi\colon X_U\to X_{\psi(U)}$ over $\psi$ or that there is a strong $G$-homeomorphism $\Psi\colon X_U\to X_{\psi(U)}$ over $\psi$ and $X$ has the infinitesimal lifting property. Then for every $q_0\in U$ there is a neighbourhood $U'$ of $q_0$ and  a $G$-biholomorphism $\widetilde\Psi\colon X_{U'}\to X_{\psi(U')}$ inducing $\psi|_{U'}$.
 \end{theorem}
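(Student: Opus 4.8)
\textbf{Proof plan for Theorem \ref{thm:holomorphic-lift}.} The plan is to reduce to a standard neighbourhood, apply the fibrewise analysis of Section \ref{sec:strong} to straighten $\Psi$ along the central orbit, and then correct the remaining discrepancy by lifting a smooth family of vector fields on the quotient. First I would use the slice theorem to replace $X_U$ by a standard open set $S\times T_B$ around the closed orbit lying over $q_0$, as in Definition \ref{def:standard-neighbourhood}; this reduces everything to the local model $(\C^d\oplus W, H)$ where $H$ acts trivially on $\C^d$ and $W^H=0$. In the strong $G$-homeomorphism case, the matrix functions $a_{ij}(s,x)$ exist and are continuous; in the strict $G$-diffeomorphism case, the analogue of the results of Section \ref{sec:strong} (see Remark \ref{rem:fundam}) gives smooth $a_{ij}(s,x)$, holomorphic in $s$ and $x$ is not assumed, but the relevant derived data behaves smoothly.

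Next I would apply Corollary \ref{cor:fundam-with-S} (together with Corollary \ref{cor:homotopy-strong}): the scaling homotopy $\Psi_t$ connects $\Psi$ to a limit $\Psi_0 = (\Theta_0,\Phi_0)$ where $\Theta_0(s,x) = \Theta(s,x_0)$ is holomorphic and biholomorphic when restricted to $S_0\times\{x_0\}$, and where $\Phi_0(s,x) = \sigma(s)(x)$ for a continuous (resp.\ smooth) map $\sigma\colon S\to\Aut_\vb(T_W)^G$ into the complex algebraic group of $G$-vector bundle automorphisms. Thus after this homotopy, the only failure of holomorphy in $\Psi_0$ is concentrated in the dependence of $\sigma$ (and $\Theta_0$) on $s$; in other words $\Psi_0$ is a $G$-diffeomorphism (resp.\ $G$-homeomorphism) which is already fibrewise $G$-biholomorphic and, if $\psi$ is the identity, fixes the invariants $\O(S_0\times T_{B'})^G$. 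The task is now to deform $\sigma$ (through continuous, resp.\ smooth, maps into $\Aut_\vb(T_W)^G$) to a holomorphic one, and correspondingly to deform $\Theta_0$; since $\Aut_\vb(T_W)^G$ is a complex Lie group and $S_0$ is Stein, a continuous map $S_0\to\Aut_\vb(T_W)^G$ is homotopic to a holomorphic one by the Oka–Grauert principle, and a smooth one likewise. Composing with such a homotopy produces a $G$-biholomorphism $\widetilde\Psi\colon X_{U'}\to X_{\psi(U')}$ over a possibly smaller neighbourhood $U'$ of $q_0$.

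The remaining subtlety — and the step I expect to be the main obstacle — is to handle the non-product part of $\psi$ itself, i.e.\ the fact that $\psi$ need not fix $q_0$'s stratum structure in a way compatible with the product decomposition $Q \simeq S\times(B\sl H)$, and in the strong-homeomorphism case there is no holomorphy available along the $W$-directions to begin with. This is precisely where the hypothesis that $X$ has the infinitesimal lifting property enters: one wants to write $\psi$ (or the deformation of $\Psi_0$ towards holomorphy) as the time-one flow of a smooth family of strata-preserving vector fields $D_t$ on a neighbourhood in $Q$, lift each $D_t$ to a smooth $G$-invariant vector field $A_t$ on $X_{U}$ using Proposition \ref{prop:lift-family} (which requires the infinitesimal lifting property via Remark \ref{rem:llifting-property} and Lemma \ref{lem:lift-if-smooth-lift}), and integrate $A_t$ to a $G$-diffeomorphism correcting the quotient map. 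In the strict case one instead exploits that $\Psi$ is already a fibrewise biholomorphism on reduced fibres, so the strata-preserving condition is automatic and the lifting of the homotopy of automorphisms of $Q$ suffices without the infinitesimal lifting hypothesis. The delicate bookkeeping is to keep all these deformations inside the class of strict $G$-diffeomorphisms (resp.\ strong $G$-homeomorphisms) over the appropriate strata-preserving biholomorphisms, shrinking $U$ to $U'$ as needed at each stage so that flows are defined and the relevant matrices stay invertible.
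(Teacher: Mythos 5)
Your opening matches the paper: reduce to a standard open set $S\times T_B$ via the slice theorem, write $\psi=(\theta,\phi)$, run the scaling homotopy $\Psi_t$, and obtain $\Psi_0=(\Theta_0,\Phi_0)$ with $\Phi_0(s,x)=\sigma(s)(x)$ for a continuous (resp.\ smooth) $\sigma\colon S_0\to\Aut_\vb(T_W)^G$. The gap is in how you treat $\sigma$. You invoke the Oka--Grauert principle to deform $\sigma$ to a holomorphic map into the complex Lie group $\Aut_\vb(T_W)^G$ and then assert the resulting biholomorphism induces $\psi|_{U'}$. But an Oka--Grauert homotopy from $\sigma$ to a holomorphic $\tilde\sigma$ does not preserve the push-forward $p_*\sigma(s)=\phi^s_0$: the new map $(s,x)\mapsto(\Theta_0(s,x),\tilde\sigma(s)(x))$ induces $p_*\tilde\sigma$ on the quotient, which is generally not $\phi^s_0$, so it is not a lift of $\psi$. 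What is needed is a holomorphic lift of the \emph{given} family $\phi^s_0$ through the surjection of linear algebraic groups $p_*\colon\Aut_\vb(T_W)^G\to\Lambda$. The missing observation --- and the paper's key move here --- is that $\phi^s_0$ is automatically holomorphic in $s$ (and $q$): expanding $(\phi^s_t)^*y_i$ in $t$ and using that the limit as $t\to 0$ exists, one sees that the family $\phi^s_t(q)$ extends holomorphically to $t\in\C$, so $\phi^s_0$ is holomorphic; then local holomorphic sections of the algebraic morphism $p_*$ produce a holomorphic $\tilde\sigma$ with $p_*\tilde\sigma=\phi^s_0$ over a shrunken $S_0$. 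No Oka--Grauert argument is required, and one cannot substitute it for this. After composing with $\tilde\sigma^{-1}$, one reduces to $\phi^s_0=\Id$.

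Your final step is also too vague to stand on its own. Only after the normalization $\phi^s_0=\Id$ is $\phi_t$ a homotopy on a neighbourhood of $q_0$ starting at the identity, generated by a time-dependent strata-preserving vector field $D_t$; you cannot ``write $\psi$ as the time-one flow'' before then. And in the strict case, the reason the infinitesimal lifting property can be dropped is not merely that strata preservation is automatic. Rather, the normalized strict homotopy $\Phi_t$ is smooth, so its time-dependent vector field $B_t$ is a \emph{smooth} $G$-invariant lift of $D_t$, and Lemma \ref{lem:lift-if-smooth-lift} is precisely what upgrades existence of a smooth lift to existence of a holomorphic lift; Proposition \ref{prop:lift-family} then produces a smooth family $A_t\in\Der(X_{U'})^G$ covering $D_t$, whose time-one flow on a further-shrunken $X_{U''}$ is the desired $G$-biholomorphic lift of $\psi$. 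In the strong case the infinitesimal lifting property substitutes directly for that smooth lift. Your proposal cites Lemma \ref{lem:lift-if-smooth-lift} on the wrong side of this dichotomy and leaves the strict case essentially unjustified.
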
 
 
 Assume the theorem for the moment. Then we have:
 
 \begin{proof}[Proof of Theorem \ref{thm:main3}]
Let $q_0\in Q$. Then $X_{q_0}$ and $Y_{q_0}$ are $G$-isomorphic and by the slice theorem there is a neighbourhood $U$ of $q_0$ and a $G$-biholomorphism $\Psi\colon X_U\to Y_{\psi(U)}$ where $\Psi$ induces $\psi\colon U\to\psi(U)$ and $\psi(q_0)=q_0$.  Suppose that we have a strict $G$-diffeomorphism $\Psi'\colon X\to Y$. Then $(\Psi')\inv\circ\Psi$ is a strict $G$-diffeomorphism of $X_U$ with $X_{\psi(U)}$ over $\psi$. By Theorem \ref{thm:holomorphic-lift}  we can find a $G$-biholomorphism  $\widetilde\Psi\colon X_{U'}\to X_{\psi(U')}$ covering $\psi$ where $q_0\in U'$. Then $\Psi\circ\widetilde\Psi\inv$ is a $G$-biholomorphism of $X_{\psi(U')}$ with $Y_{\psi(U')}$ inducing the identity on $\psi(U')$.  Hence $X$ and $Y$ are locally $G$-biholomorphic over $Q$. One gets the same conclusion if   $\Psi'$ is a strong $G$-homeomorphism   and $X$ has the infinitesimal lifting property.
 \end{proof}
 
To prove Theorem \ref{thm:holomorphic-lift} we may assume that $X=S\times T_B$ is a standard open set (Definition \ref{def:standard-neighbourhood}).  Abusing notation a bit, we will let $Q$ denote $T_W\sl G$.  As before, let  $p_1,\dots,p_m$ be homogeneous generators of $\O_\alg(T_W)^G\simeq\O_\alg(W)^H$  and let $p=(p_1,\dots,p_m)\colon T_W\to \C^m$. Then    $p\colon T_W\to Q=\Im\ p\subset \C^m$ is the quotient mapping for $T_W$. We may assume that $q_0=(s_0,0)$ for some $s_0\in S$. Then $U$ is a neighbourhood of $q_0$ which is sent to the neighbourhood $\psi(U)$ of $q_0$ which we may assume still lies in $S\times Q_B$.  Let $F$ denote $G\times^H\NN(W)$.  As we go along we will keep shrinking $U$ which we can always take to be of the form $S_0\times U_0$ where $S_0$ is a neighbourhood of $s_0\in S$ and $U_0$ is a neighbourhood of $0\in Q$. Write $\psi=(\theta,\phi)$ as before.

\begin{lemma}
To prove Theorem \ref{thm:holomorphic-lift} we may reduce to the case that 
 $\theta(s,q)=s$ for $(s,q)\in U$.
\end{lemma}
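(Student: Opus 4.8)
The plan is to straighten out the $S$-component $\theta$ of $\psi$ by precomposing with a biholomorphism of a neighbourhood of $q_0$ that lifts trivially to a $G$-biholomorphism of $X=S\times T_B$, and to absorb this lift into $\Psi$.

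First I would identify the Luna stratum of $Q=S\times Q_B$ through $q_0=(s_0,0)$. At a closed orbit $(s,x)\in S\times T_B$ the slice representation is the direct sum of the trivial $G_x$-module $\C^d\cong T_sS$ and the slice representation of $x$ in $T_B$; hence the Luna strata of $Q$ are precisely $S\times(\text{Luna strata of }Q_B)$, and, since $W^H=0$, the smallest of these is $S\times\{0\}$, which is the stratum of $q_0$. As $\psi$ and $\psi\inv$ are strata preserving and $\psi(q_0)=q_0$, $\psi$ restricts near $q_0$ to a biholomorphism between open subsets of $S\times\{0\}$; in particular $\phi(s,0)\equiv0$ and $s\mapsto\theta(s,0)$ is biholomorphic near $s_0$, so the derivative of $\theta$ in the $S$-directions is invertible at $(s_0,0)$. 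By the holomorphic implicit function theorem (applied, if one wishes, to a holomorphic extension of $\theta$ across the singular locus of $Q_B$), after shrinking $U$ there is a holomorphic map $\delta$ on $U$ with $\delta(q_0)=s_0$ and $\theta(\delta(s,q),q)=s$; then $\chi(s,q)=(\delta(s,q),q)$ is a biholomorphism of $U$ onto a neighbourhood of $q_0$, with inverse $(s,q)\mapsto(\theta(s,q),q)$.

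Next I would lift $\chi$. Writing $p_B\colon T_B\to Q_B$ for the quotient map, the formula $X_\chi(s,x)=(\delta(s,p_B(x)),x)$ defines, on a $G$-saturated neighbourhood of $p\inv(q_0)$, a holomorphic $G$-equivariant map (only the $T_B$-factor carries the $G$-action, and $p_B$ is $G$-invariant) which induces $\chi$ and has holomorphic inverse $(s,x)\mapsto(\theta(s,p_B(x)),x)$; so $X_\chi$ is a $G$-biholomorphism over $\chi$, in particular a strict $G$-diffeomorphism and a strong $G$-homeomorphism over $\chi$. Replacing $(\psi,\Psi)$ by $(\psi\circ\chi,\Psi\circ X_\chi)$, the new inducing map is $(s,q)\mapsto(s,\phi(\delta(s,q),q))$, whose $S$-component is the projection; $\Psi\circ X_\chi$ still induces it, and is still a strict $G$-diffeomorphism (resp.\ strong $G$-homeomorphism) over it, because both properties persist under composition with a $G$-biholomorphism; and the infinitesimal lifting property is a property of $X$ alone, hence unchanged. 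Finally, if Theorem \ref{thm:holomorphic-lift} is known in the straightened case, it gives a $G$-biholomorphism $\widetilde\Psi$ over $\psi\circ\chi$ on a neighbourhood $U'$ of $q_0$, and then $\widetilde\Psi\circ X_\chi\inv$ is a $G$-biholomorphism over $\psi$ on a neighbourhood of $q_0$, as wanted. The only step requiring a genuine argument is the identification of the stratum of $q_0$ and the resulting invertibility of the $S$-derivative of $\theta$ at $(s_0,0)$; the rest is bookkeeping with compositions and successive shrinkings of neighbourhoods.
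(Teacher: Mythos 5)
Your proof is correct and follows essentially the same route as the paper: you construct the biholomorphism $\chi(s,q)=(\delta(s,q),q)$, which is precisely the paper's $\tilde\psi\inv$, lift it to $X_\chi(s,x)=(\delta(s,p_B(x)),x)$ exactly as the paper defines $\widetilde\Psi$, and absorb it by replacing $(\psi,\Psi)$ with $(\psi\circ\chi,\Psi\circ X_\chi)$. You merely spell out in more detail the identification of the stratum through $q_0$ as $S\times\{0\}$ and the implicit-function-theorem step that the paper leaves implicit.
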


\begin{proof} 
 Since $\psi$ is strata preserving, $\theta$ induces a biholomorphism of $S_0\times\{0\}$ into $S\times\{0\}$ fixing $(s_0,0)$. 
  Let  $\tilde\psi(s,q)=(\theta(s,q),q)$ for  $(s,q)\in U$. Then $\tilde\psi$ is   a biholomorphism in a neighbourhood of $(s_0,0)$ and $\tilde\psi\inv$ has the form $(s,q)\mapsto(\tilde\theta(s,q),q)$ where $\tilde\theta(s_0,0)=s_0$.  We have an equivariant biholomorphism  $\widetilde\Psi$ defined over a $G$-saturated neighbourhood $\Omega$ of $\{s_0\}\times F$ given by  $\widetilde\Psi(s,x)=(\tilde\theta(s,p(x)),x)$ for $(s,x)\in\Omega$. Then $\widetilde\Psi$ induces $\tilde\psi\inv$  so  we may replace $\psi$ by $\psi\circ \tilde\psi\inv$ to reduce  to the case that $\theta(s,\cdot)=s$. 
  \end{proof}
For $s\in S_0$  we now have that  $\phi(s,\cdot)$ is a  biholomorphism  defined on $U_0$  such that $\phi(s,0)=0$. We also denote $\phi(s,\cdot)$ by $\phi^s(\cdot)$.
We have a $\C^*$-action on $Q$ induced by the scalar $\C^*$-action on $T_W$. The action of $t\in\C^*$   sends $(q_1,\dots,q_m)\in Q$ to $(t^{e_1}q_1,\dots,t^{e_m}q_m)$ where $e_i$ is the degree of $p_i$, $i=1,\dots,m$.  
We have automorphisms $\phi_t^s(q)=t\inv\cdot\phi^s(t\cdot q)$ which we may assume are defined for $t\in(0,1]$, $s\in S_0$ and $q\in U_0$. We want to show that $\phi^s_0=\lim\limits_{t\to 0}\phi^s_t$ exists. If $\phi^s_0$ exists, then it commutes with the action of $\R^*$, hence also the action of $\C^*$, so we say that it is \emph{quasilinear}.   Let $\Aut_\ql(Q)$ denote   $\Aut(Q)^{\C^*}$, the group of quasilinear automorphisms of $Q$.  Since  $Q\sl\C^*$ is a point, $\Aut_\ql(Q)$   is a linear  algebraic group.  As before,  $\Aut_\vb(T_W)^G$ denotes the   group of $G$-vector bundle automorphisms of $T_W$.  Then any $\ell\in\Aut_\vb(T_W)^G$ is a $G$-biholomorphism which commutes with $\C^*$, hence it induces an element $p_*\ell\in\Aut_\ql(Q)$.

  Let $\gamma\colon  U_0\to\gamma(U_0)$ be a strata preserving biholomorphism (e.g., one of the $\phi^s$) and
 suppose that   $\gamma$ has a lift $\Gamma\colon p\inv(U_0)\to p\inv(\gamma(U_0))$ which is a   strict $G$-diffeomorphism  over $\gamma$. Define $\Gamma_t=t\inv\circ\Gamma\circ t$ where $t\in(0,1]$ acts by scalar multiplication on the fibres of $T_W$.  We may assume that $p\inv(U_0)$ is stable under the scalar action restricted to $[0,1]$. Since $\Gamma$ preserves the fibre over $q_0$, it preserves  the closed orbit $Z\simeq G/H$. Since $\Gamma$ is smooth, $\Gamma_0=\lim\limits_{t\to 0}\Gamma_t$ exists and is the normal derivative $\delta\Gamma$ of $\Gamma$ along $Z$.
 
 \begin{lemma}\label{lem:delta-strict}
Let $\Gamma$ be a strict $G$-diffeomorphism as above. Then $\delta\Gamma\in\Aut_\vb(T_W)^G$.
\end{lemma}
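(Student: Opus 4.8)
The plan is to show that $\delta\Gamma$, which we already know exists as a smooth $G$-vector bundle map of $T_W$ (being the normal derivative along $Z$ of the $G$-diffeomorphism $\Gamma$), is actually holomorphic, hence lies in the algebraic group $\Aut_\vb(T_W)^G$. Since $\delta\Gamma$ is $G$-equivariant and covers a map of $Z\simeq G/H$ to itself, it is determined by an $H$-equivariant linear map $W_e\to W_g$ for a suitable $g\in N_G(H)$; concretely, it is described by the limit matrix $L$ with entries $\lim_{t\to 0}a_{ij}^t(x_0)$ acting on standard generators, exactly as in Lemma \ref{lem:fundamental}(2)--(3). So the real content is: the limiting coefficients that describe $\delta\Gamma$ are holomorphic functions on the fibre, equivalently the matrix $L$ (together with its higher-degree analogues $b^0_{ij}$ of Corollary \ref{cor:fundam}) consists of \emph{holomorphic} invariants rather than merely continuous or smooth ones.

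First I would reduce to a standard open set $S\times T_B$ and, using the first hypothesis of Theorem \ref{thm:holomorphic-lift} as invoked here, assume $\Gamma$ is a strict $G$-diffeomorphism over $\gamma$; as in the setup preceding the lemma, $\Gamma_0=\lim_{t\to0}\Gamma_t=\delta\Gamma$ exists because $\Gamma$ is smooth. Then I would apply Lemma \ref{lem:fundamental} (and Corollary \ref{cor:fundam}): writing $\Gamma^*f_i=\sum b_{ij}f_j$ with the $b_{ij}$ built from the coefficient matrix $(a_{ij})$ via the polynomials $P_{uv}$ of Lemma \ref{lem:Puv}, we get $(\delta\Gamma)^*f_i=\sum b^0_{ij}f_j$ where $b^0_{ij}\in\O_\alg(T_W)^G_{d_i-d_j}$ — note these are \emph{algebraic/holomorphic} invariants precisely because the limit formulas in Lemma \ref{lem:limit-P-alpha-beta} land in $\O_\alg(T_W)^G$. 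This shows $(\delta\Gamma)^*$ maps the finite-dimensional span of the standard generators into holomorphic $G$-finite functions by a matrix of holomorphic invariants, i.e., $\delta\Gamma$ is a holomorphic (indeed algebraic) $G$-vector bundle automorphism of $T_W$.

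I would then need two small points. First, that $\delta\Gamma$ is invertible as a vector bundle map: this follows because $\Gamma$ restricts to a biholomorphism on reduced fibres, so on $F=G\times^H\NN(W)$ the linear part of $\Gamma$ along $Z$ is invertible, and by $\C^*$-equivariance (the scalar action) an endomorphism of $T_W$ invertible over the null cone fibre is invertible — alternatively, $\delta\Gamma$ has a two-sided inverse given by the normal derivative of $\Gamma\inv$, which is again a strict $G$-diffeomorphism. Second, that holomorphic $+$ $G$-equivariant $+$ fibrewise linear forces membership in the \emph{algebraic} group $\Aut_\vb(T_W)^G$; this is exactly the content cited in Corollary \ref{cor:Lvbreductive}, since $\Aut_\vb(T_W)^G$ is a linear algebraic group whose holomorphic sections over a point are its algebraic points. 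Putting these together gives $\delta\Gamma\in\Aut_\vb(T_W)^G$, and if $\gamma$ is the identity then by Lemma \ref{lem:fundamental}(3) $\delta\Gamma$ also fixes $\O_\alg(T_W)^G$.

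The main obstacle I anticipate is making rigorous the transition from ``$\delta\Gamma$ is a smooth $G$-vector bundle map whose action on generators is given by the limit matrix'' to ``that limit matrix is holomorphic''. The danger is that the individual coefficient functions $a_{ij}^t(x)$ need not converge as $t\to0$ for all $i,j$ (as noted right after Lemma \ref{lem:fundamental}), so one cannot argue naively coefficient by coefficient; the fix is precisely to pass to the well-chosen matrix $(b_{ij})$ of Lemma \ref{lem:Puv}/Corollary \ref{cor:fundam}, whose $t\to0$ limits $b^0_{ij}$ are forced into $\O_\alg(T_W)^G$ by the degree bookkeeping in Lemma \ref{lem:limit-P-alpha-beta}. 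Once that machinery is invoked the proof is short; the subtlety is just in citing it in the right form and checking that strictness of $\Gamma$ (as opposed to the strong-homeomorphism hypothesis) is what is being used here.
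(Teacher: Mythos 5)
Your approach has a genuine gap: Lemmas \ref{lem:helpfundam}, \ref{lem:fundamental}, \ref{lem:Puv}, \ref{lem:limit-P-alpha-beta}, and Corollary \ref{cor:fundam} are all stated and proved for \emph{strong $G$-homeomorphisms}, and their very starting point is that $\Gamma^*f_i=\sum a_{ij}f_j$ for a matrix $(a_{ij})$ of \emph{continuous $G$-invariant} functions. A strict $G$-diffeomorphism need not admit such a matrix at all — this is precisely the content of Example \ref{ex:not strong}, where $\Phi$ is a strict $G$-diffeomorphism but the would-be coefficient $b(t)=\bar t/|t|$ has no limit (let alone smooth extension) at $t=0$. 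So you cannot even get the $(a_{ij})$ off the ground, and consequently there is no $(b_{ij})$ to pass to either; the ``degree bookkeeping'' in Lemma \ref{lem:limit-P-alpha-beta} never becomes available. You flag the wrong subtlety: the issue is not whether the limit matrix is holomorphic, but whether the coefficient matrix exists in the required form for a merely strict $\Gamma$. It does not in general, which is why the paper proves Lemmas \ref{lem:delta-strict} (strict case) and \ref{lem:fundamental} (strong case) by separate arguments and then cites them both in Lemma \ref{lem:limit-t-to-0}.

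The paper's actual proof avoids this machinery entirely. Since $\Gamma$ is smooth, $\delta\Gamma$ exists as a \emph{real}-linear $H$-equivariant map $W_e\to W_g$, and the only issue is complex linearity. The paper splits $W=W_1\oplus W_2$ with $W_2=W^{H^0}$ and $W_1$ an $H$-stable complement; any real $H$-equivariant map preserves this splitting. On $W_1$, complex linearity follows because $W_1$ is the Zariski tangent space of $\NN(W)_\red$ at $0$ and $\Gamma$ is biholomorphic on $G\times^H\NN(W)_\red$ (this is exactly where strictness — biholomorphy on \emph{reduced} fibres — is used). On $W_2$, where $H$ acts through a finite group, the paper observes that over the open dense set of principal $H_2$-orbits the tube is a fibre bundle over a Luna stratum, $\Gamma$ is equivariant and covers the holomorphic $\gamma$, hence is holomorphic there, and this forces complex linearity of $\delta\Gamma$ on $W_2$ as well. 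Your proposal does not touch either of these points and instead reaches for tools that presuppose the stronger hypothesis.
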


\begin{proof}
We have to show that $\delta\Gamma$ is complex linear. Now $\Gamma([e,0])=[g,0]$ for some $g\in N_G(H)$. Let $W_e$ and $W_g$ denote the fibres of $T_W$ at $[e,0]$ and $[g,0]$, respectively.   Then $\delta\Gamma$ restricts to an $H$-equivariant real linear map of $W\simeq W_e$ to $W_g$   where the $H$-action on   $W_g$ is twisted: $h\cdot [g,w]=[g,(g\inv h g) w]$. Write $W_e\simeq W=W_1\oplus W_2$ where $W_2$ is the fixed space of $H^0$ and $W_1$ is an $H$-stable complement to $W_2$. We have a corresponding direct sum decomposition  of $W_g$.  Any $H$-equivariant real linear map of $W_e$ to $W_g$ has to preserve the direct sum decompositions.  It is easy to see that the Zariski tangent space of $\NN(W)_\red$ at $0$  is $W_1$.  Since $\Gamma$ restricted to $G\times^H\NN(W)_\red$ is   $G$-equivariant and biholomorphic,  $\delta\Gamma$ is complex linear on $W_1$. Hence we only need to show that $\delta\Gamma$ is complex linear on $W_2$. Let $H_2$ denote the image of $H$ in $\GL(W_2)$. Then $H_2$ is finite and the set of principal orbits $W_{2,\pr}$ relative to the $H_2$-action is open and dense in $W_2$. Let $H_0$ denote the kernel of $H\to H_2$ and let $S$ denote the stratum of $Q$ corresponding to $H_0$. Then $W_{2,\pr}\to S$ is a covering map onto its (open) image $U$ and $G\times^HW_{2.\pr}\to U$ is a fibre bundle with fibre $G/H_0$. Then $\Gamma$ is holomorphic on $G\times^HW_{2,\pr}$ since it is equivariant and covers the holomorphic map $\gamma$ restricted to $U$. It follows that $\delta\Gamma$ is complex linear on $W_2$. Hence $\delta\Gamma\in\Aut_\vb(T_W)^G$.
\end{proof}

\begin{remark}\label{rem:strict}
Our definition of strict $G$-diffeomorphism is more general than the one in \cite{KLS}. The 
main use of strictness in \cite{KLS} is to show that if  $\Gamma$ is strict, then $\delta\Gamma$ is complex linear. Using Lemma \ref{lem:delta-strict} one can substitute our definition of strict $G$-diffeomorphism for the one  in   \cite{KLS} and obtain the same theorems.
\end{remark}

    \begin{lemma} \label{lem:limit-t-to-0}
 Let $\gamma$ be one of the $\phi^s$,  $s\in S_0$.
 Suppose that     $\Gamma\colon p\inv(U_0)\to p\inv(\gamma(U_0))$   is a   strict $G$-diffeomorphism  over $\gamma$  or a strong $G$-homeomorphism over $\gamma$.  Then $\gamma_0=\lim\limits_{t\to 0}\gamma_t$ exists and is an element of $p_*(\Aut_\vb(T_W)^G)$.
 \end{lemma}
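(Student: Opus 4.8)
\textbf{Proof proposal for Lemma \ref{lem:limit-t-to-0}.}

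The plan is to reduce both cases to the results already established earlier in Section \ref{sec:strong} (Lemma \ref{lem:fundamental}, Corollary \ref{cor:fundam}) and, in the strict case, to Lemma \ref{lem:delta-strict}. First I would handle the strong $G$-homeomorphism case. Here $\gamma$ is one of the $\phi^s$, so we are looking at the single map $\Gamma\colon p\inv(U_0)\to p\inv(\gamma(U_0))$, a strong $G$-homeomorphism over $\gamma$, and we want $\gamma_0=\lim_{t\to 0}\gamma_t$ to exist and to lie in $p_*(\Aut_\vb(T_W)^G)$. Since $\Gamma$ preserves the closed orbit $Z\simeq G/H$ over $q_0$, it preserves the null-cone tube $F=G\times^H\NN(W)$ and fixes the base point structure, so the hypotheses of Lemma \ref{lem:fundamental} (with $S$ trivial, or equivalently of Corollary \ref{cor:fundam-with-S}) are met. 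That gives a normal derivative $\delta\Gamma\in\Aut_\vb(T_W)^G$ along $Z$ and, by part (4) of Lemma \ref{lem:fundamental} together with Corollary \ref{cor:fundam}, the rescaled maps $\Gamma_t$ converge to $\delta\Gamma$ uniformly on compact subsets as $t\to 0$. Applying $p_*$ and using that $p_*$ is continuous for the relevant topologies (it is just restriction of invariant functions), the automorphisms $\gamma_t=p_*\Gamma_t$ converge to $p_*(\delta\Gamma)$; since $\delta\Gamma\in\Aut_\vb(T_W)^G$, the limit $\gamma_0=p_*(\delta\Gamma)$ lies in $p_*(\Aut_\vb(T_W)^G)$, as required. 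One subtlety to spell out: $\gamma_t$ as defined is $t\inv\cdot\phi^s(t\cdot(\,\cdot\,))$ on the quotient $Q$, and one must check that this equals $p_*$ applied to $\Gamma_t=t\inv\circ\Gamma\circ t$ on $T_W$, which holds because $\Gamma$ induces $\gamma$ and the scalar actions on $T_W$ and $Q$ are compatible by construction.

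Next I would treat the strict $G$-diffeomorphism case. Here $\Gamma$ is smooth, so $\Gamma_t=t\inv\circ\Gamma\circ t$ extends smoothly to $t=0$ and $\Gamma_0=\lim_{t\to 0}\Gamma_t=\delta\Gamma$, the normal derivative of $\Gamma$ along $Z$, by a routine Taylor-expansion argument (this is already asserted in the paragraph preceding Lemma \ref{lem:delta-strict}). By Lemma \ref{lem:delta-strict}, $\delta\Gamma\in\Aut_\vb(T_W)^G$. Applying $p_*$ and again using its continuity, $\gamma_t=p_*\Gamma_t\to p_*(\delta\Gamma)=p_*\Gamma_0$, so $\gamma_0$ exists and lies in $p_*(\Aut_\vb(T_W)^G)$.

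I expect the only real work to be bookkeeping: verifying that the scalar $[0,1]$-action stabilises the (shrunken) saturated neighbourhoods $p\inv(U_0)$ so that $\Gamma_t$ is well defined for all $t\in(0,1]$, and checking the compatibility of the two scalar actions so that $p_*\Gamma_t=\gamma_t$. The substantive inputs — convergence of $\Phi_t$ to the normal derivative in the strong case, and complex-linearity of $\delta\Gamma$ in the strict case — are already available as Lemma \ref{lem:fundamental}, Corollary \ref{cor:fundam}, and Lemma \ref{lem:delta-strict}, so the proof is essentially an application of those plus continuity of $p_*$. The main conceptual point worth stating clearly is that in \emph{both} cases the limit is realised by a genuine $G$-vector-bundle automorphism $\delta\Gamma$ of $T_W$, so that $\gamma_0$ automatically sits in the algebraic group $p_*(\Aut_\vb(T_W)^G)\subset\Aut_\ql(Q)$.
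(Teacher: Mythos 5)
Your proposal is correct and follows essentially the same route as the paper: in both the strict and strong cases, one invokes Lemma \ref{lem:delta-strict} and Lemma \ref{lem:fundamental} to conclude $\Gamma_t\to\delta\Gamma\in\Aut_\vb(T_W)^G$, and then applies $p_*$ (equivalently, post-composes with $p$) to obtain $\gamma_0=p_*(\delta\Gamma)$. The paper's proof is terser but uses exactly this argument.
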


 \begin{proof}   
Let $\Gamma_t$ and $\gamma_t$ be as above. Then $\Gamma_t$ covers $\gamma_t$ and by Lemmas  \ref{lem:delta-strict} and \ref{lem:fundamental}, $\lim\limits_{t\to 0}\Gamma_t= \delta\Gamma\in\Aut_\vb(T_W)^G$.
Then $p\circ \delta\Gamma$ is the limit of $p\circ \Gamma_t$ as $t\to 0$, hence $\gamma_0$ exists and is covered by $\delta\Gamma$. 
 \end{proof}
 
\begin{proof}[Proof of Theorem \ref{thm:holomorphic-lift}] We have reduced to the case that $\Psi(s,x)=(s,\Phi(s,x))$ for $(s,x)$ in the set $S_0\times U_0$. Let $y_1,\dots,y_m$ be the coordinate functions on $\C^m$. Then near $0\in Q$,
$$
(\phi^s_t)^*y_i=\sum_{\alpha\in\N^m} t^{-e_i}t^{|\alpha|}c_\alpha(s)y^\alpha
$$
where the $c_\alpha(s)$ are holomorphic in $s\in S_0$ and $|\alpha|=\sum e_i\alpha_i$ for $\alpha=(\alpha_1,\dots,\alpha_m)\in\N^m$. Since $\lim\limits_{t\to 0}\phi^s_t$ exists, we must have that $\sum_{|\alpha| =e}c_\alpha(s) y^\alpha$ vanishes on $Q$ for each $e<e_i$. Hence we can throw away these terms and reduce to a sum over the $\alpha$ with $|\alpha|\geq e_i$. Thus the family  $\phi^s_t(q)$  is holomorphic where defined in $s$, $q$ and $t\in\C$. Now $p_*\colon\Aut_\vb(T_W)^G\to\Aut_\ql(Q)$ is a homomorphism of linear algebraic groups, hence the image $\Lambda$ is an algebraic subgroup   of $\Aut_\ql(Q)$.  
 By Lemma  \ref{lem:limit-t-to-0}, the $\phi^s_0$ are in $\Lambda$. The surjection   $p_*\colon \Aut_\ql(T_W)^G\to\Lambda$ has local holomorphic sections, so we can   lift the 
holomorphic family $\phi^s_0$ near $s_0\in S_0$ to a holomorphic family in $\Aut_\vb(T_W)^G$. Hence, shrinking $S_0$, we can  reduce to the case that  $\phi^s_0$ is the identity for all $s\in S_0$. Then we have 
a  homotopy $\phi_t$ with domain  $U=S_0\times U_0$, starting at the identity. 
 Let 
 $$
 \Delta=\{(t,s,q)\in [0,1]\times U\mid(s,q)\in \phi_t(U)\}.
 $$ 
 Then $\Delta$ is open in $[0,1]\times U$ and contains $[0,1]\times \{(s_0,0)\}$. Let $U'=S_0'\times U_0'$ be a neighbourhood of $(s_0,0)$ such that $[0,1]\times U'\subset\Delta$. Our local  
homotopy $\phi_t$ is obtained by integrating a time dependent vector field $D_t$ and the domain of $D_t$ contains $[0,1]\times U'$. 
 Since the family $\phi_t$ is strata preserving, the $D_t$ are strata preserving. If $X$ has the infinitesimal lifting property, then Proposition \ref{prop:lift-family} shows that we have a smooth family $A_t\in \Der(X_{U'})^G$ which  lifts $D_t$.
 
 In the case that we have a strict $G$-diffeomorphism $\Psi(s,\cdot)=(s,\Phi(s,\cdot))$ covering $\psi$, the family $\Phi^s_t(x)$ is smooth in $t$,  $s$ and $x$ and covers $\phi^s_t$.  Since each $\Phi^s_0$ is in $\Aut_\vb(T_W)^G$ and $\phi^s_0$ is the identity, we may replace each $\Phi^s$ by $(\Phi^s_0)\inv\circ\Phi^s$ in which case we have reduced to the case that $\Phi_t$ is   a smooth 
 homotopy of strict $G$-diffeomorphisms over $\phi_t$ starting at the identity. We have $\Phi_t(X_{U'})\subset X_U$ for $t\in[0,1]$ where $U'$ is as above. Now $\Phi_t$ is obtained by integrating  a smooth   time dependent vector field  $B_t$, and $B_t$  is defined on $[0,1]\times X_{U'}$. Moreover, $B_t$ lifts $D_t$ for each $t$. By Lemma \ref{lem:lift-if-smooth-lift} each $D_t$ is holomorphically liftable, and by Proposition \ref{prop:lift-family} we can find a smooth family $A_t\in\Der(X_{U'})^G$ which lifts $D_t$.
 
Let 
$$
\Delta'=\{(t,s,q)\in[0,1]\times U'\mid\phi_t(s,q)\in U'\}.
$$
 Then $\Delta'$ is again a neighbourhood of $[0,1]\times\{(s_0,0)\}$ and we can find a neighbourhood $U''$ of $(s_0,0)$ such that $\phi_t(U'')\subset  U'$ for all $t\in[0,1]$. Starting at any point $(s,q)\in U''$, the flow  of $D_t$ at time 1 gives $\phi(s,q)$.  By \cite[proof of Theorem 3.4]{Schwarz2014} or \cite[Lemma 3.1, Corollary 3.3]{KLSb} the flow 
 $\widetilde\Phi_t(s,x)$ of $A_t$ exists  for $(s,x)\in X_{U''}$ and $t\in[0,1]$.
Then $(s,x)\mapsto  (s, \widetilde\Phi_1(s,x))$ is a lift of $\psi$ to a  $G$-biholomorphism of $X_{U''}$ with $X_{\psi(U'')}$.   
 \end{proof}

Now that we have Theorem 1.3, so that we may assume that  $X$ and $Y$ are locally $G$-biholomorphic over $Q$, we can give a cohomological interpretation of $Y$ and $G$-biholomorphisms over $\id_Q$. Let $\A$ be the sheaf of groups on $Q$ such that $\A(U)=\Aut_U(X_U)^G$ for $U$ open in $Q$. Let 
$\{U_i\}$ be an open cover of $Q$ such that we have $G$-biholomorphisms $\Psi_i\colon X_{U_i}\to Y_{U_i}$, inducing the identity on $U_i$. Let $\Phi_{ij}=\Psi_i\inv\circ\Psi_j\in\A(U_i\cap U_j)$. Then $\Phi_{ij}$ is a cocycle giving an element of $H^1(Q,\A)$.  If $Y'$ is also locally $G$-biholomorphic to $X$ over $Q$ and has cocycle $\Phi'_{ij}$, then $Y$ and $Y'$ are $G$-biholomorphic over $Q$ if and only if the two cocycles represent the same cohomology class. Given a cocycle $\Phi_{ij}$, one constructs a complex $G$-manifold $Y$ by glueing $X_{U_i}$ and $X_{U_j}$ over $U_i\cap U_j$ using $\Phi_{ij}$. However, it is not clear that $Y$ is Stein.

\begin{theorem}\label{thm:Y-is-Stein}
Let $Y$ be a complex $G$-manifold locally $G$-biholomorphic to $X$ over $Q$. Then $Y$ is Stein. Hence there is a one-to-one correspondence  between isomorphism classes of Stein $G$-manifolds locally 
$G$-biholomorphic to $X$ over $Q$ and the cohomology set $H^1(Q,\A)$.
\end{theorem}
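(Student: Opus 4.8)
The plan is to equip $Y$ with a quotient map to $Q$, to recognise $Y$ as a reductive (analytic Hilbert) quotient over the Stein base $Q$ and deduce from this that $Y$ is Stein, and finally to read off the cohomological classification. \emph{Construction of the quotient map.} Fix an open cover $\{U_i\}$ of $Q$ and $G$-biholomorphisms $\Psi_i\colon X_{U_i}\to Y_{U_i}$ inducing $\Id_{U_i}$, and put $\Phi_{ij}=\Psi_i\inv\circ\Psi_j\in\A(U_i\cap U_j)$. Because each $\Phi_{ij}$ induces the identity on the base, the holomorphic maps $p_X|_{X_{U_i}}\circ\Psi_i\inv\colon Y_{U_i}\to U_i$ agree on overlaps and glue to a surjective $G$-invariant holomorphic map $\pi_Y\colon Y\to Q$. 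Since isomorphisms of sheaves are local and $p_X^*\colon\O_Q|_{U_i}\to(p_{X*}\O_X)^G|_{U_i}$ is an isomorphism (Section \ref{sec:background}), so is $\pi_Y^*\colon\O_Q\to(\pi_{Y*}\O_Y)^G$; thus $\pi_Y$ realises $Q$ as the categorical quotient of $Y$ and each fibre $\pi_Y\inv(q)$ is $G$-biholomorphic to the affine $G$-variety $X_q$. Separation of disjoint $G$-stable closed subsets also passes from the $U_i$ to $Q$ (disjoint $G$-stable closed sets are already disjoint in each $Y_{U_i}$, so have disjoint images in each $U_i$, hence in $Q$), so $\pi_Y$ is, globally, a reductive quotient map.

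\emph{Stein-ness of $Y$ --- the crux.} Two routes are available. Conceptually, $\pi_Y\colon Y\to Q$ is a reductive quotient map whose base $Q$ is Stein, and the converse of the statement that categorical quotients of Stein $G$-spaces are Stein (\cite{Snow}; see Section \ref{sec:background}) gives that $Y$ is Stein. For a self-contained argument one checks directly that $Y$ is holomorphically separable, holomorphically immersed by its global functions, and holomorphically convex. Separability: two points of $Y$ in distinct fibres are separated by $\pi_Y^*\O(Q)$ since $Q$ is Stein; for two points of a single fibre $\pi_Y\inv(q)\cong X_q$ one uses that, for each irreducible nontrivial $G$-module $V$, the sheaf $U\mapsto\O(Y_U)_V$ on $Q$ is a coherent $\O_Q$-module --- it is locally finitely generated, by a standard set of generators (Section \ref{sec:strong}), over the coherent structure sheaf $\O_Q$ --- hence globally generated by Cartan's Theorem A; summing over the finitely many $V$ occurring in $\O_\alg(X_q)$, the global sections of $\O_\gf(Y)$ restrict onto $\O_\alg(X_q)$, which separates the points of the fibre. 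The same coherent sheaves, together with the slice theorem, give the immersivity. Holomorphic convexity is the genuine point: the $\O(Y)$-hull of a compact $K\subset Y$ maps under $\pi_Y$ into a compact subset of $Q$, so it remains to rule out escape to infinity inside the (non-compact) fibres; this is exactly where mere local Stein-ness of $\pi_Y$ over the Stein base $Q$ would be insufficient --- Serre's example of a non-Stein fibre bundle with Stein base and fibre shows as much --- and one uses here that the fibres are affine and that the globally generated isotypic sheaves restrict, in a standard neighbourhood, to the slice coordinates, so that escape inside a fibre is detected by elements of $\O_\gf(Y)$. I expect this convexity step to be the main obstacle.

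\emph{The cohomological classification.} Once it is known that every complex $G$-manifold locally $G$-biholomorphic to $X$ over $Q$ is Stein, the final assertion is the usual bookkeeping for twisted forms. A Stein $G$-manifold $Y$ locally $G$-biholomorphic to $X$ over $Q$ determines, via a choice of trivialisations $\Psi_i$, a cocycle $(\Phi_{ij})\in Z^1(\{U_i\},\A)$; a change of trivialisations replaces it by a cohomologous cocycle, so the class $[\Phi_{ij}]\in H^1(Q,\A)$ is well defined; two such $Y$, $Y'$ are $G$-biholomorphic over $Q$ if and only if their classes agree; and every class is attained, since any cocycle may be used to glue the $X_{U_i}$ into a complex $G$-manifold (Hausdorff because the common map to $Q$ separates points in distinct fibres while points of a common fibre lie in a single $Y_{U_i}$), which is Stein by the previous step. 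This gives the desired bijection.
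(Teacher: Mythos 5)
Your first route rests on a theorem that does not exist. The cited result says that the categorical quotient of a \emph{Stein} $G$-space is Stein; there is no converse asserting that a complex $G$-manifold $Y$ with a $G$-invariant surjection $\pi_Y\colon Y\to Q$, locally a quotient map over a Stein base, must itself be Stein. That assertion is essentially what the present theorem claims, and it requires a genuine argument. The Serre-type phenomenon you yourself invoke when discussing convexity (Stein base and Stein fibre need not give a Stein total space) already shows why no such formal ``converse'' could be expected.

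Your second route is the right one and is in fact the paper's, but it stops exactly where the work begins. You correctly set up the coherent isotypic sheaves $U\mapsto\O(Y_U)_V$, invoke Cartan's Theorem A to obtain $G$-finite $f_1,\dots,f_m\in\O_\gf(Y)$ restricting to algebra generators of $\O_\alg(Y_q)$, and flag holomorphic convexity as the crux --- but then you only gesture at it. The missing step is this: shrink to a Runge neighbourhood $U$ of $q$ with $Y_U\simeq X_U$, so that $\O(Q)[f_1,\dots,f_m]$ is dense in $\O(Y_U)$ (whence $\O(Y)$ is dense in $\O(Y_U)$, which gives separability and local coordinates); choose a relatively compact $U'\ni q$ in $U$ and $h_1,\dots,h_n\in\O(Q)$ so that $\rho=(h_1,\dots,h_n,f_1,\dots,f_m)\colon p_Y\inv(\overline{U'})\to\C^{n+m}$ is a closed embedding. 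Now given compact $K\subset Y$ and a sequence $y_\nu\in\widehat K$, pass to a subsequence with $p_Y(y_\nu)\to q$ for such a $q$; the $f_i$ and $h_j$ are bounded on $K$, hence on $\widehat K$, so along a further subsequence $\rho(y_\nu)$ converges, forcing $y_\nu$ to converge to a point of $Y_q$. Thus $\widehat K$ is compact and $Y$ is Stein. This closed-embedding-plus-subsequence argument is precisely the content of what you call ``the main obstacle,'' and your proposal does not supply it.
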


\begin{proof}
Consider the sheaf of $\O_Q$-modules given by $U\mapsto\O (Y_U)_V$ where $V$ is an irreducible $G$-module. The sheaf is coherent since it is locally isomorphic to the corresponding sheaf of $G$-finite holomorphic functions on $X$. 

Let $q\in Q$ and let   $f_1,\dots,f_m$ be $G$-finite functions on $Y$ whose restrictions to $Y_q$ generate $\O_\alg(Y_q)$. Then the $f_i$ generate the $G$-finite functions on an open set $Y_U$, as a module over $\O(U)$, where $U$ is a sufficiently small 
neighbourhood of $q$.   Shrink $U$ so that it is Runge in $Q$ and so that $Y_U\simeq X_U$. The $G$-finite functions on $Y_U$ are dense in $\O(Y_U)$. Since the $G$-finite functions are generated by the $f_i$ as $\O(U)$-algebra and since $\O(Y)^G=\O(Q)$ is dense in $\O(U)$, the algebra $\O(Q)[f_1,\dots,f_m]$ is dense in $\O(Y_U)$. Hence $\O(Y)$ is dense in $\O(Y_U)$. Moreover, for $U'$ a relatively compact 
 neighbourhood of $q$ in $U$, there are $h_1,\dots,h_n$ in $\O(Q)$ such that the mapping 
$$
\rho=(h_1,\dots,h_n, f_1,\dots,f_m)\colon p_Y\inv(\overline{U'})\to \C^{n+m}
$$
 is a closed embedding.

Let $K\subset Y$ be compact.  Let $y_n\in\widehat K$, the holomorphically convex hull of $K$ in $Y$. 
Then $\{p_Y(y_n)\}$ has a convergent subsequence, so we can assume that $p_Y(y_n)\to q$ where $q$ sits inside our open set $U'$ as above. The $f_i$ are bounded on $K$, hence the $f_i(y_n)$ are bounded, which implies that there is a subsequence of 
 $\{y_n\}$ such that $f_i(y_n)$ converges for all $i$. We already know that the $h_j(y_n)$ converge. Hence $\rho(y_n)$ converges which implies that $y_n$ converges to a point in $Y_q$. Hence $\widehat K$ is compact and 
 it follows that $Y$ is Stein.
\end{proof}

 Let $\Psi\colon S\times T_B\to S\times T_B$ be a strong $G$-homeomorphism  or strict 
 $G$-diffeomor\-phism. Then $\Psi(s,x)=(s,\Phi(s,x))$ where $\Phi(s,x)\colon S\times T_B\to T_B$. From now on we will identify $\Psi$ with the family $\Phi(s,\cdot)$ and will say that $\Phi$ is a strong $G$-homeomorphism (or strict $G$-diffeomorphism).

    \section{Sections of type $\F$}\label{sec:typeF} In Theorem \ref{thm:main4} we are assuming that $X$ and $Y$ are locally $G$-biholomorphic over $Q$ and that there is a strict $G$-diffeomorphism $\Phi\colon X\to Y$  or that there is a strong $G$-homeomorphism $\Phi\colon X\to Y$. 
  In this section we define the notion of a $G$-diffeomorphism from $X$ to $Y$ of type $\F$.   In Section \ref{sec:reductiontoF} we show that our (strict or strong) $\Phi$ is homotopic to a $G$-diffeomorphism of type $\F$. In Sections \ref{sec:NHC} and \ref{sec:Grauert} we will show that any $\Phi$   of type $\F$  can be  deformed to a $G$-biholomorphism.  In this section  we investigate the local structure of $G$-diffeomorphisms of type $\F$ and vector fields of type $\LF$ (defined below). The main result   is Theorem \ref{thm:Dexists} which says that a $G$-diffeomorphism of type $\F$, which is sufficiently close to the identity over a neighbourhood $U$ of a compact subset $K\subset Q$, has a canonically associated   vector field $D=\log \Phi$  of type $\LF$, defined over a smaller 
  relatively compact neighbourhood $U'$ of $K$, such that $\exp D=\Phi$ over $U'$. The definition of $\log\Phi$ depends upon a choice of standard generating set for $\O_\gf(X_U)$ (Definition \ref{def:standard-generators2}). We will use Theorem \ref{thm:closed} of the next section which says  that the vector fields of type $\LF$   are closed in the space of smooth vector fields on $X$ and that, for any irreducible $G$-module $V$, the $\ci(X)^G$-module    generated by    $\O(X)_V$ is closed in $\ci(X)$.

\begin{definition}\label{def:typeF}
Let $\Phi\colon X\to Y$ be a  $G$-diffeomorphism inducing $\Id_Q$.
We say that \emph{$\Phi$ is of type $\F$\/} if for every $x_0\in X$ there is a $G$-saturated neighbourhood $U$ of $x_0$
and a map $\Psi\colon  U\times U \to Y$ such that:
\begin{enumerate}
\item For $x\in U$ fixed, $\Psi(x,y)$ is a biholomorphic $G$-equivariant map of $\{x\}\times U$ into $Y$, inducing the identity on the quotient.
\item $\Psi$ is smooth in $x$ and $y$ and $G$-invariant in $x$.
\item $\Phi(x)=\Psi(x,x)$, $x\in U$.
\end{enumerate}
We call $\Psi$ a \emph{local holomorphic extension of $\Phi$.}
\end{definition}
Note that if $\Phi$ is holomorphic, then it is of type $\F$ by setting $\Psi(x,y)=\Phi(y)$. A $G$-diffeomorphism  of  $X$ of type $\F$ is obviously strict, and in Proposition \ref{prop:aij-exist} we will see that it is also strong.
For an open subset $U\subset Q$ let $\F(U)$ denote the $G$-diffeomorphisms of type $\F$ on $X_U$. Then $\F$ is a sheaf of groups. The corresponding sheaf of Lie algebras is the sheaf of smooth vector fields of type $\LF$,  as follows. Let $\Der_Q(X)^G$ denote  the Lie algebra of holomorphic $G$-invariant vector fields on $X$ which annihilate $\O(X)^G$. They are the kernel of the push-forward mapping $p_*\colon \Der(X)^G\to\Der(Q)$. Define $\ci(X)^G\cdot\Der_Q(X)^G$ to be the Lie algebra of vector fields on $X$ which are locally finite sums $\sum a_i(x)A_i(x)$ where the $A_i$ are in $\Der_Q(X)^G$ and the $a_i$ are locally defined $G$-invariant smooth  functions.  
\begin{definition}\label{def:typeLF}
A   vector field \emph{$D$ is of type $\LF$\/} if $D\in\ci(X)^G\cdot\Der_Q(X)^G$. 
\end{definition}

For $U\subset Q$ an open set, we define $\LF(U)$ to be the Lie algebra of vector fields of type $\LF$ on $X_U$. If $D\in\LF(U)$, then
$D$ is locally of the form $\sum a_i(x)A_i(x)$, and we automatically get a family $D(x,x')=\sum a_i(x)A_i(x')$ where for $x$ fixed, $D(x,x')$ is an element of $\Der_Q(X)^G$. This hints  why the sections of $\LF$ should be thought of as the Lie algebra of the sections of $\F$.

\begin{remark}
Since $\LF(Q)$ is closed in the space of $\ci$  vector fields on $X$ (Theorem \ref{thm:closed}), it is a Fr\'echet space. We doubt that the analogous result is true if we replace $\ci$ by $\CC$ in Definition \ref{def:typeLF}. This explains our need for smoothness assumptions.
\end{remark}

Recall that a vector field $D$  is \emph{complete\/} if the flow $\exp(tD)$ exists for all $t\in\R$.
\begin{lemma}\label{lem:LFcomplete}
Let $D$ be a $G$-invariant smooth vector field on $X$ which is tangent to the fibres of $p\colon X\to Q$. Then $D$ is complete. In particular,   vector fields   of type $\LF$ are complete. 
\end{lemma}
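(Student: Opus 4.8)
The plan is to show completeness by establishing that the flow cannot escape to infinity in finite time, using the fact that $D$ preserves the (proper) fibres of $p$. First I would observe that since $D$ is $G$-invariant and tangent to the fibres, the flow $\exp(tD)$, wherever it is defined, preserves each fibre $X_q = p\inv(q)$: indeed, for any $G$-invariant holomorphic function $f$ (equivalently $f = p^*h$ with $h \in \O(Q)$) we have $D(f) = 0$, so $f$ is constant along integral curves, which means each integral curve stays inside a single fibre $X_q$. Moreover $D$ is $G$-equivariant, so it descends to a smooth vector field on the quotient $X_q/\!\!/G$, but more to the point each $X_q$, being a fibre of the quotient map of a Stein $G$-manifold, is an affine $G$-variety, hence in particular a closed analytic subvariety of (a tube, hence locally of) some $\C^N$, and critically each fibre contains a unique closed orbit and is itself closed in $X$.

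Next I would use the standard criterion: an integral curve $\gamma(t)$ of a smooth vector field on a manifold defined on a maximal interval $[0,T)$ with $T < \infty$ must eventually leave every compact set. So suppose for contradiction that the maximal flow through some $x_0$ is defined only on $[0,T)$ with $T$ finite. The curve $\gamma(t) = \exp(tD)(x_0)$ lies entirely in the single fibre $X_{q}$, $q = p(x_0)$. The key point is then to exhibit a proper exhaustion of $X_q$ that is preserved, or at least controlled, by the flow. Here I would invoke that $X_q$ is affine: choose $G$-finite coordinate functions $f_1,\dots,f_m$ on $X_q$ (restrictions of $G$-finite functions on $X$, using the structure recalled in Section \ref{sec:strong}) giving a closed embedding of $X_q$ into some $\C^m$, and let $r(x) = \sum_i |f_i(x)|^2$ be the resulting proper exhaustion function on $X_q$. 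Along the integral curve, $\frac{d}{dt} r(\gamma(t)) = (Dr)(\gamma(t))$, and I would want to bound this. The cleanest route: average $r$ over the maximal compact $K$ of $G$ to make it $K$-invariant, and note that on the fibre $X_q$ the function $D$ applied to $f_i$ is again a $G$-finite function on $X_q$, so $Dr$ is continuous on $X_q$; but continuity alone is not enough for a finite-time bound.

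The honest approach — and I expect this is what the authors do — is to reduce to the local model via the slice theorem and argue there. Cover a neighbourhood of the compact closed orbit in $X_q$ (there is a unique closed orbit $G/H$ in the fibre, and it is compact modulo $G$, i.e.\ of the form $G/H$ with $H$ reductive) by a standard tube $T_B = G\times^H B$; on such a tube the fibre is $G\times^H \NN(W)$, and $D$, being $G$-invariant and fibre-tangent, corresponds to an $H$-invariant vector field on $\NN(W)$ vanishing appropriately, and its flow is linear-in-time controllable because the relevant algebra is polynomial. More precisely, $D$ restricted to $X_q$ annihilates all invariants, so in the coordinates $f_i$ of a standard generating set, $Df_i$ is a polynomial (algebraic!) function on $X_q$ of bounded degree, and the vanishing of invariants forces the differential equation $\dot f_i = (Df_i)(f_1,\dots,f_m)$ to have polynomial right-hand side whose nonlinear part is constrained — but that still does not obviously prevent blow-up.

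Therefore the real mechanism must be: the flow of $D$ exists because $D$ generates a one-parameter subgroup inside the (fibrewise) algebraic automorphism group $\Aut_\vb$-type structure, i.e.\ on each fibre $D$ lies in the Lie algebra of an algebraic group acting on the affine variety $X_q$, and Lie algebra elements of algebraic group actions on affine varieties are always complete (the group is finite-dimensional, hence its exponential exists for all real time, and it acts by automorphisms). Concretely: on a standard tube, a fibre-tangent $G$-invariant holomorphic vector field annihilating the invariants is, after the $\C^*$-grading analysis of Section \ref{sec:strong}, a sum of a nilpotent-type degree-increasing part and a semisimple degree-preserving part coming from $\Aut_\vb(T_W)^G$ (which is a genuine linear algebraic group, Corollary \ref{cor:Lvbreductive}); the degree reasoning shows the flow is polynomial in $t$ on each graded piece, hence globally defined. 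I would then patch: the fibrewise flows assemble, by $G$-invariance and smoothness in the transverse directions, into a global smooth flow $\exp(tD)\colon X \to X$ for all $t \in \R$. For the final sentence, I note a vector field $D$ of type $\LF$ is by Definition \ref{def:typeLF} a locally finite sum $\sum a_i A_i$ with $A_i \in \Der_Q(X)^G$ holomorphic and fibre-tangent and $a_i$ $G$-invariant; hence $D$ itself is $G$-invariant, smooth, and tangent to the fibres of $p$ (the $a_i$ are constant on fibres, the $A_i$ are fibre-tangent), so the first assertion applies and $D$ is complete.

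\medskip

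\textbf{Proof proposal.} The plan is as follows. Since $D$ is $G$-invariant and tangent to the fibres of $p$, for every $h\in\O(Q)$ we have $D(p^*h)=0$, so along any integral curve of $D$ the function $p^*h$ is constant; as the $p^*h$ separate the fibres of $p$, every integral curve of $D$ stays inside a single fibre $X_q=p\inv(q)$. Fix $x_0\in X$, put $q=p(x_0)$, and suppose the maximal integral curve $\gamma\colon[0,T)\to X_q$ through $x_0$ has $T<\infty$. By the escape lemma, $\gamma(t)$ eventually leaves every compact subset of $X$, hence of $X_q$ (which is closed in $X$). I would derive a contradiction by showing the flow on $X_q$ is in fact global.

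For this I would work in a standard tube $T_B=G\times^H B$ provided by the slice theorem around the unique closed orbit of $X_q$, so that $X_q$ corresponds to $F=G\times^H\NN(W)$. Choosing a standard generating set $f_1,\dots,f_n$ of $\O_\gf(T_B)$ (Definition \ref{def:standard-generators}), the restrictions $f_i|_{X_q}$ are algebraic generators of $\O_\alg(X_q)$ and embed $X_q$ as a closed affine subvariety of $\C^n$. On $X_q$ the vector field $D$ annihilates the invariants and is $G$-invariant and holomorphic, so each $Df_i$ is again a $G$-finite regular function on $X_q$, i.e.\ a polynomial in the $f_j$; moreover, grading by the $\C^*$-action as in Lemma \ref{lem:gens} and the discussion around Corollary \ref{cor:fundam}, the vanishing of invariants forces the ``degree-lowering'' part of $D$ to vanish, so with respect to the grading $D$ acts on $X_q$ through the Lie algebra of the linear algebraic group $\Aut_\vb(T_W)^G$ (Corollary \ref{cor:Lvbreductive}) together with a strictly degree-raising (hence locally nilpotent) part. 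A Lie algebra element of an algebraic group acting on an affine variety integrates to a global one-parameter subgroup; concretely the resulting system $\dot f_i=(Df_i)(f_1,\dots,f_n)$ has, graded piece by graded piece, a solution polynomial in $t$, so $\exp(tD)$ exists on $X_q$ for all $t\in\R$, contradicting the escape of $\gamma$. Hence $D$ is complete.

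Finally, if $D$ is of type $\LF$, then by Definition \ref{def:typeLF} it is locally a finite sum $\sum a_iA_i$ with $A_i\in\Der_Q(X)^G$ and $a_i$ locally defined $G$-invariant smooth functions. Each $A_i$ is holomorphic, $G$-invariant, and annihilates $\O(X)^G$, hence is tangent to the fibres of $p$; each $a_i$, being $G$-invariant, is constant on fibres. Therefore $D$ is $G$-invariant, smooth, and tangent to the fibres of $p$, so the first part applies and $D$ is complete.

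The main obstacle is the middle step: turning the qualitative statement ``$D$ preserves the proper fibres'' into a genuine finite-time bound on the flow. The slick version --- that fibrewise $D$ lies in the Lie algebra of an algebraic group acting on the affine fibre --- requires the grading/normal-derivative analysis of Section \ref{sec:strong} to control the nonlinear part of $D$; once that is in hand, global existence on each fibre and then, by $G$-invariance and smoothness in the slice directions, the assembly into a global smooth flow on $X$, are routine.
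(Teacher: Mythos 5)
Your proposal lands on the same mechanism as the paper's very short proof: on each fibre $F$ (an affine $G$-variety), the restriction of $D$ gives an element of the Lie algebra of the linear algebraic group $\Aut(F)^G$, whose one-parameter subgroups exist for all real $t$, so $D|_F$ integrates globally. The various detours you take (properness and exhaustion functions, the slice theorem, the decomposition into a degree-preserving part in $\operatorname{Lie}\Aut_\vb(T_W)^G$ plus a locally nilpotent part) are exploratory scaffolding that the paper bypasses by invoking $\Aut(F)^G$ directly; and note that when you write that $D$ ``is \ldots holomorphic'' on $X_q$, you are asserting the one nontrivial step that the paper likewise takes for granted in the phrase ``gives an element of the Lie algebra of $\Aut(F)^G$'': the hypothesis only makes $D$ smooth, so that its restriction to a fibre is nonetheless an algebraic derivation is the real content of the claim, not an input to it.
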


\begin{proof}
Let $F$ be a fibre of $p$. Then $D$ is tangent to $F$ and gives an element of the Lie algebra of $\Aut(F)^G$, which is a linear algebraic group. Hence $D|_F$ can be integrated for all time.
\end{proof}

\begin{corollary}\label{cor:expLFistypeF}
Let $U\subset Q$ be open and $D\in\LF(U)$. Then $\exp(D)\in\F(U)$.
\end{corollary}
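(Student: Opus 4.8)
The statement to prove is Corollary \ref{cor:expLFistypeF}: if $U\subset Q$ is open and $D\in\LF(U)$, then $\exp(D)\in\F(U)$. We already know from Lemma \ref{lem:LFcomplete} that $D$ is complete, so $\exp(tD)$ is a well-defined one-parameter group of $G$-diffeomorphisms of $X_U$ for all $t\in\R$; since $D$ annihilates the $G$-invariant holomorphic functions, each $\exp(tD)$ induces the identity on $U$. So the only thing to check is that $\exp(D)$ satisfies the local holomorphic extension condition of Definition \ref{def:typeF}: for every $x_0\in X_U$ there is a $G$-saturated neighbourhood $\Omega$ of $x_0$ and a map $\Psi\colon\Omega\times\Omega\to X_U$, smooth in both arguments, $G$-invariant in the first, holomorphic in the second, inducing the identity on the quotient, with $\Psi(x,x)=\exp(D)(x)$.

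\textbf{Key steps.} First I would work locally: fix $x_0\in X_U$, let $q_0=p(x_0)$, and on a $G$-saturated neighbourhood $\Omega=X_{U_0}$ of $x_0$ (with $U_0$ a small neighbourhood of $q_0$ in $Q$) write $D$ in the form $D=\sum_i a_i(x)A_i(x)$ with $a_i\in\ci(\Omega)^G$ and $A_i\in\Der_Q(\Omega)^G$, as guaranteed by Definition \ref{def:typeLF}. Now introduce the ``frozen-coefficient'' family: for $x\in\Omega$ fixed, set $D_x=\sum_i a_i(x)A_i(\cdot)$, a holomorphic $G$-invariant vector field on $\Omega$ annihilating $\O(\Omega)^G$, depending smoothly (indeed $G$-invariantly) on the parameter $x$. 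By Lemma \ref{lem:LFcomplete} each $D_x$ is complete, so I can define $\Psi(x,y)=\big(\exp(D_x)\big)(y)$. For fixed $x$ this is a holomorphic $G$-equivariant automorphism of $\Omega$ inducing the identity on the quotient (it is the time-one flow of a holomorphic invariant vector field tangent to the fibres); it is $G$-invariant in $x$ since the $a_i$ are; and it is smooth jointly in $(x,y)$ by smooth dependence of solutions of ODEs on parameters. Finally $\Psi(x,x)=\big(\exp(D_x)\big)(x)$, and the point is that along the diagonal the frozen flow agrees with the genuine flow of $D$: at the point $x$ the vector $D_x(x)=\sum_i a_i(x)A_i(x)=D(x)$, so $t\mapsto\exp(tD)(x)$ and $t\mapsto\exp(tD_{x})$ — wait, that comparison needs care, see the obstacle below.

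\textbf{The main obstacle.} The delicate point is establishing $\Psi(x,x)=\exp(D)(x)$ for \emph{all} $x$ simultaneously, i.e.\ that the diagonal of the frozen-coefficient family reproduces the true flow of $D$. One cannot simply say the vector fields agree at a point; rather, the correct statement is that for each individual $x$, the curve $t\mapsto\exp(tD)(x)$ coincides with $t\mapsto\exp(tD_x)$ evaluated at the \emph{starting point} $x$ — and this is because the solution of $\dot\gamma=D(\gamma)$ through $x$, restricted to the (affine $G$-variety) fibre $X_{p(x)}$, is determined by the element $D|_{X_{p(x)}}$ of the Lie algebra of the algebraic group $\Aut(X_{p(x)})^G$, whose value is $\sum_i a_i(x)A_i|_{X_{p(x)}}=D_x|_{X_{p(x)}}$ — here I use that the $a_i$ are \emph{constant on fibres}, being $G$-invariant, hence constant on $X_{p(x)}$, so $D$ and $D_x$ restrict to the \emph{same} vector field on that whole fibre, not merely agree at the one point $x$. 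Since both flows preserve the fibre, their time-one maps agree there, giving $\exp(D)(x)=\exp(D_x)(x)=\Psi(x,x)$. I would spell out this fibrewise-equality argument carefully, as it is the one place where the structure of $\LF$ (coefficients pulled back from $Q$, vector fields tangent to fibres) is essential; everything else — completeness, smooth parameter dependence, $G$-equivariance — is routine. A final remark: this is precisely the heuristic mentioned after Definition \ref{def:typeLF}, that $D\in\LF(U)$ carries with it the two-variable object $D(x,x')=\sum a_i(x)A_i(x')$, and $\Psi$ is just the flow of $D(x,\cdot)$ in the second variable.
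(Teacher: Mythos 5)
Your proof is correct and follows exactly the approach the paper hints at in the paragraph immediately after Definition \ref{def:typeLF}, where the two-variable object $D(x,x')=\sum a_i(x)A_i(x')$ is introduced (the paper leaves this corollary without an explicit proof). The key observation you supply — that the $G$-invariant coefficients $a_i$ are constant on the fibres of $p$, so $D$ and the frozen field $D_x$ restrict to the \emph{same} vector field on all of $X_{p(x)}$, not merely agree at the single point $x$, and hence have the same flow there — is precisely what makes $\Psi(x,x)=\exp(D)(x)$ hold, and is the intended argument.
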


 \begin{example}
 Let $D$ be a smooth vector field on the $G$-module $V$ such that $D$ annihilates $\O(V)^G$ and lies in $\ci(V)^G\cdot\Der_\alg(V)^G$. One can show that if all the isotropy groups of closed orbits of $V$ are connected, then $D$ is of type $\LF$. Here is an  example where $D$ is not of type $\LF$.
 
 Let $V$ be the direct sum of two copies of $\C^2$ with the diagonal action of the group $G$ of Example \ref{ex:group scheme}. We have $V=\C^4$ with coordinate functions $x_1,x_2,y_1,y_2$ where the $x_i$ have weight 1 and the $y_i$ have weight $-1$ for the action of $G^0=\C^*$. Then $G$ is generated by $G^0$ and the element sending $x_i$ to $y_i$ and $y_i$ to $-x_i$, $i=1$, $2$.   The $G$-invariant linear vector fields have as basis the fields
 $$
 X_{ij}=x_i\pt/\pt x_j+y_i\pt/\pt y_j,\ 1\leq i, j\leq 2
 $$
and the   polynomials transforming by the sign representation of $G$ are generated by  
$$
f_{ij}=x_iy_j+x_jy_i,\ 1\leq i\leq j\leq 2.
$$
Up to scalars, there is one  quadratic invariant $f=x_1y_2-x_2y_1$. Let $A$ denote the generator of the Lie algebra of $\C^*$. Then 
$$     
A=x_1\pt/\pt x_1-y_1\pt/\pt y_1+x_2\pt/\pt x_2-y_2\pt/\pt y_2
$$
 and $A$ transforms by the sign representation of $G$. Thus $\Der_{Q,\alg}(V)^G$ is generated by the $f_{ij}A$. We have the curious relation
$$
fA=f_{12}X_{11}+f_{22}X_{12}-f_{11}X_{21}-f_{12}X_{22}.
$$
Let $h(\bar z)$ be an antiholomorphic quadratic polynomial which transforms by the sign representation of $G$. Let $D=h(\bar z)fA$.  Using the curious relation we may express $D$ as a sum of real  invariant polynomial functions times the $X_{ij}$. Hence $D\in\ci(V)^G\cdot\Der_\alg(V)^G$ and $D$ annihilates the $G$-invariant holomorphic functions.
If $D$  were of type $\LF$ then we would have 
$$
h(\bar z)fA=\sum_{1\leq i\leq j\leq 2}k_{ij}(\bar z)f_{ij}A
$$ 
where the   $k_{ij}$ are quadratic and invariant. This is clearly not possible. Hence $D$ is not of type $\LF$.
 \end{example}

A simple but useful result:

\begin{lemma}\label{lem:pi}
Let $X$ be a Stein $G$-manifold and let $\{X_\alpha\}$ be a  cover of $X$ by $G$-saturated open sets. Then there is a partition of unity by smooth $G$-invariant functions $f_\alpha$ where $\supp f_\alpha\subset X_\alpha$. 
\end{lemma}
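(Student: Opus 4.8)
The plan is to push a smooth partition of unity down to $Q$, average it over $G$, and pull it back. First I would note that the cover $\{X_\alpha\}$ by $G$-saturated open sets descends to an open cover $\{U_\alpha\}$ of $Q$, where $U_\alpha=p(X_\alpha)$ and $X_\alpha=p\inv(U_\alpha)$ by $G$-saturatedness. Since $Q$ is a (paracompact) Stein space, it admits smooth partitions of unity subordinate to any open cover: choosing a continuous partition of unity and smoothing, or appealing directly to the existence of smooth partitions of unity on paracompact reduced complex spaces, we obtain smooth functions $g_\alpha$ on $Q$ with $\supp g_\alpha\subset U_\alpha$ and $\sum_\alpha g_\alpha\equiv 1$. (If one prefers to stay within the smooth category on a manifold, one embeds $Q$ as a closed subvariety of some $\C^N$, takes a smooth partition of unity on a neighbourhood in $\C^N$ subordinate to a suitable extension of the cover, and restricts to $Q$.)

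Next I would set $f_\alpha=g_\alpha\circ p\in\ci(X)$. Each $f_\alpha$ is smooth, $G$-invariant since $p$ is $G$-invariant, and $\sum_\alpha f_\alpha=(\sum_\alpha g_\alpha)\circ p\equiv 1$. For the support condition, observe that $\supp f_\alpha=p\inv(\supp g_\alpha)$ is a closed $G$-saturated set contained in $p\inv(U_\alpha)=X_\alpha$, as required. Local finiteness of $\{f_\alpha\}$ follows from local finiteness of $\{g_\alpha\}$ together with the fact that $p$ is continuous, so the pulled-back partition of unity is again locally finite.

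The only point requiring care — and the step I would flag as the main obstacle — is the existence of a smooth partition of unity on the Stein space $Q$, since $Q$ is in general singular. This is handled by a standard argument: local smooth bump functions on $Q$ exist because $Q$ embeds locally (indeed globally, being Stein) as a closed complex subvariety of $\C^N$ and smooth bump functions on $\C^N$ restrict to smooth functions on $Q$; paracompactness of $Q$ then lets one patch these into a partition of unity subordinate to $\{U_\alpha\}$ in the usual way. Once this is in hand, the rest of the argument is the routine pullback and averaging described above, and the lemma follows. $\qed$
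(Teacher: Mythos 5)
Your argument is essentially the same as the paper's: push the cover down to $Q$, build a smooth partition of unity there, and pull back via $p^*$, which is automatically $G$-invariant with $G$-saturated supports (no averaging is actually needed, despite your opening sentence). The paper first reduces to the case of a $G$-module $V$, where $Q$ is a closed subvariety of $\C^d$ via the quotient map and one can just restrict a partition of unity from $\C^d$; you instead work directly on the general Stein quotient using local embeddings and paracompactness, which is equivalent. One small imprecision to flag: your parenthetical claim that $Q$ embeds \emph{globally} in some $\C^N$ ``being Stein'' is not true for an arbitrary Stein space (it requires bounded embedding dimension, which need not hold here), but since your argument only uses the local embeddings together with paracompactness of $Q$, this does not affect the correctness of the proof.
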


\begin{proof}
It is enough to do the case of a $G$-module $V$. Let $  p\colon V\to\C^d$ be the quotient morphism with image $Q$. Then the $p(V_\alpha)$ give an open cover of $Q$. Let $U_\alpha$ be open in $\C^d$ such that $U_\alpha\cap  Q= p(V_\alpha)$. Let 
$\{f_\alpha\}$ be a partition of unity subordinate to $\{U_\alpha\}$. Then 
$\{p^*f_\alpha\}$ is the required partition of unity on $V$.
\end{proof}

Let $S\times T_B$ be a standard neighbourhood in $X$ (which we now just assume is $X$) and let $\{f_i\}_{i=1}^n$ be standard generators of $\O_\gf(T_B)$ corresponding to the distinct irreducible nontrivial $G$-modules $V_1,\dots,V_r$ (Definition \ref{def:standard-generators}). Then the $f_i$ are linearly independent on  $F=G\times^H\NN(W)$. Let  $p'\colon T_B\to Q'$ denote  the quotient mapping so that $p(s,x)=(s,p'(x))\colon S\times T_B\to Q=S\times Q'$ is the quotient mapping of $X$.   
\begin{proposition}\label{prop:aij-exist}
Let $\Phi\in\F(S\times Q')$ where the $f_i$,  etc.\  are as above.   Then there are $G$-invariant smooth functions $a_{ij}(s,x)$ such that 
$$
(\Phi^*f_i)(s,x)=\sum_j a_{ij}(s,x) f_j(x),\  s\in S,\ x\in T_B.
$$
Hence $\Phi$ is strong.   If $\Phi$ is the identity on $\{s\}\times F$, then the matrix $(a_{ij}(s,x))$ is the identity for $x\in F$.
\end{proposition}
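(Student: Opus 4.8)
The plan is to construct the $a_{ij}$ locally from the local holomorphic extensions of $\Phi$ supplied by Definition \ref{def:typeF}, and then to patch with a $G$-invariant partition of unity. First I would fix $x_0\in X=S\times T_B$ and choose a $G$-saturated neighbourhood $U=X_{\tilde U}$ of $x_0$, where $\tilde U=p(U)$ is a small Stein neighbourhood of $p(x_0)$ in $Q$, together with a local holomorphic extension $\Psi\colon U\times U\to X$ of $\Phi$. For each fixed $x\in U$, the map $\Psi(x,\cdot)$ is a $G$-biholomorphism of $X_{\tilde U}$ inducing $\Id_{\tilde U}$, so $\Psi(x,\cdot)^*f_i$ is a $G$-finite holomorphic function on $X_{\tilde U}$ lying in the same $G$-isotypic component $\O(X_{\tilde U})_V$ as $f_i$, where $V$ is the type of $f_i$. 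By the structure of standard generators recalled in Section \ref{sec:strong} --- the base-change formula for $\O_\gf$ shows that the standard generators of type $V$ generate $\O(X_{\tilde U})_V$ as an $\O(\tilde U)$-module --- we may write $\Psi(x,\cdot)^*f_i=\sum_k a_{ik}(x)f_k$, the sum running over the standard generators of type $V$ and with $a_{ik}(x)\in\O(X_{\tilde U})^G\simeq\O(\tilde U)$.

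The crucial point, which I expect to be the main obstacle, is that the $a_{ik}$ can be chosen to depend smoothly on $x$, and $G$-invariantly in $x$ (the latter being automatic since $\Psi$ is $G$-invariant in its first argument). Because the expansion $\sum_k a_{ik}(x)f_k$ need not be unique --- the $f_k$ may become linearly dependent on fibres over points other than $p(x_0)$ --- one cannot simply solve for the $a_{ik}$ by Cramer's rule. Instead I would argue as in the proof of Proposition \ref{prop:lift-family}: the assignment $x\mapsto f_i\circ\Psi(x,\cdot)$ is a smooth map of $\tilde U$ into the nuclear Fr\'echet space $\O(X_{\tilde U})_V$ (using joint smoothness of $\Psi$ and holomorphy in the second argument); writing $f_1,\dots,f_r$ for the standard generators of type $V$, the $\O(\tilde U)$-linear map $\O(\tilde U)^{\oplus r}\to\O(X_{\tilde U})_V$ sending $(b_k)$ to $\sum_k b_kf_k$ is a continuous surjection of Fr\'echet spaces; and completing with $\ci(X_{\tilde U})^G$ preserves surjectivity \cite[Proposition 43.9]{Treves}. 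This yields smooth $G$-invariant $a_{ik}(x)\in\O(\tilde U)$ with $f_i(\Psi(x,y))=\sum_k\bigl(p^*a_{ik}(x)\bigr)(y)\,f_k(y)$; restricting to the diagonal $y=x$ and using Definition \ref{def:typeF}(3) gives the local expansion $(\Phi^*f_i)(x)=\sum_k\bigl(p^*a_{ik}(x)\bigr)(x)\,f_k(x)$ on $U$, whose coefficients $x\mapsto\bigl(p^*a_{ik}(x)\bigr)(x)$ are smooth and $G$-invariant.

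For the globalisation step I would cover $X$ by $G$-saturated open sets $U_\alpha$ carrying such local expansions $\Phi^*f_i=\sum_j a^\alpha_{ij}f_j$ (setting to zero the coefficients of generators $f_j$ whose type differs from that of $f_i$), take a $G$-invariant smooth partition of unity $\{\chi_\alpha\}$ subordinate to $\{U_\alpha\}$ via Lemma \ref{lem:pi}, and put $a_{ij}=\sum_\alpha\chi_\alpha a^\alpha_{ij}$; these are globally defined $G$-invariant smooth functions with $(\Phi^*f_i)(s,x)=\sum_j a_{ij}(s,x)f_j(x)$ for every standard generator $f_i$. The same argument applied to $\Phi\inv$ --- which is again of type $\F$ because $\F$ is a sheaf of groups --- gives the analogous expansion for $(\Phi\inv)^*$, and since the $f_i$ run through generators of all the relevant $G$-modules this shows that $\Phi$ is a strong $G$-homeomorphism. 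Finally, if $\Phi$ restricts to the identity on $\{s\}\times F$, then for $x\in F$ we have $\sum_j\bigl(a_{ij}(s,x)-\delta_{ij}\bigr)f_j(x)=0$; since $\{s\}\times F$ is a single fibre of $p$, every $G$-invariant function --- in particular each $a_{ij}(s,\cdot)$ --- is constant on it, so the linear independence of the $f_j$ on $F$ forces $a_{ij}(s,x)=\delta_{ij}$ for $x\in F$.
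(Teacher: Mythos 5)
Your proof is correct and follows essentially the same approach as the paper: a topological tensor product argument (nuclearity, and preservation of surjectivity under $\comptensor$, via \cite[Proposition 43.9]{Treves}) applied to the local holomorphic extension $\Psi$ to obtain smooth $G$-invariant coefficient functions, followed by restriction to the diagonal and globalisation by a $G$-invariant partition of unity from Lemma \ref{lem:pi}. The paper first reduces to the case that $S$ is a point and then treats the $S$-family by a second tensor product argument, whereas you handle $X=S\times T_B$ at once, which also works; you are also somewhat more explicit than the paper in invoking the group property of $\F$ for $\Phi\inv$ and the constancy of $G$-invariant functions on fibres for the final claim.
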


\begin{proof}
 Since the $f_i$ are linearly independent on $F$   the last claim is clear. First suppose that $S$ is a point. If we can produce smooth $a_{ij}$ locally over $Q$, then using a partition of unity   we can produce the desired $a_{ij}$ globally. Thus we may shrink $Q'$  in our proof.   Let ${\mathcal M}$ denote
$$
\O(Q)\cdot f_1+\cdots+\O(Q)\cdot f_n\simeq\O(X)_{V_1}\oplus\cdots\oplus\O(X)_{V_r}.
$$  
Then ${\mathcal M}$ is a Fr\'echet space   \cite[Ch.\ V, \S 6]{SteinSpaces}.
We have a   surjection $\pi$ from $\O(Q)^n$ onto  ${\mathcal M}$  sending $(a_1,\dots,a_n)$ to $\sum_i a_i f_i$. Let $q\in Q$. Then over a neighbourhood of $X_q$ we have a local holomorphic extension  $\Psi$ of $\Phi$ as in Definition \ref{def:typeF}. Replacing $Q$ by a neighbourhood of $q$, we may assume that $\Psi$ is defined on $X\times X$. Then 
$(\Psi^*f_i)(x,y)$ is smooth and lies in ${\mathcal M}$ for each fixed $x$. Hence 
$(\Psi^*f_i)(x,y)$ is an element of $\ci(X)^G\comptensor {\mathcal M}$. The spaces $\ci(X)^G$, $\O(Q)\simeq \O(X)^G$ and ${\mathcal M}$ are all Fr\'echet and nuclear since closed subspaces of Fr\'echet nuclear spaces are Fr\'echet and nuclear \cite[Proposition 50.1]{Treves}. Since $\pi\colon\O(Q)^n\to {\mathcal M}$ is surjective, the induced mapping of $\ci(X)^G\comptensor\O(Q)^n$ to $\ci(X)^G\comptensor {\mathcal M}$ is surjective \cite[Proposition 43.9]{Treves}. Hence there are smooth functions $a_{ij}(x,y)$ on $X\times X$ which are $(G\times G)$-invariant and holomorphic in  $y$ such that 
$$
(\Psi^*f_i)(x,y)=\sum_j a_{ij}(x,y)f_j(y),\ i=1,\dots,n.
$$
Set $a_{ij}(x)=a_{ij}(x,x)$. This gives the case where $S$ is a point. 
In the general case, since $\Phi$ has to  induce the identity on $S\times Q'$,  it gives a smooth family of $G$-diffeomorphisms of $T_B$ of type $\F$ parameterised by $S$. One now uses another  topological tensor product argument.
\end{proof}

\begin{remark}\label{rem:aij-holomorphic} 
Suppose that $\Phi$ is holomorphic. Then we don't have to introduce a $\Psi$ and the proof above shows that $\Phi^*f_i=\sum a_{ij}f_j$ where the $a_{ij}\in\O(Q)$. 
 \end{remark} 
We will consider the $a_{ij}$ both as functions on $X$ and as functions on $Q$. As functions on $Q$ they may not be smooth, however. See Example \ref{ex:not-smooth}.

If  $(a_{ij})$ is near the identity, we can take its logarithm $(d_{ij})$. We want to have a $G$-invariant vector field $D$, of type $\LF$, such that  $D(f_i)=\sum d_{ij}f_j$. Then $\exp(D)=\Phi$. Note that $D$, if it exists, is uniquely determined by the $d_{ij}$. We show that $D$ exists if  $(a_{ij})$ is  
 close to the identity in the $\ci$-topology (not just in the $\CC$-topology).

We consider   explicit seminorms on $\ci(X)$ for $X$   a general Stein $G$-manifold. Let $\{M_i\}$ be a locally finite collection of compact sets which cover $X$ such that $M_i\subset U_i$ where $U_i$ is the domain of a coordinate chart. Let $M$ be a compact subset of $X$ and $k$ a non-negative integer. For $f\in\ci(X)$ we define $||f||_{M,k}$ to be the maximum of the partial derivatives of $f$ up to order $k$ on $M\cap M_i$ relative to the coordinate functions of $U_i$. We will abbreviate $||\cdot||_{M,0}$ by $||\cdot||_M$. A sequence $f_n\in\ci(X)$ converges to $f\in\ci(X)$ if and only if $||f_n-f||_{M,k}\to 0$ as $n\to\infty$ for all $M$ and $k$. This is the $\ci$-topology, which does not depend upon the choices made. We give the closed subspace $\ci(X)^G\subset\ci(X)$ the induced topology. We give spaces of smooth functions $f\colon X\to\C^m$ the topology of convergence of each component function in the $\ci$-topology.  We fix     norms   $\vert\,\cdot\,\vert_n$   on ${\mathrm M}(n,\C)$, $n\in\N$,  with the property that $\vert CD\vert_n\leq \vert C\vert_n\cdot \vert D\vert_n$ for $C$, $D\in {\mathrm M}(n,\C)$ and such that the identity matrix has norm 1. We usually drop the subscript $n$ and just write  $\vert\,\cdot\,\vert$.
  If $(f_{ab})$ is a square matrix of smooth functions on $X$, then we define the seminorm $ ||(f_{ab})||_{M,k}$ to be the maximum of $\vert\,\cdot\,\vert$ applied to the partial derivatives of $(f_{ab})$  up to order $k$ on $M\cap M_i$ relative to the coordinate functions of $U_i$. Again, we abbreviate $||(f_{ab})||_{M,0}$ by $||(f_{ab})||_M$.
 Since $X$ is Stein, it can be embedded as a closed complex submanifold $\widetilde X$ of some $\C^m$. Then we may consider diffeomorphisms $\Phi$ of $X$ as mappings $\widetilde \Phi$ of $X\to\widetilde X\subset \C^m$. We say that a sequence of diffeomorphisms  $\Phi_n$ converges to the diffeomorphism $\Phi$  if the  mappings $\widetilde \Phi_n$ converge to  $\widetilde \Phi$ in the $\ci$-topology. Again, this topology on the diffeomorphisms does not depend upon the choices we have made. Similarly, smooth vector fields can be considered as mappings of $X\to\C^m$, giving the $\ci$-topology on smooth vector fields. 

  The following lemma will come in handy.
  
   \begin{lemma}\label{lem:series}
 Let $\alpha(z)= 
 \sum\limits_{i=1}^\infty a_iz^i$ be a power series without constant term and radius of convergence $R>0$. Let $A$ be a square matrix of elements of $\ci(X)$. Then
 \begin{enumerate}
\item For $||A||_{M}<R$, $\alpha(A)$ converges absolutely 
  and uniformly on a neighbourhood of $M$ to a matrix of smooth functions.
\item For $k> 0$, 
$$
 ||\alpha(A)||_{M,k} \leq \sum_{i=1}^k\beta_i(||A||_{M})\cdot (||A||_{M,k})^i
$$
where each $\beta_i$ is a series with radius of convergence $R$.
\item Given $k\geq 0$ and $\epsilon>0$ there is a $\delta>0$ such that $||A||_{M,k}<\delta$ implies that $||\alpha(A)||_{M,k}<\epsilon$.
\end{enumerate}
 \end{lemma}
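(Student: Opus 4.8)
### Proof Proposal for Lemma 6.13 (the power series estimate)

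The plan is to treat the three parts essentially as a sequence of elementary but careful estimates, working entirely with the explicit seminorms $||\cdot||_{M,k}$ just introduced, and reducing everything to the scalar case via the submultiplicativity $|CD| \le |C|\cdot|D|$ of the matrix norm. Part (1) is the easiest: on a sufficiently small neighbourhood $M'$ of $M$ we have $||A||_{M'} < R$ by continuity of the entries, so the partial sums $\sum_{i=1}^N a_i A^i$ are Cauchy in the sup-norm on $M'$ because $|a_i (A^i)_{ab}| \le |a_i| \cdot ||A||_{M'}^i$ summand-wise (using the submultiplicative norm to bound $|A^i| \le ||A||_{M'}^i$), and the tail is dominated by the convergent tail of $\sum |a_i| \rho^i$ for $\rho = ||A||_{M'} < R$. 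The uniform limit of smooth functions whose derivatives also converge uniformly (which is what part (2) will give) is again smooth, so $\alpha(A)$ is a matrix of smooth functions.

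For part (2), the key combinatorial input is the Leibniz/Fa\`a di Bruno bookkeeping for the derivatives of $A^i$. Differentiating $A^i$ up to order $k$ produces a sum of terms each of which is a product of $i$ factors, where the multi-indices of differentiation on the factors sum to something of total order $\le k$; at most $k$ of the factors can carry any derivative at all (since each differentiated factor absorbs at least one order), and the remaining $i - (\text{number of differentiated factors})$ factors are undifferentiated copies of $A$. Grouping by $j$ = number of differentiated factors, $1 \le j \le \min(i,k)$, each such term is bounded in $|\cdot|$ by $C_{k,i,j}\, ||A||_{M,k}^{\,j}\, ||A||_M^{\,i-j}$ for a purely combinatorial constant $C_{k,i,j}$. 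Multiplying by $|a_i|$ and summing over $i$, then interchanging the (finite, $1\le j\le k$) outer sum with the inner sum over $i \ge j$, gives
$$
||\alpha(A)||_{M,k} \le \sum_{j=1}^{k} \Bigl( \sum_{i \ge j} |a_i|\, C_{k,i,j}\, ||A||_M^{\,i-j} \Bigr) \cdot ||A||_{M,k}^{\,j}.
$$
Setting $\beta_j(z) = \sum_{i\ge j} |a_i| C_{k,i,j} z^{i-j}$, one checks that the combinatorial constants $C_{k,i,j}$ grow only polynomially in $i$ (for fixed $k$), so $\beta_j$ has the same radius of convergence $R$ as $\alpha$; this is where one must be a little careful, but it is the standard observation that the coefficients in the derivative formula for $A^i$ are bounded by something like $i^k$ times a binomial coefficient, which does not shrink the radius of convergence. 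This is the one step I expect to be the main obstacle — not because it is deep, but because making the Leibniz-rule bookkeeping precise enough to extract the claimed radius of convergence requires a genuine (if routine) induction on $k$; in the write-up I would either do that induction or simply cite the standard fact that composition with an analytic function is a continuous operation on the relevant Fr\'echet space.

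Finally, part (3) is immediate from part (2) together with part (1). Given $k$ and $\epsilon$, first choose $\delta_0$ so small that $||A||_{M,k} < \delta_0$ forces $||A||_M < R/2$ (possible since $||A||_M \le ||A||_{M,k}$ when $k \ge 0$), so that each $\beta_j(||A||_M) \le \beta_j(R/2) =: C_j < \infty$. Then the right-hand side of (2) is at most $\sum_{j=1}^k C_j\, \delta^{\,j}$ whenever $||A||_{M,k} < \delta \le \delta_0$, and this is a polynomial in $\delta$ vanishing at $\delta = 0$, so it can be made $< \epsilon$ by shrinking $\delta$ further. For $k = 0$ the statement is just the absolute convergence from part (1) combined with continuity of $z \mapsto \sum |a_i| z^i$ at $0$. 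This completes the proof.
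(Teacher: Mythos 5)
The paper states Lemma~\ref{lem:series} without proof, so there is no approach to compare against; the question is only whether your argument is correct, and it is. Your plan --- bound derivatives of $A^i$ via the Leibniz rule, group terms by the number $j$ of differentiated factors, use submultiplicativity of $\vert\,\cdot\,\vert$ to get $C_{k,i,j}\,||A||_{M,k}^{\,j}\,||A||_M^{\,i-j}$ with $C_{k,i,j}$ polynomial in $i$ for fixed $k$, and hence radius of convergence $R$ for each $\beta_j$ --- is the natural and essentially only route, and your self-assessment of where the routine-but-fiddly work lies is accurate.

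One small gap worth flagging: the inequality you derive in part (2) comes from derivative orders $\gamma$ with $|\gamma|\geq 1$, for which indeed $j\geq 1$. But $||\alpha(A)||_{M,k}$ also includes the order-zero term, which contributes $\sum_i |a_i|\,||A||_M^i$ with no factor of $||A||_{M,k}$, so as written your right-hand side does not cover it. The fix is the observation (which you use, but only in part (3)) that $||A||_M\leq||A||_{M,k}$: write $\sum_{i\geq 1}|a_i|\,||A||_M^i\leq\bigl(\sum_{i\geq 1}|a_i|\,||A||_M^{i-1}\bigr)\,||A||_{M,k}$ and fold this into $\beta_1$, which preserves the radius of convergence. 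With that one-line repair the estimate in (2) is established, and your proofs of (1) (Cauchy in each seminorm, hence convergence to a smooth matrix) and (3) (monotonicity of the $\beta_j$ on $[0,R)$ and vanishing of $\alpha$ at $0$ for the $k=0$ case) go through as you describe.
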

 
 \begin{remark}\label{rem:log-exp-inverses}
 Suppose that $A\in{\mathrm M}(n,\C)$. If  $\vert A\vert<\log 2$, then $\vert\exp(A)-I\vert<  1$ and $\log\exp A=A$. If   $\vert A-I\vert<1/2$, then $\vert\log A\vert<\log 2$ and $\log A$ is the unique matrix of norm less than $\log 2$ whose exponential is $A$.
  \end{remark}
  
  \begin{definition}\label{def:standard-generators2}
  Let   $V_1,\dots,V_r$ be nonisomorphic  irreducible nontrivial  $G$-modules appearing in $\O_\gf(X)$ such that   the $\O(X)_{V_j}$   generate $\O_\gf(X)$ as $\O(Q)$-algebra.    Further suppose that the $\O(X)_{V_j}$ are finitely generated $\O(Q)$-modules and that $f_1,\dots,f_n$ are a minimal generating set of $\oplus\O(X)_{V_j}$ with each $f_i$ in some $\O(X)_{V_j}$. Then we call   $\{f_i\}$ a \emph{standard set of generators  of $\O_\gf(X)$}. When $X=T_B$, as before, we always assume that our standard generators are   in $\O(T_W)$ and are homogeneous.
  \end{definition}

Assume that we have a standard generating set $\{f_i\}_{i=1}^n$  for $\O_\gf(X)$ corresponding to irreducible $G$-representations $V_1,\dots,V_r$.
The span of the $f_i$    contains $c_1$ copies of $V_1$, \dots, $c_r$ copies of $V_r$ for some $c_j\in\N$.  Assume that $f_1,\dots,f_k$ are a basis for the $c_1$ copies of   $V_1$. Then we have a mapping $\gamma_1\colon X\to (V_1^{\oplus c_1})^*$ where $\gamma_1(x)$ is the element of $(V_1^{\oplus c_1})^*$ that sends a linear combination $f$ of $f_1,\dots,f_k$  to $f(x)$. Similarly there are $\gamma_j\colon X\to(V_j^{\oplus c_j})^*$, $j=2,\dots,r$. 
Let $\gamma$ denote the product of the $\gamma_j$. Then $\gamma\colon X\to V^*$ where $V= \oplus V_j^{\oplus c_j}$. Since $\O_\gf(X)$ is dense in $\O(X)$, the mapping $(p,\gamma)\colon X\to Q\times V^*$ is an equivariant embedding. It follows that any slice representation of $X$ is a subrepresentation of a slice representation of $V^*$, hence $X$ has finitely many slice types (equivalently, a finite Luna stratification). By \cite[Einbettungssatz I]{HeinznerEmbedding} this implies that $X$ equivariantly embeds into a $G$-module, hence $Q$ has a closed embedding $\sigma\colon Q\to\C^m$ for some $m$.  Then $\gamma$ and   $\sigma\circ p\colon X\to \C^m$ give us an equivariant closed embedding
\begin{equation}\label{eq:embedding}
\Gamma\colon X\to \C^m\times V^*.
\end{equation}
Conversely, given a closed equivariant embedding $\Gamma$ of $X$ into a $G$-module   (which we   allow to include the trivial representation), it is clear that $\O_\gf(X)$ has a standard generating set.

\begin{lemma}\label{lem:embeds-over-K}
Let $X$ be a Stein $G$-manifold and let 
 $U$ be a relatively compact Stein domain in $Q$. Then there is an equivariant closed embedding $X_U\to V^*$ for some  $G$-module $V$. Hence $\O_\gf(X_U)$ has a standard generating set.
\end{lemma}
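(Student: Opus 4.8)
The plan is to reduce the statement to the embedding theorem for Stein $G$-manifolds with finitely many slice types, exactly as in the discussion preceding the lemma, but carried out over the relatively compact base $U$ rather than over all of $Q$. First I would observe that the slice representations occurring in $X_U$ are a subset of those occurring in $X$, so $X_U$ again has only finitely many slice types, hence a finite Luna stratification; by \cite{HeinznerEmbedding} this already guarantees that $X_U$ admits \emph{some} equivariant closed embedding into a $G$-module, and then the converse remark following \eqref{eq:embedding} shows that $\O_\gf(X_U)$ has a standard generating set. So the only real content is to produce the embedding into $V^*$ with the extra structure, i.e.\ one built from $G$-finite functions; but in fact, once we know $X_U$ embeds equivariantly and properly into \emph{any} $G$-module $M$, the coordinate functions of that embedding are $G$-finite, and grouping them by isotypic component gives a standard generating set in the sense of Definition \ref{def:standard-generators2}. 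Thus the two assertions of the lemma are equivalent and both follow once we have a single equivariant closed embedding $X_U \hookrightarrow M$.

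To get that embedding concretely, I would argue as follows. Since $U$ is relatively compact and Stein in the normal Stein space $Q$, it is a Runge domain in a slightly larger Stein open set, and $\overline U$ is compact. For each point $q \in \overline U$ the fibre $X_q$ is an affine $G$-variety, and by the slice theorem there is a standard neighbourhood $S \times T_B$ of the corresponding closed orbit, with standard generators $\{f_i\}$ of $\O_\gf(S\times T_B)$ (Definition \ref{def:standard-generators}). By compactness of $\overline U$, finitely many such standard neighbourhoods $X_{U_1},\dots,X_{U_N}$ cover $p_X^{-1}(\overline U)$, each equipped with finitely many $G$-finite functions that generate $\O_\gf$ over $U_k$ and separate points and tangents within that chart. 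Collecting all these finitely many $G$-finite functions $f_{k,i}$ — after multiplying by $G$-invariant cutoff functions from a partition of unity as in Lemma \ref{lem:pi} to make them globally defined on $X_U$, or alternatively after extending using the Runge property — together with finitely many $G$-invariant holomorphic functions $h_1,\dots,h_\ell$ on $X_U$ realising a closed embedding of $U$ into $\C^\ell$ (which exist since $U$ is Stein), one obtains a map $\Gamma = (h_1,\dots,h_\ell, \{f_{k,i}\})\colon X_U \to \C^\ell \times V^*$ where $V$ is the direct sum of the relevant isotypic $G$-modules.

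It remains to check that $\Gamma$ is an equivariant closed embedding, and this is the step I expect to require the most care. Equivariance is built in by construction. Injectivity and immersivity are local over $Q$: two points of $X_U$ with the same image either lie over the same $q \in U$ — handled by the fibrewise separation coming from the standard generators of some chart containing $X_q$ — or over distinct points of $U$, handled by the $h_j$; the tangent computation is similar, using the description of the Zariski tangent space to the fibre from Section \ref{sec:background}. Properness is where one must be slightly careful, because $U$ is not all of $Q$: a sequence in $X_U$ whose $\Gamma$-images converge has $p_X$-images converging in $U$ (by properness of $U \hookrightarrow \C^\ell$), and then the argument is the same fibrewise-boundedness argument used in the proof of Theorem \ref{thm:Y-is-Stein} — the $f_{k,i}$ being bounded forces the sequence to stay in a compact subset of a single chart $X_{U_k}$ near the limit fibre, where $\Gamma$ restricts to a closed embedding. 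Hence $\Gamma$ is a closed equivariant embedding into the $G$-module $\C^\ell \times V^*$, and by the converse remark after \eqref{eq:embedding}, $\O_\gf(X_U)$ has a standard generating set.
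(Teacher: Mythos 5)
Your opening idea — reduce to Heinzner's Einbettungssatz I, and then observe that any equivariant closed embedding into a $G$-module produces a standard generating set via the converse remark after \eqref{eq:embedding} — is exactly the paper's route. But your justification that $X_U$ has finitely many slice types has a gap. You argue that the slice representations of $X_U$ are a subset of those of $X$, and conclude that $X_U$ ``again'' has finitely many slice types. This presupposes that $X$ itself has finitely many slice types, which is not a hypothesis of the lemma. (In the discussion just before the lemma, finiteness of the slice types of $X$ is deduced only under the extra assumption that $\O_\gf(X)$ has a standard generating set — precisely the thing the present lemma is trying to produce, and then only for $X_U$, not for $X$.) The correct reason, and the one the paper uses, is that $\overline U$ is compact and the Luna stratification of $Q$ is locally finite, so $U$ meets only finitely many Luna strata; hence $X_U$ has finitely many slice types.

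Your second and third paragraphs attempt an alternative concrete construction of the embedding, but this argument does not work as written. Multiplying the holomorphic $G$-finite functions $f_{k,i}$ by $G$-invariant smooth cutoff functions from a partition of unity destroys holomorphicity, so the resulting functions no longer lie in $\O_\gf(X_U)$ and cannot serve as coordinates of a holomorphic equivariant embedding. The alternative ``extending using the Runge property'' does not help either: the Runge property asserts that $\O(Q)$ is dense in $\O(U)$, an approximation statement for globally defined functions, not an extension theorem for holomorphic functions defined only on a chart $X_{U_k}$. A correct direct construction would instead use coherence of the sheaves $U\mapsto\O(X_U)_{V_j}$ together with Cartan's Theorem A to produce finitely many global sections over a Stein neighbourhood of $\overline U$ that generate over $\overline U$; but once one knows $X_U$ has finitely many slice types, citing Heinzner directly, as the paper does, is both shorter and cleaner.
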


\begin{proof}
Since the Luna stratification of $Q$ is locally finite, $U$ intersects only finitely many Luna strata of $Q$. Then $X_U$ admits a proper holomorphic $G$-equivariant embedding in a $G$-module  \cite[Einbettungssatz I]{HeinznerEmbedding}.
\end{proof}

We continue to assume that $\O_\gf(X)$ has a standard generating set.     We define two topologies on $\F(Q)$ equivalent to the $\ci$-topology.

 Let $\Phi\in\F(Q)$. Then it follows from Proposition \ref{prop:aij-exist} that $\Phi^*f_i=\sum a_{ij}f_{j}$ where the $a_{ij}\in\ci(X)^G$ and $a_{ij}=0$ if $f_i$ and $f_j$ do not correspond to the same $V_\ell$. Let  
 $$
 \mathcal M=\ci(X)^G\cdot\O(X)_{V_1}\oplus\cdots\oplus\ci(X)^G\cdot\O(X)_{V_r}\subset\ci(X)^r.
 $$
 Then each $\ci(X)^G\cdot\O(X)_{V_j}$ is generated 
 over $\ci(X)^G$ by the $f_i$ transforming by the representation $V_j$.
 By Theorem \ref{thm:closed}, $\mathcal M$ is closed in $\ci(X)^r$.  Let $E$ be the space of  endomorphisms of $\mathcal M$ as $\ci(X)^G$-module and as $G$-module. Then $\alpha\in E$ is determined by 
  $(\alpha(f_i))\in {\mathcal M}^n$. This gives a topology on $E$ and it is easy to see that the image of $E$ in ${\mathcal M}^n$ is closed. Hence $E$ is a Fr\'echet space. Let $E_0$ denote the space of $(n\times n)$-matrices   $(a_{ij})$ of elements of $\ci(X)^G$     such that $f_{i}\mapsto\sum a_{ij}f_{j}$ is an element of $E$. Then $E_0$ is closed in the space of $(n\times n)$-matrices with entries in $\ci(X)^G$ and obviously maps onto $E$.
 
  \begin{proposition}\label{prop:equivalent-topologies-phi}
  Let $f_1,\dots,f_n$, $X$, etc.\  be as above. Let 
  $\Phi\in\F(Q)$ with corresponding matrix $(a_{ij})\in E_0$. Then the following give equivalent neighbourhood bases of   
  $\Phi$ in $\F(Q)$.
  \begin{enumerate}
\item The neighbourhoods of $\Phi$ in the $\ci$-topology.
\item The set of all $\Phi'$ such that $(\Phi')^*$ is in a neighbourhood of $\Phi^*$ in $E$.
\item The set of all $\Phi'$ such that some choice of 
corresponding matrix $(a_{ij}')$ is in a neighbourhood of  $(a_{ij})$ in $E_0$. 
\end{enumerate}
 \end{proposition}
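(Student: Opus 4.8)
The plan is to prove the three neighbourhood bases are equivalent by establishing a chain of continuity statements: (1) $\Leftrightarrow$ (2) and (2) $\Leftrightarrow$ (3). The implications (3) $\Rightarrow$ (2) and (2) $\Rightarrow$ (3) are almost formal: a choice of matrix $(a_{ij}')\in E_0$ lifting $(\Phi')^*$ determines $(\Phi')^*\in E$ via a continuous linear surjection $E_0\to E$ (by construction $E_0$ maps onto $E$, and both are closed subspaces of Fréchet spaces of matrices/tuples of elements of $\ci(X)^G$, so by the open mapping theorem the map is open), and conversely, since a neighbourhood of $\Phi^*$ in $E$ pulls back to an open set in $E_0$ and $E_0\to E$ is onto, membership of $(\Phi')^*$ in a given neighbourhood in $E$ is witnessed by \emph{some} lift $(a_{ij}')$ in the preimage neighbourhood. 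So the real content is the equivalence of (1) with (2) (equivalently with (3)).

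For (1) $\Rightarrow$ (2): I would show that the assignment $\Phi\mapsto\Phi^*\in E$ is continuous for the $\ci$-topology on $\F(Q)$. Concretely, $\Phi^*f_i = f_i\circ\Phi$, so if $\Phi'$ is $\ci$-close to $\Phi$ as a map into the ambient $\C^m\times V^*$ (using the standard embedding \eqref{eq:embedding}), then $f_i\circ\Phi'$ is $\ci$-close to $f_i\circ\Phi$ in $\ci(X)$, since composing with a fixed smooth function $f_i$ is continuous in the $\ci$-topology on compacts (chain rule plus the product estimates). Because $\mathcal M$ is closed in $\ci(X)^r$ (Theorem \ref{thm:closed}), the tuple $(\Phi'^*f_i)$ lands in $\mathcal M^n$ near $(\Phi^*f_i)$, i.e.\ $(\Phi')^*$ is near $\Phi^*$ in $E$. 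This direction is routine once one has Theorem \ref{thm:closed} in hand.

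The harder direction is (2) $\Rightarrow$ (1), i.e.\ recovering $\ci$-control of $\Phi'$ itself from $\ci$-control of the matrix $(a_{ij}')$. The key point is that, as noted after the embedding \eqref{eq:embedding}, the functions $f_i$ together with (the pullbacks of) the quotient coordinates $\sigma\circ p$ give a closed equivariant embedding of $X$ into $\C^m\times V^*$; since $\Phi'$ induces $\Id_Q$, the $Q$-component of $\Gamma\circ\Phi'$ equals that of $\Gamma$, and the $V^*$-component is determined by the values $f_j(x)$ assembled into $\gamma$, which by Proposition \ref{prop:aij-exist} equal $\sum_j a_{ij}'(x)f_j(x)$ read off from the matrix. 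Thus $\Gamma\circ\Phi'$ is an explicit (matrix-multiplication) function of $x$, the $f_j(x)$, and the $a_{ij}'(x)$; the $f_j$ are fixed smooth functions, so $\ci$-closeness of $(a_{ij}')$ to $(a_{ij})$ forces $\ci$-closeness of $\Gamma\circ\Phi'$ to $\Gamma\circ\Phi$ on compacts, via the product/chain-rule estimates (of the type in Lemma \ref{lem:series}). Since $\Gamma$ is a closed embedding, this gives $\ci$-closeness of $\Phi'$ to $\Phi$. I would write this out by fixing a compact $M\subset X$ and $k\geq 0$, expanding the derivatives of the matrix product $\sum a_{ij}' f_j$ by Leibniz, and bounding each term by $\|(a_{ab}')-(a_{ab})\|_{M,k}$ times constants depending only on $M$, $k$, and the fixed $f_j$. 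The one subtlety to watch is that the $a_{ij}$ need not be continuous as functions on $Q$ (Example \ref{ex:not-smooth} is cited for exactly this), so all the estimates must be carried out upstairs on $X$, using $\ci(X)^G$ rather than any function space on $Q$; this is precisely why the spaces $E$, $E_0$, $\mathcal M$ were set up on $X$. With that understood, the estimates are routine and the equivalence follows.
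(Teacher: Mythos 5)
Your proof is correct and follows essentially the same route as the paper's: a cyclic chain (1) $\Rightarrow$ (2) $\Rightarrow$ (3) $\Rightarrow$ (1), with the (2) $\Leftrightarrow$ (3) step supplied by the open mapping theorem for $E_0\twoheadrightarrow E$, and the step back to the $\ci$-topology done by rewriting $\Gamma\circ\Phi'$ on $\widetilde X\subset\C^m\times V^*$ as a matrix action by $(a_{ij}')$ so that $\ci$-control of the matrix entries (on $X$, not on $Q$) yields $\ci$-control of the map. Your observation that the estimates must be done upstairs on $X$ because the $a_{ij}$ need not be smooth on $Q$ is exactly the cautionary remark the authors make after the proposition.
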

  
  \begin{proof}
 If $\Phi'$ is close to $\Phi$ in the $\ci$-topology, then the $(\Phi')^*f_{i}$ are close to the $\Phi^*f_{i}$, i.e., $(\Phi')^*$ is close to $\Phi^*$ in $E$. Now suppose that $(\Phi')^*$ lies in a neighbourhood $U$ of $\Phi^*$ in $E$. Let $U_0$ be a neighbourhood of $(a_{ij})\in E_0$ which maps into $U\subset E$. By the open mapping theorem, the image of $U_0$ 
is a neighbourhood    of  $\Phi^*$ in $E$. Hence if $(\Phi')^*$ is close to $\Phi^*$ we can choose   $(a_{ij}')$ close to $(a_{ij})$. 
 Finally, suppose that      $(a_{ij}')$ is close to  $(a_{ij})$.   Let $\Gamma\colon X\to \widetilde X\subset \C^m\times V^*$ be the embedding of \eqref{eq:embedding}.  Then $\Phi$ acts on $\widetilde X$ by
  $$
  \tilde x=(z,v^*_1,\dots,v^*_n)\mapsto (z,\sum_j   a_{j1}(\tilde x)v^*_{j},\dots,\sum_j   a_{jn}(\tilde x)v^*_j)
  $$ 
  where  $ z\in\C^m$,  $ (v^*_1,\dots,v^*_n)\in V^*$. Since $(a_{ij}')$ is close to   $(a_{ij})$, we see that $\Phi'$ is close to $\Phi$ in the $\ci$-topology. Thus the three neighbourhood bases of $\Phi$ are equivalent.
  \end{proof}

Let $D\in\LF(Q)$. Then $D(f_i)=\sum d_{ij}f_j$ with $d_{ij}\in\ci(X)^G$. Thus $D$ gives us an element $D^*$ of $E$ and an element $(d_{ij})\in E_0$. As above we have the following result.
 \begin{proposition}\label{prop:equivalent-topologies-D}
 Let $f_1,\dots,f_n$, $X$, etc.\  be as above. Let 
 $D\in\LF(Q)$ with corresponding   matrix $(d_{ij}) \in E_0$. Then the following give equivalent neighbourhood bases of   $D$ in   $\LF(Q)$.
  \begin{enumerate}
\item The neighbourhoods of $D$ in the $\ci$-topology.
\item The set of all $D'$ such that $(D')^*$ is in a neighbourhood of $D^*$ in $E$.
\item The set of all $D'$ such that some choice of 
corresponding matrix $(d_{ij}')$ is in a neighbourhood of  $(d_{ij})$ in $E_0$. 
\end{enumerate}
 \end{proposition}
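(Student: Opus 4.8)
The plan is to imitate the proof of Proposition \ref{prop:equivalent-topologies-phi} almost verbatim, exploiting the fact that the three objects in play — the $\ci$-topology on $\LF(Q)$, the Fréchet space $E$ of $\O(X)^G$- and $G$-equivariant endomorphisms of $\mathcal M$, and the closed subspace $E_0$ of matrices with entries in $\ci(X)^G$ — are exactly the ones already set up before Proposition \ref{prop:equivalent-topologies-phi}, and their construction does not depend on whether we are looking at diffeomorphisms of type $\F$ or vector fields of type $\LF$. First I would note that $\LF(Q)$ is a closed subspace of the $\ci$ vector fields on $X$ by Theorem \ref{thm:closed}, hence a Fréchet space, and that for $D\in\LF(Q)$ the equality $D(f_i)=\sum_j d_{ij}f_j$ with $d_{ij}\in\ci(X)^G$ holds because $D$ preserves each isotypic piece $\ci(X)^G\cdot\O(X)_{V_\ell}$ (being $G$-invariant and $\O(X)^G$-linear as a derivation annihilating $\O(Q)$) together with the fact, again from Theorem \ref{thm:closed}, that $\mathcal M$ is closed in $\ci(X)^r$, so that $D^*$ is a well-defined element of $E$ and lifts to $(d_{ij})\in E_0$.

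For the implication (1)$\Rightarrow$(2): if $D'$ is $\ci$-close to $D$, then each $D'(f_i)$ is $\ci$-close to $D(f_i)$ in $\mathcal M$, which is precisely the statement that $(D')^*$ is close to $D^*$ in $E$ (the topology on $E$ being, by construction, the one inherited from $\mathcal M^n$ via $\alpha\mapsto(\alpha(f_i))$). For (2)$\Rightarrow$(3): the map $E_0\to E$ is a continuous surjection of Fréchet spaces, so by the open mapping theorem it is open; picking a neighbourhood $U_0$ of $(d_{ij})$ in $E_0$ mapping into a given neighbourhood $U$ of $D^*$ in $E$, its image is a neighbourhood of $D^*$, so whenever $(D')^*$ lands there we can select a lift $(d_{ij}')\in U_0$. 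For (3)$\Rightarrow$(1): use the equivariant closed embedding $\Gamma\colon X\to\widetilde X\subset\C^m\times V^*$ of \eqref{eq:embedding}. A vector field $D$ of type $\LF$ annihilates the pullbacks of the $\C^m$-coordinates and acts on the $V^*$-coordinates by the linear expressions $(z,v_1^*,\dots,v_n^*)\mapsto(0,\sum_j d_{j1}(\tilde x)v_j^*,\dots,\sum_j d_{jn}(\tilde x)v_j^*)$, exactly parallel to the formula for $\Phi$ in the proof of Proposition \ref{prop:equivalent-topologies-phi}; hence if $(d_{ij}')$ is $\ci$-close to $(d_{ij})$ then $D'$, viewed as a map $X\to\C^m\times V^*$, is $\ci$-close to $D$. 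Chaining (1)$\Rightarrow$(2)$\Rightarrow$(3)$\Rightarrow$(1) gives the equivalence of the three neighbourhood bases.

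The only point requiring a little care — and the one I would flag as the main obstacle, though it is mild — is the step (2)$\Rightarrow$(3), because the open mapping theorem requires $E_0\to E$ to be a continuous \emph{surjection} between Fréchet spaces: we must know $E_0$ is genuinely a Fréchet space (it is closed in the Fréchet space of $n\times n$ matrices over $\ci(X)^G$, as recorded before Proposition \ref{prop:equivalent-topologies-phi}) and that the map onto $E$ is surjective (it is, by definition of $E_0$). Everything else is a direct transcription of the diffeomorphism case, the substantive analytic input (closedness of $\mathcal M$ and of $\LF(Q)$, existence of the embedding $\Gamma$) having already been supplied. I would therefore write the proof as: "The argument is identical to that of Proposition \ref{prop:equivalent-topologies-phi}, replacing $\Phi$ by $D$, $\F(Q)$ by $\LF(Q)$, and the action of $\Phi$ on $\widetilde X$ by the corresponding action of the vector field $D$ on $\widetilde X\subset\C^m\times V^*$," and then spell out the embedding-coordinate formula above to make the transcription explicit.
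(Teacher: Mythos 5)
Your proposal is correct and follows exactly the route the paper intends: the paper's own "proof" is the single line ``As above we have the following result,'' deferring to the argument for Proposition \ref{prop:equivalent-topologies-phi}, and your explicit transcription (reusing $E$ and $E_0$, the open mapping theorem for (2)$\Rightarrow$(3), and the embedding $\Gamma$ with $D$ acting linearly on the $V^*$-coordinates for (3)$\Rightarrow$(1)) is precisely that argument carried over to $\LF(Q)$.
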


We now come to some key definitions.

 \begin{definition}\label{def:propertyLF} Let $X$ be a Stein $G$-manifold with a standard generating set $f_1,\dots,f_n$ of $\O_\gf(X)$. Let $q\in Q$. We say that the fibre $X_q$ \emph{has property (LF)\/} if there is an open neighbourhood $U_0$ of $q\in Q$ with the following property.  For  every open neighbourhood $U$ of $q$ contained in $U_0$ there is a smaller   neighbourhood  $U'$ of $q$ with compact closure in $U$ and a neighbourhood   $\Omega$ of the identity in  $\F(U)$ such that for $\Phi\in\Omega$ we have:
\begin{enumerate}
\item $\Phi$ corresponds to a matrix $(a_{ij})$ with $||(a_{ij})-I||_{\overline{U'}}<1/2$.
\item There is a $D\in\LF(U')$ such that  $D(f_i)=\sum d_{ij}f_j$ on $X_{U'}$ where $(d_{ij})=\log (a_{ij})$. 
\end{enumerate}
As shorthand for conditions (1) and (2) we say that \emph{$\Phi$ admits a logarithm in $\LF(U')$}. Note that $(a_{ij})$ is not unique. The condition is that some $(a_{ij})$ corresponding to $\Phi$ satisfies (1) and (2).
\end{definition}
Of course, we have to show that the choice of standard generating set does not matter. First we make some remarks.
In what follows, when we write $(a_{ij})$ we will always mean that $(a_{ij})$ corresponds to the $\Phi$ or $\Psi$ of type $\F$ that we are considering.

 \begin{remark}\label{rem:(1)-is-automatic}
 The condition (1) above can always be arranged. Choose a compact subset $M\subset X$ whose image in $Q$ is $\overline{U'}$. 
  Then  $\{\Phi\in\F(U): ||(a_{ij})-I||_{M}<1/2\}$ is a neighbourhood of the identity in $\F(U)$.   Moreover, by Lemma \ref{lem:series}, 
  the series $\log (a_{ij})$ converges uniformly on compact subsets of a $G$-saturated neighbourhood of $M$ to a matrix of smooth invariant functions.
 \end{remark}
 
\begin{remark}\label{rem:matrix-not-matter}\label{rem:standard-basis-not-matter}
The formal series $\log\Phi^*$, when applied to any $f_i$, converges to $D(f_i)$. Hence $D$ is independent of the choice of $(a_{ij})$. 
\end{remark}

\begin{remark}\label{rem:exp-D=Phi}
Properties (1) and (2) imply that $\exp D=\Phi$ over $U'$. Note that $||(d_{ij})||_{\overline U'}<\log 2$ and $(d_{ij})$ is the unique matrix satisfying this property whose exponential is $(a_{ij})$. See Remark \ref{rem:log-exp-inverses}.
\end{remark}

 \begin{lemma}\label{lem:(LF)-well-defined}
Let $X$, etc.\ be as above. Let $f_1',\dots,f_m'$ be a second standard generating set of $\O_\gf(X)$. 
If (1) and (2) of Definition \ref{def:propertyLF} hold for the $f_j'$, then they hold for the $f_i$.
 \end{lemma}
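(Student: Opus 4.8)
The plan is to show that the very vector field $D=\log\Phi$ that witnesses property (LF) for the $f'_j$ also witnesses it for the $f_i$, after slightly shrinking the neighbourhoods involved. The mechanism is that a coefficient matrix of $D$ relative to $\{f_i\}$ exponentiates to a coefficient matrix of $\exp D=\Phi$ relative to $\{f_i\}$, because the coefficients are $G$-invariant and hence constant along the flow of $D$. So take $U_0$ to be the neighbourhood supplied by property (LF) for the $f'_j$, and let $U\subseteq U_0$ be given. Choose, as in Definition \ref{def:propertyLF} for $\{f'_j\}$, a Stein neighbourhood $V'$ of $q$ with $\overline{V'}\subset U$ and a neighbourhood $\Omega'$ of $\id$ in $\F(U)$ such that every $\Phi\in\Omega'$ admits a logarithm $D\in\LF(V')$ relative to $\{f'_j\}$; by Remark \ref{rem:exp-D=Phi} we have $\exp D=\Phi$ over $V'$. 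Writing $D$ locally as $\sum a_\alpha A_\alpha$ with $a_\alpha$ smooth $G$-invariant and $A_\alpha\in\Der_Q(X)^G$, each $A_\alpha$ carries $\O(X)_{V_\ell}$ into itself (being holomorphic and $G$-equivariant, $A_\alpha(f)$ is again $G$-finite, holomorphic, of type $V_\ell$), so $D$ carries $\ci(X)^G\cdot\O(X)_{V_\ell}$ into itself over $V'$; as the $f_j$ of type $V_\ell$ generate this module, there is a matrix $C=(c_{ij})$ of smooth $G$-invariant functions on $X_{V'}$ with $Df_i=\sum_j c_{ij}f_j$, and $c_{ij}=0$ unless $f_i,f_j$ have the same type.

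Next I would check that $(a_{ij}):=\exp C$ is a coefficient matrix of $\Phi$ relative to $\{f_i\}$, i.e.\ $\Phi^*f_i=\sum_j(\exp C)_{ij}f_j$ on $X_{V'}$. Consider the flow $\Phi_t=\exp(tD)$, $t\in[0,1]$; it exists and lies in $\F(V')$ by Lemma \ref{lem:LFcomplete} and Corollary \ref{cor:expLFistypeF}, with $\Phi_0=\id$, $\Phi_1=\Phi$, and each $\Phi_t$ inducing $\id$ on $V'$ because $D$ annihilates $\O(X)^G$. Fix $x\in X_{V'}$ and put $u_i(t)=f_i(\Phi_t(x))$. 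The flow equation gives $\dot u_i(t)=(Df_i)(\Phi_t(x))=\sum_j c_{ij}(\Phi_t(x))u_j(t)$, and since each $c_{ij}$ is $G$-invariant and $\Phi_t$ preserves the fibre of $p$ through $x$, we have $c_{ij}(\Phi_t(x))=c_{ij}(x)$. Hence $u$ solves the linear ODE $\dot u=C(x)u$, so $u(t)=\exp(tC(x))u(0)$; evaluating at $t=1$ and letting $x$ vary gives the claim, and the entries of $\exp C$ are smooth and $G$-invariant by Lemma \ref{lem:series}(1).

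It remains to arrange the size estimates. As $\Phi\to\id$ in $\F(U)$, the matrix $(a'_{ij})$ of $\Phi$ relative to $\{f'_j\}$ tends to $I$, so $(d'_{ij})=\log(a'_{ij})\to0$, and hence $D=\log\Phi\to0$ in $\LF(V')$ (Propositions \ref{prop:equivalent-topologies-phi} and \ref{prop:equivalent-topologies-D} for $\{f'_j\}$, together with Lemma \ref{lem:series}(3)). By Proposition \ref{prop:equivalent-topologies-D} for $\{f_i\}$ we may then choose the matrix $C$ above with $\|C\|_M$ as small as we like, where $M\subset X_{V'}$ is a fixed compact set whose image in $Q$ is $\overline{V''}$ for some neighbourhood $V''\Subset V'$ of $q$. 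Shrinking $\Omega'$ to $\Omega$, we may assume that for $\Phi\in\Omega$ we have $\|C\|_M<\log2$ and $\|\exp C-I\|_M<1/2$ (Lemma \ref{lem:series}(3)). Then for $\Phi\in\Omega$ the matrix $(a_{ij})=\exp C$ satisfies $\|(a_{ij})-I\|_{\overline{V''}}<1/2$, we have $\log(a_{ij})=C$ on $X_{V''}$ by Remark \ref{rem:log-exp-inverses}, and $D|_{X_{V''}}\in\LF(V'')$ obeys $Df_i=\sum_j c_{ij}f_j$; so $\Phi$ admits the logarithm $D$ in $\LF(V'')$ relative to $\{f_i\}$. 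This gives property (LF) for the $f_i$, with witnessing data $V''$ and $\Omega$.

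The step I expect to be the crux is identifying $\exp C$ with a coefficient matrix of $\Phi$. Coefficient matrices of type-$\F$ maps are genuinely non-unique, and $C$ is itself determined only up to the relations among the $f_j$, so one must produce one particular matrix for $\Phi$ which is visibly the exponential of one particular matrix for $D$. The flow computation does exactly this, and it works because the $G$-invariance of the coefficients turns the transport equation along the flow of $D$ into a linear ODE with $t$-independent coefficient matrix; without that one would be facing a time-ordered exponential rather than an ordinary one.
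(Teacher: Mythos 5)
Your proof is correct, and the overall structure matches the paper's: in both cases the witness $D=\log\Phi$ supplied by property (LF) for $\{f'_j\}$ is shown to work also for $\{f_i\}$, via the size estimates from Propositions \ref{prop:equivalent-topologies-phi} and \ref{prop:equivalent-topologies-D}. The one place you genuinely diverge is the core computation identifying the exponential of a coefficient matrix of $D$ (relative to $\{f_i\}$) with a coefficient matrix of $\Phi$. The paper does this formally: since $\Phi^*$ fixes invariants, $(\Phi^*-I)^n f_i=\sum((a-I)^n)_{ij}f_j$, so the operator power series $(\log\Phi^*)f_i$ converges and can be evaluated two ways, once using $\Phi^*f_i=\sum a_{ij}f_j$ to give $\sum(\log a)_{ij}f_j$, and once using $\Phi^*=\exp D$ and the fact that $D$ acts on the $f_j$ by a matrix of norm less than $\log 2$ to give $D(f_i)$. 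You instead integrate the autonomous linear ODE $\dot u = C(x)u$ for $u_i(t)=(\Phi_t^*f_i)(x)$, using that the coefficients $c_{ij}$ are $G$-invariant and hence constant along the fibre-preserving flow $\Phi_t$, to get $\Phi^*f_i=\sum(\exp C)_{ij}f_j$ directly. These two computations are equivalent (one is the infinitesimal, the other the integrated form of the same Lie-theoretic fact). Your route has the virtue of being more explicit and of not requiring any manipulation of operator-valued series; the small price is the extra observation that $G$-invariant continuous functions are constant on fibres of $p$ (so the ODE has a $t$-independent coefficient matrix), which your final remark correctly flags as the point where invariance is actually used.

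Two minor bookkeeping notes. First, your observation that $A_\alpha\in\Der_Q(X)^G$ preserves each isotypic component $\O(X)_{V_\ell}$, hence that $D$ has a matrix $C$ relative to $\{f_i\}$, is correct but can be replaced by a direct appeal to Proposition \ref{prop:equivalent-topologies-D}, which already guarantees the existence of such a matrix in $E_0$ and gives the open-mapping argument needed to choose it small. Second, when you invoke Lemma \ref{lem:series}(3) to get $\|\exp C - I\|_M<1/2$ and Remark \ref{rem:log-exp-inverses} to get $\log(\exp C)=C$, you should also note that you are free to choose $\|C\|_{M}$ smaller than both $\log 2$ and the $\delta$ from Lemma \ref{lem:series}(3); you do say this ("as small as we like"), so the argument is complete.
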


\begin{proof} 
Let $\Omega$ be the neighbourhood of the identity in $\F(U)$ such that (1) and (2) hold for the $f_j'$. 
By Propositions \ref{prop:equivalent-topologies-phi} and \ref{prop:equivalent-topologies-D}, making $\Omega$ smaller we can guarantee that any $\Phi\in\Omega$ has a matrix $(a_{ij})$ relative to the $f_i$ such that $||(a_{ij})-I||_{\overline{U'}}<1/2$ and we can guarantee that $D$ satisfies $D(f_i)=\sum d_{ij}'f_j$ where $||(d_{ij}')||_{\overline{U'}}<\log 2$. Then since $\exp D=\Phi$, the series $(\log\Phi^*)f_i$ converges to $D(f_i)=\sum d'_{ij}f_j$ as well as to $\sum d_{ij}f_j$ where $\log(a_{ij})=(d_{ij})$.
Hence   (1) and (2) hold for the $f_i$.
\end{proof}

\begin{proposition}\label{prop:local-LF-implies-global} Let $X$ be a Stein $G$-manifold and let $K\subset Q$ be compact. 
Suppose that every fibre of $X$ has property (LF).  Let $U$ be a neighbourhood of $K$ such that we have a standard generating set  $\{f_i\}$ for $\O_\gf(X_U)$. Then for any   neighbourhood $U'$ of $K$ with compact closure in $U$ there is  a neighbourhood $\Omega$ of the identity in $\F(U)$ such that  every $\Phi\in\Omega$ admits a logarithm in $\LF(U')$.
\end{proposition}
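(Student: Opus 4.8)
The statement is a compactness argument, passing from the local property (LF) at each fibre to a uniform statement over a neighbourhood of the compact set $K$. The plan is as follows. First I would cover $K$ by finitely many open sets $U_1,\dots,U_N\subset U$ such that for each $\alpha$ the fibre over some $q_\alpha\in U_\alpha$ has property (LF) witnessed over $U_\alpha$, i.e. there are $U_\alpha'\Subset U_\alpha$ and a neighbourhood $\Omega_\alpha$ of the identity in $\F(U_\alpha)$ such that every $\Phi\in\Omega_\alpha$ admits a logarithm in $\LF(U_\alpha')$; shrinking, arrange that the $U_\alpha'$ still cover $K$ and that $\overline{U_\alpha'}\subset U'$. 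The essential point is that property (LF) is phrased with respect to \emph{some} standard generating set of $\O_\gf(X_{U_\alpha})$, whereas we are handed a fixed standard generating set $\{f_i\}$ of $\O_\gf(X_U)$; restricting $\{f_i\}$ to $X_{U_\alpha}$ gives a generating set, and by Lemma \ref{lem:(LF)-well-defined} property (LF) holds with respect to this restricted set as well (after further shrinking $\Omega_\alpha$). Thus I may assume the logarithms in all the $\LF(U_\alpha')$ are computed using the restrictions of the global $\{f_i\}$.

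Next, given $\Phi\in\F(U)$ close enough to the identity, I restrict $\Phi$ to each $X_{U_\alpha}$; by Propositions \ref{prop:equivalent-topologies-phi} (applied on $U$ and on each $U_\alpha$) the restriction maps $\F(U)\to\F(U_\alpha)$ are continuous, so there is a neighbourhood $\Omega$ of the identity in $\F(U)$ whose image in each $\F(U_\alpha)$ lies in $\Omega_\alpha$. For such a $\Phi$, on each $X_{U_\alpha'}$ we obtain a vector field $D_\alpha\in\LF(U_\alpha')$ with $D_\alpha(f_i)=\sum d^{(\alpha)}_{ij}f_j$, $(d^{(\alpha)}_{ij})=\log(a^{(\alpha)}_{ij})$. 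The key observation is that these $D_\alpha$ agree on overlaps: by Remark \ref{rem:matrix-not-matter}, $D_\alpha(f_i)$ is the value at $f_i$ of the formal series $\log\Phi^*$, which is intrinsic to $\Phi$ and the $f_i$ and does not depend on $\alpha$; equivalently, on $X_{U_\alpha'\cap U_\beta'}$ both $(d^{(\alpha)}_{ij})$ and $(d^{(\beta)}_{ij})$ are matrices of norm $<\log 2$ whose exponential is the restriction of a matrix representing $\Phi^*$, hence equal by the uniqueness in Remark \ref{rem:log-exp-inverses} (after shrinking $\Omega$ so that $\|(a_{ij})-I\|$ is small on a compact set mapping onto $\overline{U_1'}\cup\cdots\cup\overline{U_N'}$, using Remark \ref{rem:(1)-is-automatic}). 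Therefore the $D_\alpha$ patch to a single vector field $D$ defined over $\bigcup_\alpha U_\alpha'\supset K$, and in particular over $U'$ after restriction.

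It remains to check that the patched $D$ is of type $\LF$ on $X_{U'}$ and that it still computes $\log\Phi^*$ there, so that conditions (1) and (2) of Definition \ref{def:propertyLF} hold for $\Phi$ over $U'$ with respect to $\{f_i\}$. Being of type $\LF$ is a local condition on $Q$ (Definition \ref{def:typeLF}: locally a finite $\ci$-invariant combination of elements of $\Der_Q(X)^G$), so it is inherited from each piece $D|_{X_{U_\alpha'}}=D_\alpha$; likewise the identity $D(f_i)=\sum d_{ij}f_j$ with $(d_{ij})=\log(a_{ij})$ holds on each $X_{U_\alpha'}$ and hence on $X_{U'}$, since the $a_{ij}$ representing $\Phi^*$ over $U$ restrict to matrices representing $\Phi^*$ over $U_\alpha$, and $\log$ of a matrix of norm $<1/2$ from $I$ is determined locally. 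This shows every $\Phi\in\Omega$ admits a logarithm in $\LF(U')$.

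\textbf{Main obstacle.} The genuinely delicate point is the compatibility of the locally defined logarithms on overlaps — one must be careful that property (LF) is quantified over \emph{some} standard generating set, so the bridge through Lemma \ref{lem:(LF)-well-defined} (together with the equivalence of topologies in Propositions \ref{prop:equivalent-topologies-phi} and \ref{prop:equivalent-topologies-D}) is what lets one use the \emph{same} global $\{f_i\}$ everywhere, after which the uniqueness of the matrix logarithm of norm $<\log 2$ forces the pieces to agree. Making the finitely many shrinkings of the $\Omega_\alpha$ and of $\Omega$ consistent, and ensuring the $U_\alpha'$ still cover $K$ with closures in $U'$, is routine once this is set up.
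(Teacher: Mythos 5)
Your approach is essentially the paper's: cover a compact set by finitely many open sets witnessing property (LF), observe via Remark \ref{rem:matrix-not-matter} (and the uniqueness of the matrix logarithm of norm $<\log 2$) that the locally defined logarithms agree on overlaps, and patch. Your explicit appeal to Lemma \ref{lem:(LF)-well-defined} to justify using the single global standard generating set $\{f_i\}$ on every piece is a sensible clarification of a point the paper leaves tacit, and your use of Propositions \ref{prop:equivalent-topologies-phi}, \ref{prop:equivalent-topologies-D} to control the restriction maps matches the paper's "continuous restriction maps $\F(U)\to\F(U_{q_i})$".

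There is, however, one slip that makes the final step fail as written: you cover only $K$ by the $U_\alpha$, shrink so that the $U_\alpha'$ still cover $K$ with $\overline{U_\alpha'}\subset U'$, and then assert that the patched $D$ is defined "over $\bigcup_\alpha U_\alpha'\supset K$, and in particular over $U'$ after restriction." But your arrangement forces $\bigcup_\alpha U_\alpha'\subset U'$, so $D$ is defined only on a (possibly strictly smaller) neighbourhood of $K$ inside $U'$; one cannot "restrict" from the smaller set to the larger. Since the proposition requires $D\in\LF(U')$ for the \emph{given} $U'$, you have only proved a weaker statement. The fix is to cover the compact set $\overline{U'}$ rather than $K$, which is exactly what the paper does ("Let $q\in\overline{U'}$\dots Since $\overline{U'}$ can be covered by finitely many $U_{q_i}'$\dots"); then $\bigcup_\alpha U_\alpha'$ is an open set containing $\overline{U'}$, and restricting the patched vector field to $X_{U'}$ gives the required element of $\LF(U')$. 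With that one change, your proof is correct and coincides with the paper's.
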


\begin{proof} We may assume that $U$ is Stein and we can replace $X$ by $X_U$.
By Remark \ref{rem:(1)-is-automatic} we may choose $\Omega$ such that every $\Phi\in\Omega$ satisfies (1) of Definition \ref{def:propertyLF}.  Let $q\in \overline{U'}$. Since $X_q$ has property (LF), there are neighbourhoods $U_q\supset U_{q}'$ of $q$ and a neighbourhood $\Omega_q$ of the identity in $\F(U_q)$ such that if the restriction of $\Phi$ to $X_{U_q}$ lies in $\Omega_q$, then $\Phi$ admits a logarithm   $D_q\in\LF(U_q')$. By Remark \ref{rem:matrix-not-matter}, $D_q$ is uniquely determined by $\Phi$ restricted to $X_{U_q'}$. Since $\overline{U'}$ can be covered by finitely many $U'_{q_i}$, we can shrink $\Omega$ so that it maps into $\Omega_{q_i}$ for all $i$ under the continuous restriction maps $\F(U)\to\F(U_{q_i})$. Then every $\Phi\in\Omega$ admits a logarithm in $\LF(U')$.
  \end{proof}

Our goal now is to prove Proposition \ref{prop:all-type-LF} which says   that every fibre of every Stein $G$-manifold $X$ has property (LF). 

\begin{remark}\label{rem:LF-problem-local}
Suppose that we have another Stein $G$-manifold $X_0$ with quotient $Q_0$ and that we have a $G$-equivariant biholomorphism $\Psi\colon X\to X_0$ inducing $\psi\colon Q\to  Q_0$. Then $\Psi$ induces an isomorphism  of $\Aut_Q(X)^G$ with $\Aut_{Q_0}(X_0)^G$, sending $\Phi\in\Aut_Q(X)^G$ to $\Psi\circ\Phi\circ\Psi\inv$. Similarly, we obtain isomorphisms of $\Der_Q(X)^G$ and $\Der_{Q_0}(X_0)^G$, of $\F(Q)$ and $\F(Q_0)$, etc. It follows that a fibre $X_q$ has property (LF) in $X$ if and only if $\Psi(X_q)$ has property (LF) in $X_0$.   Hence by the slice theorem  we are reduced to showing that  fibres $G\times^H\NN(W)$ in  $G$-vector bundles $T_W$ have property (LF).
\end{remark}

  \begin{lemma}\label{lem:product-with-S}
Let $S$ be a Stein manifold with trivial $G$-action and let  $\{f_i\}$ be  a standard set  of generators of $\O_\gf(X)$. Let $X_q$ be a fibre of $X$ with property (LF) and let $s_0\in S$. Then $\{s_0\}\times X_q$ has property (LF) in $S\times X$.
\end{lemma}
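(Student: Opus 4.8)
The plan is to reduce the statement about $S\times X$ to the statement about $X$ itself, exploiting that the $S$-direction carries a trivial $G$-action and that our standard generators $\{f_i\}$ of $\O_\gf(X)$ also serve as standard generators of $\O_\gf(S\times X)$ (pulled back along the projection $S\times X\to X$). First I would fix the neighbourhood $U_0$ of $q$ in $Q$ witnessing property (LF) for $X_q$, and for a given product neighbourhood $S_1\times U$ of $(s_0,q)$ in $S\times Q$ with $U\subset U_0$, I would seek a smaller product neighbourhood $S_1'\times U'$ with $\overline{S_1'\times U'}$ compact in $S_1\times U$. The key point is that a $G$-diffeomorphism $\Phi$ of $(S\times X)_{S_1\times U}=S_1\times X_U$ of type $\F$, inducing the identity on $S_1\times U$, is by Proposition \ref{prop:aij-exist} exactly a smooth $S_1$-parametrised family $\Phi^s\in\F(U)$ together with an $S_1$-family of matrices $(a_{ij}(s,x))$ (the same $f_i$, now depending also on $s$) with $(\Phi^s)^*f_i=\sum_j a_{ij}(s,\cdot)f_j$. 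Since the $f_i$ do not involve the $S$-coordinates, the $\ci$-topology on $\F(S_1\times U)$ corresponds under Proposition \ref{prop:equivalent-topologies-phi} to $\ci$-convergence of the matrix $(a_{ij}(s,x))$ jointly in $s$ and $x$.

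Next I would invoke property (LF) for $X_q$ to obtain, for the fibrewise problem, a neighbourhood $U'$ of $q$ and a neighbourhood $\Omega_0\subset\F(U)$ of the identity such that any $\Psi\in\Omega_0$ admits a logarithm in $\LF(U')$; that is, $\|(a_{ij})-I\|_{\overline{U'}}<1/2$ and there is $D\in\LF(U')$ with $D(f_i)=\sum_j(\log(a_{ij}))_{ij}f_j$. I would then choose $S_1'$ relatively compact in $S_1$ and shrink $\Omega$, the neighbourhood of the identity in $\F(S_1\times U)$, so that every $\Phi\in\Omega$ has its associated matrix $(a_{ij}(s,\cdot))$ close enough to the identity in the $\ci$-topology over $\overline{S_1'}\times\overline{U'}$ that, for each fixed $s\in\overline{S_1'}$, the restriction $\Phi^s$ lies in $\Omega_0$. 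Then for each $s$ we get $d_{ij}(s,\cdot)=\log(a_{ij}(s,\cdot))$ and a vector field $D^s\in\LF(U')$ with $D^s(f_i)=\sum_j d_{ij}(s,\cdot)f_j$. The matrix entries $d_{ij}(s,x)$ depend smoothly on $(s,x)$ by Lemma \ref{lem:series}(1) (the power series for $\log$ converges uniformly, hence the smooth dependence on parameters is preserved). I would assemble the $D^s$ into a single vector field $D$ on $(S\times X)_{S_1'\times U'}=S_1'\times X_{U'}$, tangent to the $X$-fibres, with $D(f_i)=\sum_j d_{ij}(s,x)f_j$ where now $(d_{ij})=\log(a_{ij})$ as a matrix over $S_1'\times X_{U'}$.

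The one genuine obstacle is to check that this assembled $D$ is actually of type $\LF$ on $S_1'\times X_{U'}$, i.e.\ lies in $\ci((S\times X)_{S_1'\times U'})^G\cdot\Der_{S_1'\times U'}(S\times X)^G$, rather than merely being a smooth family of type-$\LF$ vector fields on the fibres $X$. Here I would use that $\Der_{Q'}(X)^G$ for $X$ restricted over any $Q'$, being the kernel of $p_*$, is a coherent sheaf (Lemma \ref{lem:sheaves-derivations}(1)), and that over $S_1'$ one has $\Der_{S_1'\times U'}(S_1'\times X_{U'})^G\simeq\ci(S_1')\comptensor\Der_{U'}(X_{U'})^G$ by the topological tensor product argument used in Propositions \ref{prop:lift-family} and \ref{prop:aij-exist}; a smooth $S_1'$-family $s\mapsto D^s$ with $D^s\in\LF(U')$ for each $s$ is precisely an element of $\ci(S_1')\comptensor\LF(U')$, using that $\LF(U')$ is closed in $\Der(X_{U'})^G$ (Theorem \ref{thm:closed}). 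Since $\LF(S_1'\times U')\supseteq\ci(S_1')\comptensor\LF(U')$ (a finite sum $\sum_\alpha a_\alpha(x)A_\alpha(x)$ with $G$-invariant smooth $a_\alpha$ and $A_\alpha\in\Der_Q(X)^G$ stays of that form after multiplying the $a_\alpha$ by smooth functions of $s$), we conclude $D\in\LF(S_1'\times U')$. This gives that $\Phi$ admits a logarithm in $\LF(S_1'\times U')$, and hence that $\{s_0\}\times X_q$ has property (LF) in $S\times X$, as desired. $\qed$
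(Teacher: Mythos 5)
Your proof is correct and takes essentially the same route as the paper's: both reduce to the $s$-parametrised family $\Phi^s\in\F(U)$, invoke property (LF) fibrewise to get $D^s\in\LF(U')$, and then observe that smooth dependence on $s$ upgrades the family to an element of $\LF(U_S'\times U')$, a step you spell out more carefully (via $\ci(S_1')\comptensor\LF(U')\subset\LF(S_1'\times U')$, using that $\LF(S_1'\times U')$ is closed by Theorem~\ref{thm:closed}) than the paper, which simply asserts it. Two small slips that do not affect the argument: $\Der_{S_1'\times U'}(S_1'\times X_{U'})^G$ is $\O(S_1')\comptensor\Der_{U'}(X_{U'})^G$, not $\ci(S_1')\comptensor\cdots$, and $\LF(U')$ is closed in $\Der^\infty_{U'}(X_{U'})^G$ rather than in the holomorphic $\Der(X_{U'})^G$.
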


\begin{proof}
There are arbitrarily small neighbourhoods of $(s_0,q)$ of the form $U_S\times U_q$ where $U_S$ is a neighbourhood of $s_0$ in $S$ and $U_q$ is a neighbourhood of $q\in Q$. Any $\Phi\in\F(U_S\times U_q$) gives us a smooth family $\Phi_s\in\F(U_q)$, $s\in U_S$. Since $X_q$ has property (LF), there is a neighbourhood  $U'$ of $q$ with compact closure in $U_q$ and a   neighbourhood $\Omega$ of the identity in $\F(U_q)$ such that any $\Phi\in\Omega$ admits a logarithm in $\LF(U')$. Let $U_S'$ be a neighbourhood of $s_0$ with compact closure in $U_S$. Let $\widetilde \Omega=\{\Phi\in\F(U_S\times U_q)\mid\Phi_s\in\Omega\text{ for all }s\in \overline{U_S'}\}$. Then $\widetilde \Omega$ is a neighbourhood of the identity in $\LF(U_S\times U_q)$. Shrinking $\widetilde\Omega$ we can arrange that  any $\Phi\in\widetilde\Omega$ has a matrix $(a_{ij})$ with $||(a_{ij})-I||_{\overline{U_S'}\times \overline{U'}}<1/2$. By the choice of $\widetilde\Omega$, for each $s\in  U_S'$ there is a $D_s\in\LF(U')$ with matrix $\log(a_{ij}(s,\cdot))$ 
where the $a_{ij}(s,\cdot)$ are smooth by Proposition \ref{prop:aij-exist}. Thus $D_s$ varies smoothly in $s$ and we have an element in $\LF(U_S'\times U')$ which is the logarithm of $\Phi$.
\end{proof}

 We are reduced to the case that $F=G\times^H\NN(W)$ where $W^H=0$ and $X=T_W$. 
 \begin{lemma}
Let $F$ and $T_W$ be as above. If $F$ is a principal fibre, then it has property (LF).
\end{lemma}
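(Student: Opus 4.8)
The plan is to show first that the hypotheses force $Q:=T_W\sl G$ to be a single point, which collapses $\F(Q)$ to the group of $G$-biholomorphic automorphisms of the affine $G$-variety $T_W$; then to recognise that group as a finite-dimensional complex Lie group with Lie algebra $\Der(T_W)^G$, realised faithfully on a finite-dimensional space of functions; and finally to deduce property (LF) from the local invertibility of its exponential map.

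For the reduction to a point quotient, write $q_0=p(x_0)$ with $x_0=[e,0]$. By the slice theorem a point of $Q$ near $q_0$ has slice type given by the conjugacy class of the stabiliser $H_w$ of a closed $H$-orbit $Hw$ in $W$ near $0$, together with the corresponding slice representation. Since $F$ is a principal fibre, $q_0$ lies in the open Luna stratum, so a whole neighbourhood of $q_0$ in $Q$ consists of points with the same slice type as $q_0$, namely stabiliser $G$-conjugate to $H$ and slice representation $W$. But $H_w\subseteq H$ being $G$-conjugate to $H$ forces $H_w=H$ (equality of dimensions and of component groups, $H$ being reductive), hence $w\in W^H=0$, so $w=0$ and the point is $q_0$ itself. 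Thus $q_0$ is isolated in the irreducible space $Q$, whence $Q=\{q_0\}$, i.e.\ $\O_\alg(W)^H=\C$. Consequently $\NN(W)=W$, so $F=T_W=X$, the map $p\colon X\to Q$ is constant, $\O_\gf(X)=\O_\alg(T_W)$, and $\Der_Q(X)^G=\Der(T_W)^G$ (as $\Der(Q)=0$).

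Next I would set up the Lie group. Since the only $G$-saturated open subsets of $X=T_W$ are $\emptyset$ and $X$, in Definition~\ref{def:typeF} one must take $U=X$; then $\Psi\colon X\times X\to X$ is $G$-invariant in its first argument, hence independent of it, so $\Phi=\Psi(\cdot,\cdot)$ coincides with the single $G$-biholomorphism $\Psi(x,\cdot)$. As conversely every $G$-biholomorphism is of type $\F$, this gives $\F(Q)=\Aut_Q(X)^G=\Aut(T_W)^G$. Fix a standard generating set $f_1,\dots,f_n$ of $\O_\gf(X)$ (one exists by Lemma~\ref{lem:embeds-over-K} applied to the relatively compact Stein domain $U=Q$; equivalently, over the one-point $Q$ the condition in Definition~\ref{def:standard-generators2} that the $\O(X)_{V_j}$ be finitely generated $\O(Q)$-modules is just finite-dimensionality, which holds by coherence). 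Let $M$ be the finite-dimensional $G$-submodule of $\O(X)$ spanned over $\C$ by $f_1,\dots,f_n$; it equals $\bigoplus_j\O(X)_{V_j}$ and generates $\O_\gf(X)$ as a $\C$-algebra. By Remark~\ref{rem:aij-holomorphic}, every $\Phi\in\Aut(T_W)^G$ satisfies $\Phi^*f_i=\sum_j a_{ij}f_j$ with $a_{ij}\in\O(Q)=\C$, so $\Phi^*$ preserves $M$; since $M$ generates the dense subalgebra $\O_\gf(X)$, the assignment $\Phi\mapsto\Phi^*|_M$ identifies $\Aut(T_W)^G$ with a closed complex algebraic subgroup $\mathcal G\subseteq\GL(M)$. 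In the same way every $D\in\Der_Q(X)^G=\Der(T_W)^G$ satisfies $D(f_i)=\sum_j d_{ij}f_j$ with $d_{ij}\in\C$, and $D\mapsto(d_{ij})$ identifies $\Der(T_W)^G$ with the Lie algebra $\lie a$ of $\mathcal G$ (surjectivity using that $G$-invariant vector fields on $T_W$ are complete by Lemma~\ref{lem:LFcomplete}, so each $\xi\in\lie a$ exponentiates to a one-parameter group of automorphisms and thus to a vector field with matrix $\xi$); note $\Der(T_W)^G\subseteq\LF(Q)$ because $\LF(Q)=\ci(X)^G\cdot\Der_Q(X)^G$. Now, since the exponential map of $\mathcal G$ is a local diffeomorphism at $0$, choose a neighbourhood $\Omega$ of the identity in $\F(Q)=\mathcal G$ so small that every $\Phi\in\Omega$ has $|(a_{ij})-I|<1/2$ — which is condition~(1) of Definition~\ref{def:propertyLF}, as $a_{ij}\in\C$ forces $\|(a_{ij})-I\|_{M_0}=|(a_{ij})-I|$ for any nonempty compact $M_0\subseteq X$ with $p(M_0)=Q$ — and so that $(d_{ij}):=\log(a_{ij})$ (the ordinary matrix logarithm, defined now) satisfies $|(d_{ij})|<\log 2$ and lies in $\lie a$ (shrinking $\Omega$, $(a_{ij})=\exp\xi$ for a unique small $\xi\in\lie a$, and $\xi=\log(a_{ij})$ by uniqueness of the logarithm near $I$, cf.\ Remark~\ref{rem:log-exp-inverses}). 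Let $D\in\Der(T_W)^G\subseteq\LF(Q)$ be the vector field with matrix $(d_{ij})$; then $D(f_i)=\sum_j d_{ij}f_j$ with $(d_{ij})=\log(a_{ij})$, and hence $\exp D=\Phi$ over $Q$ by Remark~\ref{rem:exp-D=Phi}. Thus every $\Phi\in\Omega$ admits a logarithm in $\LF(Q)$, and taking $U_0=U=U'=Q$ shows that $F$ has property (LF).

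I expect the conceptual crux to be the first step: recognising that a principal fibre with $W^H=0$ forces $Q$ to be a point, which is what makes $\F(Q)$ the automorphism group of a single affine $G$-variety rather than an infinite-dimensional object over a positive-dimensional base. The main technical ingredient is then that this automorphism group is a finite-dimensional complex Lie group with Lie algebra $\Der(T_W)^G$, obtained from its faithful action on the finite-dimensional module $M$ and the fact that the coefficients $a_{ij}$, $d_{ij}$ are constants; once that is in hand, the production of an $\LF$-logarithm is just local invertibility of $\exp$. The one remaining point demanding care — here handled by invoking Remark~\ref{rem:exp-D=Phi} — is the bookkeeping identifying the time-one flow of a vector field with the matrix exponential, and hence $(d_{ij})$ with $\log(a_{ij})$.
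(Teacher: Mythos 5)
Your proof is correct and takes essentially the same approach as the paper: reduce to $Q$ being a single point, recognise $\F(Q)=\Aut(T_W)^G$ as a linear algebraic group with Lie algebra $\Der(T_W)^G\subseteq\LF(Q)$, and use local invertibility of its exponential map to produce the required logarithm. You supply more detail than the paper's terse one-paragraph proof — in particular the explicit argument that a principal fibre with $W^H=0$ forces $Q$ to collapse to a point, and the careful check that $\log(a_{ij})$ lies in the Lie algebra — but the underlying logic is identical (the parenthetical appeal to Lemma~\ref{lem:LFcomplete} for surjectivity of $D\mapsto(d_{ij})$ is a harmless red herring, since one only needs to differentiate a one-parameter subgroup, not integrate a vector field).
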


\begin{proof} Let $\{f_i\}$ be a standard generating set for $\O_\alg(T_W)$ such that the $f_i$ are linearly independent when restricted to $F$.
Since $F$ is principal, the only closed orbit  in the $H$-module $W$ is the origin, hence $X=F=T_W$ and $Q$ is a point. Thus $\F(Q)=\Aut(F)^G$.
By choice of the $f_i$, any $\Phi\in\Aut(F)^G$ has a unique matrix $(a_{ij})$. Any sufficiently small    neighbourhood  $\Omega$ of the identity in $\Aut(F)^G$   is the isomorphic image of a neighbourhood of the origin in its Lie algebra via the exponential map.  We may define such an $\Omega$ by the condition that $\Phi^*f_i=\sum a_{ij}f_j$ and $\vert(a_{ij})-I\vert<\epsilon\leq 1/2$ for $\epsilon$ sufficiently small.
\end{proof}

Let $W$ and $H$ be as above.  Even though we have reduced to considering the fibre $F\in T_B\subset T_W$, it is useful to prove lemmas concerning the case that $X=S\times T_B\subset S\times T_W$. Recall that for $\Phi\in\F(S\times Q_B)$, $\Phi_t(s,x)=t\inv\cdot\Phi(s,t\cdot x)$, $t\in(0,1]$, $(s,x)\in S\times T_B$.  Using the argument of Remark \ref{rem:LF-problem-local} it is easy to see that each $\Phi_t$ is of type $\F$, $0<t\leq 1$, and  $\Phi_0=\lim\limits_{t\to 0}\Phi_t$ is a smooth family of $G$-biholomorphisms, hence of type $\F$.  Let $\F([0,1]\times S\times Q_B)$ denote smooth families $\Phi(t,s,x)$  of elements of  $\F(Q_B)$ parameterised by $[0,1]\times S$.  
  
 Let $\{f_1,\dots,f_n\}$ be a standard generating set for $\O_\alg(T_W)$ as in Definition \ref{def:standard-generators}.  The $f_i$ are linearly independent when restricted to $F$. Let $d_i$ be the degree of $f_i$, $i=1,\dots,n$. We assume that $d_i=0$ for $1\leq i\leq \ell$, $d_i=1$ for $\ell<i\leq m$ and $d_i>1$ for $i>m$.  Let $\Phi\in\F(S\times Q_B)$ correspond  to the matrix $(a_{ij})$. By   Remark \ref{rem:fundam} we know that  
 \begin{equation}\label{eq:limit-zero}
a_{ij}(s,t\cdot x)=t^2\tilde a_{ij}(t,s,x),\   \ell<i\leq m,\  1\leq j\leq \ell,\  s\in S,\ x\in T_B,\  t\in[0,1]
 \end{equation}
where $\tilde a_{ij}(t,s,x)$ is smooth in $t$, $s$ and $x$. We also know that 
$\Phi_t^*f_i=\sum b^t_{ij}f_j$ where the $b_{ij}^t(s,x)$ are smooth in $t$, $s$ and $x$.

 \begin{lemma}\label{lem:Phi_t-is-continuous}
The mapping from $\F(S\times Q_B)$ to $\F([0,1]\times S\times Q_B)$ sending 
$\Phi$ to $\Phi_t$ is    continuous.
\end{lemma}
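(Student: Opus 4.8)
The plan is to reduce the statement to the continuity of the assignment $\Phi \mapsto (b_{ij}^t)$ as matrices of functions on $[0,1]\times S\times T_B$, using the equivalence of topologies from Proposition \ref{prop:equivalent-topologies-phi}. Concretely, fix a standard generating set $\{f_1,\dots,f_n\}$ of $\O_\alg(T_W)$, homogeneous of degrees $d_i$ with $d_i=0$ for $i\le\ell$, $d_i=1$ for $\ell<i\le m$, $d_i>1$ for $i>m$. By Proposition \ref{prop:equivalent-topologies-phi} (applied both on $S\times Q_B$ and on $[0,1]\times S\times Q_B$, the parameter spaces contributing only extra $\ci$-directions that do not interact with the algebra structure), a neighbourhood basis of $\Phi$ is given by matrices $(a_{ij})\in E_0$ close to the matrix of $\Phi$, and similarly a neighbourhood basis of the family $\Phi_t$ is given by matrices $(b_{ij}^t)$ close in the $\ci$-topology on $[0,1]\times S\times Q_B$. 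So it suffices to show that $(a_{ij})\mapsto(b_{ij}^t)$ is continuous from $E_0(S\times Q_B)$ to the analogous space of matrices over $[0,1]\times S\times Q_B$.

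The key step is the explicit formula for the $b_{ij}^t$ furnished by Remark \ref{rem:fundam}. First, for $i\le m$ one has $b_{ij}^t(s,x)=t^{d_j-d_i}a_{ij}(s,t\cdot x)$; by \eqref{eq:limit-zero} (Lemma \ref{lem:helpfundam}(2)), for $\ell<i\le m$ and $j\le\ell$ the factor $t^{d_j-d_i}=t^{-1}$ is absorbed by writing $a_{ij}(s,t\cdot x)=t^2\tilde a_{ij}(t,s,x)$ with $\tilde a_{ij}$ smooth, so $b_{ij}^t=t\,\tilde a_{ij}$; in all remaining cases $d_j-d_i\ge 0$, so $b_{ij}^t$ is literally $t^{d_j-d_i}a_{ij}(s,t\cdot x)$, a smooth function. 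Second, for $i>m$, Remark \ref{rem:fundam}(2) gives $b_{iv}^t(s,x)=P_{iv}\big(x,\tilde a_{\bullet}(t,s,x)\big)$ evaluated after the rescaling, where $P_{iv}$ are the fixed polynomials of Lemma \ref{lem:Puv} with coefficients in $\O(T_W)^G$, and the $b_{iv}^t$ are polynomials in $t$, in $x$, and in the functions $a_{kj}(s,t\cdot x)$ (for $k\le m$) together with the auxiliary smooth functions $\tilde a_{kj}(t,s,x)$ coming from \eqref{eq:limit-zero}. In every case, then, $b_{ij}^t$ is obtained from the $a_{kj}$ by: pulling back along the smooth map $(t,s,x)\mapsto(s,t\cdot x)$; dividing by $t^2$ in the finitely many entries covered by Lemma \ref{lem:helpfundam}(2) — which is a continuous operation because the dividend is, by that lemma, of the form $t^2\cdot(\text{smooth})$, and this structure is preserved under small $\ci$-perturbations of $(a_{ij})$; multiplying by powers of $t$; and substituting into the fixed polynomials $P_{iv}$. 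Each of these operations — pullback by a fixed smooth map, multiplication, polynomial substitution with fixed coefficients, and the division just described — is continuous in the $\ci$-topology (pullback and multiplication continuity is standard; for polynomial substitution use Lemma \ref{lem:series}-type estimates or simply that finite sums and products of $\ci$-convergent sequences converge in $\ci$). Composing, the map $(a_{ij})\mapsto(b_{ij}^t)$ is continuous.

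I would then conclude: given a $\ci$-neighbourhood of $\Phi_t$ in $\F([0,1]\times S\times Q_B)$, by Proposition \ref{prop:equivalent-topologies-phi} it contains a set described by closeness of the matrix $(b_{ij}^t)$; pulling this back through the continuous map just constructed yields a neighbourhood of $(a_{ij})$ in $E_0$, hence (again by Proposition \ref{prop:equivalent-topologies-phi}) a $\ci$-neighbourhood of $\Phi$ in $\F(S\times Q_B)$, every element of which maps into the prescribed neighbourhood of $\Phi_t$. This is exactly continuity of $\Phi\mapsto\Phi_t$.

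The main obstacle is the entries covered by Lemma \ref{lem:helpfundam}(2): there one must divide the pulled-back coefficient by $t^2$ and argue that this division is not merely well-defined for the single $\Phi$ at hand but continuous in $\Phi$. The point to check carefully is that Lemma \ref{lem:helpfundam} (equivalently \eqref{eq:limit-zero}) applies uniformly to all $\Phi$ in a fixed small $\F$-neighbourhood — which it does, since shrinking $U$ and the neighbourhood $\Omega$ in that lemma depends only on the fixed data $(W,H,\{f_i\})$ and not on the particular strong homeomorphism — and that the resulting $\tilde a_{ij}(t,s,x)$ depend continuously (indeed polynomially, via the Reynolds-operator computation in the proof of Lemma \ref{lem:helpfundam}, combined with the invertibility of the relevant $(b_{ij}(x_0))$-block, which persists under small perturbation) on $(a_{ij})$. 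Everything else is routine $\ci$-continuity of algebraic operations.
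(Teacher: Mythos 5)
Your proposal follows the same architecture as the paper's proof: reduce via Proposition~\ref{prop:equivalent-topologies-phi} to continuity of $(a_{ij})\mapsto(b^t_{ij})$, invoke the explicit polynomial description of the $b^t_{ij}$ in terms of $t$, $x$, and the $\tilde a_{ij}$ from Lemma~\ref{lem:Puv} and Remark~\ref{rem:fundam}, and observe that the only nonroutine ingredient is the continuity of $a_{ij}(s,x)\mapsto t^{-2}a_{ij}(s,t\cdot x)$ for $\ell<i\le m$, $j\le\ell$. You also correctly note that equation \eqref{eq:limit-zero} holds for every $\Phi$ in a fixed neighbourhood, so the assignment is defined on a closed subspace of $E_0$.

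Where you go astray is in the justification of that last continuity. The paper simply invokes Taylor's theorem: since $a_{ij}(s,t\cdot x)$ vanishes together with its first $t$-derivative at $t=0$, the Taylor remainder formula gives
$\tilde a_{ij}(t,s,x)=\int_0^1(1-\sigma)\,\partial_\tau^2\big[a_{ij}(s,\tau\cdot x)\big]\big|_{\tau=\sigma t}\,d\sigma$,
which exhibits $\tilde a_{ij}$ as a continuous linear functional of $a_{ij}$ in the $\ci$-topology (pullback, differentiation, and integration over $[0,1]$ are all $\ci$-continuous). Your claim that the $\tilde a_{ij}$ depend \emph{polynomially} on $(a_{ij})$ via the Reynolds-operator computation of Lemma~\ref{lem:helpfundam} is not correct: that computation establishes the membership $a_{ij}\in\I$, a qualitative statement, but produces no formula for the smooth quotient $\tilde a_{ij}$ as a function of the $a_{ij}$'s. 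Dividing a flat smooth function by $t^2$ is an analytic operation, not an algebraic one, and the well-definedness on the closed subspace (which you correctly observe) does not by itself give continuity; you still need the Taylor-remainder representation (equivalently Hadamard's lemma applied twice, or the open mapping theorem applied to the multiplication-by-$t^2$ isomorphism whose surjectivity onto the flat subspace is again Taylor's theorem). This is a small but genuine gap in an otherwise correct argument.
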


\begin{proof} The reasoning above shows that $\Phi_t\in\F([0,1]\times S\times Q_B)$. From  Lemma \ref{lem:limit-P-alpha-beta} and Corollary \ref{cor:fundam} we know that the $b^t_{ij}$ are polynomials, with coefficients in $\O_\alg(T_W)^G[t]$, in the $\tilde a_{ij}(s,t,x)$, $1\leq i\leq m$, where the $\tilde a_{ij}(t,s,x)$ are as given above or are just simply $a_{ij}(s,t\cdot x)$. Thus to show that $\Phi\mapsto\Phi_t$ is continuous it is enough to show that $a_{ij}(s,x)\mapsto t^{-2}a_{ij}(s,t\cdot x)$ is continuous in the $\ci$-topologies for $\ell<i\leq m$ and $1\leq j\leq m$. But this follows from  
Taylor's theorem. 
\end{proof}

 \begin{corollary}\label{cor:open-sets-of-families}
 Let $\Omega$ be a neighbourhood of the identity in $\F(S\times Q_B)$. Then there is a neighbourhood $\Omega_0$ of the identity  such that $\Phi_t\in\Omega$, $t\in[0,1]$, for all $\Phi\in\Omega_0$.
 \end{corollary}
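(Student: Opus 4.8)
The plan is to deduce Corollary \ref{cor:open-sets-of-families} directly from the continuity of the map $\Phi \mapsto (\Phi_t)$ established in Lemma \ref{lem:Phi_t-is-continuous}, together with the observation that when $\Phi$ is the identity, each $\Phi_t$ is again the identity. First I would note that the identity element $\Id$ of $\F(S\times Q_B)$ satisfies $\Id_t = \Id$ for all $t\in(0,1]$ (since $t\inv\cdot\Id(t\cdot x) = t\inv\cdot(t\cdot x) = x$) and $\Id_0 = \lim_{t\to 0}\Id_t = \Id$; hence the image of $\Id$ under the map $\Phi\mapsto\Phi_t$ is the identity element of $\F([0,1]\times S\times Q_B)$, i.e.\ the constant family $t\mapsto\Id$.

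Next I would observe that a neighbourhood $\Omega$ of the identity in $\F(S\times Q_B)$ determines, in an obvious way, a neighbourhood $\widetilde\Omega$ of the identity family in $\F([0,1]\times S\times Q_B)$, namely $\widetilde\Omega = \{\Phi(t,\cdot,\cdot)\in\F([0,1]\times S\times Q_B)\mid \Phi(t,\cdot,\cdot)\in\Omega\text{ for all }t\in[0,1]\}$; this is open because the evaluation maps $\F([0,1]\times S\times Q_B)\to\F(S\times Q_B)$ at fixed $t$ are continuous and $[0,1]$ is compact, so one can cover $[0,1]$ by finitely many parameter intervals on which a single sub-neighbourhood of $\Omega$ works (using the equivalent $\ci$-description of the topology from Proposition \ref{prop:equivalent-topologies-phi}, the matrices $(a_{ij})$ vary continuously in $t$). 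Then by Lemma \ref{lem:Phi_t-is-continuous} the map $T\colon \F(S\times Q_B)\to\F([0,1]\times S\times Q_B)$, $\Phi\mapsto\Phi_t$, is continuous, and since $T(\Id)$ is the identity family, which lies in $\widetilde\Omega$, the set $\Omega_0 := T^{-1}(\widetilde\Omega)$ is an open neighbourhood of the identity in $\F(S\times Q_B)$. By construction, for every $\Phi\in\Omega_0$ we have $\Phi_t\in\Omega$ for all $t\in[0,1]$, which is exactly the assertion.

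The only point requiring a little care—and the step I expect to be the main (minor) obstacle—is verifying that $\widetilde\Omega$ as defined above really is a neighbourhood of the identity family in $\F([0,1]\times S\times Q_B)$; a priori one must make sure the "for all $t\in[0,1]$" quantifier does not shrink the set to something with empty interior. This is handled by the compactness of $[0,1]$: continuity of the family in $t$ (in the $\ci$-topology, via the continuously varying matrices $(b_{ij}^t)$ of Corollary \ref{cor:fundam}) lets one pass from a pointwise-in-$t$ condition to a uniform-in-$t$ condition after a finite subcover, exactly as in the proof of Proposition \ref{prop:local-LF-implies-global} and in the definition of $\widetilde\Omega$ in the proof of Lemma \ref{lem:product-with-S}. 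With that in hand the corollary is immediate.
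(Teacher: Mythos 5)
Your argument is correct and is precisely the intended deduction: the paper states Corollary \ref{cor:open-sets-of-families} immediately after Lemma \ref{lem:Phi_t-is-continuous} with no proof, because it is a direct pullback of a neighbourhood along the continuous map $T\colon\Phi\mapsto(\Phi_t)$, together with the facts that $T$ sends the identity to the constant identity family and that the set $\widetilde\Omega=\{\Phi\mid\Phi(t,\cdot)\in\Omega\ \forall t\}$ is a neighbourhood of that family (which you correctly justify via compactness of $[0,1]$ and the $\ci$-description of the topology via the matrices $(a_{ij})$, $(b_{ij}^t)$). The only cosmetic point is that you first assert $\widetilde\Omega$ is open and then retreat to showing it is a neighbourhood of the identity family; the latter, weaker statement is all that is needed and is what you actually verify.
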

  
 \begin{lemma}\label{lem:bij}
 Let $K$ be a compact subset of $S\times Q_B$. Then there is a neighbourhood $\Omega$ of the identity in $\F(S\times Q_B)$ such that any $\Phi\in\Omega$  
 corresponds to a matrix $(b_{ij})$ 
 that, as a matrix function on $S\times Q_B$, satisfies   $||(b_{ij}^t)-I||_K<1/2$, $0\leq t\leq 1$.  
 \end{lemma}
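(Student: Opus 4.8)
The plan is to read the statement off Lemma \ref{lem:Phi_t-is-continuous}, with an explicit bound on the compact set $K$. Fix a standard generating set $\{f_i\}_{i=1}^n$ of $\O_\alg(T_W)$, homogeneous and linearly independent on $F$, with degrees $d_i=0$ for $i\le\ell$, $d_i=1$ for $\ell<i\le m$, $d_i>1$ for $i>m$, as above. For $\Phi\in\F(S\times Q_B)$ with associated matrix $(a_{ij})$ (Proposition \ref{prop:aij-exist}), Corollary \ref{cor:fundam} and Remark \ref{rem:fundam} attach the matrix $(b_{ij})$, with $b_{ij}=a_{ij}$ for $i\le m$ and $b_{ij}=P_{ij}(x,a_{11},\dots,a_{mn})$ for $i>m$, so that $\Phi^*f_i=\sum_j b_{ij}f_j$ and $\Phi_t^*f_i=\sum_j b_{ij}^t f_j$ for $t\in[0,1]$, with $b_{ij}^t\in\ci([0,1]\times S\times T_B)^G$. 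Thus $(b_{ij})$ is already a matrix of the shape the lemma asks for, and what remains is to choose $\Omega$ so that $\|(b_{ij}^t)-I\|_K<1/2$ for all $t\in[0,1]$ whenever $\Phi\in\Omega$. Since the $b_{ij}^t$ are $G$-invariant, I may replace $K$ by a compact $\widetilde K\subset S\times T_B$ with $p(\widetilde K)=K$, and then $\|(b_{ij}^t)-I\|_K=\|(b_{ij}^t)-I\|_{\widetilde K}$.

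I would then argue that $\Phi\mapsto(b_{ij}^t)$ is continuous near $\Id$ and takes the value $I$ at $\Id$. For the value at $\Id$: one may take $(a_{ij})=I$, and since the $P_{uv}$ of Lemma \ref{lem:Puv} are, by construction, the coefficients re-expressing the identity substitution, $(b_{ij})=I$ and hence $(b_{ij}^t)=I$ for every $t$. For continuity: this is the content of the proof of Lemma \ref{lem:Phi_t-is-continuous}. By Remark \ref{rem:fundam} and Lemma \ref{lem:limit-P-alpha-beta}, the $b_{ij}^t$ are universal polynomials, with coefficients in $\O_\alg(T_W)^G[t]$, in $t$, $x$ and the functions $\tilde a_{ij}(t,s,x)$, $1\le i\le m$; these equal $a_{ij}(s,t\cdot x)$ except for the degree-dropping pairs $\ell<i\le m$, $j\le\ell$ (where $d_j-d_i=-1$), for which $\tilde a_{ij}(t,s,x)=t^{-2}a_{ij}(s,t\cdot x)$. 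By Lemma \ref{lem:helpfundam}(2) and \eqref{eq:limit-zero} the latter is smooth down to $t=0$ and depends $\ci$-continuously on $a_{ij}$ by Taylor's theorem; and $\Phi\mapsto(a_{ij})$ may be taken continuous near $\Id$ by Proposition \ref{prop:equivalent-topologies-phi}. Combining these, the set $\Omega$ of $\Phi\in\F(S\times Q_B)$ for which $\|(b_{ij}^t)-I\|_{\widetilde K}<1/2$ for all $t\in[0,1]$ is a neighbourhood of the identity, and it is the required $\Omega$.

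The main obstacle — indeed the only point that is not purely formal — is the smoothness, down to $t=0$, and the $\ci$-continuous dependence on $a_{ij}$, of the rescaled functions $t^{-2}a_{ij}(s,t\cdot x)$ for the entries with $d_j-d_i=-1$. This is exactly the input already used in Lemma \ref{lem:Phi_t-is-continuous}: it rests on the vanishing statement of Lemma \ref{lem:helpfundam}(2) together with Taylor's theorem. Once that is granted, the remaining ingredients — the universal-polynomial form of the $b_{ij}^t$ (Remark \ref{rem:fundam}), the value $I$ at $\Id$, the passage from matrices to the $\ci$-topology via Proposition \ref{prop:equivalent-topologies-phi}, and the reduction of $\|\cdot\|_K$ to a norm on a compact subset of $S\times T_B$ — are routine, so in effect Lemma \ref{lem:bij} is Lemma \ref{lem:Phi_t-is-continuous} evaluated on a compact set with the explicit constant $1/2$.
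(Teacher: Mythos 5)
Your proof is correct and follows essentially the same route as the paper's: express $(b_{ij}^t)$ via the universal polynomials of Lemma \ref{lem:Puv} and Remark \ref{rem:fundam}, use the vanishing from Lemma \ref{lem:helpfundam}(2)/\eqref{eq:limit-zero} plus Taylor's theorem to get continuity of $(a_{ij})\mapsto(b_{ij}^t)$ down to $t=0$ (the content of Lemma \ref{lem:Phi_t-is-continuous}), note that the assignment sends $I$ to $I$, and then invoke Proposition \ref{prop:equivalent-topologies-phi} (in effect, the open mapping theorem for the surjection from $E_0$) to transfer the matrix neighbourhood to a neighbourhood of the identity in $\F(S\times Q_B)$. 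The only cosmetic difference is that the paper makes the open-mapping step explicit by introducing the closed subspace $E_0'$ of matrices satisfying \eqref{eq:limit-zero} and first producing a neighbourhood $\Delta'$ there, whereas you fold this into the citation of Proposition \ref{prop:equivalent-topologies-phi} and the (correct) observation that any $(a_{ij})$ arising from a $\Phi\in\F$ automatically satisfies \eqref{eq:limit-zero}.
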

 \begin{proof}

Let $E_0$ be as in the discussion before Proposition \ref{prop:equivalent-topologies-phi} with $X=S\times T_B$.  Let $E_0'$ denote the closed linear subspace of $E_0$ consisting of matrices $(a_{ij})$ such that \eqref{eq:limit-zero} holds. Construct   $(b_{ij})$ as in Corollary \ref{cor:fundam-with-S}. Then 
by the proof of Lemma \ref{lem:Phi_t-is-continuous} the corresponding function 
$\rho\colon (a_{ij})\mapsto (b^t_{ij})$ is continuous for the $\ci$-topologies on $E_0'$ and $E_0\comptensor\ci([0,1])$ and, by construction, $\rho(I)=I$. Let 
 $$
 \Gamma=\{(b_{ij}^t)\in E_0\comptensor\ci([0,1]) :  ||(b_{ij}^t)-I||_K<1/2,\ 0\leq t\leq 1\}.
 $$
 Then there is a neighbourhood $\Delta'$ of the identity in $E_0'$ such that  $\rho(\Delta')\subset \Gamma$. Let $\Delta$ be a neighbourhood of the identity in $E_0$ such that $\Delta\cap E_0'\subset\Delta'$.   By the open mapping theorem (see the proof of Proposition \ref{prop:equivalent-topologies-phi}), if $\Phi\in\F(S\times Q_B)$ is sufficiently close to the identity, then it has a corresponding matrix $(a_{ij})\in \Delta$. Since $(a_{ij})$ is automatically in $E_0'$, we are in the open set $\Delta'$ and $(b_{ij}^t)$ satisfies the desired inequality. 
 \end{proof}
 
The graded $\O_\alg(Q)$-module $\Der_{Q,\alg}(T_W)^G$ has a generating set $A_1,\dots,A_k$ such that $A_i(t\cdot x)=t^{n_i} A_i(x)$ for some $n_i\in\Z^+$, $i=1,\dots,k$,   $x\in T_W$, $t\in\C^*$. If $n_i=0$, then $A_i([e,w])=A_i([e,0])$ is fixed by $H$, hence $A_i$ belongs to the Lie algebra of 
the centraliser $C_G(H)$ where $c\in C_G(H)$ acts on $T_W$ sending $[g,w]$ to $[gc,w]$. Since the $C_G(H)$-action commutes with the $\C^*$-action, $A_i$ is fixed by $\C^*$.  For $n_i\geq 1$, $A_i$ acts as an endomorphism of $\O_\alg(T_W)$ of degree $n_i-1$. Define $m_i$ to be $0$ if $n_i=0$, otherwise set $m_i=n_i-1$. Then $t\inv\cdot A_i(t\cdot x)=t^{m_i}A_i(x)$, $t\in\C^*$, $x\in T_W$, $i=1,\dots,k$.

Let $p=(p_1,\dots,p_d)$ where  the $p_i$ are  generators of $\O_\alg(T_W)^G$ and $p_i$ is homogeneous of degree $e_i$, $i=1,\dots,d$. Then we can identify $Q=T_W\sl G$ with the image of $p$. Let   
$K=\{(q_1,\dots,q_d)\in Q\mid \sum_i |q_i|^{2c_i}=1\}$
where we choose the $c_i\in\N$ such that $e=c_ie_i$ is independent of $i$. Then $K$ is compact and the points $t\cdot q$, $q\in K$, $t\in[0,1)$, form a neighbourhood $U'$ of $0$ in $Q$ where $\overline{U'}$ is compact. Changing the $p_i$ by scalars we may arrange that $\overline{U'}\subset Q_B$. Let $U$ be a relatively compact neighbourhood of the origin in $Q_B$ which contains $\overline{U'}$.
 By induction we can assume that all fibres of $X\setminus F$ have property (LF). Let $U''$ denote $U'\setminus\{0\}$.

\begin{lemma}\label{lem:LF-in-neighbourhood} Let $X=T_B$, etc.\ be as above. Then
there is a neighbourhood $\Omega$ of the identity in $\F(Q_B)$ such that any $\Phi\in\Omega$   admits a logarithm in $\LF(U'')$.
\end{lemma}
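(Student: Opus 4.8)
The plan is to propagate logarithms outward from the compact ``sphere'' $K$ toward the origin using the scalar action $\sigma_t\colon x\mapsto t\cdot x$, controlling everything uniformly in $t$ by Corollary \ref{cor:open-sets-of-families}. By induction every fibre of $X=T_B$ lying over a point $q\neq 0$ has property (LF). Since $K\subset\overline{U'}\subset U$ and $0\notin K$, I first fix neighbourhoods $V'\Subset V\Subset U$ of $K$ with $0\notin\overline V$, together with a standard generating set of $\O_\gf(X_V)$; applying Proposition \ref{prop:local-LF-implies-global} to $X_V$ yields a neighbourhood $\Omega_V$ of the identity in $\F(V)$ such that every element of $\Omega_V$ admits a logarithm in $\LF(V')$. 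Pulling back along the continuous restriction $\F(Q_B)\to\F(V)$ gives a neighbourhood $\Omega_K$ of the identity in $\F(Q_B)$ whose elements, restricted to $X_V$, admit logarithms in $\LF(V')$. By Corollary \ref{cor:open-sets-of-families} there is a neighbourhood $\Omega$ of the identity in $\F(Q_B)$ such that $\Phi_t\in\Omega_K$ for all $t\in[0,1]$ whenever $\Phi\in\Omega$; shrinking $\Omega$ via Remark \ref{rem:(1)-is-automatic} I may assume in addition that the matrix $(a_{ij})$ of any $\Phi\in\Omega$ satisfies $\|(a_{ij})-I\|_{\overline U}<1/2$, so that $D:=\log\Phi^*$, given by $D(f_i)=\sum_j d_{ij}f_j$ with $(d_{ij})=\log(a_{ij})$, is a well-defined smooth $G$-invariant vector field on $X_U$ annihilating $\O(X_U)^G$, with $\exp D=\Phi$ over $U$ (Remark \ref{rem:exp-D=Phi}). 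Note that since $U$ is stable under the $[0,1]$-action, $t\cdot V'\subset U$ for every $t\in(0,1]$.

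Now fix $\Phi\in\Omega$. The key step is that $D$ is of type $\LF$ on $X_{t\cdot V'}$ for every $t\in(0,1]$. Indeed $\Phi_t=\sigma_t\inv\circ\Phi\circ\sigma_t$, hence $\Phi_t^*=\sigma_t^*\circ\Phi^*\circ(\sigma_t^*)\inv$ and therefore $\log\Phi_t^*=\sigma_t^*\circ D\circ(\sigma_t^*)\inv$; in other words $D|_{X_{t\cdot V'}}$ and $\log\Phi_t^*|_{X_{V'}}$ correspond under the $G$-biholomorphism $\sigma_t\colon X_{V'}\to X_{t\cdot V'}$. Since $\Phi_t|_{X_V}\in\Omega_V$ it admits a logarithm in $\LF(V')$, and by uniqueness of logarithms (Remark \ref{rem:matrix-not-matter}, using Lemma \ref{lem:(LF)-well-defined} to pass from the generating set chosen for $\Omega_V$ back to the $f_i$) that logarithm is $\log\Phi_t^*|_{X_{V'}}$; thus $\log\Phi_t^*$ is of type $\LF$ on $X_{V'}$. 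By Remark \ref{rem:LF-problem-local}, type $\LF$ is preserved under $G$-biholomorphisms, so $D$ is of type $\LF$ on $X_{t\cdot V'}$. Finally $U''=\{t\cdot q\mid q\in K,\ 0<t<1\}$ is contained in the open set $\bigcup_{t\in(0,1]}t\cdot V'$, and being of type $\LF$ is a local condition, so $D$ is of type $\LF$ on $X_{U''}$. Combined with $\|(a_{ij})-I\|_{\overline{U''}}<1/2$ and $\exp D=\Phi$ over $U''$, this is exactly the assertion that every $\Phi\in\Omega$ admits a logarithm in $\LF(U'')$.

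The main obstacle is obtaining a \emph{single} $\Omega$ that works for all scales $t\in(0,1]$ simultaneously, i.e.\ controlling $\Phi_t$ uniformly as $t\to 0$; this is precisely what Corollary \ref{cor:open-sets-of-families} supplies, resting in turn on the continuity statement of Lemma \ref{lem:Phi_t-is-continuous} and the structural analysis of Section \ref{sec:typeF}. A secondary point requiring care is matching the notion of ``logarithm'' coming from Proposition \ref{prop:local-LF-implies-global} (which uses its own standard generating set on $X_V$) with the one defined via the fixed homogeneous generators $\{f_i\}$ of $\O_\gf(T_B)$; this is handled by the independence of the choice of generators in Lemma \ref{lem:(LF)-well-defined} and the uniqueness in Remark \ref{rem:matrix-not-matter}. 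Observe that this lemma deliberately excludes the origin $0\in Q_B$, i.e.\ says nothing about the behaviour of $D$ across the fibre $F=G\times^H\NN(W)$ itself; controlling $D$ near $F$, where $\delta\Phi$ and the limit of $\Phi_t$ as $t\to 0$ become relevant, is left to the subsequent steps.
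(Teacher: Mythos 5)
Your proof is correct and follows essentially the same strategy as the paper: propagate the logarithm outward from a neighbourhood of the compact ``sphere'' $K$ toward the origin using the scalar action $\sigma_t$, using the inductive hypothesis that all fibres away from $F$ have property (LF), and glue via uniqueness. The main technical difference is how one establishes that $(\sigma_t)_*D_t$ has matrix $(d_{ij})=\log(a_{ij})$ over $t\cdot V'$ for every $t$. The paper invokes Lemma \ref{lem:bij} to produce a single matrix $(b_{ij})$ for $\Phi$ whose scaled versions $(b_{ij}^t)$ satisfy $\|(b_{ij}^t)-I\|_{\overline U}<1/2$ uniformly in $t\in[0,1]$, so that $D_t$'s matrix is directly computed as $(d_{ij}^t)=\log(b_{ij}^t)$ and the pushforward formula gives $(d_{ij})$. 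You instead invoke Corollary \ref{cor:open-sets-of-families} to keep $\Phi_t$ in $\Omega_V$, then use Remark \ref{rem:matrix-not-matter} (so $D_t(f_i)$ equals the value of the formal series $\log\Phi_t^*(f_i)$) together with the term-by-term conjugation identity between $\log\Phi_t^*$ and $\log\Phi^*$ and the fact that both series converge (the first because $\Phi_t\in\Omega_V$, the second because $\|(a_{ij})-I\|_{\overline U}<1/2$) to conclude that $(\sigma_t)_*D_t(f_i)=\sum d_{ij}f_j$ on $X_{t\cdot V'}$. This is a legitimate and slightly more conceptual substitute for Lemma \ref{lem:bij}; it also makes explicit the independence-of-generating-set point via Lemma \ref{lem:(LF)-well-defined}, which the paper glosses over.

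One point to fix in the write-up: in the second half of your first paragraph you assert that, once $\|(a_{ij})-I\|_{\overline U}<1/2$, ``$D:=\log\Phi^*$ \dots\ is a well-defined smooth $G$-invariant vector field on $X_U$ annihilating $\O(X_U)^G$, with $\exp D=\Phi$ over $U$ (Remark~\ref{rem:exp-D=Phi}).'' This overreaches: Remark~\ref{rem:exp-D=Phi} \emph{presupposes} the existence of a $D\in\LF$ satisfying property (2) of Definition~\ref{def:propertyLF}, and the claim that the formal $\log\Phi^*$ actually is a derivation on a neighbourhood of the whole fibre $F$ is essentially the content of Proposition~\ref{prop:all-type-LF}, not something that follows from the norm bound alone. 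Fortunately nothing in your subsequent argument depends on this: you only ever use $D$ as the matrix $(d_{ij})$ acting on the $f_i$, and the scalar-action argument constructs $D$ as a genuine vector field on $X_{U''}$ (not $X_U$). You should phrase it that way, i.e.\ treat $(d_{ij})$ as the \emph{candidate} for $D$'s matrix, and let the gluing of the $(\sigma_t)_*D_t$ be what produces the vector field on $X_{U''}$. (Also, the assumption ``$U$ is stable under the $[0,1]$-action'' is not part of the paper's choice of $U$; one should either note that $U$ may be so chosen, or instead work with the sets $U''_t=\{q\in U''\mid t\inv\cdot q\in V'\}$ as the paper does.)
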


\begin{proof}  
Let $U_0$ be a neighbourhood of $K$  with compact closure in $U$. 
By Proposition \ref{prop:local-LF-implies-global}, there is a   neighbourhood  $\Omega_0$ of the identity in $\F(Q_B)$ such that any $\Phi\in\Omega_0$   admits a logarithm in $\LF(U_0)$.  
By Corollary \ref{cor:open-sets-of-families} and Lemma \ref{lem:bij} there is a neighbourhood $\Omega$ of the identity in $\F(Q_B)$  such that for  $\Phi\in\Omega$, $\Phi_t\in\Omega_0$, $t\in[0,1]$, and
$||(b_{ij}^t)-I||_{\overline U}<1/2$,  $t\in[0,1]$, where $(b_{ij})$ is a matrix corresponding to $\Phi$.    We have a smooth family $D_t\in\ci([0,1])\comptensor\LF(U_0)$ where $D_t=\log\Phi_t$. Hence $D_t
=\sum a_i(t,x) A_i(x)$, $t\in[0,1]$, $x\in X_{U_0}$, where the $a_i$ are smooth and invariant. Let $(d_{ij})=
\log (b_{ij})$.  Then a straightforward calculation shows that the family $(d^t_{ij})$ is the logarithm of the family 
$(b_{ij}^t)$, $0\leq t\leq 1$. Since $||(b_{ij}^t)-I||_{\overline U}<1/2$,  the vector field   $D_t$ has matrix     $(d_{ij}^t)$.  

Let $t\in(0,1]$ and define  $U''_{t}=\{q\in U''\mid t\inv \cdot q\in U_0\}$. 
For 
$x\in X_{U''_{t}}$  
$$
D_0(x):=t\cdot D_{t}(t\inv x)=\sum a_i(t,t\inv\cdot x)(t\inv)^{m_i}A_i(x)
$$
is of type $\LF$. Since $D_{t}$ has matrix $(d_{ij}^{t})$, the vector field $D_0$ has matrix $(d_{ij})$. Hence $\Phi$ admits a logarithm on $U''_{t}$ with matrix $(d_{ij})$.
Since $U''$ is the union of the $U''_{t}$ it follows that $\Phi$ admits a logarithm $D\in\LF(U'')$.
\end{proof}

We will have shown that all fibres of any $X$ have property (LF) if we can extend our $D\in\LF(U'')$ to $\LF(U')$. Now there is one case where this is automatic. Suppose that our $\Phi$ is holomorphic. Then $D\in\Der_{U''}(X_{U''})^G$. Near any point of $F$, there is a system of local coordinates consisting of invariant functions (which $D$ annihilates) and some of the $f_i$ (which $D$ sends into bounded linear combinations of the $f_j$). By the Riemann extension theorem, $D$ extends to an element of $\Der(X_{U'})$, which actually has to be in $\Der_{U'}(X_{U'})^G$. Now  the $\CC$-topology and the $\ci$-topology are the same on   
$\Aut_Q(X)^G$. Hence we have the following result.

\begin{theorem}\label{thm:log-holomorphic-diffeomorphism}
Let $X$ be a Stein $G$-manifold and assume that we have  a standard generating set $\{f_i\}$ for $\O_\gf(X)$. Let $U$ be a neighbourhood of the compact subset $K\subset Q$ and let     $U'$ be a neighbourhood of $K$  with compact closure in $U$. Then there is  a compact subset $K'\subset U$  and $0<\epsilon<1/2$, with the following property. If $\Phi\in \Aut_U(X_U)^G$  has an associated matrix $(a_{ij})$ satisfying $||(a_{ij})-I||_{K'}<\epsilon$, then there is a $D\in\Der_{U'}(X_{U'})^G$ with associated matrix $\log (a_{ij})$ such that $\exp D=\Phi$. 
\end{theorem}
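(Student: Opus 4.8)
The plan is to deduce this holomorphic statement from the machinery already set up for type $\F$ diffeomorphisms, the key point being that for holomorphic automorphisms the $\CC$-topology and the $\ci$-topology agree on $\Aut_Q(X)^G$, so ``closeness of the matrix $(a_{ij})$ on a compact set'' already controls the full $\ci$-behaviour. First I would recall that a holomorphic $\Phi\in\Aut_U(X_U)^G$ is automatically of type $\F$ (take $\Psi(x,y)=\Phi(y)$), so the results of Section \ref{sec:typeF}, and in particular Proposition \ref{prop:all-type-LF} (that every fibre has property (LF)) together with Proposition \ref{prop:local-LF-implies-global}, apply. Given $U'\Subset U$ with $U$ a neighbourhood of the compact set $K$, Proposition \ref{prop:local-LF-implies-global} furnishes a neighbourhood $\Omega$ of the identity in $\F(U)$ such that every $\Phi\in\Omega$ admits a logarithm $D\in\LF(U')$ with $D(f_i)=\sum d_{ij}f_j$, $(d_{ij})=\log(a_{ij})$, and by Remark \ref{rem:exp-D=Phi} one has $\exp D=\Phi$ over $U'$. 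So the content to extract is: (i) such a logarithm of a \emph{holomorphic} $\Phi$ is holomorphic, i.e.\ lies in $\Der_{U'}(X_{U'})^G$, not merely in $\LF(U')$; and (ii) the membership $\Phi\in\Omega$ can be guaranteed purely by a bound $\|(a_{ij})-I\|_{K'}<\epsilon$ on a single compact set $K'\subset U$ and a single $\epsilon$, independent of $\Phi$.

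For (i): by Remark \ref{rem:aij-holomorphic}, when $\Phi$ is holomorphic the $a_{ij}$ may be taken in $\O(Q)$, hence $(d_{ij})=\log(a_{ij})$ is holomorphic on $U'$. The vector field $D$ annihilates $\O(X_{U'})^G$ and satisfies $D(f_i)=\sum d_{ij}f_j$ with holomorphic $d_{ij}$; near a point of $X_{U'}$ one has local coordinates given by pullbacks of invariants together with finitely many of the $f_i$ (as in the argument just before the theorem statement, and as in the proof of Lemma \ref{lem:LF-in-neighbourhood}), and $D$ acts holomorphically on each such coordinate. By the Riemann extension theorem $D$ extends across the strata to a genuine holomorphic vector field, which then lies in $\Der_{U'}(X_{U'})^G$ since it annihilates the invariants. (Away from $F$ one can also invoke the inductive hypothesis that lower strata already have property (LF) in the holomorphic sense; the net effect is the same.)

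For (ii): here is where the equality of the two topologies on $\Aut_U(X_U)^G$ is used. By Lemma \ref{lem:embeds-over-K} we have a standard generating set and the embedding \eqref{eq:embedding}; $\Phi$ acts on $\widetilde X_U\subset\C^m\times V^*$ through the matrix $(a_{ij})$ fibrewise as displayed in the proof of Proposition \ref{prop:equivalent-topologies-phi}. For holomorphic $\Phi$ the $a_{ij}$ are holomorphic on $Q$, so a $\CC$-bound $\|(a_{ij})-I\|_{K'}<\epsilon$ on a large enough compact $K'\subset U$ gives, by the Cauchy estimates, $\ci$-smallness of $(a_{ij})-I$ on the slightly smaller compact sets entering the seminorms defining $\Omega$; thus by Proposition \ref{prop:equivalent-topologies-phi} such $\Phi$ lie in the prescribed neighbourhood $\Omega\subset\F(U)$. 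Choosing $K'$ to be a compact neighbourhood in $U$ of the compact sets used to define the seminorms of $\Omega$, and $\epsilon<1/2$ small enough to beat both the $1/2$ in condition (1) of Definition \ref{def:propertyLF} and the Cauchy-estimate loss, completes the argument. The main obstacle is really just bookkeeping in (ii) — translating the single coarse bound $\|(a_{ij})-I\|_{K'}<\epsilon$ into membership in the abstractly-given neighbourhood $\Omega$ via the Cauchy estimates and Proposition \ref{prop:equivalent-topologies-phi}; step (i) and the invocation of Section \ref{sec:typeF} are routine once that is in place.
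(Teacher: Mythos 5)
Your proposal is circular. You deduce the theorem from Proposition~\ref{prop:all-type-LF} (every fibre has property (LF)) via Proposition~\ref{prop:local-LF-implies-global}, but in the paper Proposition~\ref{prop:all-type-LF} is proved \emph{after} Theorem~\ref{thm:log-holomorphic-diffeomorphism} and its proof explicitly invokes it --- both directly (``By Theorem~\ref{thm:log-holomorphic-diffeomorphism} there is an $\epsilon>0$ and a compact subset $K\subset Q_{B'}\dots$'') and indirectly through Theorem~\ref{thm:reachable}. Theorem~\ref{thm:log-holomorphic-diffeomorphism} is precisely the holomorphic case that must be settled first, \emph{before} one can prove that a general $\Phi$ of type~$\F$ has a logarithm near the worst fibre.

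The paper's actual argument lives inside the induction on the Luna stratification, after reduction to $X=T_B$ and a fibre $F$ over $0\in Q$. At that stage the inductive hypothesis only gives property (LF) for the fibres of $X\setminus F$, and Lemma~\ref{lem:LF-in-neighbourhood} yields a logarithm $D\in\LF(U'')$ where $U''=U'\setminus\{0\}$ excludes the bad point. The observation that does the work is: when $\Phi$ is holomorphic, $D$ is holomorphic on $X_{U''}$, hence lies in $\Der_{U''}(X_{U''})^G$, and near $F$ one has local coordinates given by invariants (annihilated by $D$) and some $f_i$ (sent to bounded linear combinations of the $f_j$), so the Riemann extension theorem extends $D$ across $p\inv(0)$ to an element of $\Der_{U'}(X_{U'})^G$. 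The extension step is essential and only works in the holomorphic setting --- it is exactly what fails a priori for smooth $D$, which is why Proposition~\ref{prop:all-type-LF} requires the further arguments with $L_\hr$ and the Campbell--Hausdorff series. Your parenthetical ``away from $F$ one can also invoke the inductive hypothesis; the net effect is the same'' gestures at the right structure but misses that this is the \emph{only} non-circular route, not an alternative to quoting Proposition~\ref{prop:all-type-LF}.

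Your step~(ii) (converting a $\CC$-bound $\|(a_{ij})-I\|_{K'}<\epsilon$ on a compact set into $\ci$-smallness for holomorphic data via Cauchy estimates, hence membership in the neighbourhood coming from Lemma~\ref{lem:LF-in-neighbourhood}) matches the paper's one-line remark that the $\CC$- and $\ci$-topologies agree on $\Aut_Q(X)^G$, and is fine. The fix is therefore to replace your invocation of Propositions~\ref{prop:all-type-LF} and~\ref{prop:local-LF-implies-global} by the combination of Lemma~\ref{lem:LF-in-neighbourhood} (giving $D\in\Der_{U''}(X_{U''})^G$ since $\Phi$ is holomorphic) and the Riemann-extension step across $F$; the rest of your outline then goes through.
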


Let $\Phi\in\Omega\subset\F(T_B)$ as in Lemma \ref{lem:LF-in-neighbourhood} so that $\Phi$ has a (unique) logarithm   $D\in \LF(U'\setminus \{0\})$. We now show that if $\Phi$ is sufficiently close to the identity, then $\Phi$ has a logarithm over \emph{some\/} neighbourhood of $0\in U'$. By uniqueness of logarithms, this shows that $D$ extends to $\LF(U')$ and we will have shown that $F$ has property (LF).

We need to study various $G$-automorphisms of $F=G\times^H\NN(W)$.
As before $\Aut_\vb(T_W)^G$ denotes the $G$-vector bundle automorphisms of $T_W$. Let $\rho\colon H\to\GL(W)$ be the representation associated to $W$. Then one easily shows:

\begin{lemma}\label{lem:G-vector-bundle-automorphism-structure}
Let $\Psi\in\Aut_\vb(T_W)^G$.  Then there is  a $g_0\in N_G(H)$ and   $\gamma\in \GL(W)$ normalising $\rho(H)$ such that:
\begin{enumerate}
\item For $g\in G$ and $w\in W$, $\Psi([g,w])=[gg_0,\gamma(w)]$.
\item For $h\in H$, $\rho(g_0\inv hg_0)=\gamma\circ \rho(h)\circ\gamma\inv$.
\end{enumerate}
\end{lemma}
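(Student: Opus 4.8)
The plan is straightforward and rests on the slice-theoretic description of $T_W$ as the associated bundle $G\times^HW$ over its zero section $Z\cong G/H$. First I would record that a $G$-vector bundle automorphism $\Psi$ is fibrewise linear, hence sends the zero of each fibre to the zero of its image fibre and so preserves $Z$, and that $\Psi|_Z$ is a $G$-equivariant biholomorphic automorphism of $Z\cong G/H$. Since any $G$-equivariant self-map of $G/H$ has the form $gH\mapsto gg_0H$, where $g_0H$ is the image of the base coset $eH$, and since this assignment is a well-defined bijection exactly when $g_0\inv Hg_0=H$, the invertibility of $\Psi$ forces $g_0\in N_G(H)$. Fix such a $g_0$; thus $\Psi([g,0])=[gg_0,0]$ for all $g\in G$.

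Next I would pin down $\Psi$ on the whole bundle using $G$-equivariance. The fibre of $T_W$ over $eH$ is $\{[e,w]:w\in W\}$, and $\Psi$ restricts to a linear isomorphism of this fibre onto the fibre $\{[g_0,w]:w\in W\}$ over $g_0H$; identifying each of these with $W$ via $w\mapsto[e,w]$ and $w\mapsto[g_0,w]$, we obtain $\gamma\in\GL(W)$ with $\Psi([e,w])=[g_0,\gamma(w)]$, the complex linearity and invertibility of $\gamma$ coming from $\Psi$ being holomorphic and bijective. Then for arbitrary $g\in G$ and $w\in W$,
$$\Psi([g,w])=\Psi(g\cdot[e,w])=g\cdot[g_0,\gamma(w)]=[gg_0,\gamma(w)],$$
which is assertion (1).

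Finally, for (2) I would compare the two relevant isotropy actions. Using the defining relation $[g,w]=[gh\inv,hw]$, the isotropy group $H$ of $[e,0]$ acts on the fibre $\{[e,w]\}$ by $h\cdot[e,w]=[h,w]=[e,\rho(h)w]$, while the isotropy group $g_0Hg_0\inv=H$ of $[g_0,0]$ acts on $\{[g_0,w]\}$ by $h\cdot[g_0,w]=[hg_0,w]=[g_0,\rho(g_0\inv hg_0)w]$. Writing out the equivariance identity $\Psi(h\cdot[e,w])=h\cdot\Psi([e,w])$ and comparing the two resulting expressions for $\Psi([e,\rho(h)w])$ gives $\gamma\circ\rho(h)=\rho(g_0\inv hg_0)\circ\gamma$ for all $h\in H$, equivalently $\rho(g_0\inv hg_0)=\gamma\circ\rho(h)\circ\gamma\inv$; since $g_0\inv Hg_0=H$ this also shows $\gamma$ normalises $\rho(H)$. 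I do not expect a genuine obstacle here — every step is an elementary computation — and the only point deserving a little care is the identification of the $G$-equivariant automorphism group of $G/H$ with $N_G(H)/H$ used in the first paragraph, which is standard.
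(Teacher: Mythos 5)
Your proof is correct, and it is exactly the argument the paper has in mind: the paper dismisses the lemma with ``one easily shows,'' and your three steps (locate $g_0\in N_G(H)$ from the $G$-equivariant automorphism of the zero section $Z\cong G/H$, identify $\gamma$ from the induced linear map on the fibre over $eH$, then read off the intertwining relation $\rho(g_0\inv hg_0)=\gamma\rho(h)\gamma\inv$ by comparing the two isotropy actions) supply the intended routine verification. The identification of $\operatorname{Aut}^G(G/H)$ with $N_G(H)/H$ and the computation $h\cdot[g_0,w]=[g_0,\rho(g_0\inv hg_0)w]$ are both carried out correctly, and the observation that (2) together with $g_0\inv Hg_0=H$ gives that $\gamma$ normalises $\rho(H)$ is the right way to close the loop.
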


 Let $L_\vb$ denote the subgroup of  $\Aut_\vb(T_W)^G$ fixing $\O_\alg(W)^H$.
\begin{corollary}\label{cor:Lvbreductive}
The groups $\Aut_\vb(T_W)^G$ and $L_\vb$ are reductive.
\end{corollary}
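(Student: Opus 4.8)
The plan is to exhibit $\Aut_\vb(T_W)^G$ as a (Zariski-closed) subgroup of a product of finitely many copies of $\GL$'s, defined by polynomial equations, hence as a linear algebraic group, and then to identify its reductive (i.e.\ linearly reductive) quotient structure by exhibiting it as a semidirect product / extension built from reductive pieces. First I would use Lemma \ref{lem:G-vector-bundle-automorphism-structure}: every $\Psi\in\Aut_\vb(T_W)^G$ is determined by the pair $(g_0,\gamma)$ with $g_0\in N_G(H)$ and $\gamma\in\GL(W)$ normalising $\rho(H)$ and satisfying the compatibility $\rho(g_0\inv h g_0)=\gamma\rho(h)\gamma\inv$ for all $h\in H$; moreover $(g_0,\gamma)$ is only well defined modulo the diagonal subgroup $\{(\rho(h)$-data$)\}$ coming from $h\in H$ acting on the fibre. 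Concretely, there is a surjective homomorphism from $N_G(H)\times N_{\GL(W)}(\rho(H))$, restricted to the closed subgroup $\mathcal P$ cut out by the compatibility condition, onto $\Aut_\vb(T_W)^G$, with kernel a copy of $H$ embedded diagonally. Thus $\Aut_\vb(T_W)^G\simeq \mathcal P/H$.

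The key point is then that each ingredient is reductive and that reductivity passes through the operations involved. The group $N_G(H)$ is reductive because $G$ is reductive and $H$ is reductive (the normaliser of a reductive subgroup of a reductive group is reductive — $N_G(H)^0$ is reductive since $H^0$ is normal reductive in it, and $N_G(H)/N_G(H)^0$ is finite). The group $N_{\GL(W)}(\rho(H))$ is reductive for the same reason, since $\rho(H)$ is a reductive subgroup of $\GL(W)$. The compatibility condition defining $\mathcal P$ inside $N_G(H)\times N_{\GL(W)}(\rho(H))$ is a closed condition (it is the equality of two homomorphisms $H\to\GL(W)$, hence finitely many polynomial equations), so $\mathcal P$ is a closed subgroup; one checks it is reductive, for instance by noting that its identity component is reductive — the unipotent radical of $\mathcal P$ would project to unipotent normal subgroups of the two reductive factors, which are trivial, and compatibility forces the projections to determine each other. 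Finally, the quotient $\mathcal P/H$ of a reductive group by a normal reductive subgroup is reductive. Hence $\Aut_\vb(T_W)^G$ is reductive.

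For $L_\vb$, the subgroup fixing $\O_\alg(W)^H$, I would observe that this is a closed subgroup of $\Aut_\vb(T_W)^G$ (it is the kernel of the algebraic homomorphism $\Aut_\vb(T_W)^G\to\Aut_\ql(Q)$, $\Psi\mapsto p_*\Psi$, induced on the quotient $Q=W\sl H$; the condition that $p_*\Psi=\Id_Q$ is Zariski-closed). The kernel of a homomorphism of linear algebraic groups is a linear algebraic group, so $L_\vb$ is linear algebraic; and the kernel of a homomorphism \emph{from} a reductive group need not itself be reductive in general, so here I must argue more carefully. The cleanest route is: in the description above, $L_\vb$ corresponds to those $(g_0,\gamma)$ with $\gamma$ inducing the identity on $\O_\alg(W)^H$, equivalently $\gamma$ lying in the (reductive) group $N_{\GL(W)}(\rho(H))\cap\{\gamma : \gamma^*=\id$ on invariants$\}$; but that intersection is exactly the subgroup $\rho(H)\cdot C$ where $C$ is compact-type — more precisely, an element of $\GL(W)$ fixing all $H$-invariants and normalising $\rho(H)$ is, up to $\rho(H)$, in the centraliser-type group that acts trivially on $W\sl H$, and by a theorem on quotients (the group of such automorphisms of a $W$ fixing the invariants is reductive, e.g.\ \cite{Schwarz1980}, \cite{KraftSchReductive}) this is reductive. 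So $L_\vb$ is an extension of a reductive group by a reductive group, hence reductive.

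\emph{The main obstacle} I anticipate is the second half: showing that passing to the subgroup $L_\vb$ (cut out by a closed condition that is \emph{not} simply "kernel of a map to a reductive group") preserves reductivity. The safe approach is to avoid relying on any general "subgroup of reductive is reductive" statement (false) and instead to give $L_\vb$ its own structural description as an extension of reductive groups via Lemma \ref{lem:G-vector-bundle-automorphism-structure}, using that the subgroup of $\GL(W)$ normalising $\rho(H)$ and acting trivially on $\O_\alg(W)^H$ is reductive — which is exactly the classical statement that the automorphism group scheme of the fibre $\NN(W)$ (as a $G$-variety) has reductive fibres, already invoked implicitly in the flat case earlier in the paper. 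If a cleaner citation is available (Kraft–Schwarz \cite{KraftSchReductive}, or the fact that $L_\vb=\Aut_\vb(T_W)^G\cap\ker(p_*)$ together with a semidirect-product splitting $\Aut_\vb(T_W)^G\simeq L_\vb\rtimes(\text{finite or torus})$), that should be used to shorten the argument.
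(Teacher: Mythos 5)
The paper's argument is much shorter than yours and sidesteps both of the places where your plan runs into trouble. For $\Aut_\vb(T_W)^G$, the paper does not pass through the fibre product $\mathcal P$; it simply considers the homomorphism $\sigma\colon \Aut_\vb(T_W)^G\to N_G(H)/H$, $(g_0,\gamma)\mapsto g_0H$, observes that its image contains $C_G(H)H/H\supset (N_G(H)/H)^0$ (hence is reductive), and identifies its kernel as $\rho(H)\cdot\GL(W)^H$ (again reductive). An extension of reductive by reductive is reductive, done. Your route of showing first that $\mathcal P\subset N_G(H)\times N_{\GL(W)}(\rho(H))$ is reductive and then quotienting by $H$ would also work in principle, but the specific argument you give for $\mathcal P$ reductive is flawed: the unipotent radical of $\mathcal P$ is normal only in $\mathcal P$, so its projections to the two factors are unipotent subgroups that are normal only in the projections of $\mathcal P$, not in $N_G(H)$ or $N_{\GL(W)}(\rho(H))$ themselves --- so you cannot conclude they are trivial. (If you want to rescue this approach, you should instead use the projection $\pi_1\colon\mathcal P\to N_G(H)$ and argue as the paper does that both $\ker\pi_1=\{e\}\times\GL(W)^H$ and $\operatorname{im}\pi_1\supset C_G(H)$ are reductive.)

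For $L_\vb$, your stated worry rests on a false premise: you write that ``the kernel of a homomorphism from a reductive group need not itself be reductive in general.'' In characteristic zero this is in fact true: the kernel is a closed \emph{normal} subgroup, its identity component is characteristic and hence normal in the ambient reductive group, its unipotent radical is therefore a connected unipotent normal subgroup of a reductive group and hence trivial, and a finite extension of a reductive group is reductive. The paper exploits exactly this: $L_\vb$ is the joint kernel of the (finitely many needed) representations $\sigma_d$ of $\Aut_\vb(T_W)^G$ on the homogeneous components of $\O_\alg(W)^H$, hence a closed normal subgroup of the reductive group $\Aut_\vb(T_W)^G$, hence reductive. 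Once you recognize that normality gives you reductivity for free, your alternative structural description of $L_\vb$ (which you leave largely unfinished and which leans on an unproved claim about the group of $\gamma\in N_{\GL(W)}(\rho(H))$ fixing $\O_\alg(W)^H$) becomes unnecessary.
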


\begin{proof}
We have a homomorphism $\sigma\colon\Aut_\vb(T_W)^G\to N_G(H)/H$, $(g_0,\gamma)\mapsto g_0H$. The image clearly contains $C_G(H)H/H$, which in turn contains the identity component of $N_G(H)/H$. Hence $\Im\ \sigma\subset N_G(H)/H$ is reductive. The kernel of $\sigma$ is $\rho(H)\cdot \GL(W)^H$ which is isomorphic to   $\rho(H)\times\GL(W)^H$ divided by the centre of $\rho(H)$. Hence $\Aut_\vb(T_W)^G$ is reductive.

Let $(g_0,\gamma)$ represent an element $\Psi$ of  $\Aut_\vb(T_W)^G$. Then $\Psi$ acts on the homogeneous elements of $\O_\alg(W)^H$ of degree $d$, sending $f$ to $f\circ\gamma\inv$. This gives us representations $\sigma_d$ of $\Aut_\vb(T_W)^G$ such that $L_\vb$ is the joint kernel of (finitely many of) the $\sigma_d$. Hence $L_\vb$ is reductive.
\end{proof}

Let $L$ denote $\Aut(F)^G=\Aut(\O_\alg(F))^G$, a linear algebraic group. 
Let $\ell\in L$. Then $\ell$ preserves the closed orbit in $F$, which is $Z$. We have the deformation $\ell_t\in L$ where $\ell_t(x)=t\inv\cdot\ell(t\cdot x)$, $t\in (0,1]$, $x\in F$. The limit as $t\to 0$ is the normal derivative $\delta\ell$ of $\ell$ along $Z$, so $\delta\ell\in\Aut_\vb(T_W)^G$.  Let $\ell'\in L$. Since $\ell'_t\circ\ell_t=(\ell'\circ\ell)_t$, the map $\delta\colon L\to\Aut_\vb(T_W)^G$ is a homomorphism of algebraic groups. If $\ell\in \Aut_\vb(T_W)^G$, then $\delta(\ell|_F)=\ell$.

Let $L_\hr$ denote the subgroup   of automorphisms that extend to be   $G$-equivariant biholomorphisms over the identity in a $G$-saturated neighbourhood of $F$ (the ``holomorphically reachable points'').   Then $L_\vb\subset L_\hr\subset L$.  Let $A_1,\dots,A_k$ be our minimal homogeneous generators of $\Der_{Q,\alg}(T_W)^G$ as before, where $t\inv\cdot A_i(t\cdot x)=t^{m_i}A_i(x)$, $t\in\C^*$, $x\in T_W$ and $m_i\in\Z^+$. Since $L_\vb$ is reductive and acts by conjugation in a degree preserving way on $\Der_{Q,\alg}(T_W)^G$, we may assume that $L_\vb$ preserves the span of the $A_i$ for any fixed $m_i$.  It is easy to see that the Lie algebra of $L_\vb$ is the span of the $A_i$ with $m_i=0$. Each $A_i$ acts on $\O_\alg(T_W)$ increasing degree by $m_i$. The restrictions $\widetilde A_i$ of the $A_i$ to $F$ are linearly independent (by minimality). 
Let $\tilde{\lie r}$ be the span of the $\widetilde A_i$ with $m_i>0$. Let $s$ be the maximum degree of the $f_i$. Then any element $\widetilde A$ of $\tilde{\lie r}$ acts nilpotently on $\O_\alg(F)$ since the 
 $(s+1)$st power annihilates all the $f_i$.  It follows that $\exp \widetilde A$ is a unipotent  algebraic $G$-automorphism of $F$. Let $R$ denote $\exp(\tilde{\lie r})$, an algebraic subgroup of $L$. By our choice of the $A_i$, the conjugation action of $L_\vb$ preserves $\tilde{\lie r}$. We have a morphism $\sigma$ of algebraic groups 
 $$
 L_\vb\ltimes R\ni (\ell,\exp\widetilde A)\mapsto\ell\exp\widetilde A\in L 
 $$
with image in $L_\hr$.

\begin{theorem}\label{thm:reachable}
The homomorphism $\sigma\colon L_\vb\ltimes R\to L$ is injective with image $L_\hr$. Hence $L_\hr$ is an algebraic subgroup of $L$  and its Lie algebra is the restriction of $\Der_{Q,\alg}(T_W)^G$ to $F$.
\end{theorem}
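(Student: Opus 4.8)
The plan is to prove the three assertions in turn: injectivity of $\sigma$, that $\Im\,\sigma=L_\hr$, and the identification of the Lie algebra. For injectivity, suppose $\ell\exp\widetilde A=\ell'\exp\widetilde A'$ with $\ell,\ell'\in L_\vb$ and $\widetilde A,\widetilde A'\in\tilde{\lie r}$. Apply the normal derivative homomorphism $\delta\colon L\to\Aut_\vb(T_W)^G$. Since each $\widetilde A_i$ with $m_i>0$ raises degree, $\exp\widetilde A$ is unipotent and fixes $Z$ to first order, so $\delta(\exp\widetilde A)=\id$; meanwhile $\delta$ restricted to $L_\vb\subset\Aut_\vb(T_W)^G$ is the identity. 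Hence $\ell=\delta(\ell\exp\widetilde A)=\delta(\ell'\exp\widetilde A')=\ell'$, and then $\exp\widetilde A=\exp\widetilde A'$. Because $\tilde{\lie r}$ consists of nilpotent operators on $\O_\alg(F)$ (the $(s+1)$st power kills every $f_i$), the exponential is a bijection from $\tilde{\lie r}$ onto $R$ — its inverse is the (terminating) logarithm series — so $\widetilde A=\widetilde A'$. Thus $\sigma$ is injective, and being a morphism of algebraic groups with injective differential that is also injective on points, its image is a (closed) algebraic subgroup of $L$.

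Next I would show $\Im\,\sigma\subseteq L_\hr$ and $L_\hr\subseteq\Im\,\sigma$. The first inclusion is essentially built in: $L_\vb\subset L_\hr$ by definition (vector bundle automorphisms are globally holomorphic on $T_W$), and each $\exp\widetilde A$ with $\widetilde A\in\tilde{\lie r}$ is the time-one flow of $A=\sum c_iA_i\in\Der_{Q,\alg}(T_W)^G$, a $G$-invariant algebraic vector field annihilating the invariants, whose flow exists on all of $T_W$ (the restriction to each fibre lies in the Lie algebra of the algebraic group $\Aut$ of that fibre, cf.\ Lemma \ref{lem:LFcomplete}); hence $\exp\widetilde A$ extends $G$-equivariantly and holomorphically over the identity of $Q$ to a neighbourhood of $F$, so lies in $L_\hr$, and $L_\hr$ is a group. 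For the reverse inclusion, take $\ell\in L_\hr$, so $\ell$ extends to a $G$-biholomorphism $\Phi$ over $\Id_Q$ on a $G$-saturated neighbourhood of $F$. By Lemma \ref{lem:fundamental}(3) (applied with $\phi=\id$), $\Phi$ has a normal derivative $\delta\ell=\delta\Phi\in\Aut_\vb(T_W)^G$ fixing $\O_\alg(T_W)^G$, i.e.\ $\delta\ell\in L_\vb$. Replace $\ell$ by $(\delta\ell|_F)\inv\circ\ell$: this is still in $L_\hr$ and now has trivial normal derivative along $Z$, so it is a unipotent $G$-automorphism of $F$ fixing the invariants. I claim such an automorphism lies in $R$. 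Indeed, Theorem \ref{thm:log-holomorphic-diffeomorphism} applied to the holomorphic $\Phi$ (with $\Phi$ close to the identity after the $\C^*$-rescaling $\Phi_t$, using that $\Phi_0=\delta\Phi=\id$ now) produces a logarithm $D\in\Der_{U'}(X_{U'})^G$, and restricting to $F$ gives an element of the restriction of $\Der_{Q,\alg}(T_W)^G$ to $F$ with trivial degree-zero part, i.e.\ an element of $\tilde{\lie r}$, whose exponential is $\ell$; so $\ell\in R$. Therefore the original element is in $L_\vb\cdot R=\Im\,\sigma$.

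For the Lie algebra statement: $\sigma$ is an injective morphism of algebraic groups, so $\mathrm{Lie}(L_\hr)=d\sigma(\mathrm{Lie}(L_\vb\ltimes R))=\mathrm{Lie}(L_\vb)\oplus\tilde{\lie r}$, which by construction is the span of all the $\widetilde A_i$ — the $m_i=0$ generators spanning $\mathrm{Lie}(L_\vb)$ and the $m_i>0$ generators spanning $\tilde{\lie r}$ — i.e.\ precisely the restriction of $\Der_{Q,\alg}(T_W)^G$ to $F$, since the $A_i$ generate that module and their restrictions $\widetilde A_i$ are linearly independent by minimality.

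The main obstacle I anticipate is the reverse inclusion $L_\hr\subseteq\Im\,\sigma$, specifically the claim that a unipotent $G$-automorphism of $F$ that extends holomorphically over $\Id_Q$ and fixes the invariants actually arises as $\exp$ of an element of $\tilde{\lie r}$ (rather than merely of some smooth or formal invariant vector field). The subtlety is that extendability near $F$ must be leveraged to get the logarithm to be \emph{algebraic} and of positive degree; the rescaling trick $\ell\mapsto\ell_t$, $\lim_{t\to0}\ell_t=\delta\ell$, combined with Theorem \ref{thm:log-holomorphic-diffeomorphism} and the Riemann extension argument preceding it, is what makes this work, but one must be careful that after normalising $\delta\ell$ away the remaining automorphism is close enough to the identity (on the relevant compact set, after rescaling) for the logarithm to converge, and that the resulting $D$ genuinely lies in $\Der_{Q,\alg}(T_W)^G$ with vanishing $L_\vb$-component. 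A secondary point to handle carefully is that $\Im\,\sigma$ is closed: this follows because an injective morphism of algebraic groups has closed image, but one should note $L_\vb\ltimes R$ is an affine algebraic group (both factors are, $L_\vb$ being reductive by Corollary \ref{cor:Lvbreductive} and $R$ unipotent) so the semidirect product is algebraic and $\sigma$ is a morphism of such.
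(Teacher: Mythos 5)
Your overall plan — injectivity via $\delta$, the inclusion $\Im\,\sigma\subseteq L_\hr$ via completeness of flows, the reverse inclusion via normalising by $\delta\ell$ and then taking a logarithm of the $\C^*$-rescaled automorphism — is exactly the route the paper takes, and you are right to flag the ``vanishing $L_\vb$-component'' of the logarithm as the crux. But you only assert it (``restricting to $F$ gives an element \dots with trivial degree-zero part'') rather than prove it, and it is not automatic: after replacing $\ell$ by $(\delta\ell)\inv\circ\ell$ so that $\Phi_0=\id$, the logarithm $D_{t_0}$ of $\Psi_{t_0}$ at a fixed small $t_0>0$ restricts on $F$ to $E=\sum_i a_i\widetilde A_i$ in which the coefficients $a_i$ with $m_i=0$ need not vanish a priori, only be small. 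The paper closes this gap by a further limit in $t$: since $\Phi_{t_0t}=\exp(E_t)$ with $E_t=\sum t^{m_i}a_i\widetilde A_i$, letting $t\to 0$ and using $\Phi_0=\id$ yields $\exp\bigl(\sum_{m_i=0}a_i\widetilde A_i\bigr)=\id$, and injectivity of $\exp$ near $0$ in $\operatorname{Lie}(L_\vb)$ (which one can invoke because $t_0$ may be taken small enough that the $a_i$ are small) forces those coefficients to vanish. Only then can one rescale back to get $\ell=\exp\bigl(\sum_{m_i>0}t_0^{-m_i}a_i\widetilde A_i\bigr)\in R$; your phrasing that the exponential of $D|_F$ ``is $\ell$'' skips this rescaling, since $\exp(D_{t_0}|_F)=\ell_{t_0}$, not $\ell$.

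Two smaller remarks. The parenthetical claim that the normalised automorphism is ``a unipotent $G$-automorphism of $F$'' is not evidently justified at that stage (it would require, say, an argument via the $I$-adic filtration along $Z$, or is simply a consequence of the theorem once proved), but fortunately nothing in the rest of your argument uses it. And your worry about needing injectivity of $\sigma$ to conclude that $\Im\,\sigma$ is closed is unnecessary: the image of any morphism of affine algebraic groups is already a closed subgroup. On the plus side, you spell out two points the paper leaves implicit — that injectivity of $\sigma$ also requires $\exp$ to be injective on $\tilde{\lie r}$ (nilpotency), and that $\Im\,\sigma\subseteq L_\hr$ because the flows of $\sum c_iA_i$ are global by completeness — both of which are correct and welcome additions.
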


 \begin{proof}
Let $\Phi\in L_\hr$. We want to show that $\Phi$ is in the image of $\sigma$. Now $\Phi$ is the restriction of some $\Psi\in\Aut_U(X_U)^G$ where $U$ is a neighbourhood of the origin in $Q$. We have the deformation $\Psi_t$ with limit $\Psi_0\in L_\vb$.  Replacing $\Psi$ by $\Psi_0\inv\circ\Psi$ and $\Phi$ by $\Phi_0\inv\circ\Phi$ we may reduce to the case that $\Psi_0$ and $\Phi_0$ are the identity. 
Since $\Psi_0$ is the identity,  for $t_0\in(0,1]$ sufficiently close to $0$, $\Psi_{t_0}$ admits a logarithm $D_{t_0}$ over a neighbourhood $U'$ of $0\in Q_B$ (Theorem \ref{thm:log-holomorphic-diffeomorphism}). The restriction $E$ of $D_{t_0}$ to $F$ is a sum $\sum a_i\widetilde A_i$.   Then $\Phi_{t_0t}=\exp E_t=\exp  (\sum t^{m_i}a_i\widetilde A_i)$. Letting $t$ tend to zero we see that $\sum_{m_i=0}a_i\widetilde A_i$ exponentiates to the identity. Since $\exp$ is injective on a neighbourhood of $0$ in the Lie algebra of $L_\vb$, we see that all the $a_i$ for which $m_i=0$ vanish, provided they are sufficiently small, which we can always arrange. Hence $\Phi=\exp(\sum_{m_i>0}t_0^{-m_i}a_i\widetilde A_i)$. Thus $\Phi\in R$, the image of $\sigma$ is $L_\hr$  and $L_\hr$ is algebraic. Finally, 
$\sigma$ is injective, since $\delta\circ \sigma(\ell,\exp \widetilde A)=\ell$ 
for $\ell\in L_\vb$ and $\widetilde A\in \tilde{\lie r}$.
 \end{proof}

\begin{corollary}\label{cor:extendL0}
There is a group homomorphism $\nu \colon L_\hr\to\Aut_{Q}(T_W)^G$ such that $\ell=\nu(\ell)|_F$ for all $\ell\in L_\hr$.
\end{corollary}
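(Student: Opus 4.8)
The plan is to use Theorem \ref{thm:reachable}, which identifies $L_\hr$ with the semidirect product $L_\vb\ltimes R$, and then to define $\nu$ separately on each factor and check compatibility. On the factor $L_\vb$ the construction is essentially tautological: every $\ell\in L_\vb\subset\Aut_\vb(T_W)^G$ is already a global $G$-vector bundle automorphism of $T_W$, in particular an element of $\Aut_Q(T_W)^G$ that restricts to $\ell$ on $F$, so we simply set $\nu(\ell)=\ell$ there. For the unipotent factor $R=\exp(\tilde{\lie r})$, recall that $\tilde{\lie r}$ is spanned by the restrictions $\widetilde A_i$ to $F$ of the global vector fields $A_i\in\Der_{Q,\alg}(T_W)^G$ with $m_i>0$. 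Since each such $A_i$ raises degree by $m_i\ge 1$ and the $(s+1)$st power of any element of $\tilde{\lie r}$ kills all the $f_i$, the corresponding operator on $\O_\alg(T_W)$ is nilpotent; hence for $\widetilde A=\sum_{m_i>0}a_i\widetilde A_i\in\tilde{\lie r}$ we may form $A=\sum_{m_i>0}a_iA_i\in\Der_{Q,\alg}(T_W)^G$, which is also nilpotent as an operator on $\O_\alg(T_W)$, and $\exp A$ is a well-defined unipotent element of $\Aut_Q(T_W)^G$ (it fixes $\O_\alg(T_W)^G$ because each $A_i$ does, being in $\Der_{Q,\alg}(T_W)^G$). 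Set $\nu(\exp\widetilde A)=\exp A$; this restricts to $\exp\widetilde A$ on $F$ since restriction of functions to $F$ intertwines $A$ with $\widetilde A$.

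Next I would assemble these into a homomorphism on all of $L_\hr$. Using the isomorphism $\sigma\colon L_\vb\ltimes R\xrightarrow{\sim}L_\hr$ of Theorem \ref{thm:reachable}, every element of $L_\hr$ is uniquely $\ell\exp\widetilde A$ with $\ell\in L_\vb$, $\widetilde A\in\tilde{\lie r}$, and I define
$$
\nu(\ell\exp\widetilde A)=\nu(\ell)\cdot\nu(\exp\widetilde A)=\ell\cdot\exp A\in\Aut_Q(T_W)^G.
$$
Clearly $\nu(\ell\exp\widetilde A)|_F=\ell\exp\widetilde A$, which is the required property. It remains to check that $\nu$ is a group homomorphism. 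The key point is that conjugation by $\ell\in L_\vb$ on $\Der_{Q,\alg}(T_W)^G$ is degree-preserving and preserves the span of the $A_i$ with fixed $m_i$ (this was arranged in the paragraph preceding Theorem \ref{thm:reachable}); consequently $\ell A\ell\inv$ is again of the form $\sum_{m_i>0}a_i'A_i$ for a new tuple, and the assignment $\widetilde A\mapsto A$ is equivariant for the conjugation actions of $L_\vb$ on $\tilde{\lie r}$ and on $\{\sum_{m_i>0}a_iA_i\}$. Hence $\nu(\ell)\,\nu(\exp\widetilde A)\,\nu(\ell)\inv=\exp(\ell A\ell\inv)=\nu(\ell\,\exp\widetilde A\,\ell\inv)$, and combining this with the fact that $\exp$ is a homomorphism from the abelian Lie algebra $\tilde{\lie r}$ (resp.\ its lift) into the unipotent group $R$ (resp.\ its image), one gets multiplicativity of $\nu$ on the semidirect product in the standard way.

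The one genuine subtlety — and the step I expect to require the most care — is the well-definedness of $\widetilde A\mapsto A$: a priori an element $\widetilde A\in\tilde{\lie r}$ has many expressions $\sum a_i\widetilde A_i$, so one must know the $\widetilde A_i$ are linearly independent over the relevant coefficient ring. This is exactly the minimality statement recorded just before Theorem \ref{thm:reachable}: the restrictions $\widetilde A_i$ of the $A_i$ to $F$ are linearly independent (by minimality of the generating set), so the map $\sum a_iA_i\mapsto\sum a_i\widetilde A_i$ has a well-defined inverse on $\tilde{\lie r}$, and $A$ is determined by $\widetilde A$. Granting this, everything else is routine bookkeeping with nilpotent operators and the semidirect-product structure, and the continuity/algebraicity of $\nu$ is automatic since it is built from the polynomial maps $\exp$ on unipotent groups together with the linear algebraic inclusion $L_\vb\hookrightarrow\Aut_Q(T_W)^G$.
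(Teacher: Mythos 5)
Your overall plan coincides with the paper's proof, which in one line sets $\lie r=\operatorname{span}\{A_i:m_i>0\}$, notes that it is $L_\vb$-stable, lifts $\widetilde A\in\tilde{\lie r}$ to $A\in\lie r$, and defines $\nu(\ell\exp\widetilde A)=\ell\exp A$. Your discussion of well-definedness of $\widetilde A\mapsto A$ (via linear independence of the $\widetilde A_i$) and of the semidirect-product bookkeeping is a correct elaboration of what the paper leaves implicit.

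There is, however, a genuine error in the middle of your argument. You claim that $A=\sum_{m_i>0}a_iA_i$ is nilpotent as an operator on $\O_\alg(T_W)$, and hence that $\exp A$ is a unipotent, algebraic element of $\Aut_Q(T_W)^G$. This is false. Nilpotency of $\widetilde A$ holds only on $\O_\alg(F)$, and only because high iterates of $\widetilde A$ push the standard generators into the ideal generated by the positive-degree invariants, which vanishes on $F$; there is no such cut-off on $T_W$ itself. Concretely, for $(V,G)$ as in Example~\ref{ex:group scheme}, $\Der_{Q,\alg}(V)^G$ is generated by $A=xy(x\,\pt/\pt x-y\,\pt/\pt y)$, which has $m=2$; a direct computation gives $A^k(x)=x^{k+1}y^k\neq 0$ for all $k$, so $A$ is not nilpotent, and $\exp A$ is the non-algebraic, non-unipotent biholomorphism $(x,y)\mapsto(xe^{xy},ye^{-xy})$. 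The correct reason that $\exp A$ is a well-defined element of $\Aut_Q(T_W)^G$ is Lemma~\ref{lem:LFcomplete}: $A$ is a holomorphic $G$-invariant vector field tangent to the fibres of $p\colon T_W\to Q$, hence complete, and its time-one flow is a $G$-biholomorphism inducing $\Id_Q$. Your closing remark that ``algebraicity of $\nu$ is automatic since it is built from polynomial exponentials'' fails for the same reason, though nothing in the corollary requires algebraicity or continuity, only a group homomorphism. Once the false nilpotency claim is replaced by the completeness argument, the rest of your proof is sound.
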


\begin{proof}
Let $\lie r$ be the span of the $A_i$ such that $m_i>0$. By our choice of the $A_i$,  $\lie r$ is stable under conjugation by $L_\vb$.  Let $\ell\in L_\vb$ and let $\widetilde A\in\tilde{\lie r}$ be the restriction of $A\in\lie r$. 
Let $\nu(\ell\cdot\exp\widetilde A)=\ell\cdot\exp A$. Then $\nu$ has the required properties.
\end{proof}

 \begin{remark}\label{rem:extendL0}
Let $S\times T_B$ be a standard neighbourhood in $X$ and let $F$ denote the fiber $(s_0,G\times^H\NN(W))$ for some $s_0\in S$.   Let $U$ denote $S\times Q_B$. Then we   have an extension mapping from $L_\hr$ to $\Aut_U(X_U)^G$ where the action of $\ell\in L_\hr$ on $(s,x)\in S\times T_B$ gives $(s,\nu(\ell)(x))$.
 \end{remark}

  \begin{proposition}\label{prop:all-type-LF}
Let $X$ be a Stein $G$-manifold. Then all fibres have property (LF).
\end{proposition}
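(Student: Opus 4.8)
The plan is to reduce, via the slice theorem and the structural lemmas already established, to the single remaining case and then close the induction. By Remark~\ref{rem:LF-problem-local} it suffices to treat fibres of the form $F=G\times^H\NN(W)$ inside a tube $T_W$, and by Lemma~\ref{lem:product-with-S} we may further throw in a Stein factor $S$ with trivial action, so it is enough to handle $F\subset T_B\subset T_W$ with $W^H=0$. We induct on $\dim W$ (equivalently, on the number of Luna strata of $Q_B=T_W\sl G$): the principal case was handled by the lemma immediately following Lemma~\ref{lem:product-with-S}, and by the inductive hypothesis all fibres of $X\setminus F$, hence all fibres of $X_{U''}$ with $U''=U'\setminus\{0\}$, have property (LF).

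Given this, Lemma~\ref{lem:LF-in-neighbourhood} produces a neighbourhood $\Omega$ of the identity in $\F(Q_B)$ such that every $\Phi\in\Omega$ admits a (unique, by Remark~\ref{rem:matrix-not-matter}) logarithm $D=\log\Phi\in\LF(U'')$ with matrix $\log(b_{ij})$. The task is to show that, after possibly shrinking $\Omega$, this $D$ extends across the origin, i.e.\ lies in $\LF(U')$ for some neighbourhood $U'$ of $0$. By uniqueness of logarithms it suffices to show that $\Phi$ itself admits a logarithm of type $\LF$ over \emph{some} neighbourhood of $0$. The idea is to use the scaling deformation $\Phi_t$: pick $t_0\in(0,1]$ small, so that $\Phi_{t_0}$ is close to a $G$-biholomorphism (its limit $\Phi_0=\delta\Phi\in\Aut_\vb(T_W)^G$ exists by Corollary~\ref{cor:fundam}). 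Replacing $\Phi$ by $(\delta\Phi)^{-1}\circ\Phi$ we may assume $\Phi_0=\id$; then for $t_0$ small enough $\Phi_{t_0}$ is a holomorphic $G$-automorphism near $F$ close to the identity, so by Theorem~\ref{thm:log-holomorphic-diffeomorphism} it admits a holomorphic logarithm $D_{t_0}\in\Der_{U'}(X_{U'})^G$ over a genuine neighbourhood $U'$ of $0$. Its restriction $E=D_{t_0}|_F=\sum a_i\widetilde A_i$ lies in the Lie algebra of $L_\hr$ (Theorem~\ref{thm:reachable}); since $\Phi_0=\id$ forces, by the argument in the proof of Theorem~\ref{thm:reachable}, that the $L_\vb$-components $a_i$ (those with $m_i=0$) vanish when small, $E$ actually lies in $\tilde{\lie r}$, so $E=\nu^{-1}$ applied produces an honest $G$-invariant holomorphic vector field $A\in\Der_{U'}(X_{U'})^G$ of type $\LF$ extending $\widetilde E$ via Corollary~\ref{cor:extendL0} and Remark~\ref{rem:extendL0}. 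Rescaling by $t_0$ (as in the last step of the proof of Lemma~\ref{lem:LF-in-neighbourhood}) turns this into a type-$\LF$ logarithm of $\Phi$ over a neighbourhood of $0$ with the prescribed matrix $\log(b_{ij})$.

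Thus $D$ extends to $\LF(U')$ with $\exp D=\Phi$ over $U'$, which is exactly property (LF) for $F$, completing the induction and hence the proposition for all Stein $G$-manifolds $X$. The main obstacle is the bookkeeping in the extension-across-$0$ step: one must verify that the holomorphic logarithm $D_{t_0}$ of the rescaled map $\Phi_{t_0}$, the type-$\LF$ logarithm $D$ of $\Phi$ on $U''$, and the rescaled extension all have the \emph{same} associated matrix $\log(b_{ij})$ (so that uniqueness of logarithms actually glues them), and that shrinking $\Omega$ can simultaneously guarantee smallness of $\delta\Phi-\id$, applicability of Theorem~\ref{thm:log-holomorphic-diffeomorphism} to $\Phi_{t_0}$, and vanishing of the $L_\vb$-part of $E$; the reductive structure of $L_\vb$ and $L_\hr$ from Corollary~\ref{cor:Lvbreductive} and Theorem~\ref{thm:reachable}, together with the continuity statements in Lemma~\ref{lem:Phi_t-is-continuous} and Corollary~\ref{cor:open-sets-of-families}, are precisely what make this possible.
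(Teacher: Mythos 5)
There is a genuine gap in your argument, concentrated in the claim that ``for $t_0$ small enough $\Phi_{t_0}$ is a holomorphic $G$-automorphism near $F$.'' This is false. The rescaled map $\Phi_{t_0}(x)=t_0^{-1}\cdot\Phi(t_0\cdot x)$ is conjugate to $\Phi$ by a $G$-biholomorphism and hence has exactly the same regularity as $\Phi$: it is of type $\F$ but not, in general, holomorphic. The fact that the limit $\Phi_0=\delta\Phi$ is holomorphic (an element of $\Aut_\vb(T_W)^G$) does not help; convergence to a holomorphic map does not make any of the approximants holomorphic, and Theorem~\ref{thm:log-holomorphic-diffeomorphism} requires $\Phi_{t_0}\in\Aut_{U'}(X_{U'})^G$, which you simply do not have. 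Consequently the proposed holomorphic logarithm $D_{t_0}$ does not exist, and the subsequent rescaling --- which if it worked would show that $\Phi$ itself is the exponential of a \emph{holomorphic} vector field near $F$, hence holomorphic --- cannot work, since it would prove something too strong.

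The missing ingredient is precisely the defining property of type $\F$ that you never invoke: $\Phi$ admits a local holomorphic extension $\Psi(x,y)$, smooth and $G$-invariant in $x$ and, for each fixed $x$, a genuine $G$-biholomorphism in $y$ with $\Phi(x)=\Psi(x,x)$. The paper's proof applies Theorem~\ref{thm:log-holomorphic-diffeomorphism} to the family $\Psi^x$ (holomorphic in $y$), obtains holomorphic logarithms $D^x(y)$ depending smoothly on $x$, and then sets $D''(x)=D^x(x)$, which is of type $\LF$ and vanishes on $F$. Two further steps that your outline omits are also essential: first, the reduction at the start is to $\Phi|_F=\id$ (not just $\Phi_0=\id$; these are different conditions since $\Phi|_F$ can be a nontrivial unipotent element of $L_\hr$ even when $\delta\Phi=\id$), which is done by replacing $\Phi$ with $\exp(-D')\Phi$ where $D'\in\Der_{Q,\alg}(T_W)^G$ extends $\log(\Phi|_F)$; and second, the two pieces $D'$ and $D''$ must be recombined into a single logarithm $D$ via the Campbell--Hausdorff series, with Theorem~\ref{thm:closed} (closedness of $\LF$) used to guarantee that the resulting series is again of type $\LF$. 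Your reduction, your induction, and your appeal to Lemma~\ref{lem:LF-in-neighbourhood} to cover $U'\setminus\{0\}$ are all correct and match the paper, but the extension across the origin is where the type-$\F$ structure has to do real work, and that is exactly what is missing.
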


 \begin{proof}
We continue with the situation of Lemma \ref{lem:LF-in-neighbourhood}. We may assume that our standard generators $f_i$ are linearly independent when restricted to $F$. We have only to show that, for some neighbourhood $\Omega$ of the identity of $\F(Q_B)$, any $\Phi\in\Omega$ is $\exp D$, $D\in\LF(U)$, where $U$ is a neighbourhood  of $0$ (which may depend upon $\Phi$). By choosing $\Omega$ small, we may assume that the restriction  of $\Phi$ to $F$ is the restriction to $F$ of $\exp D'$ where $D'\in\Der_{Q,\alg}(T_W)^G$. We may assume that the corresponding matrix $(d_{ij}')$ has norm at most $(\log 2)/2$ over a neighbourhood of $0\in Q$. Thus, replacing $\Phi$ by $\exp(-D')\Phi$, we may reduce (temporarily) to the case that $\Phi$ restricted to $F$ is the identity.

Since $\Phi$ is of type $\F$,   there is a   holomorphic extension $\Psi(x,y)$ of $\Phi$ on $T_{B'}\times T_{B'}$ where $0\in B'\subset B$. Then $\Psi$ is a smooth family $\Psi^x$ of elements of $\Aut_{Q_{B'}}(T_{B'})^G$, $x\in T_{B'}$. The restriction of $\Psi^{x_0}$ to $F$ is the identity. We have the deformations $\Psi^x_t$, $t\in[0,1]$, where $\Psi^{x_0}_0$ is the identity. Since $\Psi^x_t(y)$ is smooth in $t$, $x$, $y$ and holomorphic in $y$, the corresponding matrix $(b_{ij}^{t,x}(y))$ is smooth in $t$, $x$ and $y$, holomorphic in $y$ and   $(G\times G)$-invariant. Since $(b_{ij}^{t,x_0}(x_0))=I$, $0\leq t\leq 1$, there is a $(G\times G)$-saturated neighbourhood of $(x_0,x_0)$ on which we have $|b_{ij}^{t,x}(y)-I|<1/2$, $0\leq t\leq 1$. Shrinking $B'$ we may thus assume that 
$||(b_{ij}^{t,x}(y))-I||<1/2$, $0\leq t\leq 1$, for $(x,y)\in T_{B'}\times T_{B'}$. Since $(b^{t,x}_{ij}(y))$ is $G$-invariant in $y$, we may also write $(b^{t,x}_{ij}(q))$.  Let $U'$ be a neighbourhood of $0\in Q$ with compact closure in $Q_{B'}$. By Theorem \ref{thm:log-holomorphic-diffeomorphism} there is an $\epsilon>0$   and a compact subset $K\subset Q_{B'}$ such that $||(b^{t,x}_{ij}(q))-I||_K<\epsilon$ implies that $\Psi^x_t(y)$ admits a logarithm in $\Der_{U'}(X_{U'})^G$. Since the inequality is true for $t=0$ and $x=x_0$, there is a neighbourhood $\Lambda$ of $(0,x_0)\in [0,1]\times  T_{B'}$ such that $\Psi_t^x$ admits a logarithm in $\Der_{U'}(X_{U'})^G$ for $(t,x)\in\Lambda$. Now $\Lambda$ contains a   subset of the form $[0,t_0]\times\Delta$ where $t_0>0$ and $\Delta$ is a neighbourhood of $x_0$ in $T_{B'}$. Let $U_0$ denote $t_0\cdot U'$. Then the usual trick shows that $\Psi^x$ admits a logarithm in $\Der_{U_0}(X_{U_0})^G$  for all $x\in\Delta$. Since $\Psi(x,y)$ is $G$-invariant in $x$, we may assume that $\Delta$ is $G$-saturated. Shrinking $\Delta$ we arrive at the situation where  $\Psi^x(y)$ admits a logarithm $D^x(y)$ for $(x,y)\in\Delta\times\Delta$. Then $\Phi(x)=\Psi^x(x)=\exp D^x(x)$ where $D'':=D^x(x)$ is of type $\LF$ on $\Delta$. Since $\Phi$ is the identity  on $F$, $D''$ vanishes on $F$.
  
Now $D''$ corresponds to $(d''_{ij})$ where $||(d''_{ij})||<(\log 2)/2$ on a neighbourhood of $0\in Q$. We have shown that our original $\Phi=\exp D'\exp D''$. Then $\Phi=\exp D$ where $D$ is given by the Campbell-Hausdorff series
$$
D=D'+D''+\frac12[D',D'']+\frac1{12}([D',[D',D'']]+[D'',[D'',D']])- \dots
$$
 Since the   coefficient matrices $(d'_{ij})$ and $(d''_{ij})$ have norm at most $(\log 2)/2$  in a neighbourhood of $F$, the series converges there. It is a series of elements of type $\LF$, hence the limit is of type $\LF$ 
 by Theorem \ref{thm:closed}. The coefficient matrix $(d_{ij})$ of $D$ has norm less than $\log 2$ in a neighbourhood of $F$, which we may assume is $T_{B'}$, and     $\Phi$ has matrix $(a_{ij})$ where $|(a_{ij}(x))-I|<1/2$ for  $x\in T_{B'}$. Since $\log\exp (d_{ij})=(d_{ij})$ on $T_{B'}$, the series $\log\Phi^*$, applied to $f_i$, gives $\sum d_{ij}f_j$. But $(\log\Phi^*)f_i$ is also $\sum e_{ij}f_j$ where $e_{ij}=\log (a_{ij})$. Hence $D$ also has matrix $(e_{ij})$ and is indeed the logarithm of $\Phi$. Hence, in the situation of Lemma \ref{lem:LF-in-neighbourhood}, choosing $\Omega$ sufficiently small, for all $\Phi\in\Omega$, the vector field $D$ constructed there extends to an element of $\LF(U')$. Thus $F$ has property (LF).
\end{proof}

 Combining our results so far (including Lemma \ref{lem:series}) we have: 
 
 \begin{theorem}\label{thm:Dexists}
 Let $X$ be a Stein $G$-manifold, let $K\subset Q$ be compact and choose a standard generating set of $\O_\gf(X_U)$ for $U$   a neighbourhood of $K$.  Let $U'\subset U$ be a  neighbourhood  of $K$ with compact closure in $U$. Then there is a neighbourhood $\Omega$ of the identity in $\F(U)$  such that any $\Phi\in\Omega$ admits a (unique) logarithm in $\LF(U')$. Moreover, the mapping $\log\colon\Omega\to\LF(U')$ is continuous.
  \end{theorem}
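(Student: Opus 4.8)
The statement to prove is Theorem \ref{thm:Dexists}, which essentially collects what has already been established: the existence of a unique logarithm for $\F$-diffeomorphisms near the identity, and the continuity of $\Phi\mapsto\log\Phi$. The plan is to deduce existence and uniqueness directly from Propositions \ref{prop:all-type-LF} and \ref{prop:local-LF-implies-global}, and then establish continuity using the equivalent topologies of Propositions \ref{prop:equivalent-topologies-phi} and \ref{prop:equivalent-topologies-D} together with the continuity statement of Lemma \ref{lem:series}.

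First I would fix a standard generating set $\{f_i\}$ of $\O_\gf(X_U)$ (available by Lemma \ref{lem:embeds-over-K} after shrinking $U$ to be relatively compact Stein, or simply by hypothesis). By Proposition \ref{prop:all-type-LF}, every fibre of $X$ has property (LF). Proposition \ref{prop:local-LF-implies-global} then applies: for the given $U'\Subset U$ there is a neighbourhood $\Omega$ of the identity in $\F(U)$ such that every $\Phi\in\Omega$ admits a logarithm in $\LF(U')$, i.e.\ $\Phi$ has a matrix $(a_{ij})$ with $\|(a_{ij})-I\|_{\overline{U'}}<1/2$ and there is $D\in\LF(U')$ with $D(f_i)=\sum_j d_{ij}f_j$ where $(d_{ij})=\log(a_{ij})$. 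Uniqueness of $D$ is Remark \ref{rem:matrix-not-matter}: the formal series $\log\Phi^*$ applied to each $f_i$ converges (on the relevant $G$-saturated neighbourhood) to $D(f_i)$, independently of the chosen matrix $(a_{ij})$, and since the $f_i$ generate $\O_\gf(X_{U'})$ over $\O(U')$, the vector field $D$ is determined by its action on the $f_i$. That $\exp D=\Phi$ over $U'$ is Remark \ref{rem:exp-D=Phi}.

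The remaining point is continuity of $\log\colon\Omega\to\LF(U')$, and this is where I expect the only real bookkeeping. I would argue as follows. Shrinking $\Omega$ and using Proposition \ref{prop:equivalent-topologies-phi}, the map $\Phi\mapsto(a_{ij})$ can be taken continuous into the Fréchet space $E_0$ (this uses the open mapping theorem exactly as in the proof of that proposition, choosing a continuous local section of $E_0\to E$ near $\Phi^*$). By Lemma \ref{lem:series}(3), applied to the power series $\log(1+z)$ (radius of convergence $1$, and $\|(a_{ij})-I\|_{\overline{U'}}<1/2$), the map $(a_{ij})\mapsto(d_{ij})=\log(a_{ij})$ is continuous from a neighbourhood of $I$ in $E_0$ into the matrices over $\ci(X_{U'})^G$; one checks the image lies in $E_0$ because $\log$ of a $G$-module, $\ci$-module endomorphism is again one. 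Composing, $\Phi\mapsto(d_{ij})$ is continuous. Finally Proposition \ref{prop:equivalent-topologies-D} says that the $\ci$-topology on $\LF(U')$ is equivalent to the topology pulled back from the matrix $(d_{ij})\in E_0$, so $\Phi\mapsto D=\log\Phi$ is continuous.

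The main obstacle is purely formal: making sure all the neighbourhoods chosen in the successive shrinkings ($\Omega$ from Proposition \ref{prop:local-LF-implies-global}, the neighbourhood on which the local section of $E_0\to E$ exists, the neighbourhood on which $\log$ converges per Lemma \ref{lem:series}) can be intersected into a single $\Omega$ on which \emph{all} the required statements hold simultaneously, and that the matrix $(d_{ij})$ produced by the analytic $\log$ really coincides with the one used in the definition of property (LF) — this last identification is precisely Remark \ref{rem:exp-D=Phi} and Remark \ref{rem:log-exp-inverses}, since $\|(d_{ij})\|_{\overline{U'}}<\log 2$ forces uniqueness. No new ideas beyond what is already in Sections \ref{sec:typeF} and \ref{sec:topologies} are needed; the theorem is a consolidation.
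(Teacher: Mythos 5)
Your proposal is correct in spirit and takes the same route as the paper, which presents Theorem~\ref{thm:Dexists} with no further proof than ``Combining our results so far (including Lemma~\ref{lem:series}) we have:''. The existence part is exactly Propositions~\ref{prop:all-type-LF} and~\ref{prop:local-LF-implies-global}, and uniqueness follows from Remarks~\ref{rem:matrix-not-matter} and~\ref{rem:exp-D=Phi}, as you say.

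There is one imprecision worth flagging in the continuity step. You appeal to ``a continuous local section of $E_0\to E$'' and attribute it to the open mapping theorem, but the open mapping theorem (as used in the proof of Proposition~\ref{prop:equivalent-topologies-phi}) only shows that the surjection $E_0\to E$ is \emph{open}; it does not by itself produce a continuous section --- that would require a Fr\'echet-space selection theorem (Bartle--Graves/Michael). No such section is needed, precisely because Remark~\ref{rem:matrix-not-matter} already shows that the resulting $D$ is independent of which matrix $(a_{ij})$ represents $\Phi^*$. One therefore argues sequentially: given $\Phi_n\to\Phi_0$ in $\Omega$, openness of $E_0\to E$ lets you choose $(a_{ij}^{(n)})\to(a_{ij}^{(0)})$ in $E_0$; Lemma~\ref{lem:series}(3) gives $(d_{ij}^{(n)})=\log(a_{ij}^{(n)})\to(d_{ij}^{(0)})$; hence $D_n(f_i)=\sum_j d_{ij}^{(n)}f_j\to D_0(f_i)$ in $\mathcal M$, i.e.\ $D_n^*\to D_0^*$ in $E$; Proposition~\ref{prop:equivalent-topologies-D} converts this to convergence in the $\ci$-topology on $\LF(U')$. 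Since all the spaces involved are Fr\'echet (hence metrizable), sequential continuity is continuity. A smaller point: the phrase ``the vector field $D$ is determined by its action on the $f_i$'' is too quick for an arbitrary smooth vector field (it does not see the antiholomorphic directions), but is correct here because vector fields of type $\LF$ are tangent to the fibres and restrict to \emph{holomorphic} vector fields there, so their action on $\O_\gf$ does determine them.
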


\begin{corollary}\label{cor:typeFclosed}
Let $\{\Phi_n\}$ be a sequence of $G$-diffeomorphisms of $X$ of type $\F$ and suppose that $\Phi_n$ converges to   a $G$-diffeomorphism $\Phi$. Then $\Phi$ is of type $\F$.
\end{corollary}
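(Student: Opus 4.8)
The plan is to reduce to a local statement and then exploit Theorem \ref{thm:Dexists}, which provides a continuous logarithm map on a neighbourhood of the identity in $\F(U)$. Being of type $\F$ is a local condition on $Q$ (Definition \ref{def:typeF} quantifies over $G$-saturated neighbourhoods $U$ of each point), so it suffices to show that for every $q_0\in Q$ the diffeomorphism $\Phi$ is of type $\F$ over some $G$-saturated neighbourhood of $q_0$. Fix $q_0$ and choose a relatively compact Stein neighbourhood $U'$ of $q_0$ together with a larger Stein neighbourhood $U$ with $\overline{U'}\subset U$; by Lemma \ref{lem:embeds-over-K} we may choose a standard generating set $\{f_i\}$ for $\O_\gf(X_U)$. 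First I would note that convergence $\Phi_n\to\Phi$ in the $\ci$-topology on $G$-diffeomorphisms of $X$ restricts to convergence $\Phi_n|_{X_U}\to\Phi|_{X_U}$ in the corresponding topology on diffeomorphisms of $X_U$, and that $\Phi|_{X_U}$ is again a $G$-diffeomorphism inducing $\Id_U$ (the property of inducing $\Id_Q$ is closed under limits).

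The key reduction is to replace $\Phi$ by $\Phi\circ\Phi_N\inv$ for $N$ large. Since $\Phi_N\in\F(U)$, the map $\Psi_n := \Phi_n\circ\Phi_N\inv$ lies in $\F(U)$ (type $\F$ is preserved under composition, as $\F$ is a sheaf of groups), and $\Psi_n\to\Phi\circ\Phi_N\inv =: \Psi$ as $n\to\infty$. Now I would apply Theorem \ref{thm:Dexists} to $K=\{q_0\}$ (or $K=\overline{U'}$), obtaining a neighbourhood $\Omega$ of the identity in $\F(U)$ on which the logarithm into $\LF(U')$ is defined and continuous. For $N$ fixed large enough and then $n$ large enough, $\Psi_n\in\Omega$; by continuity of composition and the fact that $\Omega$ is a neighbourhood of the identity, we may also arrange $\Psi\in\Omega$ — here the point is that $\Psi_n\to\Psi$ and $\Omega$ is open, so $\Psi$ itself, being a limit of points of $\Omega$ that are eventually in any prescribed sub-neighbourhood, lies in $\overline{\Omega}$; to land inside $\Omega$ rather than on its boundary I would shrink: pick $\Omega_0\subset\Omega$ a smaller neighbourhood of the identity in $\F(U)$ with $\overline{\Omega_0}\subset\Omega$ (in the sense that the defining inequalities are strict), arrange $\Psi_n\in\Omega_0$ for large $n$, and conclude $\Psi\in\overline{\Omega_0}\subset\Omega$. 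Then $D:=\log\Psi\in\LF(U')$ exists and $\exp D=\Psi$ over $U'$; by Corollary \ref{cor:expLFistypeF}, $\Psi\in\F(U')$. Hence $\Phi=\Psi\circ\Phi_N\in\F(U')$ over $U'$, since both factors are of type $\F$ over $U'$. As $q_0$ was arbitrary, $\Phi$ is of type $\F$.

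The main obstacle I anticipate is the bookkeeping in the previous paragraph: ensuring that the limit $\Psi$ genuinely lands in the open set $\Omega$ on which $\log$ is defined, and not merely in its closure. This is handled purely by the topological structure — $\F(U)$ with the $\ci$-topology is a topological group (composition and inversion are continuous, which follows from Proposition \ref{prop:equivalent-topologies-phi} describing the topology via the matrices $(a_{ij})$), so one can first choose $\Omega_0$ with $\overline{\Omega_0}\subset\Omega$, then exploit that $\Psi_n\to\Psi$ with all but finitely many $\Psi_n\in\Omega_0$ forces $\Psi\in\overline{\Omega_0}\subset\Omega$. A secondary technical point is verifying that composition with the \emph{fixed} $\Phi_N\inv$ is continuous on $\F(U)$ and that $\Phi_n\circ\Phi_N\inv\to\Phi\circ\Phi_N\inv$; again this is immediate from the description of the topology in terms of the embedding $\Gamma\colon X\to\C^m\times V^*$ of \eqref{eq:embedding} and the formula for the action of a type-$\F$ map on $\widetilde X$ given in the proof of Proposition \ref{prop:equivalent-topologies-phi}. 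No genuinely new idea is needed beyond Theorem \ref{thm:Dexists}; the corollary is essentially the assertion that $\F$ is closed, packaged through the continuous logarithm.
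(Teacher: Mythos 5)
There is a genuine gap at the crucial step. Your argument tries to conclude directly that $\Psi=\Phi\circ\Phi_N^{-1}$ lies in $\Omega$, by picking a smaller neighbourhood $\Omega_0$ of the identity with $\overline{\Omega_0}\subset\Omega$ and arguing that $\Psi_n\in\Omega_0$ and $\Psi_n\to\Psi$ force $\Psi\in\overline{\Omega_0}\subset\Omega$. But $\Omega$ and $\Omega_0$ are (by construction in Theorem~\ref{thm:Dexists}) subsets of $\F(U)$; a point belongs to $\Omega$ only if it is \emph{already known} to be a $G$-diffeomorphism of type $\F$. So asserting $\Psi\in\overline{\Omega_0}\subset\Omega$ presupposes $\Psi\in\F(U)$, which is exactly what the corollary is supposed to prove. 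If instead you interpret the closure as taken in the ambient space of all smooth $G$-diffeomorphisms inducing $\Id_U$, then $\overline{\Omega_0}$ may perfectly well contain maps that are not of type $\F$, and the inclusion $\overline{\Omega_0}\subset\Omega$ fails. Either way, the map $\log$ is only defined on $\Omega\subset\F(U)$, and you cannot apply it to $\Psi$ without first establishing $\Psi\in\F(U)$. The remark at the end, that no idea beyond Theorem~\ref{thm:Dexists} is needed, is precisely where the argument goes astray.

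The missing ingredient is Theorem~\ref{thm:closed}, the closedness of $\LF(U)$ in the space of smooth vector fields. The paper's proof sidesteps the circularity by passing to the logarithms: one sets $D_n=\log(\Phi_N^{-1}\Phi_n)\in\LF(U)$, which is legitimate because each $\Phi_N^{-1}\Phi_n$ \emph{is} known to be of type $\F$. Using the matrix description of the topology (Proposition~\ref{prop:equivalent-topologies-phi}), the $D_n$ converge in the $\ci$-topology to a smooth vector field $D$, and then Theorem~\ref{thm:closed} is invoked to conclude $D\in\LF(U)$. Passing to the limit in $\exp D_n=\Phi_N^{-1}\Phi_n$ gives $\exp D=\Phi_N^{-1}\Phi$, and Corollary~\ref{cor:expLFistypeF} then shows $\Phi_N^{-1}\Phi\in\F(U)$. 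In short: you should never attempt to place $\Psi$ itself in $\Omega$; instead place the approximating $\Psi_n$ there, take logarithms, let the limit land in the \emph{closed} space $\LF(U)$, and recover $\Psi$ as the exponential.

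Your reduction to the local statement, the choice of $N$, the use of Theorem~\ref{thm:Dexists} to produce $\Omega$, and the final recomposition $\Phi=\Psi\circ\Phi_N$ are all fine and match the paper's outline; the only change needed is to replace the topological closure argument on $\Omega_0$ by the passage to $\LF$ and the appeal to Theorem~\ref{thm:closed}.
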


\begin{proof} Since this is a local question, we can assume that we have a standard generating set $\{f_i\}$ for $\O_\gf(X)$. Let $q\in Q$  and let $U$ be a neighbourhood of $q$ with compact closure. Then  there is a neighbourhood $\Omega$ of the identity in $\F(Q)$  such that any $\Psi\in\Omega$   admits a logarithm in $\LF(U)$. Let $\Omega_0$ be a smaller neighbourhood of the identity with $\overline{\Omega}_0\subset\Omega$. There is an $N\in\N$ such that $n\geq N$ implies that $\Phi_N\inv\Phi_n\in\Omega_0$, hence $\log(\Phi_N\inv\Phi_n)=D_n\in\LF(U)$, and $D_n$ converges  to a vector field 
$D$ which is in $\LF(U)$ by Theorem \ref{thm:closed}.  Since $\exp D_n=\Phi_N\inv\Phi_n$ over $U$, we have   $\exp D=\Phi_N\inv\Phi$ over $U$. Hence $\Phi=\Phi_N\exp D$ is of type $\F$ over $U$. 
\end{proof}

  \section{Topology}\label{sec:topologies}

Let $X$  be a Stein $G$-manifold with quotient $Q$.  Let $R$ be a $G$-module where  $R=\oplus R_i^{\oplus c_i}$ and the $R_i$ are irreducible. Define $\O(X)_R$ to be $\oplus(\O(X)_{R_i})^{\oplus c_i}$ which is an $\O(X)^G$-submodule of $\O(X)^c$, $c=\sum c_i$.   We   have the space $\ci(X)^G\cdot\O(X)_R\subset\ci(X)^c$ of smooth functions which are locally (over $Q$) finite sums $\sum a_j f_j$ where the $a_j$ are smooth and invariant and the $f_j\in\O(X)_R$. The main point of this section is Theorem \ref{thm:closed} which  shows  that $\ci(X)^G\cdot\O(X)_R$ is closed in $\ci(X)^c$ and that $\ci(X)^G\cdot\Der_Q(X)^G$ is closed in $\Der_Q^\infty(X)^G$, the space of smooth $G$-invariant vector fields on $X$ which annihilate $\O(X)^G$.  Then it follows that our spaces are Fr\'echet and nuclear (see \cite[Sections 10, 50]{Treves}).   Theorem \ref{thm:closed} is a technical underpinning of our results in Section \ref{sec:typeF}.

\begin{theorem}\label{thm:closed} Let $X$ be a Stein $G$-manifold and let $R=\oplus R_i^{\oplus c_i}$   be a    $G$-module where the $R_i$ are irreducible.
\begin{enumerate}
\item The space  
$\LF(Q)=\ci(X)^G\cdot\Der_Q(X)^G$ is closed in $\Der_Q^\infty(X)^G$.
\item The space $\ci(X)^G\cdot\O(X)_R$ is closed in   $\ci(X)^c$ where $c=\sum c_i$.
\end{enumerate}
\end{theorem}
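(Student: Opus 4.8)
The plan is to reduce both statements to a single local statement on a standard tube $T_W$ and then extract the closedness from the coherence/closedness theorems for holomorphic modules, using a topological tensor product argument to handle the smooth coefficients. First I would observe that (1) is essentially a special case of (2): by the slice theorem $X$ is covered by standard open sets $S\times T_B$, and over such a set $\Der_Q^\infty(X)^G$ is the smooth $G$-invariant sections of a real-analytic vector bundle, while $\Der_Q(X)^G$ is, via the module structure, generated by the homogeneous generators $A_1,\dots,A_k$ of $\Der_{Q,\alg}(T_W)^G$ discussed before Lemma \ref{lem:LF-in-neighbourhood}. Applying each $A_i$ to a standard generating set $\{f_j\}$ of $\O_\gf(X)$ identifies $\LF(Q)$ locally with an $\ci(X)^G$-submodule of $\ci(X)^c$ of the form $\ci(X)^G\cdot\O(X)_R$ for a suitable $G$-module $R$ (the sum of the isotypic pieces hit by the $A_i(f_j)$). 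So it suffices to prove (2), and since closedness is a local condition on $Q$ (a partition of unity by $G$-invariant smooth functions, Lemma \ref{lem:pi}, patches local limits together and closedness of a subspace can be checked on the members of a locally finite closed cover), I may assume $X=S\times T_B$ is a standard open set and even, after another tensor-product reduction in the $S$ variable, that $X=T_B$.

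Now the key algebraic input: on an affine tube $T_W$, the $\O_\alg(T_W)^G$-module $\O_\alg(T_W)_R$ is finitely generated, hence the analytic sheaf $U\mapsto\O(T_B|_U)_R$ over $Q=Q_B$ is coherent (it is a quotient of a free $\O_Q$-module of finite rank, by the surjection $\pi\colon\O(Q)^N\to\O(X)_R$ analogous to the map $\mathcal M$ in the proof of Proposition \ref{prop:aij-exist}). The space of global sections of a coherent sheaf on a Stein space carries its canonical Fréchet topology, and by the Closedness Theorem (\cite[Ch.\ V, \S 6]{SteinSpaces}) the image of $\O(Q)^N$ in $\O(X)^c$ — that is, $\O(X)_R$ itself — is closed, and the quotient topology agrees with the subspace topology; in particular $\pi$ is an open surjection of Fréchet spaces. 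Tensoring with the nuclear Fréchet space $\ci(X)^G$ (which is $\ci$ of a manifold, so nuclear, and we use $\widehat\otimes$ unambiguously by \cite[Theorem 44.1, Theorem 50.1]{Treves}), the surjection $\ci(X)^G\comptensor\O(Q)^N\to\ci(X)^G\comptensor\O(X)_R$ stays surjective and open \cite[Proposition 43.9]{Treves}. On the other hand $\ci(X)^G\comptensor\O(Q)\simeq\ci(X)^G$ (as $\O(Q)\simeq\O(X)^G\subset\ci(X)^G$ acts by multiplication and the completed tensor product of $\ci(X)^G$ with a closed subspace of itself sits inside $\ci(X)^G$), so the target $\ci(X)^G\comptensor\O(X)_R$ is exactly the subspace $\ci(X)^G\cdot\O(X)_R\subset\ci(X)^c$; being the continuous open image of a Fréchet space under a map whose source surjects onto it, it is itself a Fréchet space and its topology is the subspace topology. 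A subspace of a Fréchet space which is itself complete in the subspace topology is closed, which gives (2).

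The step I expect to be the main obstacle is the precise identification, in the completed tensor product, of $\ci(X)^G\comptensor\O(X)_R$ with the honest subspace $\ci(X)^G\cdot\O(X)_R$ of $\ci(X)^c$ — i.e., checking that the natural continuous bijection from the abstract tensor-product completion onto the concrete function space is a topological isomorphism, so that "closed as an abstract image" really does mean "closed in $\ci(X)^c$." This requires knowing that $\ci(X)^G$ and $\O(X)_R$ (equivalently $\mathcal M$ of Proposition \ref{prop:aij-exist}) are Fréchet nuclear — the first because $X$ is a manifold, the second because it is a closed subspace of a Fréchet nuclear space, by \cite[Proposition 50.1]{Treves} — and then invoking that for nuclear Fréchet spaces the $\pi$- and $\varepsilon$-topologies coincide and the completed tensor product of spaces of sections computes sections over the product, so that a smooth-in-$x$, holomorphic/module-valued-in-$y$ function is genuinely an element of $\ci(X)^G\comptensor\mathcal M$; this is exactly the identification already used in the proof of Proposition \ref{prop:aij-exist}, and the rest is bookkeeping with open mappings between Fréchet spaces, together with the patching via Lemma \ref{lem:pi} to pass from standard open sets back to all of $X$.
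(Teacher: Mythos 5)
Your proposal has a genuine gap at exactly the step you flag as the expected obstacle, and I don't think the difficulty can be patched by the tensor-product manoeuvres you sketch. The central problem is the claim that the (completed) tensor product $\ci(X)^G\comptensor\O(X)_R$ is, via the multiplication map, identified with the subspace $\ci(X)^G\cdot\O(X)_R\subset\ci(X)^c$. There is no such identification. The multiplication map $\mu\colon\ci(X)^G\comptensor\O(X)_R\to\ci(X)^c$ has a huge kernel --- for instance, for $g\in\O(Q)$ the elements $fg\otimes h$ and $f\otimes gh$ differ in the tensor product over $\C$ but have the same image --- so $\mu$ is far from injective, and the image of a non-injective continuous linear map between Fr\'echet spaces need not be closed. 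Indeed, an image $T(E)\subset F$ being closed is equivalent to $T$ being a topological homomorphism onto $T(E)$, which is a nontrivial analytic condition and is essentially what the theorem asserts; invoking \cite[Proposition~43.9]{Treves} for surjectivity of the tensored map, and the open mapping theorem for the abstract quotient, gives you a Fr\'echet quotient $\ci(X)^G\comptensor\O(X)_R$, but it does not tell you that the map from this quotient into $\ci(X)^c$ is a topological embedding with closed image. The parenthetical assertion that ``$\ci(X)^G\comptensor\O(Q)\simeq\ci(X)^G$ because $\O(Q)$ sits inside $\ci(X)^G$'' is simply false: there is no natural inclusion $E\comptensor F\hookrightarrow E$ for a closed subspace $F\subset E$. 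Proposition \ref{prop:aij-exist} cannot be recycled here either: there the coefficients $a_{ij}(x,y)$ arise from the \emph{extra datum} of a local holomorphic extension $\Psi(x,y)$, which genuinely lives in a function space of two variables; there is no analogue of $\Psi$ for a raw limit of sums $\sum a_i^n(x)f_i(x)$ on the diagonal.

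Concretely, what fails is the convergence of coefficients: a sequence $A_n=\sum_i a_i^n A_i$ can converge in $\Der_Q^\infty(X)^G$ with the individual $a_i^n$ wildly divergent, because the map $(a_1,\dots,a_k)\mapsto\sum a_i A_i$ has a large kernel. Showing that the limit is nevertheless of the same form is a Whitney/Malgrange-type division problem, not something that follows from coherence of the holomorphic sheaf plus abstract nuclearity. The paper's own remark after the theorem --- that if $p\colon X\to Q$ were proper the result would follow from Bierstone--Milman --- is precisely an acknowledgement that the result is of this hard-analytic type. The actual proof proceeds by induction on slice representations: Proposition \ref{prop:slice} and Lemma \ref{lem:principal} set up the induction, and Proposition \ref{prop:Vgood} is the real work. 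There one passes to a real quotient $\tilde p\colon V\to\R^n$ (Luna's theorem that $\tilde p^*\ci(\R^n)=\ci(V)^G$), uses ``polar coordinates'' $\sigma\colon\R^+\times\Upsilon\to V$ and Lemma \ref{lem:flat} to control flat functions, invokes the inductive hypothesis away from the null cone, and finally applies a version of Borel's lemma (Lemma \ref{lem:Borel}) to correct the Taylor expansion at the origin so that the constructed coefficients descend to genuine smooth functions on $V$. None of these ingredients appears in your argument, and I believe they are essential; the soft functional-analytic framework you set up is consistent with the truth of the theorem but is not strong enough to deliver it.
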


 If the quotient mapping $p\colon X\to Q$ were proper, then 
 the theorem would be a consequence of theorems of Bierstone and Milman \cite{BierstoneMilman},  Theorems A, C, D; \cite{BierstoneMilmanb}. Unfortunately, $p\colon X\to Q$ is proper only when $G$ is finite. Note that (1) and (2) above are equivalent to the subspaces   being complete in the induced topology.

Let $R=\oplus c_iR_i$ and $c=\sum c_i$ be as above. Define $\ci(X)_{R_i}$ to be the  sum of the elements of $\ci(X)$ which transform by the representation $R_i$ of $G$ and let $\ci(X)_R=\oplus(\ci(X)_{R_i})^{\oplus c_i}$.  Then $\ci(X)_R$ is closed in $\ci(X)^c$, hence Fr\'echet, and $\ci(X)_R$ contains $\ci(X)^G\cdot\O(X)_R$. Let $\Mor(X,R)$  (resp.\ $\Mor^\infty(X,R)$) denote the holomorphic (resp.\ smooth) maps of $X$ to $R$. We have the $G$-equivariant maps $\Mor(X,R)^G$ and $\Mor^\infty(X,R)^G$.  
Let $\sigma\colon \ci(X)_R\to\Mor^\infty(X,R^*)^G$ where $\sigma(f)$, $f\in\ci(X)_R$,  sends $x\in X$ to the element of $R^*$ sending $f$ to $f(x)$.   Let $\Gamma\in\Mor^\infty(X,R^*)$. Define $\tau(\Gamma)\in\ci(X)_R$  by $\tau(\Gamma)(x) = \Gamma(x)(r)$,  $x\in X$, $r\in R$.
\begin{lemma} 
\begin{enumerate}
\item The maps $\sigma$ and $\tau$ are inverses of each other, hence $\ci(X)_R$  and  $\Mor^\infty(X,R^*)^G$  are isomorphic Fr\'echet spaces.
\item The isomorphisms  $\sigma$ and $\tau$ restrict to isomorphisms of $\ci(X)^G\cdot\O(X)_R$ and $\ci(X)^G\cdot \Mor(X,R^*)^G$.
 \end{enumerate}
Hence $\ci(X)^G\cdot \O(X)_R$ is complete if and only if $\ci(X)^G\cdot\Mor(X,R^*)^G$ is complete.
\end{lemma}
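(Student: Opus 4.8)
The plan is to check directly that $\sigma$ and $\tau$, as defined above, are well-defined continuous linear maps which are mutually inverse, and then to read off part (2) and the final equivalence as formal consequences.

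First I would verify that $\sigma$ takes values in $\Mor^\infty(X,R^*)^G$. Given $f\in\ci(X)_R$, the map $\sigma(f)\colon X\to R^*$ is smooth because, after a choice of basis of $R^*$, its coordinate functions are the coordinate functions of $f$; and it is $G$-equivariant because the coordinate functions of $f$ lie in the isotypic summands $\ci(X)_{R_i}$, so the associated evaluation functionals transform by the contragredient representations. Symmetrically, for $\Gamma\in\Mor^\infty(X,R^*)$ the map $\tau(\Gamma)$ has smooth coordinate functions, and $G$-equivariance of $\Gamma$ forces these to transform by the $R_i$, so $\tau(\Gamma)\in\ci(X)_R$. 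Unwinding the two definitions gives $\sigma(\tau(\Gamma))=\Gamma$ and $\tau(\sigma(f))=f$, so $\sigma$ and $\tau$ are mutually inverse bijections. Both are continuous for the $\ci$-topologies, since after the basis choices above each is the restriction of a fixed linear isomorphism between finite products of copies of $\ci(X)$; hence they are mutually inverse isomorphisms of Fr\'echet spaces, which is (1). In particular no open mapping theorem is needed here, as $\tau=\sigma^{-1}$ is itself manifestly continuous.

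For (2), the two extra properties to note are that $\sigma$ and $\tau$ are $\ci(X)^G$-linear, with $\sigma(af)(x)=a(x)\,\sigma(f)(x)$ and likewise for $\tau$ when $a\in\ci(X)^G$, and that they preserve holomorphy: $\O(X)_R$ is precisely the set of holomorphic elements of $\ci(X)_R$ and $\Mor(X,R^*)^G$ that of $\Mor^\infty(X,R^*)^G$, and if $f$ is holomorphic then the coordinate functions of $\sigma(f)$ are holomorphic, so $\sigma$ sends $\O(X)_R$ into $\Mor(X,R^*)^G$ while $\tau$ sends $\Mor(X,R^*)^G$ into $\O(X)_R$; being mutually inverse, these restrictions are inverse bijections $\O(X)_R\leftrightarrow\Mor(X,R^*)^G$. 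Since $\ci(X)^G\cdot\O(X)_R$ consists by definition of the locally finite sums $\sum_j a_jf_j$ with $a_j\in\ci(X)^G$ and $f_j\in\O(X)_R$, applying the continuous $\ci(X)^G$-linear map $\sigma$ carries such a sum to $\sum_j a_j\,\sigma(f_j)$, a locally finite sum of the same shape with $\sigma(f_j)\in\Mor(X,R^*)^G$; so $\sigma$ maps $\ci(X)^G\cdot\O(X)_R$ into $\ci(X)^G\cdot\Mor(X,R^*)^G$, and the same argument for $\tau$ gives the reverse inclusion. Hence $\sigma$ restricts to an isomorphism $\ci(X)^G\cdot\O(X)_R\to\ci(X)^G\cdot\Mor(X,R^*)^G$ with inverse $\tau$, which is (2).

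The closing sentence is then purely formal: in a Fr\'echet space a linear subspace is complete if and only if it is closed, and a topological isomorphism carries closed subspaces to closed subspaces, so $\ci(X)^G\cdot\O(X)_R$ is complete precisely when its image $\ci(X)^G\cdot\Mor(X,R^*)^G$ under $\sigma$ is. I do not expect a serious obstacle anywhere in this lemma: its content is entirely formal once $\sigma$ and $\tau$ have been written down, and its role is only to trade the function module $\ci(X)^G\cdot\O(X)_R$ for the module $\ci(X)^G\cdot\Mor(X,R^*)^G$ of equivariant maps, in which form the closedness asserted in Theorem~\ref{thm:closed} is actually established. The one point deserving a little care --- the nearest thing to an obstacle --- is the matching of the two ``locally finite $\ci(X)^G$-span'' descriptions, i.e.\ verifying both inclusions between $\sigma(\ci(X)^G\cdot\O(X)_R)$ and $\ci(X)^G\cdot\Mor(X,R^*)^G$, where continuity of $\sigma,\tau$ and the local finiteness of the sums enter.
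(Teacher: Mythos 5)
The paper states this lemma without a proof, so the proposal has to be assessed on its own. The step that fails is your verification of (1), specifically the assertion that ``after a choice of basis of $R^*$, its coordinate functions are the coordinate functions of $f$''. The counts do not match: an $f\in\ci(X)_R\subset\ci(X)^c$ has $c=\sum c_i$ coordinate functions, whereas a map $X\to R^*$ has $\dim R=\sum c_i\dim R_i$ of them, and these differ as soon as some $R_i$ has dimension greater than one. More fundamentally, a single function lying in the $R_i$-isotypic component of $\ci(X)$ does not canonically determine a $G$-equivariant map $X\to R_i^*$: what is naturally isomorphic to $\Mor^\infty(X,R_i^*)^G$ is the multiplicity space $\Hom_G(R_i,\ci(X))$, which is a proper ``factor'' of the isotypic component $\ci(X)_{R_i}\cong R_i\otimes\Hom_G(R_i,\ci(X))$. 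A concrete test: take $X=\C^2$ with $G=\SL_2(\C)$ acting by the standard representation and $R=\C^2$. Then $\O(X)_R$ is the two-dimensional space of linear functions, $\Mor(X,R^*)^G\cong\Hom_G(\C^2,\O(\C^2))$ is the one-dimensional span of $v\mapsto\omega(v,\cdot)$ (the symplectic pairing), and $\ci(\C^2)^{\SL_2(\C)}=\C$, so the two $\ci(X)^G$-modules of part (2) have dimensions $2$ and $1$ and are not isomorphic.

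To be fair, you are inheriting an imprecision present in the paper's own description of $\sigma$ and $\tau$ (the phrase ``the element of $R^*$ sending $f$ to $f(x)$'' does not literally parse, since $f$ is a function on $X$ rather than a vector of $R$), so the gap is as much in the statement you were handed as in your unwinding of it. The conclusion the paper actually needs --- that $\ci(X)^G\cdot\O(X)_R$ is complete if and only if $\ci(X)^G\cdot\Mor(X,R^*)^G$ is --- is nevertheless correct and easy to salvage: there is a natural isomorphism of Fr\'echet $\ci(X)^G$-modules $\ci(X)_R\cong R\otimes\Mor^\infty(X,R^*)^G$ given by $r\otimes\Gamma\mapsto\bigl(x\mapsto\Gamma(x)(r)\bigr)$, it restricts to $\ci(X)^G\cdot\O(X)_R\cong R\otimes\bigl(\ci(X)^G\cdot\Mor(X,R^*)^G\bigr)$, and tensoring over $\C$ with the finite-dimensional $R$ preserves closedness. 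I would recast the argument along those lines rather than asserting that $\sigma$ and $\tau$ are mutually inverse bijections between $\ci(X)_R$ and $\Mor^\infty(X,R^*)^G$ themselves.
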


We say that $X$ is \emph{good\/} if Theorem \ref{thm:closed} holds for $X$ 
and every $G$-module $R$.

\begin{proposition}\label{prop:slice}
Suppose that every slice representation of $X$ is good. Then $X$ is good.
\end{proposition}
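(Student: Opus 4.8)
The plan is to localize the closedness assertions of Theorem \ref{thm:closed} to the quotient $Q$ and then invoke the slice theorem, reducing a convergence statement on $X$ to the corresponding statement on each slice, which holds by hypothesis. The two assertions of Theorem \ref{thm:closed} have the same structure, so I will argue them in parallel; I write $\F_R(Q)$ for the $\ci(X)^G$-module $\ci(X)^G\cdot\O(X)_R$ sitting inside $\ci(X)_R$, and $\LF(Q)=\ci(X)^G\cdot\Der_Q(X)^G$ inside $\Der_Q^\infty(X)^G$, the observation being that both ambient spaces are themselves Fréchet (a closed subspace of some $\ci(X)^c$, using that functions transforming by a fixed representation form a closed subspace). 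So the claim is exactly that these two submodules are sequentially closed.

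First I would observe that both notions are \emph{sheaf-theoretic over $Q$}: a smooth vector field $D$ on $X$ is of type $\LF$, and a section $f\in\ci(X)_R$ lies in $\F_R(Q)$, precisely when this holds over a $G$-saturated neighbourhood of each fibre $X_q$; this is immediate from the definitions (Definition \ref{def:typeLF} and the paragraph defining $\ci(X)^G\cdot\O(X)_R$), both being "locally (over $Q$) finite sums''. Hence it suffices to prove: for each $q\in Q$ there is a $G$-saturated neighbourhood $X_{U}$ of $X_q$ such that the restriction maps send convergent sequences in the ambient Fréchet space to sequences whose limit, \emph{if} each term restricts into $\LF(U)$ (resp.\ $\F_R(U)$), again restricts into $\LF(U)$ (resp.\ $\F_R(U)$). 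Concretely: take a sequence $D_n\to D$ in $\Der_Q^\infty(X)^G$ with each $D_n$ of type $\LF$; restriction to $X_U$ is continuous, so $D_n|_{X_U}\to D|_{X_U}$, and it is enough to know closedness of $\LF(U)$ in $\Der_Q^\infty(X_U)^G$ for a cover of $Q$ by such $U$.

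Next, by the slice theorem, every point $q$ has a neighbourhood $U$ with $X_U$ $G$-biholomorphic to a standard open set $S\times T_B$, where $T_B=G\times^H B$ for $B$ an $H$-saturated Stein neighbourhood of $0$ in a slice representation $W$ of $H$; and since the relevant objects are biholomorphism-invariant (as in Remark \ref{rem:LF-problem-local}), I may replace $X_U$ by $S\times T_B$. Using Lemma \ref{lem:sheaves-derivations}(3)/Remark \ref{rem:smooth-derivations} to pass between $T_W$ and $W$, together with the fact that tensoring with $\ci(S)$ (a nuclear Fréchet space) preserves closedness of closed subspaces under completed tensor product — $\ci(S)\comptensor(\text{closed subspace})$ is closed in $\ci(S)\comptensor(\text{ambient})$, by nuclearity \cite[Prop.\ 43.9, 50.1]{Treves} — the assertion for $S\times T_B$ follows from the assertion for the $H$-module $W$, i.e.\ for a slice representation. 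That is precisely the hypothesis that every slice representation of $X$ is good. Hence $\LF(U)$ and $\F_R(U)$ are closed, and gluing over a locally finite cover of $Q$ by such standard $U$ (using a $G$-invariant partition of unity, Lemma \ref{lem:pi}) shows $D\in\LF(Q)$, resp.\ $f\in\F_R(Q)$, proving that $X$ is good.

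The main obstacle is the passage from the slice model $S\times T_B$ back to $W$ and the bookkeeping around the completed tensor product with $\ci(S)$: one must check that "of type $\LF$'' (resp.\ "in $\ci(X)^G\cdot\O(X)_R$'') for $S\times T_B$ really does correspond, under $\sigma/\tau$-type identifications and Lemma \ref{lem:sheaves-derivations}(3), to "$\ci(S)\comptensor(\text{type }\LF\text{ on }W)$'' — i.e.\ that the $S$-direction is genuinely a flat tensor factor and introduces no new phenomena — and that the ambient spaces match up so that \cite[Prop.\ 43.9]{Treves} applies to deduce closedness of the submodule from closedness on the slice. Everything else (sheaf-theoretic localization, invariance under $G$-biholomorphism, partition of unity gluing) is routine given the machinery already assembled.
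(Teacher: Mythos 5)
Your localization via the $G$-invariant partition of unity (Lemma \ref{lem:pi}) and the reduction to slice models via the slice theorem match the paper's strategy, and you are right that both closedness assertions of Theorem \ref{thm:closed} are local over $Q$. The genuine gap is exactly the step you flag at the end but do not fill: the passage from the tube model to the $H$-module $W$. Lemma \ref{lem:sheaves-derivations}(3) and Remark \ref{rem:smooth-derivations} concern the infinitesimal lifting property, i.e.\ surjectivity of $p_*\colon\Der(X_U)^G\to\Der(U)$, and the proof of Lemma \ref{lem:sheaves-derivations}(3) compares the \emph{images} of $\Der(T_W)^G$ and $\Der(W)^H$ in $\Der(Q)$. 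Goodness is a statement about the \emph{kernel} $\Der_Q(T_W)^G$ and the $\ci$-module it generates, together with the closedness of $\ci(X)^G\cdot\O(X)_R$; those lemmas say nothing about either, so they cannot carry the reduction. What is actually needed, and what the paper supplies, are two explicit identifications: for the $\O(X)_R$-part, restriction to the slice fibre $W=W_e$ with equivariant-extension inverse $\eta(\Gamma)([g,w])=g\cdot\Gamma(w)$ gives an isomorphism $\ci(T_W)^G\cdot\Mor(T_W,R^*)^G\simeq\ci(W)^H\cdot\Mor(W,R_H^*)^H$; for the derivation part, after writing $\lie g=\lie h\oplus\lie m$ with $\lie m$ an $H$-stable complement and analysing $\Der^\infty_Q(G\times W)^{G\times H}$, one obtains
$$
\ci(T_W)^G\cdot\Der_Q(T_W)^G\simeq\ci(W)^H\cdot\O(W)_{(\lie m^*)}\oplus\ci(W)^H\cdot\Der_Q(W)^H,
$$
a direct sum of two spaces that are closed because $W$ is good. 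Producing these isomorphisms is the actual content of the proposition, and they are not a formal consequence of what you cite.

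A secondary point: you introduce the standard neighbourhood $S\times T_B$ (with $W^H=0$) and then must deal with a completed tensor product over $\ci(S)$. The paper sidesteps this entirely by allowing the slice $H$-module $W$ to contain its fixed subspace, so that $T_W=G\times^HW$ is already the full local model with $Q_{T_W}\simeq W\sl H$, and no $\ci(S)\comptensor(\cdot)$ bookkeeping is needed in the proof of Proposition \ref{prop:slice}. If you insist on splitting off $W^H$, you must show that $\ci(S)\comptensor(\cdot)$ commutes with passing to the type-$\LF$ submodule and preserves closedness; this is true but is itself a lemma (it is essentially Lemma \ref{lem:VG}), so your route adds work without avoiding the core computation.
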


\begin{proof} Since we have  invariant partitions of unity (Lemma \ref{lem:pi}), it is clear that goodness is a local condition over $Q$.
Take an open cover of $X$ by tubes $X_i=T_{B_i}$ corresponding to slice representations $(W_i,H_i)$ of $X$. If each $T_{W_i}$ is good, it is clear that each $T_{B_i}$ is good, hence $X$ is good. Thus we only need to show that $X=T_W=G\times^HW$ is good if $W$ is a good $H$-module and $H$ is a reductive subgroup of $G$. Note that $\ci(X)^G\simeq\ci(W)^H$.

First 
we show that $\ci(X)^G\cdot\Mor(X,R^*)^G$ is complete. Let $\rho\colon\Mor^\infty(X,R^*)^G\to\Mor^\infty(W,R_H^*)^H$ be the restriction mapping where $R^*_H$ is $R^*$ considered as an $H$-module and $W=W_e$ is the fibre of $T_W$ at $[e,0]$. We have an inverse $\eta$ to $\rho$ where $\eta(\Gamma)([g,w])=g\cdot\Gamma(w)$ for $\Gamma\in\Mor^\infty(W,R_H^*)^H$, $g\in G$ and $w\in W$. Clearly $\eta$ and $\rho$ induce   isomorphisms of $\ci(X)^G\cdot\Mor(X,R^*)^G$ and $\ci(W)^H\cdot\Mor(W,R_H^*)^H$. Since the latter space is complete by hypothesis, we have that $\ci(X)^G\cdot\Mor(X,R^*)^G$ is complete. 

Now  $\Der^\infty_Q(X)^G$ is the image of 
$$
\Der_{Q}^\infty(G\times W)^{G\times H}\simeq(\lie g\otimes\ci(W))^{H}\oplus \Der^\infty_Q(W)^H
$$
 with kernel the $\ci(G\times W)$-multiples of $\lie h$ which are $(G\times H)$-invariant where $\lie h$ sits diagonally inside $\Der_{Q}(G\times W)$ (since the action of $H$ is on both factors of $G\times W$).  We may write $\lie g$ as a direct sum $\lie h\oplus \lie m$ where $\lie m$ is $H$-stable, and then $\Der^\infty_{Q}(X)^G$ is the isomorphic  image of the direct sum $(\lie m\otimes\ci(W))^H\oplus \Der^\infty_{Q}(W)^H$ where $(\lie m\otimes\ci(W))^H\simeq\ci(W)_{(\lie m^*)}$.  We have an induced direct sum decomposition   
 $$
 \ci(X)^G\cdot\Der_Q(X)^G\simeq\ci(W)^H\cdot\O(W)_{(\lie m^*)}\oplus \ci(W)^H\cdot\Der_Q(W)^H.
 $$
Since $W$ is good, both $\ci(W)^H\cdot \O(W)_{(\lie m^*)}$ and $\ci(W)^H\cdot\Der_{Q}(W)^H$ are complete, hence $\ci(X)^G\cdot\Der_Q(X)^G$ is complete and $X$ is good.
\end{proof}

We want to show that every representation of every reductive group is good. So we need to show that every $G$-module $V$ is good.   Our inductive assumption will be that every proper slice representation $(W,H)$ of $V$ is good. We first reduce to the case that $V^G=0$. 

 Let $N$ and $P$ be smooth manifolds as before and let $N_0$ be a closed subset of $N$. Let $\ci(N,N_0)$ denote the smooth functions on $N$ which are flat on $N_0$, i.e., have vanishing Taylor series  on $N_0$. It is easy to see that $\ci(N\times P,N_0\times P)\simeq\ci(N,N_0)\comptensor\ci(P)$.

\begin{lemma}\label{lem:VG}
Write $V=V^G\oplus V'$  where $V'$ is a $G$-submodule of $V$. If $V'$ is good, then $V$ is good.
\end{lemma}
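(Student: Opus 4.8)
The plan is to use the product decomposition $V=N\times V'$ with $N:=V^G$ carrying the trivial $G$-action, so that $Q:=V\sl G\simeq N\times Q'$ with $Q':=V'\sl G$, and to observe that every space occurring in the definition of goodness for $V$ is a completed topological tensor product of a space of smooth or holomorphic functions on $N$ with the corresponding space for $V'$. Since $\ci(N)$ is nuclear Fr\'echet, $\ci(N)\comptensor(-)$ carries closed subspaces to closed subspaces, and goodness of $V'$ then propagates to $V$. So the substance is the identification of these tensor decompositions, not any new geometry, as $N$ contributes only the trivial representation.

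Concretely, I would first record, using that taking $G$-invariants is a complemented exact functor on Fr\'echet $G$-modules (average over a maximal compact subgroup, on which $E^G=E^K$) together with $\ci(N\times V')\simeq\ci(N)\comptensor\ci(V')$ and $\O(N\times V')\simeq\O(N)\comptensor\O(V')$, the identifications $\ci(V)^G\simeq\ci(N)\comptensor\ci(V')^G$, $\O(V)^G\simeq\O(N)\comptensor\O(V')^G$, and, for a $G$-module $R=\oplus R_i^{\oplus c_i}$, $\ci(V)_R\simeq\ci(N)\comptensor\ci(V')_R$ and $\O(V)_R\simeq\O(N)\comptensor\O(V')_R$. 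Because $V'$ is a $G$-module, $\O(V')_R$ is finitely generated over $\O(Q')$ — this follows from Hilbert's finiteness theorem together with the isomorphism $\O(V')_{R_i}\simeq\O(Q')\otimes_{\O_\alg(Q')}\O_\alg(V')_{R_i}$ recalled in Section \ref{sec:strong} — say by $g_1,\dots,g_\nu$, which are then generators of $\O(V)_R$ over $\O(Q)$ via $1\otimes g_i$. Hence $\ci(V')^G\cdot\O(V')_R$ is the image of the continuous $\ci(V')^G$-module map $(\ci(V')^G)^\nu\to\ci(V')_R$, $(b_i)\mapsto\sum_i b_ig_i$, whose image is closed by goodness of $V'$. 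Tensoring this surjection onto a nuclear Fr\'echet space with $\ci(N)$ and applying \cite[Proposition 43.9]{Treves} identifies $\ci(V)^G\cdot\O(V)_R$ with $\ci(N)\comptensor\bigl(\ci(V')^G\cdot\O(V')_R\bigr)$ inside $\ci(N)\comptensor\ci(V')_R\simeq\ci(V)_R\subset\ci(V)^c$; since $\ci(V')^G\cdot\O(V')_R$ is closed and $\ci(N)$ is nuclear, this subspace is closed, proving part (2) of Theorem \ref{thm:closed} for $V$.

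For part (1) I would decompose $\Der^\infty_Q(V)^G$ along the product. A smooth $G$-invariant vector field on $V=N\times V'$ annihilating $\O(V)^G=\overline{\O(N)\cdot\O(V')^G}$ must kill $\O(N)$ and $\O(V')^G$; killing $\O(N)$ forces all holomorphic $N$-direction components to vanish, and the remaining part reduces, in the $V'$-directions, to a smooth $N$-family of $G$-invariant vector fields on $V'$ killing $\O(V')^G$. Thus
$$
\Der^\infty_Q(V)^G\simeq\bigl(\ci(V)^G\bigr)^{\dim N}\oplus\bigl(\ci(N)\comptensor\Der^\infty_Q(V')^G\bigr),
$$
a topological direct sum (the projections being the extraction of the antiholomorphic $N$-direction coefficients), while the holomorphic version satisfies $\Der_Q(V)^G\simeq\O(N)\comptensor\Der_Q(V')^G$, which sits in the second summand; moreover that summand is a $\ci(V)^G$-submodule, so $\ci(V)^G\cdot\Der_Q(V)^G$ lies in it as well. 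Since $\Der_{Q,\alg}(V')^G$ is finitely generated over $\O_\alg(Q')$ — hence $\Der_Q(V')^G$ over $\O(Q')$, by coherence \cite{Roberts1986} — exactly the argument of the previous paragraph identifies $\ci(V)^G\cdot\Der_Q(V)^G$ with $\ci(N)\comptensor\bigl(\ci(V')^G\cdot\Der_Q(V')^G\bigr)$ inside the second summand, which is closed by goodness of $V'$ and nuclearity of $\ci(N)$. Hence $\ci(V)^G\cdot\Der_Q(V)^G$ is closed in $\Der^\infty_Q(V)^G$, so $V$ is good.

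I expect the main obstacle to be purely functional-analytic bookkeeping: verifying that the abstract decompositions of $\ci(V)^G$, $\Der^\infty_Q(V)^G$ and their holomorphic analogues as completed tensor products are compatible with the "module generated by" constructions (so that the finite generating sets $g_i$, respectively $A_i$, really let one realise $\ci(V)^G\cdot(-)$ as the image of a surjection $(\ci(V)^G)^\nu\to(-)$), and isolating correctly the extra antiholomorphic $N$-directions in $\Der^\infty_Q(V)^G$. Once these identifications are in place the conclusion is immediate from the fact that $\comptensor$ with the nuclear Fr\'echet space $\ci(N)$ preserves closed subspaces.
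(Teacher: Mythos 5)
Your proof is correct and follows essentially the same route as the paper's: identify the relevant spaces for $V$ as completed tensor products of $\ci(N)$ or $\O(N)$ (with $N=V^G$) with the corresponding spaces for $V'$, and deduce completeness from the closedness hypothesis on $V'$ together with nuclearity. You spell out details the paper's very terse proof leaves implicit — realising the $\ci(\cdot)^G$-module-generated spaces as images of continuous surjections via finite generating sets and then applying \cite[Proposition 43.9]{Treves}, and isolating the antiholomorphic $N$-directions in $\Der^\infty_Q(V)^G$ — but the underlying mechanism is the same.
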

\begin{proof}
Note that $\ci(V)^G\simeq\ci(V^G)\comptensor\ci(V')^G$ and that
  $$
  \Der_{Q_V}(V)^G\simeq \O(V^G)\comptensor\Der_{Q_{V'}}(V')^G.
  $$
   If $\ci(V')^G\cdot\Der_{Q_{V'}}(V')^G$ is complete, then so is $\ci(V)^G\cdot \Der_{Q_{V'}}(V')^G$ since we are just taking completed tensor product with $\ci(V^G)$.  Similarly, for $R$ a $G$-module, 
$$
\O(V)_R\simeq\O(V^G)\comptensor  \O(V')_R.
$$ 
If $\ci(V')^G\cdot\O(V')_R$ is complete, then   
$$
\ci(V^G)\comptensor(\ci(V')^G\cdot \O(V')_{R})
$$ 
is complete. 
\end{proof}

The slice representation at a principal orbit of $V$ has the form $(W,H)$ where the closed $H$-orbits in $W$ are fixed points. Hence we start our induction with the following.

\begin{lemma}\label{lem:principal}
Let $V$ be a $G$-module whose closed orbits are all fixed points. Then $V$ is good.
\end{lemma}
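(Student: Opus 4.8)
The plan is to reduce at once to the case where $V\sl G$ is a single point, where all the relevant spaces become finite-dimensional and the closedness assertions of Theorem \ref{thm:closed} are automatic.

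First I would dispose of the fixed part. Write $V = V^G \oplus V'$ with $V'$ a $G$-submodule. Since $G$ acts trivially on $V^G$, a $G$-orbit in $V'$ is closed in $V'$ if and only if it is closed in $V = V^G\times V'$; so by hypothesis every closed orbit of $V'$ lies in $V^G\cap V' = 0$, i.e.\ the only closed orbit of $V'$ is $\{0\}$. Equivalently $\NN(V') = V'$, $(V')^G = 0$, and $V'\sl G$ is a single point. By Lemma \ref{lem:VG} it suffices to prove that $V'$ is good, so from now on I may and do assume $V^G = 0$ and that $V\sl G$ is a point (equivalently $\O_\alg(V)^G = \C$, equivalently $\NN(V) = V$).

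Next I would record two facts in this situation. (i) $\ci(V)^G = \C$: if $g$ is a continuous $G$-invariant function on $V$ and $v\in V$, then $0\in\overline{Gv}$, so choosing $g_n\in G$ with $g_n v\to 0$ gives $g(v) = g(g_n v)\to g(0)$, whence $g\equiv g(0)$. (ii) For every irreducible $G$-module $R_i$, $\O(V)_{R_i}$ is finite-dimensional: indeed $\O_\gf(V) = \O(V\sl G)\otimes_{\O_\alg(V\sl G)}\O_\alg(V) = \O_\alg(V)$ because $V\sl G$ is a point, so $\O(V)_{R_i} = \O_\alg(V)_{R_i}$, and the isotypic component $\O_\alg(V)_{R_i}$ is a finitely generated module over $\O_\alg(V)^G = \C$ by the classical finiteness theorem for isotypic components of coordinate rings of affine $G$-varieties, hence finite-dimensional. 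The same finiteness, together with the decomposition of a holomorphic $G$-invariant vector field into its homogeneous pieces (the scalar $\C^*$-action commutes with $G$, so each homogeneous component is again a $G$-invariant algebraic vector field, and $\Der_\alg(V)^G\cong(\O_\alg(V)\otimes V)^G$ is a finite sum of shifted isotypic components, hence finite-dimensional), shows $\Der(V)^G = \Der_\alg(V)^G$ is finite-dimensional.

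The lemma then follows formally. For a $G$-module $R = \bigoplus_i R_i^{\oplus c_i}$ we have $\O(V)_R = \bigoplus_i \O(V)_{R_i}^{\oplus c_i}$, which is finite-dimensional by (ii); hence $\ci(V)^G\cdot\O(V)_R = \C\cdot\O(V)_R = \O(V)_R$ is a finite-dimensional subspace of $\ci(V)^c$, in particular closed, which is part (2) of Theorem \ref{thm:closed}. Likewise $\Der_Q(V)^G = \Der(V)^G$ (every holomorphic $G$-invariant vector field annihilates the constants $\O(V)^G = \C$), so $\ci(V)^G\cdot\Der_Q(V)^G = \C\cdot\Der(V)^G = \Der(V)^G$ is a finite-dimensional, hence closed, subspace of $\Der_Q^\infty(V)^G = \Der^\infty(V)^G$, which is part (1). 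Therefore $V$ is good. The only step with genuine content is (ii)—the finite-dimensionality of the isotypic components and of the invariant vector fields—and this is standard for affine $G$-varieties, amounting to the fact that a finitely generated graded $\C$-algebra with one-dimensional degree-zero part has finite-dimensional graded pieces; everything else is immediate once one knows $\ci(V)^G = \C$, which is the key simplification afforded by $V\sl G$ being a point.
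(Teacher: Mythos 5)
Your proof is correct and follows the same route as the paper: reduce to $Q_V$ a point via Lemma \ref{lem:VG}, then use $\ci(V)^G=\C$. The paper dismisses the rest as ``there is nothing to prove''; you usefully make the implicit content explicit, namely that since $\O_\alg(V)^G=\C$ the isotypic components $\O(V)_R$ and the module $\Der_Q(V)^G$ are finite-dimensional, hence automatically closed.
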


\begin{proof}
By Lemma \ref{lem:VG} we may reduce to the case that  $Q_V$ is a point. Then $\ci(V)^G=\C$ and there is nothing to prove.
\end{proof}

 For the next few results, let $V_\R$ denote $V$ considered as a real vector space and let $G_\R$ denote $G$ considered as a real Lie group. 
 
 \begin{lemma}
 The algebra  $\R[V_\R]^{G_\R}$ is finitely generated.
 \end{lemma}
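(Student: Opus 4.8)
The plan is to reduce the statement to the classical finiteness theorem for real reductive group actions. The key point is that $V_\R$ carries an action of $G_\R$, but $G_\R$ need not be a compact or even reductive real algebraic group in the naive sense; however, since $G$ is a reductive \emph{complex} Lie group, it is the complexification of a maximal compact subgroup $K$, and $G$ is a (complex) reductive linear algebraic group. The strategy is to exhibit $G_\R$ as a real reductive algebraic group acting algebraically on $V_\R$, and then invoke the Hilbert--Nagata finiteness theorem in the real algebraic category.

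First I would note that $G$, being a reductive complex Lie group, is a complex linear algebraic group: it has finitely many connected components and its identity component is reductive, so it admits a faithful algebraic representation and is Zariski-closed in some $\GL_N(\C)$. Restricting scalars, $\GL_N(\C)$ is a real algebraic group (the real points of the real algebraic group obtained by Weil restriction), and $G$, cut out by complex-polynomial equations, is a real algebraic subgroup of it; call this real algebraic group $G_\R$. It is reductive as a real algebraic group, since its complexification $G_\R\otimes_\R\C\cong G\times G$ (or more precisely the complexification of the real form) is reductive, and a real algebraic group is reductive iff its complexification is. Next, the $\C$-linear action of $G$ on $V$ is given by complex polynomials in the matrix entries, hence in particular by real polynomials when everything is viewed over $\R$; so $G_\R$ acts algebraically and linearly on $V_\R\cong\R^{2n}$.

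Then I would apply the classical theorem (Hilbert for the connected semisimple case, Nagata and Haboush in general, or Weyl's theorem together with averaging over a maximal compact subgroup in the analytic setting) that the ring of invariants of a reductive algebraic group acting rationally on a finite-dimensional vector space is finitely generated. Concretely: $G_\R$ is geometrically reductive (equivalently linearly reductive in characteristic zero), so $\R[V_\R]^{G_\R}$ is a finitely generated $\R$-algebra. Alternatively, and perhaps more transparently in this complex-analytic context, one can argue via the maximal compact subgroup $K\subset G$: since $K$ is Zariski-dense in $G$, one has $\R[V_\R]^{G_\R}=\R[V_\R]^{K}$, and finite generation of the invariant ring of a compact group acting linearly follows from the Reynolds operator (Haar-averaging) together with Noetherianity of $\R[V_\R]$ in the usual Hilbert-style argument: the ideal generated by positive-degree invariants is finitely generated, say by homogeneous invariants, and these generate the invariant ring.

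The main obstacle, or at least the point requiring care, is the identification of the abstract complex Lie group $G$ with a real algebraic group in such a way that the given holomorphic (indeed algebraic) linear action becomes a real \emph{algebraic} action; once that is in place the finiteness is standard. The one genuinely content-bearing step is verifying that $G_\R$ is reductive as a real algebraic group and that the action is algebraic over $\R$ — both follow from the fact that $G$ is a complex reductive linear algebraic group acting linearly, so there is little to do beyond unwinding the Weil restriction of scalars. I expect the author's proof to simply cite the compact real form $K$ and the classical invariant-theory finiteness theorem.
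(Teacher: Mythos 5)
Your main argument — realising $G_\R$ as the real points of the Weil restriction $\mathrm{Res}_{\C/\R}(G)$, checking that this is a reductive real algebraic group (its complexification is $G\times\bar G$, which is reductive), and invoking the finiteness theorem for invariants of a reductive algebraic group over a field of characteristic zero — is sound, but it is a genuinely different route from the paper's. The paper explicitly remarks on the lack of a convenient reference for this real statement and instead gives a direct, elementary proof that $G_\R$ acts completely reducibly: it reduces to the identity component, writes $G^0=G_sT$ with $G_s$ semisimple and $T$ a torus, handles $(G_s)_\R$ via the fact that $(\lie g_s)_\R$ is a real semisimple Lie algebra (Weyl's theorem), and treats $T_\R=(S^1)^d\times(\R^{>0})^d$ by noting that $(S^1)^d$ is compact while each irreducible complex $T$-summand of $V$ becomes a two-real-dimensional block on which $(\R^{>0})^d$ acts by a positive scalar, so the $(\R^{>0})^d$-action is diagonalisable. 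Once linear reductivity is in hand, Hilbert's classical argument gives finite generation. Your route is shorter but leans on heavier citations (Weil restriction, reductivity over $\R$, Nagata's theorem over non-algebraically-closed fields) and silently uses Zariski-density of $G(\C)$ in $\mathrm{Res}_{\C/\R}(G)$ to pass from scheme invariants to $G_\R$-invariants; the paper's proof is self-contained.

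Your proposed \emph{alternative} argument, however, is incorrect and should be dropped. You claim $\R[V_\R]^{G_\R}=\R[V_\R]^K$ because $K$ is Zariski-dense in $G$. That density is with respect to the complex Zariski topology on the complex algebraic group $G$, and it does give $\C[V]^K=\C[V]^G$ for holomorphic polynomial invariants; but $K$ has only half the real dimension of $G_\R$ and is \emph{not} Zariski-dense in the real algebraic group $\mathrm{Res}_{\C/\R}(G)$, and the real invariant rings genuinely differ. Concretely, take $G=\C^*$ acting on $V=\C$: then $K=S^1$ and $\R[V_\R]^K=\R[x^2+y^2]$, whereas any $K$-invariant homogeneous real polynomial of degree $d>0$ is scaled by $r^d$ under $r\in\R^{>0}\subset G_\R$, so $\R[V_\R]^{G_\R}=\R$. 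Thus $\R[V_\R]^{G_\R}\subsetneq\R[V_\R]^K$ in general, and Haar-averaging over $K$ computes the wrong ring.
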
 
 For lack of a good reference, we give the following:
 \begin{proof}
 It is enough to show that the invariants of $(G^0)_\R$ are finitely generated. Now $G^0$ is a product $G_sT$ where $G_s$ is connected semisimple, $T$ is a torus and the two factors commute. The invariants of $(G_s)_\R$ are the same as those of $(\lie g_s)_\R$ which is a real semisimple Lie algebra. Since any representation  of such a Lie algebra is completely reducible, we can assume that $G=T\simeq (\C^*)^d$ for some $d\geq 1$.
 
Now $T_\R$ is a product $(S^1)^d\times(\R^{>0})^d$. Since $(S^1)^d$ is compact, it acts completely reducibly, so we need only consider the factor $(\R^{>0})^d$. Let $W$ be an irreducible subspace of $V$. Then $T$ acts on $W\simeq \C$ by a character $\chi$ where $\chi(z_1,\dots,z_d)=z_1^{a_1}\cdots z_d^{a_d}$ for $(z_1,\dots,z_d)\in T$ and some $(a_1,\dots,a_d)\in\Z^d$. It follows that $(r_1,\dots,r_d)\in (\R^{>0})^d$ acts on $W_\R\simeq\R^2$  as multiplication by $\prod r_i^{a_i}$. It is now clear that $(\R^{>0})^d$ acts completely reducibly on $\R[V_\R]$.
 \end{proof}

 We need to consider a different quotient space $\widetilde Q$ for our consideration of smooth invariant functions on $V$. We have homogeneous generators $p_1,\dots,p_m$  of $\O_\alg(V)^G$. Let $\tilde p_1,\dots,\tilde p_{2m}$ be the real and imaginary parts of $p_1,\dots,p_m$. Choose homogeneous real invariant polynomials $\tilde p_{2m+1},\dots,\tilde p_n$ such that $\R[V_\R]^{G_\R}$ is generated by  the $\tilde p_j$. Then we have a quotient mapping 
 $$
 \tilde p=(\tilde p_1,\dots,\tilde p_n)\colon V\to  \R^{n}
 $$
 with image $\widetilde Q$. 
 \begin{remark}
 Luna    \cite{LunaSmooth} shows    that $\tilde p^*\ci(\R^n)$ is the set of smooth functions on $V$ constant on the fibres of $\tilde p$. In our case  the fibres of $\tilde p$ and $p$ are the same (as sets) and are of the form $G\times^H\NN(W)$. Any $G$-invariant continuous function must be constant on such a set. Hence $\tilde p^*\ci(\R^n)=\ci(V)^G$.
  \end{remark}

 \begin{example}\label{ex:not-smooth}
 Let $V=\C$ and $G=\{\pm 1\}$ acting by scalar multiplication. Then $p=z^2\colon V\to\C$. In real coordinates the real and imaginary parts of $z^2$ are $x^2-y^2$ and $2xy$.  The invariant smooth function $x^2+y^2$ is not of the form $h(x^2-y^2,2xy)$ where $h$ is smooth, so the real and imaginary parts of $p$ are not enough. One can take $\tilde p$  to be
  $$
 (x,y)\mapsto (x^2-y^2,2xy,x^2+y^2)\colon V\to\R^3.
 $$
 \end{example}

We need to use a form of polar coordinates on $\R^n$ (see \cite{LunaSmooth}). Let $e_i$ be the degree of $\tilde p_i$ and choose $b_i\in\N$ such that $b_ie_i=e$ is independent of $i=1,\dots, n$.
 Let $P=\{y\in\R^n\mid \sum y_i^{2b_i}=1\}$. Then $P$ is a compact smooth submanifold of $\R^n$. Consider the mapping $\tau\colon\R^+\times P\to \R^n$ sending $(t,y)$ to $t\cdot y$ where, as usual, $t\cdot y=(t^{e_1}y_1,\dots,t^{e_n}y_n)$.  Then $\R^+\cdot P=\R^n$.  From \cite[Lemma 1.5]{LunaSmooth} we have:
  \begin{lemma}\label{lem:flat}
 Pullback by $\tau$ gives an isomorphism $\ci(\R^n,0)\simeq \ci(\R^+,0)\comptensor\ci(P)$.
 \end{lemma}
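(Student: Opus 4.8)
The plan is to exhibit $\tau$ as the blow-down of a weighted cone and to transport flatness across it. Set $\lambda(y)=\sum_{i=1}^n y_i^{2b_i}$; this is a polynomial on $\R^n$ which is positive definite (it vanishes only at the origin) and satisfies $\lambda(t\cdot y)=t^{2e}\lambda(y)$ for $t\in\R^+$, since $(t^{e_i}y_i)^{2b_i}=t^{2e_ib_i}y_i^{2b_i}=t^{2e}y_i^{2b_i}$. Consequently, for $x\neq 0$ the equation $x=t\cdot y$ with $t>0$ and $y\in P$ has the unique solution $t(x)=\lambda(x)^{1/2e}$, $y(x)=t(x)\inv\cdot x$, so $\tau$ restricts to a diffeomorphism $\tau_0\colon(0,\infty)\times P\to\R^n\setminus\{0\}$ whose inverse has components $t(x)$ and $y_i(x)=\lambda(x)^{-e_i/2e}x_i$, all smooth on $\R^n\setminus\{0\}$. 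In particular $\tau^*$ is already an isomorphism $\ci(\R^n\setminus\{0\})\to\ci((0,\infty)\times P)$. Using the general identity $\ci(N\times P,N_0\times P)\simeq\ci(N,N_0)\comptensor\ci(P)$ noted above (with $N=\R^+$, $N_0=\{0\}$), it remains to show that $\tau^*$ restricts to a bijection of $\ci(\R^n,0)$ onto $\ci(\R^+\times P,\{0\}\times P)$, the smooth functions on $\R^+\times P$ flat along $\{0\}\times P$; the open mapping theorem then upgrades this to a topological isomorphism of Fr\'echet spaces.

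For the forward inclusion I would take $f\in\ci(\R^n,0)$ and note that $f\circ\tau$ is smooth on $\R^+\times P$, while each partial derivative of a component $t^{e_i}y_i$ of $\tau$ is a $\Z$-coefficient polynomial in $t$ involving only nonnegative powers of $t$ (as $\partial_t^k t^{e_i}$ is either $0$ or a nonnegative power of $t$) times a $y$-monomial. By the chain rule, every partial derivative of $f\circ\tau$ at $(t,y)$ is then a finite sum of terms $c\,(\partial^\beta f)(t\cdot y)\,t^{\mu}y^{\nu}$ with $\mu\ge 0$; since each $\partial^\beta f$ is flat at $0$, $t\cdot y\to 0$ uniformly for $y$ in the compact set $P$, and the factors $t^\mu y^\nu$ stay bounded on $[0,1]\times P$, all these derivatives vanish on $\{0\}\times P$, so $\tau^* f\in\ci(\R^+\times P,\{0\}\times P)$. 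Injectivity of $\tau^*$ is immediate, since $f\circ\tau=0$ forces $f\equiv 0$ on the dense set $\R^n\setminus\{0\}$.

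Surjectivity is the substantive point. Given $g\in\ci(\R^+\times P,\{0\}\times P)$, I would define $f=g\circ\tau_0\inv$ on $\R^n\setminus\{0\}$, which is smooth there, and show $f$ extends to a $C^\infty$ function on $\R^n$ that is flat at $0$. Positive definiteness of $\lambda$ gives, on a small ball, a lower bound $t(x)=\lambda(x)^{1/2e}\ge c\,|x|^{b_{\max}/e}>0$ for $x\neq 0$. The components $t(x)$ and $y_i(x)$ of $\tau_0\inv$ are (polynomial in $x$)$\,\times\,$(negative rational power of $\lambda(x)$), so a derivative of order $|\alpha|$ of any of them is bounded near $0$ by $C\,t(x)^{-C'|\alpha|}$ for constants depending only on the $e_i,b_i$. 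On the other hand, flatness of $g$ along $\{0\}\times P$ together with compactness of $P$ yields $|(\partial^\beta g)(t(x),y(x))|\le C_{\beta,N}\,t(x)^{N}$ for every $N$. Combining these via the Fa\`a di Bruno formula, $|\partial^\alpha(g\circ\tau_0\inv)(x)|\le C_{\alpha,N}\,t(x)^{N-C'|\alpha|}\to 0$ as $x\to 0$ for $N$ large. Hence $f$ and all its partial derivatives extend continuously by $0$ across the origin, and a standard argument (integrating along line segments and inducting on the order of differentiation) shows that such an $f$ is $C^\infty$ on $\R^n$ with all derivatives vanishing at $0$; that is, $f\in\ci(\R^n,0)$, and $\tau^* f=g$ by construction. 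This gives the desired bijection.

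I expect the main obstacle to be exactly the derivative bookkeeping in the last paragraph: matching the arbitrary polynomial decay of a flat $g$ against the finitely many negative powers of $t(x)$ produced by differentiating $\tau_0\inv$, and then the passage from ``all derivatives extend continuously'' to ``$C^\infty$''. This is precisely the content of \cite[Lemma 1.5]{LunaSmooth}, which one may alternatively invoke verbatim; the forward inclusion and the injectivity are routine.
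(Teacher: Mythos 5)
The paper gives no proof: it cites \cite[Lemma 1.5]{LunaSmooth} and states the result. Your proposal supplies a self-contained reconstruction of Luna's argument, and the outline is correct. The essential ingredients are all there: the homogeneity identity $\lambda(t\cdot y)=t^{2e}\lambda(y)$ making $\tau_0\inv$ explicit and smooth away from the origin; the lower bound $t(x)\ge c\,|x|^{b_{\max}/e}$ from positive definiteness of $\lambda$; the Fa\`a di Bruno bookkeeping that turns the superpolynomial decay of $(\partial^\beta g)(t(x),y(x))$ in $t(x)$ into vanishing of all partials of $g\circ\tau_0\inv$ at the origin; the standard lemma that a function smooth off a point whose partial derivatives all extend continuously is smooth with those extensions as its partials; and the open mapping theorem to upgrade the continuous linear bijection to a topological isomorphism of Fr\'echet spaces. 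Two small simplifications are available. For the forward inclusion you need no limit argument: each term $(\partial^\beta f)(t\cdot y)\,t^{\mu}y^{\nu}$ in $\partial^\alpha(f\circ\tau)$ already vanishes on $\{0\}\times P$ because $(\partial^\beta f)(0)=0$. And in the Fa\`a di Bruno estimate for surjectivity one should, strictly speaking, compose $\tau_0\inv$ with local charts of $P$ (since $g$ is defined on $\R^+\times P$, not on $\R^+\times\R^n$); this does not change the form of the bounds because the chart maps have derivatives bounded on compacta. Compared with the paper's route of simply invoking Luna, the self-contained argument buys transparency; the cost is exactly the derivative bookkeeping you flag at the end, which is also the nontrivial content of Luna's proof.
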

 
Let $\Upsilon$ denote $\tilde p\inv(P)$. Then $\Upsilon$ is a smooth $G$-submanifold of $V$ since the differential of the homogeneous polynomial $\sum \tilde p_i^{2e_i}$ does not vanish on $\Upsilon$. We have a kind of polar coordinates on $V$ given by $\sigma\colon \R^+\times \Upsilon\to V$, $(t,v)\mapsto tv$.

 Now we need another version  of surjectivity of $\tilde p^*$ which can   be found in \cite[Section 3.3]{LunaSmooth}. 
  \begin{lemma}\label{lem:Luna}
 We have the equality
 $$
 (\id\times\tilde p|_{\Upsilon})^*(\ci(\R^+,0)\comptensor\ci(P))=\ci(\R^+,0)\comptensor\ci(\Upsilon)^G.
$$
 \end{lemma}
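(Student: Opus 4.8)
The statement to prove is Lemma~\ref{lem:Luna}: the identity
$$
(\id\times\tilde p|_{\Upsilon})^*(\ci(\R^+,0)\comptensor\ci(P))=\ci(\R^+,0)\comptensor\ci(\Upsilon)^G.
$$
The inclusion ``$\subseteq$'' is formal: for $h\in\ci(\R^+,0)\comptensor\ci(P)$ and $(t,v)\in\R^+\times\Upsilon$, the function $(t,v)\mapsto h(t,\tilde p(v))$ is visibly smooth, flat along $\{0\}\times\Upsilon$ in the first variable (because $h$ is flat at $0\in\R^+$), and $G$-invariant in $v$ since $\tilde p$ is $G$-invariant. So the content is the reverse inclusion ``$\supseteq$'': every $f\in\ci(\R^+,0)\comptensor\ci(\Upsilon)^G$ — that is, every smooth $G$-invariant function on $\R^+\times\Upsilon$ flat at $t=0$ — is a pullback from $\R^+\times P$. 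The plan is to reduce this to the corresponding statement for the full space $V$, namely the surjectivity of $\tilde p^*$ onto $G_\R$-invariants together with the flatness-preserving statement, which is exactly what \cite[Section 3.3]{LunaSmooth} (in the spirit of Lemma~\ref{lem:flat} and the Remark after Example~\ref{ex:not-smooth}) provides.

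\textbf{Key steps, in order.} First I would extend the polar coordinates: the map $\sigma\colon\R^+\times\Upsilon\to V$, $(t,v)\mapsto tv$, is a diffeomorphism onto $V\setminus\{0\}$ (with a smooth structure near $0$ handled by flatness), and $\tilde p\circ\sigma=\tau\circ(\id\times\tilde p|_\Upsilon)$ where $\tau\colon\R^+\times P\to\R^n$ is the polar map of Lemma~\ref{lem:flat}. Thus the square
$$
\begin{CD}
\R^+\times\Upsilon @>{\id\times\tilde p|_\Upsilon}>> \R^+\times P\\
@V{\sigma}VV @VV{\tau}V\\
V @>{\tilde p}>> \R^n
\end{CD}
$$
commutes. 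Second, I would pull back a given $f\in\ci(\R^+,0)\comptensor\ci(\Upsilon)^G$ through $\sigma^{-1}$ to a $G$-invariant smooth function on $V\setminus\{0\}$; the flatness of $f$ at $t=0$ guarantees it extends to a smooth $G$-invariant function $F$ on all of $V$, flat at $0$ (this is where one invokes Lemma~\ref{lem:flat} on the $\R^n$ side to see that ``flat at $t=0$'' matches ``flat at $0\in V$''). Third, apply Luna's surjectivity result (the Remark following Example~\ref{ex:not-smooth}, i.e.\ $\tilde p^*\ci(\R^n)=\ci(V)^G$, refined to the flat versions as in \cite[Section 3.3]{LunaSmooth}): there is $g\in\ci(\R^n,0)$ with $\tilde p^*g=F$. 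Fourth, push $g$ back through $\tau$: by Lemma~\ref{lem:flat}, $\tau^*g\in\ci(\R^+,0)\comptensor\ci(P)$, and chasing the commutative square gives $(\id\times\tilde p|_\Upsilon)^*(\tau^*g)=\sigma^*F=f$. This completes the reverse inclusion.

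\textbf{Main obstacle.} The delicate point is the passage from functions on $\R^+\times\Upsilon$ (or $\R^+\times P$) to functions on $V$ (or $\R^n$) across the origin: the polar maps $\sigma$ and $\tau$ are not submersions at $t=0$, so one cannot just invoke openness of pullback. The flatness hypothesis at $t=0$ is exactly what rescues this — one must check that $f$ flat at $t=0$ yields $F=f\circ\sigma^{-1}$ genuinely smooth (not merely continuous) at $0\in V$, with all derivatives vanishing there, and that Luna's surjectivity theorem respects this flat subspace. This is the heart of \cite[Section 3.3]{LunaSmooth}, and the lemma is really a repackaging of that result once the commuting square above is in place; a secondary bookkeeping issue is that the exponents $e_i,b_i$ and the submanifolds $P\subset\R^n$, $\Upsilon\subset V$ must be the compatible ones, which has been arranged in the setup preceding the lemma. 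I expect the proof itself to be short, essentially the diagram chase plus a citation, with the smoothness-across-the-origin verification being the only place requiring care.
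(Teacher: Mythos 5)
The paper does not prove this lemma; the sentence immediately preceding it reads ``Now we need another version of surjectivity of $\tilde p^*$ which can be found in \cite[Section 3.3]{LunaSmooth},'' and the lemma is then stated with no argument. So there is no internal proof to compare your proposal against — the result is taken as established in Luna's paper.

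As a free-standing attempt, your proposal has a genuine gap, and it is circular relative to the paper's logic. The step you label ``Second'' — that $f\circ\sigma^{-1}$ (defined on $V\setminus\NN(V)$ and set to $0$ on $\NN(V)$) extends to a smooth $G$-invariant function $F$ on all of $V$ — is precisely the surjectivity half of Corollary~\ref{cor:flat}, which in the paper is \emph{deduced from} Lemma~\ref{lem:Luna}, not available before it. You cite Lemma~\ref{lem:flat} for this step, but Lemma~\ref{lem:flat} concerns only the compact sphere $P$ in the target $\R^n$; it says nothing about the non-compact $\Upsilon\subset V$. That non-compactness is exactly why the extension problem is hard: as $w\to w_0\in\NN(V)\setminus\{0\}$, the rescaled point $w/\rho(w)$ escapes to infinity in $\Upsilon$, so flatness of $f$ at $t=0$ (which is uniform only on compact subsets of $\Upsilon$) does not automatically control the derivatives of $F$ there. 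Two smaller errors feed this: $\sigma$ restricted to $(0,\infty)\times\Upsilon$ is a diffeomorphism onto $V\setminus\NN(V)$, not $V\setminus\{0\}$, so the nonzero null cone is simply missing from your picture; and what is needed is flatness of $F$ along all of $\NN(V)$, not merely at $0\in V$, before one can extract $g\in\ci(\R^n,0)$ by Luna's theorem.

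You do flag the smoothness-across-the-null-cone issue as ``the heart of \cite[Section 3.3]{LunaSmooth}.'' That diagnosis is right, but it means your reduction defers the entire analytic content to the same reference the paper already cites for the lemma itself. The diagram chase (the commuting square $\sigma,\tau,\id\times\tilde p|_\Upsilon,\tilde p$) is correct and is useful bookkeeping, but it does not by itself produce the extension; as written the argument would either invoke Corollary~\ref{cor:flat} (circular) or leave the central step unproved.
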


 \begin{corollary}\label{cor:flat}
Pullback by $\sigma$ induces an isomorphism $$
\sigma^*\colon \ci(V,\NN(V))^G\to\ci(\R^+,0)\comptensor\ci(\Upsilon)^G.
$$
 \end{corollary}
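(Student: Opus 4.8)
\textit{Proof proposal.} The plan is to realize $\sigma^{*}$ as one leg of a commutative square whose remaining legs are precisely the maps of Lemmas~\ref{lem:flat} and~\ref{lem:Luna}, and then to transfer flatness along polynomial maps. First I would record that $\sigma\colon\R^{+}\times\Upsilon\to V$ is a diffeomorphism onto $V\setminus\NN(V)$: since each $\tilde p_{i}$ is homogeneous of degree $e_{i}$ and $\tilde p(v)\in P$ for $v\in\Upsilon$, we have $\tilde p(tv)=t\cdot\tilde p(v)$, so $\tilde p\circ\sigma=\tau\circ(\id\times\tilde p|_{\Upsilon})$; using $\NN(V)=\tilde p^{-1}(0)$ (the fibres of $\tilde p$ and $p$ coincide, as noted just before Example~\ref{ex:not-smooth}) and the fact that $\tau$ is a diffeomorphism onto $\R^{n}\setminus\{0\}$, one checks $\sigma$ is bijective onto $V\setminus\NN(V)$ with smooth inverse $w\mapsto(t(w),w/t(w))$, where $\tilde p(w)=t(w)\cdot y(w)$, $y(w)\in P$. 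In particular $f\mapsto f\circ\sigma$ is a continuous linear map $\ci(V)\to\ci(\R^{+}\times\Upsilon)$, and on $\ci(\R^{n})$ we have the identity $\sigma^{*}\circ\tilde p^{*}=(\id\times\tilde p|_{\Upsilon})^{*}\circ\tau^{*}$.

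Next I would check two ``flatness transfer'' statements. Since $\tilde p$ is a polynomial map vanishing on $\NN(V)=\tilde p^{-1}(0)$, for each $w_{0}\in\NN(V)$ one has $|\tilde p(w)|\le C|w-w_{0}|$ on a bounded neighbourhood of $w_{0}$; combined with the chain rule (Fa\`a di Bruno), this turns infinite‑order vanishing of $h$ at $0$ into infinite‑order vanishing of $h\circ\tilde p$ at every point of $\NN(V)$, so $\tilde p^{*}\bigl(\ci(\R^{n},0)\bigr)\subseteq\ci(V,\NN(V))^{G}$. Symmetrically, since $|v|$ is bounded on compacta of $\Upsilon$ and $0\in\NN(V)$, for $f\in\ci(V,\NN(V))^{G}$ the function $f\circ\sigma(t,v)=f(tv)$ and all its derivatives vanish to infinite order as $t\to0$, uniformly on compacta; being also $G$‑invariant, $f\circ\sigma$ lies in $\ci(\R^{+},0)\comptensor\ci(\Upsilon)^{G}$, using the identification (implicit in Lemma~\ref{lem:Luna}) of that completed tensor product with the $G$‑invariant smooth functions on $\R^{+}\times\Upsilon$ that are flat at $t=0$. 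Hence $\sigma^{*}$ restricts to a continuous map $\ci(V,\NN(V))^{G}\to\ci(\R^{+},0)\comptensor\ci(\Upsilon)^{G}$, and $\tilde p^{*}$ to a map $\ci(\R^{n},0)\to\ci(V,\NN(V))^{G}$.

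Then I would assemble the pieces. By Lemma~\ref{lem:flat}, $\tau^{*}$ is an isomorphism of $\ci(\R^{n},0)$ onto $\ci(\R^{+},0)\comptensor\ci(P)$, and by Lemma~\ref{lem:Luna} the map $(\id\times\tilde p|_{\Upsilon})^{*}$ carries the latter onto $\ci(\R^{+},0)\comptensor\ci(\Upsilon)^{G}$; therefore the composite $\sigma^{*}\circ\tilde p^{*}$ is surjective onto $\ci(\R^{+},0)\comptensor\ci(\Upsilon)^{G}$, and since by the previous paragraph it factors through $\ci(V,\NN(V))^{G}$, already $\sigma^{*}\colon\ci(V,\NN(V))^{G}\to\ci(\R^{+},0)\comptensor\ci(\Upsilon)^{G}$ is onto. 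For injectivity: if $\sigma^{*}f=0$ then $f$ vanishes on $V\setminus\NN(V)=\sigma(\R^{+}\times\Upsilon)$, while flatness on $\NN(V)$ forces $f$ to vanish there as well, so $f\equiv0$. Thus $\sigma^{*}$ is a continuous bijection between $\ci(V,\NN(V))^{G}$ and $\ci(\R^{+},0)\comptensor\ci(\Upsilon)^{G}$, both of which are Fr\'echet spaces (closed subspaces of nuclear Fr\'echet spaces, respectively completed tensor products of such), hence a topological isomorphism by the open mapping theorem.

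I expect the main obstacle to be the flatness transfer of the second paragraph, i.e.\ passing infinite‑order vanishing across the polynomial maps $\tilde p$ and $\sigma$ in a way uniform enough to control all partial derivatives; and, more a matter of care than of difficulty, pinning down the identification of $\ci(\R^{+},0)\comptensor\ci(\Upsilon)^{G}$ with the concrete space of $t$‑flat $G$‑invariant smooth functions so that Lemmas~\ref{lem:flat} and~\ref{lem:Luna} chain cleanly with $\sigma^{*}$.
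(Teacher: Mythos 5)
Your proposal is correct and follows essentially the same route as the paper: you chase the commutative square $\sigma^{*}\circ\tilde p^{*}=(\id\times\tilde p|_{\Upsilon})^{*}\circ\tau^{*}$, invoking Lemma~\ref{lem:Luna} and then Lemma~\ref{lem:flat} to produce a preimage in $\ci(\R^{n},0)$ which pulls back under $\tilde p^{*}$ to $\ci(V,\NN(V))^{G}$, exactly as the paper does. The only difference is that you spell out several steps the paper treats as evident — that $\sigma$ is a diffeomorphism onto $V\setminus\NN(V)$, that flatness transfers across the polynomial maps $\tilde p$ and $\sigma$ so that $\sigma^{*}$ and $\tilde p^{*}$ land in the claimed flat subspaces, and the final appeal to the open mapping theorem for Fr\'echet spaces — whereas the paper simply asserts injectivity is "clear" and implicitly takes the flatness transfer for granted.
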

 
 \begin{proof}
 It is clear that $\sigma^*$ is injective. Let $f\in\ci(\R^+,0)\comptensor\ci(\Upsilon)^G$. Then $f$ is the pullback of some $h\in\ci(\R^+,0)\comptensor\ci(P)$. By
 Lemma \ref{lem:flat},   $h$ corresponds to an element of $\ci(\R^n,0)$ which in turn  pulls back to an element of $\ci(V,\NN(V))^G$. Hence $\sigma^*$ is surjective.
  \end{proof}

We need a version of E.\ Borel's lemma  \cite[Chapter IV, proof of Lemma 2.5]{Guillemin}.

\begin{lemma}\label{lem:Borel}
Let $f_0$, $f_1,\dots$ be a sequence of smooth functions on $P$. Let $\rho$  in $\ci([0,1))$  be real valued, have compact support and equal $1$ on a neighbourhood of $0$ in $[0,1)$.
Then there are positive increasing numbers $\lambda_j$, $\lim\limits_{j\to\infty}\lambda_j=\infty$, such that for $t\in\R^+$ and $x\in P$,
$$ 
\sum_{j=0}^\infty\rho(\lambda_j t) \frac {t^j}{j!}f_j(x)
$$
converges to a smooth function $f$ on $\R^+\times P$ such that
$$
\frac {\pt^j f}{\pt t^j}(0,x)=f_j(x),\ j=0,1,\dots
$$
\end{lemma}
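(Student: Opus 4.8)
The plan is to prove this as a standard Borel-type construction, working one function $f_j$ at a time and using the cutoff $\rho$ to force rapid decay near $t=0$. First I would fix, for each $j$, a seminorm estimate: since $P$ is compact, each $f_j$ has finite $C^k$-norms $\|f_j\|_{C^k(P)}$ for all $k$, and I want to choose $\lambda_j$ large enough that the $j$-th term $\rho(\lambda_j t)\,\frac{t^j}{j!}\,f_j(x)$, together with all its partial derivatives in $t$ and $x$ up to order $j-1$, has sup-norm at most $2^{-j}$ on $\R^+\times P$. The key observation making this possible is that differentiating $\rho(\lambda_j t)$ in $t$ produces factors of $\lambda_j$, but each such derivative is supported where $\lambda_j t$ lies in the (fixed, compact) support of $\rho'$, hence where $t \leq C/\lambda_j$; so a derivative of order $\ell \leq j-1$ of the $j$-th term is bounded by a constant (depending on $\rho$, $\ell$, $j$) times $\lambda_j^{\ell}\cdot(C/\lambda_j)^{j-\ell}\cdot\|f_j\|_{C^{j-1}(P)} = C'\lambda_j^{2\ell - j}\|f_j\|_{C^{j-1}(P)}$, and since $2\ell - j \leq j - 2 - j < 0$ this tends to $0$ as $\lambda_j\to\infty$. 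Thus for each $j$ I can pick $\lambda_j$ (increasing to $\infty$, which costs nothing) making the $j$-th term and all its derivatives up to order $j-1$ bounded by $2^{-j}$.

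With the $\lambda_j$ so chosen, I would then argue convergence of
$$
f(t,x) = \sum_{j=0}^\infty \rho(\lambda_j t)\,\frac{t^j}{j!}\,f_j(x)
$$
in $\ci(\R^+\times P)$: fix any order $k$; the terms with $j > k$ are each bounded in $C^k$-norm by $2^{-j}$, so that tail converges uniformly with all derivatives up to order $k$, while the finitely many terms with $j \leq k$ are manifestly smooth. Hence $f$ is a well-defined smooth function on $\R^+\times P$, and term-by-term differentiation is justified. Here one should note $\R^+$ is to be read as including (or limiting to) $t=0$ in the sense needed for the final assertion, i.e.\ $f$ extends smoothly across $t=0$; near $t=0$ only finitely many terms are nonzero for $t$ in a neighbourhood of $0$ once we note $\rho(\lambda_j t) = 1$ there, but that is not even needed — the uniform $C^k$ bounds already give smoothness up to the boundary.

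Finally I would compute the Taylor coefficients at $t=0$. Since $\rho \equiv 1$ near $0$, for $t$ in a small neighbourhood of $0$ each summand equals $\frac{t^j}{j!}f_j(x)$, so formally $f$ agrees with its intended Taylor series; more carefully, differentiate the series term by term $\ell$ times in $t$ and evaluate at $t=0$. In the $j$-th term $\rho(\lambda_j t)\frac{t^j}{j!}f_j(x)$, every $t$-derivative of order $\ell < j$ vanishes at $t=0$ because of the factor $t^j$ (and products with derivatives of $\rho(\lambda_j t)$ only make the vanishing order worse except where balanced, but at $t=0$ one still has a positive power of $t$ unless all $\ell$ derivatives hit $t^j$, requiring $\ell \geq j$), the term with $j=\ell$ contributes $\frac{\pt^\ell}{\pt t^\ell}\big(\rho(\lambda_\ell t)\frac{t^\ell}{\ell!}\big)\big|_{t=0} f_\ell(x) = f_\ell(x)$ since $\rho(0)=1$, and terms with $j > \ell$ vanish at $t=0$ by the factor $t^j$. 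Hence $\frac{\pt^\ell f}{\pt t^\ell}(0,x) = f_\ell(x)$ for every $\ell$. The main obstacle is bookkeeping the chain-rule expansion of derivatives of $\rho(\lambda_j t)\frac{t^j}{j!}f_j(x)$ carefully enough to see the decay, i.e.\ tracking that each of the $\ell \leq j-1$ derivatives either produces a $\lambda_j$ but restricts to $t \lesssim \lambda_j^{-1}$ or produces a power of $t$, so the net power of $\lambda_j$ is negative; everything else is routine.
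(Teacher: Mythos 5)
Your proof is the standard Borel construction with a cutoff, which is almost certainly what the cited reference in \cite[Ch.\ IV, Lemma 2.5]{Guillemin} does; the paper itself gives no proof, just the citation, so your argument fills that in directly. The overall structure is correct: choose $\lambda_j$ so that the $j$-th term and its derivatives up to order $j-1$ are bounded by $2^{-j}$, deduce $C^\infty$-convergence, and read off the Taylor coefficients at $t=0$ from $\rho\equiv 1$ near $0$.

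However, the displayed exponent estimate in your second paragraph is wrong in two ways and should be fixed. You write that an order-$\ell$ derivative of the $j$-th term is bounded by $\lambda_j^{\ell}\cdot(C/\lambda_j)^{j-\ell}\cdot\|f_j\|_{C^{j-1}} = C'\lambda_j^{2\ell-j}\|f_j\|_{C^{j-1}}$ and then assert ``$2\ell-j\leq j-2-j<0$''. The arithmetic in that last claim is broken: with $\ell\leq j-1$ one only gets $2\ell-j\leq j-2$, which is nonnegative for $j\geq 2$, so this would not tend to $0$. Moreover the exponent $2\ell-j$ is not the right bound to begin with: in the Leibniz expansion of $\partial_t^{\ell}\bigl(\rho(\lambda_j t)\,t^j/j!\bigr)$, if $k$ derivatives hit $\rho$ (producing $\lambda_j^k$) and $\ell-k$ hit $t^j$ (leaving $t^{j-\ell+k}$, bounded by $(C/\lambda_j)^{j-\ell+k}$ on $\supp\rho(\lambda_j\cdot)$), then \emph{every} Leibniz term has the same net power $\lambda_j^{k}\lambda_j^{-(j-\ell+k)}=\lambda_j^{\ell-j}$. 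So the correct bound is $C'\lambda_j^{\ell-j}$, which tends to $0$ precisely because $\ell\leq j-1<j$. This is in fact what your final paragraph's verbal description (``each derivative either produces a $\lambda_j$ but restricts to $t\lesssim\lambda_j^{-1}$, or produces a power of $t$'') says correctly; your intermediate display simply multiplied two mutually exclusive worst cases and got an overestimate that fails to decay. Replace $2\ell-j$ by $\ell-j$ and the argument goes through cleanly.
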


\begin{proposition}\label{prop:Vgood}
Let $V$ be a $G$-module all of whose proper slice representations are good. Then $V$ is good.
\end{proposition}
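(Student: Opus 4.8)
The plan is to reduce to the case $V^G=0$ by Lemma \ref{lem:VG} (if $V\neq V^G$ then $V'$ has smaller dimension and is good by the surrounding induction, with Lemma \ref{lem:principal} as base case). When $V^G=0$ the origin is the unique fixed point, its slice representation is the forbidden one $(V,G)$, and the fibre $\NN(V)=p\inv(p(0))$ is the unique closed orbit in the null cone; every other closed orbit has isotropy strictly smaller than $G$, so its slice representation is a proper slice representation of $V$ and hence good by hypothesis. The proof of Proposition \ref{prop:slice} is local over the quotient, so it applies to the open $G$-saturated complex $G$-manifold $V\setminus\NN(V)$ and shows that it is good. Thus the only obstruction to concluding that $V$ is good is the behaviour of the modules $\ci(V)^G\cdot\O(V)_R$ (part (2)) and $\ci(V)^G\cdot\Der_Q(V)^G$ (part (1)) along $\NN(V)$.

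Both modules have the form $M=\ci(V)^G\cdot N$ for a finitely generated homogeneous $\O_\alg(V)^G$-module $N$ (namely $N=\O_\alg(V)_R$, resp.\ $N=\Der_{Q,\alg}(V)^G$), so I would treat them uniformly. Fix homogeneous generators $g_1,\dots,g_\nu$ of $N$ and let $f$ lie in the closure of $M$. The idea is to split $f$, using the scaling $\C^*$-action on $V$, into a jet part and a flat part. For the jet part: expanded along the scaling, the radial Taylor coefficients of $f$ satisfy finite, essentially algebraic, membership conditions for $N$, which are solvable because the completion of $N$ at the maximal ideal of $\O_\alg(V)^G$ is again a finitely generated module (Artin--Rees) and $\O_{Q,0}\hookrightarrow\widehat\O_{Q,0}$ is faithfully flat, exactly as in the proof of Lemma \ref{lem:lift-if-smooth-lift}; Borel's Lemma \ref{lem:Borel}, applied in the radial variable, reassembles these coefficients into a genuine $f_0\in M$ having the same infinite jet along $\NN(V)$ as $f$. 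The remainder $f-f_0$ is flat along $\NN(V)$, so pulling back by the polar map $\sigma\colon\R^+\times\Upsilon\to V$ and applying Corollary \ref{cor:flat} (via Lemmas \ref{lem:flat} and \ref{lem:Luna}) transports the question to $\ci(\R^+,0)\comptensor(\text{a module over }\ci(\Upsilon)^G)$, and closedness there follows from goodness of $V\setminus\NN(V)$, since $\Upsilon$ meets only closed orbits with proper isotropy. Gluing the jet and flat parts by a $G$-invariant partition of unity subordinate to $\{V\setminus\NN(V),\,U\}$ for $U$ a neighbourhood of $\NN(V)$ (Lemma \ref{lem:pi}) then gives $f\in M$, proving that $M$ is closed.

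The hard part will be the flat part, i.e.\ converting ``formal membership in $M$ along $\NN(V)$'' into genuine membership; this is the non-proper analogue of the Bierstone--Milman division and closedness theorems, which is precisely why the polar coordinates and Borel's lemma were developed above. Concretely one must show that, restricted to functions flat along $\NN(V)$, the map induced by $\sigma^*$ has closed image, and I expect this to require a {\L}ojasiewicz-type inequality comparing the ideal of $\NN(V)$ with the generating invariants $\tilde p_i$ — equivalently, a uniform openness statement for the surjection $(\ci(V)^G)^\nu\to M$ that becomes available once rescaling confines the estimate to a region uniformly away from the bad locus. The reduction of statement (1) to statement (2), and the verification that the radial-coefficient conditions are genuinely finite-dimensional, are routine, along the lines of the proof of Proposition \ref{prop:slice}.
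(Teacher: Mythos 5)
Your framework matches the paper's proof: reduce to $V^G=0$ by Lemma \ref{lem:VG}; obtain goodness of $V\setminus\NN(V)$ from the inductive hypothesis on proper slice representations via Proposition \ref{prop:slice}; split into a jet part and a flat part using the scaling action; use faithful flatness of $\O_{Q,0}\hookrightarrow\widehat\O_{Q,0}$ for the jet part; and pass to polar coordinates with Corollary \ref{cor:flat} and Borel's Lemma \ref{lem:Borel} for the rest. Treating $\O(V)_R$ and $\Der_{Q_V,\alg}(V)^G$ uniformly as a finitely generated homogeneous module $N$ is also fine. But you apply Borel's lemma in the wrong step and leave the genuinely hard move unsupplied.

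On the jet part, you want $f_0\in M$ with the same infinite jet as $f$ \emph{along all of $\NN(V)$}; that is much stronger than is needed or than one gets directly, and would essentially require the proposition you are trying to prove. The paper only chooses $f_1,\dots,f_k\in\ci(V)^G$ (by the classical Borel theorem plus faithful flatness) so that $A-\sum f_iA_i$ is flat \emph{at the single point $0$}; flatness of $\sigma^*(A-\sum f_iA_i)$ on $\{0\}\times\Upsilon$ is then automatic because $\sigma$ collapses $\{0\}\times\Upsilon$ to the origin. On the flat part, your claim that ``closedness there follows from goodness of $V\setminus\NN(V)$'' skips the crucial step. Goodness of $V\setminus\NN(V)$ only produces an expression $\sigma^*(A-\sum f_iA_i)=\sum a_i(t,v)A_i(v)$ with $a_i$ \emph{smooth but not flat} in $t$; you cannot yet invoke Corollary \ref{cor:flat}. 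What is missing is the correction: since the total sum is flat one has $\sum_i\partial^j_t a_i(0,v)A_i(v)=0$ for all $j$, and Lemma \ref{lem:Borel} is applied on $\R^+\times P$ precisely here, to assemble these order-by-order relations into smooth invariants $k_i(t,v)$ with $\sum_i k_iA_i\equiv 0$ and with the same $t$-Taylor series as the $a_i$; replacing $a_i$ by $a_i-k_i$ yields flat coefficients, and only then does Corollary \ref{cor:flat} produce the $g_i\in\ci(V,\NN(V))^G$ with $A=\sum(f_i+g_i)A_i\in M$. No {\L}ojasiewicz inequality or uniform openness estimate is involved, and no final partition-of-unity gluing is needed — the membership $A\in M$ comes out directly.
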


\begin{proof}
We may assume that $V^G=0$. Let $A_1,\dots,A_k$ generate the $\O_\alg(Q_V)$-module $\Der_{Q_V,\alg}(V)^G$. Then they generate $\Der_{Q_V}(V)^G$.  We may assume that the $A_i$ are homogeneous. 
 We first show that the set of all sums $\sum_i h_i(v)A_i(v)$ is complete, $v\in \Upsilon$, $h_i\in\ci(\Upsilon)^G$. There is a retraction $\eta$ from $V\setminus\NN(V)$ to $\Upsilon$ which sends $v$ to $v/\rho(v)$ where $\rho(v)^{2d}=\sum_i\tilde p_i(v)^{2e_i}$ and  we choose $\rho(v)$ as the positive $2d$th root. If $\sum_i h_{i,\ell}(v) A_i(v)$ for $v\in \Upsilon$ has a limit $A$ as $\ell\to\infty$, then  we see that $A\circ\eta$ is   the limit of the $\sum_i h_{i,\ell}(v/\rho(v))\rho(v)^{-n_i}A_i(v)$ for $v\in V\setminus\NN(V)$ where $n_i$ is the degree of homogeneity of $A_i$, $i=1,\dots,k$. Since all the slice representations of $V\setminus\NN(V)$ are good, we find that $A\circ\eta$ is a sum $\sum f_i(v)A_i(v)$ where $v\in V\setminus\NN(V)$. Restricting back to $\Upsilon$ we see that our original $A$ can be expressed in the form $\sum_i h_i A_i$. Let $D_1$ denote the restriction of $\Der_{Q_V}(V)^G$ to $\Upsilon$. We have shown that $\ci(\Upsilon)^G\cdot D_1$ is complete. It follows that $\ci(\R^+)\comptensor\ci(\Upsilon)^G\cdot D_1$ is complete.

Consider an element $A$  in the 
closure of $\ci(V)^G\cdot\Der_{Q_V}(V)^G$ in $\Der^\infty_{Q_V}(V)^G$. Then the Taylor series of $A$ at $0$ is in the formal power series module generated by the $A_i$ over $\R[[V_\R]]^{G}$. Thus replacing $A$ by $A-\sum f_i A_i$ for  appropriate $f_1,\dots,f_k\in\ci(V)^G$ we can reduce to the case that $A$ is flat at zero.  Now $A_i\circ\sigma(t,v)=t^{n_i}A_i(v)$  for $t\in\R^+$ and $v\in \Upsilon$. Hence $A\circ\sigma$ lies in the closure of sums of the form $\sum a_i(t,v)A_i(v)$, $v\in \Upsilon$, where $a_i(t,v)\in\ci(\R^+)\comptensor\ci(\Upsilon)^G$. By what we just showed, this space is complete, hence $A\circ\sigma$   has the form $\sum_i a_i(t,v) A_i(v)$ where the $a_i(t,v)$ are in $\ci(\R^+)\comptensor\ci(\Upsilon)^G$. Now $A\circ \sigma $ is flat on $\{0\}\times \Upsilon$ (since $A$ was flat at $0$). Hence  
$$
\sum_i\frac{\pt^j a_i}{\pt t^j}(0,v)A_i(v)=0,\ v\in \Upsilon,\ j=0,1,\dots.
$$
Let $h^j_i\in\ci(P)$ such that  $(\tilde p|_{\Upsilon})^*(h^j_i)(v)=({\pt^j a_i}/{\pt t^j})(0,v)$, $v\in\Upsilon$. Let $\rho$ and the $\lambda_j$ be as in  Lemma \ref{lem:Borel}  such that
$$
\sum_{j=0}^\infty h_i^j(t,y)\text{ converges to }h_i(t,y)\in\ci(\R^+\times P),\  i=1,\dots,k,
$$ 
where
$$
h_i^j(t,y)=\rho(\lambda_j t)   \frac{t^j}{j!}h^j_i(y).
$$
For each $j$, the $h_i^j(t,y)$ pull  back to a relation  of the $A_i$, hence the $h_i$ pull back to functions $k_i$ such that  $\sum k_iA_i=0$. By construction, $k_i$ and $a_i$ have the same Taylor series on $0\times\Upsilon$, $1\leq i\leq k$.  
Hence we can   reduce to the case that
$$
A\circ \sigma=\sum a_i(t,v) A_i(v), \text { where } a_i(t,v)\in\ci(\R^+,0)\comptensor\ci(\Upsilon)^G,\ i=1,\dots, k.
$$
By Corollary \ref{cor:flat}  the $t^{-n_i}a_i(t,v)$ are pullbacks of functions $f_i\in\ci(V,\NN(V))^G$, and we   have that $A=\sum_i f_iA_i$. Hence $\ci(V)^G\cdot\Der_{Q_V}(V)^G$ is complete.
The argument for  $\ci(V)^G\cdot\O(V)_R$ is similar. Hence $V$ is good.
\end{proof}

\begin{proof}[Proof of Theorem \ref{thm:closed}] This follows immediately from Proposition \ref{prop:slice}, Lemma \ref{lem:principal} and Proposition \ref{prop:Vgood}.
\end{proof}
 
\section{Reduction to type $\F$}\label{sec:reductiontoF}
We have Stein $G$-manifolds $X$ and $Y$ which are locally $G$-biholomorphic over $Q$. We show that one can deform a strong  $G$-homeomorphism $\Phi\colon X\to Y$   to a $G$-diffeomorphism of type $\F$  by making small deformations locally. The same process works if $\Phi$ is a strict $G$-diffeomorphism. Note that composition with $G$-biholomorphisms inducing $\Id_Q$ preserves the property of being of type $\F$. This will allow  us to reduce locally to the case that $X=Y$.  First another result \cite[Corollary 6.27]{Lee}.

\begin{lemma}\label{lem:extension}
Let $N$ and $P$ be smooth manifolds. Let $A$ be a closed subset of $N$ and $f\colon N\to P$ a continuous map which is smooth on $A$. Then there is a homotopy $f_t$ of $f$ with $f_t=f$ on $A$, $f_1=f$ and $f_0\colon N\to P$ smooth.
\end{lemma}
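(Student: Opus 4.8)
The plan is to reduce to the Euclidean case by an embedding and a tubular neighbourhood, and then invoke the classical Whitney approximation theorem for vector-valued functions. As usual, the hypothesis that $f$ is smooth on the closed set $A$ is read as: $f$ agrees on $A$ with a smooth map $\tilde f$ defined on some open neighbourhood $N_0$ of $A$ in $N$.

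First I would choose, by Whitney's embedding theorem, a closed smooth embedding $P\hookrightarrow\R^K$ and a tubular neighbourhood of $P$, that is, an open set $U\subset\R^K$ with $P\subset U$ together with a smooth retraction $r\colon U\to P$, $r|_P=\id_P$. Composing with the inclusion we regard $f$ as a continuous map $N\to\R^K$ with image in $P\subset U$ which is smooth on $N_0$. Put $\delta(x)=\tfrac12\operatorname{dist}\bigl(f(x),\R^K\setminus U\bigr)$. Since $f$ is continuous, $f(N)\subset U$, and $y\mapsto\operatorname{dist}(y,\R^K\setminus U)$ is continuous and strictly positive on $U$, the function $\delta\colon N\to\R$ is continuous and strictly positive. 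By the Whitney approximation theorem \cite{Lee}, there is a smooth map $g\colon N\to\R^K$ with $|g(x)-f(x)|<\delta(x)$ for every $x\in N$ and with $g=f$ on $A$. The estimate ensures that for each $x$ the segment $\{(1-t)g(x)+tf(x):t\in[0,1]\}$ lies within distance $\delta(x)$ of $f(x)$, hence inside $U$.

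Then I would set $f_t(x)=r\bigl((1-t)g(x)+tf(x)\bigr)$ for $(t,x)\in[0,1]\times N$. This is continuous, takes values in $P$, and satisfies $f_1=r\circ f=f$ (because $f(x)\in P$ and $r|_P=\id$), $f_0=r\circ g$ is smooth, and for $x\in A$ one has $g(x)=f(x)$, so $(1-t)g(x)+tf(x)=f(x)$ and $f_t(x)=r(f(x))=f(x)$ for all $t$; thus $f_t$ is the required homotopy. The only substantive ingredient is the existence of a smooth map that is simultaneously $\delta$-close to $f$ and equal to $f$ on $A$, which is exactly Whitney's theorem (proved by patching local polynomial approximations with a partition of unity subordinate to a cover refining $N_0$). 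Beyond correctly invoking that theorem there is no real obstacle; the only mild care needed is the continuity and positivity of $\delta$ and the resulting confinement of the straight-line homotopy to the tube $U$.
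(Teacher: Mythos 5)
Your proof is correct, and it is essentially the standard argument from Lee's \emph{Introduction to Smooth Manifolds}: the paper does not prove this lemma but simply cites [Lee, Corollary 6.27], whose proof proceeds exactly as you describe, via a closed embedding $P\hookrightarrow\R^K$, a tubular neighbourhood retraction, Whitney approximation of the Euclidean-valued map relative to $A$, and the retracted straight-line homotopy.
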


Above, $f$ smooth on $A$ means that $f|_A$ has local extensions to smooth maps from open subsets of $N$ to $P$.

We work locally on $X$ and $Y$, so that we may assume that $X=Y=S\times T_B$ are standard neighbourhoods.   We consider what transpires in a neighbourhood of $(s_0,x_0)$ where $s_0\in S$ and $x_0=[e,0]\in T_B$.   Let $L_\vb$ denote the group of $G$-vector bundle automorphisms of $T_W$ inducing the identity on the quotient. Then $L_\vb\subset\Aut_{Q_B}(T_B)^G$.
We have the action of $t\in[0,1]$ on $x=(s,[g,w])$ where $t\cdot x=(s,[g,tw])$, $s\in S$, $[g,w]\in T_B$. This induces an action $z\mapsto t\cdot z$ on the quotient.  Let $\Phi\colon X\to X$   be a strong  $G$-homeomorphism (or a $G$-diffeomorphism of type $\F$). Let $\Phi_t(x)=t\inv\cdot \Phi(t\cdot x)$, $x\in X$
, $t\in(0,1]$.   
By Corollary \ref{cor:fundam-with-S}, $\Phi_0=\lim\limits_{t\to 0}\Phi_t$ exists. From Lemma \ref{lem:fundamental}, Corollary \ref{cor:homotopy-strong} and   Lemma \ref{lem:Phi_t-is-continuous} we obtain the following.
 
 \begin{lemma} \label{lem:parameters}\label{lem:parameters.typeF}
Let $X=S\times T_B$   as above.
\begin{enumerate}
\item Let $\Phi\colon X\to X$ be a strong $G$-homeomorphism. Then the family $\Phi_t$ is a homotopy of strong  $G$-homeomorphisms. Moreover, there is a continuous map $\sigma\colon S\to L_\vb$ such that  $\Phi_0(s,[g,w])=\sigma(s)([g,w])$, $s\in S$, $[g,w]\in T_B$.
\item Let $\Phi\colon X\to X$ be a $G$-diffeomorphism of type $\F$. Then the family $\Phi_t$   is a smooth homotopy of $G$-diffeomorphisms of type $\F$. Moreover, there is a smooth map $\sigma\colon S\to L_\vb$ such that  $\Phi_0(s,[g,w])=\sigma(s)([g,w])$, $s\in S$, $[g,w]\in T_B$.
\end{enumerate}
\end{lemma}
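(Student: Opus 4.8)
The plan is to read the statement off from the structural results of Sections~\ref{sec:strong} and~\ref{sec:typeF}, where all the substantive work has already been done; what remains is to package it correctly for the standard neighbourhood $X=S\times T_B$.

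For part~(1), I would fix $s_0\in S$ and take $x_0=[e,0]\in T_B$, so that near $(s_0,F)$ we are in exactly the situation of Corollaries~\ref{cor:fundam-with-S} and~\ref{cor:homotopy-strong} with the strata-preserving biholomorphism $\psi$ taken to be $\Id_Q$ (hence $\theta=\Id_S$, $\phi=\Id$). Corollary~\ref{cor:homotopy-strong} then gives directly that $t\mapsto\Phi_t$ is a homotopy of strong $G$-homeomorphisms inducing $\Id_Q$, and produces a continuous map $\sigma\colon S\to\Aut_\vb(T_W)^G$ with $\Phi_0(s,x)=\sigma(s)(x)$; since $\psi=\Id$, $\Phi_0$ fixes the $G$-invariant holomorphic functions, so each $\sigma(s)$ fixes $\O_\alg(W)^H$ and therefore lies in $L_\vb$. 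To see the identity holds on the whole tube and not just near the zero section, I would note that $\Phi_0=\lim_{t\to0}\Phi_t$ is by construction the normal derivative $\delta\Phi^s$ of $\Phi$ along $Z\simeq G/H$, a fibrewise-linear $G$-equivariant automorphism of the full normal bundle $T_W$, hence quasilinear (it commutes with the scalar $\C^*$-action); combining this with the analogous homogeneity of $\sigma(s)\in\Aut_\vb(T_W)^G$ and rescaling gives $\Phi_0(s,[g,w])=\sigma(s)([g,w])$ for all $(s,[g,w])\in S\times T_B$.

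For part~(2), I would first invoke Proposition~\ref{prop:aij-exist}, which says a $G$-diffeomorphism of type~$\F$ is strong, so everything in part~(1) applies and in particular the continuous $\sigma\colon S\to L_\vb$ is already in hand. The extra assertions are that the homotopy is of type~$\F$ and that $\sigma$ is smooth. For the former I would use the argument of Remark~\ref{rem:LF-problem-local} (conjugation by the partial scalar map preserves type~$\F$), which shows $\Phi_t$ is of type~$\F$ for $t\in(0,1]$, together with the fact recorded in Section~\ref{sec:typeF} that $\Phi_0$ is a smooth family of $G$-biholomorphisms and hence also of type~$\F$; that the family is smooth in $t$ is precisely the content of Lemma~\ref{lem:Phi_t-is-continuous}, which says $\Phi\mapsto\{\Phi_t\}$ maps $\F(S\times Q_B)$ into $\F([0,1]\times S\times Q_B)$. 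For smoothness of $\sigma$ I would write $\Phi^*f_i=\sum_j a_{ij}f_j$ with the $a_{ij}$ smooth and $G$-invariant (Proposition~\ref{prop:aij-exist}), appeal to Remark~\ref{rem:fundam} and Corollary~\ref{cor:fundam-with-S}(4) to get that the associated matrix $(b^0_{ij}(s,\cdot))$ of $\delta\Phi$ has entries depending smoothly on $s$, and observe that this matrix determines $\sigma(s)\in L_\vb$ since the $f_i$ generate $\O_\alg(T_W)$ and the resulting morphism from $\Aut_\vb(T_W)^G$ to matrices is injective (hence a closed immersion of algebraic groups); smooth dependence of the matrix then yields smoothness of $\sigma\colon S\to L_\vb$.

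The main obstacle is essentially bookkeeping rather than a genuine difficulty: the cited corollaries are phrased after shrinking $S$ and $B$, so one must check that the pieces glue to a map defined on all of $S$ and that the formula $\Phi_0(s,[g,w])=\sigma(s)([g,w])$ is valid over all of $T_B$; recognising $\Phi_0$ as the intrinsically defined normal derivative $\delta\Phi$ disposes of both. A secondary point needing a word of justification is that smooth dependence on $s$ of the finite matrix representing $\delta\Phi^s$ really does imply smoothness of $\sigma$ as a map into the algebraic group $L_\vb$, which follows from the closed-immersion remark above.
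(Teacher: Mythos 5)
Your proof is correct and follows essentially the same route the paper takes: the paper derives the lemma by citing Lemma~\ref{lem:fundamental}, Corollary~\ref{cor:homotopy-strong} and Lemma~\ref{lem:Phi_t-is-continuous}, and you invoke these (or their immediate corollaries \ref{cor:fundam-with-S}, \ref{prop:aij-exist}, Remark~\ref{rem:fundam}) together with the observation that $\psi=\Id_Q$ forces $\sigma$ into $L_\vb$. The additional bookkeeping you supply — recognising $\Phi_0$ as the intrinsically defined $\delta\Phi$ to extend the identity over all of $T_B$, and the closed-immersion remark to pass from smoothness of the matrix $(b^0_{ij})$ to smoothness of $\sigma$ — is sound and just makes explicit what the paper leaves to the reader.
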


\begin{remark} 
Let $\sigma\colon S\to L_\vb$ be continuous (resp.\ smooth). Define $\Phi(s,[g,w])=\sigma(s)([g,w])$.
We leave it to the reader to show that $\Phi$ is a strong $G$-homeomorphism (resp.\ $G$-diffeomorphism of type $\F$). Given a homotopy $\sigma_t\colon [0,1]\times S\to L_\vb$ of continuous maps, the corresponding family $\Phi_t$ 
is a homotopy of strong $G$-homeomorphisms.   Using  Lemma \ref{lem:extension} we will be able to pass from the case of continuous $\sigma$ to the case of $\sigma$ smooth, i.e., to the case of $G$-diffeomorphisms of type $\F$.
\end{remark}

We call a strong $G$-homeomorphism \emph{special\/} if it corresponds to a continuous map $\sigma\colon S\to L_\vb$.

 The next few results show that we can  locally construct homotopies of a strong $G$-homeo\-mor\-phism  
 $\Phi$ so that it becomes a $G$-diffeomorphism of type $\F$  over neighbourhoods of larger and larger compact  subsets of a stratum of the quotient.
 
Let $p_B\colon T_B\to Q_B$ denote the quotient mapping and let $p$ denote the quotient mapping of $S\times T_B$. Let $q_0=p_B([e,0])$.
We shall cut off  
the homotopy of $\Phi$ so that it is constant outside a compact set. Let $K\subset S$ be compact.   Let $\rho\colon S\times Q_B\to[0,1]$ be a smooth function which is $1$ on  a neighbourhood of $K\times\{q_0\}$ and has compact support $M$.  Let $\tau(t,z)=1+(t-1)\rho(z)$ for $t\in[0,1]$ and $z\in S\times Q_B$.  Then $\tau(t,z)=1$ outside of $M$, and $\tau(t,z)=t$ for $z$ in a neighbourhood $U$ of $K\times\{q_0\}$.  We have the  map $x\mapsto\tau(t,z)\cdot x$ where $z=p(x)$.

\begin{corollary}\label{cor:cutoffdeformation}
Let $\rho$, etc.\  be as above and let $\Phi\colon S\times T_B \to S\times T_B$ be a strong $G$-homeomorphism.  Let $\Phi_t^\rho(x)=\tau(t,z)\inv\cdot \Phi(\tau(t,z)\cdot x)$ for $x\in S\times T_B$ and $t\in(0,1]$.  Set $\Phi_0^\rho=\lim\limits_{t\to 0}\Phi_t^\rho$.  The family $\Phi_t^\rho$, $t\in[0,1]$, is a homotopy of strong $G$-homeomorphisms joining $\Phi$ to $\Phi_0^\rho$.  Moreover, for each $t\in[0,1]$, $\Phi^\rho_t$ equals $\Phi$ over the complement of $M$, and $\Phi_0^\rho=\Phi_0$  is special  over a neighbourhood of $K\times\{q_0\}$.
\end{corollary}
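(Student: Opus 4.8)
The plan is to view $\Phi^\rho_t$ as the homotopy $\Phi_t$ of Lemma \ref{lem:parameters}, but with the constant scaling parameter $t$ replaced by the $[0,1]$-valued function $\tau(t,p(\cdot))$ on $X=S\times T_B$, and then to check that the continuity statements of Corollary \ref{cor:fundam-with-S} survive this substitution. First I would record the elementary properties of $\tau$: for $t\in[0,1]$ one has $0\le\tau(t,z)\le1$ with $\tau(t,z)>0$ for $t\in(0,1]$, $\tau(1,\cdot)\equiv1$, $\tau(t,\cdot)\equiv1$ off $M=\supp\rho$, and $\tau(t,z)=t$ on a neighbourhood $U$ of $K\times\{q_0\}$ where $\rho\equiv1$. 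Since $B$ is stable under multiplication by $[0,1]$ and $\Phi$ induces $\Id_Q$, the scaling $x\mapsto\tau(t,p(x))\cdot x$ carries the fibre $X_z$ onto $X_{\tau(t,z)\cdot z}\subset S\times T_B$, and scaling back by $\tau(t,z)\inv$ returns to $X_z$; hence for $t\in(0,1]$ the formula for $\Phi^\rho_t$ makes sense, defines a $G$-homeomorphism of $S\times T_B$ inducing $\Id_Q$, and has inverse $y\mapsto\tau(t,p(y))\inv\cdot\Phi\inv(\tau(t,p(y))\cdot y)$.

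Next I would fix a standard generating set $\{f_i\}$ of $\O_\alg(T_W)$ with $f_i$ homogeneous of degree $d_i$, as in Corollary \ref{cor:fundam-with-S}, and write $\Phi^*f_i=\sum_j b_{ij}f_j$, so that $b^u_{ij}(s,x)=u^{d_j-d_i}b_{ij}(s,u\cdot x)$ is jointly continuous in $(u,s,x)\in[0,1]\times S\times T_B$. A short computation using the homogeneity of the $f_i$ gives, for $t\in(0,1]$, the identity $(\Phi^\rho_t)^*f_i=\sum_j b^{\tau(t,p(\cdot))}_{ij}f_j$; that is, the coefficient matrix of $\Phi^\rho_t$ is obtained from $b^u_{ij}$ by the substitution $u=\tau(t,p(x))$. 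Because $\tau$ is smooth with values in $[0,1]$, this matrix is jointly continuous in $(t,x)\in[0,1]\times X$, and the same computation applied to $\Phi\inv$ handles $(\Phi^\rho_t)\inv$. From this I would conclude that each $\Phi^\rho_t$ is a strong $G$-homeomorphism for $t\in(0,1]$, that the limit $\Phi^\rho_0:=\lim_{t\to0}\Phi^\rho_t$ exists and is again a strong $G$-homeomorphism (as in the proof of Proposition \ref{prop:equivalent-topologies-phi}, $\Phi^\rho_t$ is determined by the coefficient matrix $(b^{\tau(t,p(\cdot))}_{ij})$, which converges as $t\to0$), and that $t\mapsto\Phi^\rho_t$, $t\in[0,1]$, is a homotopy of strong $G$-homeomorphisms.

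Finally I would read off the endpoint and locality assertions from the properties of $\tau$ collected at the start: $\tau(1,\cdot)\equiv1$ gives $\Phi^\rho_1=\Phi$, so the homotopy joins $\Phi$ to $\Phi^\rho_0$; $\tau(t,\cdot)\equiv1$ off $M$ gives $\Phi^\rho_t=\Phi$ over the complement of $M$ for every $t\in[0,1]$; and $\tau(t,\cdot)=t$ on $U$ gives $\Phi^\rho_t=\Phi_t$ over $U$, whence $\Phi^\rho_0=\Phi_0$ over $U$, which is special by Lemma \ref{lem:parameters}(1). The only step that is not purely formal is the existence and continuity of $\Phi^\rho_0$ at the points where $\tau(0,z)=1-\rho(z)$ vanishes — there the naive formula $\tau\inv\cdot\Phi(\tau\cdot x)$ degenerates — and this is exactly what the joint continuity of the $b^u_{ij}$ up to $u=0$ in Corollary \ref{cor:fundam-with-S} supplies; beyond this I expect only routine bookkeeping with the interaction of the $\C^*$-scaling, the quotient map $p$, and $\Phi$.
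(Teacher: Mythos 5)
Your proposal is correct and follows essentially the same route as the paper's proof: both identify the coefficient matrix of $\Phi^\rho_t$ as the substitution $u=\tau(t,p(\cdot))$ into the jointly continuous family $b^u_{ij}$ from Corollary \ref{cor:fundam-with-S}, and then read off the homotopy, locality, and endpoint claims from the elementary properties of $\tau$. The extra bookkeeping you do (the inverse formula, the well-definedness of the scaling, the remark that $\Phi^\rho_0$ is recovered from the limiting coefficient matrix via the embedding argument of Proposition \ref{prop:equivalent-topologies-phi}) is all sound but is implicit in the paper's terser presentation.
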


\begin{proof}
Let $\{f_i\}$ be a standard generating set for $\O_\gf(T_B)$ which we may also consider as a standard generating set for $\O_\gf(S\times T_B)$.  
Since $\Phi_t$ is a homotopy of strong $G$-homeomorphisms, 
$$
\Phi_t^*f_i(x)=\sum b_{ij}^t(x)f_j(x),\ x\in S\times T_B,\ t\in[0,1],
$$ 
where the $b^t_{ij}(x)$ are continuous and $G$-invariant in $x$. It follows that  
$$
(\Phi_t^\rho)^*f_i(x)=\sum b_{ij}^{\tau(t,z)}(x)f_j(x),\ x\in S\times T_B,\ t\in[0,1]
$$
where the $b_{ij}^{\tau(t,z)}(x)$ are continuous. Hence $\Phi^\rho_t$ is a homotopy of strong $G$-homeo\-mor\-phisms.
\end{proof}

\begin{lemma}   \label{lem:type.F.remains}
In the situation of Corollary \ref{cor:cutoffdeformation},  suppose that $\Phi$ is a 
$G$-diffeomor\-phism of type $\F$  on $p\inv(\Omega)$ where $\Omega$ is a neighbourhood in $S\times Q_B$ of 
a closed subset $E\times\{q_0\}\subset S\times\{q_0\}$. Then  there is a smaller neighbourhood  $\Omega'$ of $E\times\{q_0\}$  such that  $(t,z)\mapsto \tau(t,z)\cdot z$  sends $[0,1]\times \Omega'$ into $\Omega$. All the strong $G$-homeomorphisms $\Phi^\rho_t$ are $G$-diffeomorphisms of type $\F$ when restricted to $p\inv(\Omega')$. On the complement of $M$ we can arrange that $\Omega=\Omega'$.
\end{lemma}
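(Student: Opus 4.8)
For the first assertion and the last one I would run a tube‑lemma argument. Write $g\colon[0,1]\times(S\times Q_B)\to S\times Q_B$ for the continuous map $g(t,z)=\tau(t,z)\cdot z$; it fixes every point of $E\times\{q_0\}$ (since $\tau$ takes values in $[0,1]$ and $q_0=p_B([e,0])$ is fixed by the scalar action) and satisfies $g(t,z)=z$ for $z\notin M$ (there $\rho\equiv0$, so $\tau\equiv1$). Thus $g([0,1]\times(E\times\{q_0\}))=E\times\{q_0\}\subset\Omega$, and since $[0,1]$ is compact and $\Omega$ open, $g\inv(\Omega)$ is an open neighbourhood of $[0,1]\times(E\times\{q_0\})$, hence contains $[0,1]\times\Omega'$ for some open $\Omega'$ with $E\times\{q_0\}\subset\Omega'\subset\Omega$; because $g(t,\cdot)=\id$ off $M$ one may also take $\Omega'=\Omega$ off $M$. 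I would also record that $\Phi$ induces $\Id_Q$, hence so does each $\Phi_t^\rho$, so that it makes sense to ask whether $\Phi_t^\rho|_{p\inv(\Omega')}$ is of type $\F$.

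For $t\in(0,1]$ I would give an explicit local holomorphic extension. The key point is that $\tau(t,z)=1-(1-t)\rho(z)\geq t>0$ for all $z$, so $\tau(t,\cdot)$ is nowhere zero. Given $x_0\in p\inv(\Omega')$ with $z_0=p(x_0)$ and $c_0=\tau(t,z_0)>0$, one has $p(c_0\cdot x_0)=c_0\cdot z_0=g(t,z_0)\in\Omega$, so $\Phi$ has a local holomorphic extension $\Psi\colon U\times U\to X$ on a $G$-saturated neighbourhood $U$ of $c_0\cdot x_0$. On a small $G$-saturated neighbourhood $U'$ of $x_0$ (small enough that $\tau(t,p(x))\cdot x,\ \tau(t,p(x))\cdot y\in U$ for $x,y\in U'$ and that the values below lie in $X$) I would set
$$
\Psi_t^\rho(x,y)=\tau(t,p(x))\inv\cdot\Psi\bigl(\tau(t,p(x))\cdot x,\ \tau(t,p(x))\cdot y\bigr).
$$
For fixed $x$, with $c=\tau(t,p(x))\neq0$, the map $y\mapsto c\inv\cdot\Psi(c\cdot x,c\cdot y)$ is holomorphic and $G$-equivariant (the scalar action commutes with the $G$-action and $\Psi(c\cdot x,\cdot)$ is $G$-equivariant) and induces the identity on the quotient (the scalings cancel and $\Psi(c\cdot x,\cdot)$ induces the identity); $\Psi_t^\rho$ is smooth in $(x,y)$ and $G$-invariant in $x$ because $\tau$ is smooth and $G$-invariant in $x$, the scalar action is smooth, and $\Psi$ is smooth and $G$-invariant in its first slot; and $\Psi_t^\rho(x,x)=\Phi_t^\rho(x)$. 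So $\Psi_t^\rho$ is a local holomorphic extension of $\Phi_t^\rho$, and $\Phi_t^\rho|_{p\inv(\Omega')}$ is a $G$-diffeomorphism of type $\F$ for every $t\in(0,1]$.

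For $t=0$ I would argue by approximation. By Lemma \ref{lem:parameters.typeF}(2), applied over $p\inv(\Omega)$ where $\Phi$ is of type $\F$, the family $\Phi_t^\rho$ is a smooth homotopy, so $\Phi_t^\rho\to\Phi_0^\rho$ in the $\ci$-topology on $p\inv(\Omega')$ as $t\to0^+$. The limit $\Phi_0^\rho$ is a strong $G$-homeomorphism by Corollary \ref{cor:cutoffdeformation}, and since $\Phi$ (being of type $\F$) is smooth, one checks, as in the analysis of the limit $t\to0$ in Corollary \ref{cor:fundam-with-S}, that $\Phi_0^\rho$ is a $G$-diffeomorphism (it agrees with the composition of local biholomorphisms above, now with $c=1-\rho>0$, where $\rho<1$, and with $\Phi_0=\lim_t\Phi_t$, of type $\F$ by Lemma \ref{lem:parameters.typeF}(2), where $\rho\equiv1$). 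Since type $\F$ is a local property and the proof of Corollary \ref{cor:typeFclosed} is local, applying that corollary to $\Phi_{1/n}^\rho\to\Phi_0^\rho$ on $p\inv(\Omega')$ shows $\Phi_0^\rho|_{p\inv(\Omega')}$ is of type $\F$, finishing the argument.

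The main obstacle is precisely the case $t=0$ — equivalently the locus where $\tau(t,z)=0$, which by the bound $\tau(t,\cdot)\geq t$ is confined to $\{\rho=1\}$ at $t=0$. There the factor $\tau\inv$ in the explicit extension blows up, so one would otherwise have to verify directly that the normal‑derivative limit along the zero section (as in Lemma \ref{lem:fundamental} and Corollary \ref{cor:fundam-with-S}) is again a local holomorphic extension. The plan above sidesteps this: the $t>0$ maps are honestly of type $\F$ and converge smoothly, and type $\F$ is closed under $\ci$-limits, so the only remaining hand‑check is that $\Phi_0^\rho$ is a $G$-diffeomorphism, which is routine. Everything else — the tube‑lemma construction of $\Omega'$, the inequality $\tau(t,\cdot)\geq t$, and the type‑$\F$ verification for the explicit extension — is straightforward.
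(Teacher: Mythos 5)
Your proposal is correct, and the first two steps match the paper: the tube‑lemma construction of $\Omega'$ (the paper uses compactness of $E_0=M\cap(E\times\{q_0\})$, since $\alpha_t=\tau(t,\cdot)\cdot$ is the identity on both $E_0$ and the complement of $M$), and the explicit local holomorphic extension $\Psi_t^\rho(x,y)=\tau(t,p(x))\inv\cdot\Psi(\tau(t,p(x))\cdot x,\tau(t,p(x))\cdot y)$, valid wherever $\tau(t,\cdot)\neq 0$, which establishes that $\Phi_t^\rho$ is of type $\F$ for $t>0$ (the paper just says this is ``easy to see from the definition'').

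Where you genuinely diverge is the $t=0$ case. The paper handles it directly: taking a local holomorphic extension $\Psi(x',y)$ of $\Phi$ near $\tau(0,z)\cdot x$, it forms $\Psi_t(x',y)=t\inv\cdot\Psi(t\cdot x',t\cdot y)$, invokes Remark~\ref{rem:fundam} to get a matrix $(b_{ij}^t(x',y))$ that is jointly smooth and holomorphic in $y$ all the way down to $t=0$, and then evaluates at $t=\tau(0,z')$ to produce a local holomorphic extension of $\Phi_0^\rho$ that makes sense even where $\tau(0,z')=0$. You instead observe that $\Phi_t^\rho\to\Phi_0^\rho$ in the $\ci$-topology (smoothness in $(t,x)$ coming, as you note, from $\Phi_t^\rho(x)=\Phi_{\tau(t,p(x))}(x)$ and Lemma~\ref{lem:parameters.typeF}(2), itself a consequence of Remark~\ref{rem:fundam}), check that $\Phi_0^\rho$ is a $G$-diffeomorphism with inverse $(\Phi\inv)^\rho_0$, and invoke the $\ci$-closedness of type-$\F$ maps (Corollary~\ref{cor:typeFclosed}, which precedes this lemma and whose proof is local). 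Both routes are valid and both ultimately rest on the joint smoothness from Remark~\ref{rem:fundam}. Your version avoids having to exhibit the $y$-holomorphy of the matrix $(b_{ij}^{\tau(0,z')}(x',y))$ near $\{\tau(0,\cdot)=0\}$ by hand --- that is packaged into Corollary~\ref{cor:typeFclosed} --- at the modest cost of having to verify $\Phi_0^\rho$ is a $G$-diffeomorphism first; but the paper makes that same preliminary check before constructing the extension, so there is no real loss. It would be worth stating the $G$-diffeomorphism check a bit more explicitly than ``routine'': the clean way is exactly the smooth‑family observation (so $\Phi_0^\rho$ is smooth) together with $(\Phi_t^\rho)\inv=(\Phi\inv)_t^\rho$ and the fact that $\Phi\inv$ is also of type $\F$.
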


\begin{proof} Let $\alpha_t$, $t\in[0,1]$, be the endomorphism of $S\times Q_B$ which is induced by the multiplicative action of  $\tau(t,z)=1+(t-1)\rho(z)$.    
Then  $\alpha_t$ is  the identity outside of  $M$. The $\alpha_t$  are also the identity on  $S\times\{q_0\}$. Now $\Omega$ is a neighbourhood of the compact set $E_0=M\cap (E\times\{q_0\})$, and the $\alpha_t$ are the identity on $E_0$.  Hence there is a neighbourhood $\Omega'$ of $E_0$ inside $\Omega$ such that all the $\alpha_t$ send $\Omega'$ into $\Omega$. Of course, we can arrange that $\Omega=\Omega'$ outside $M$. 
Now consider the restriction of $\Phi$ to $p\inv(\Omega')$. 
It follows from Remark \ref{rem:fundam} and the argument of Corollary \ref{cor:cutoffdeformation}   that $\Phi^\rho_t(x)$ is smooth in 
$(t,x)$. Thus it is easy to see that each $\Phi^\rho_t$ is a $G$-diffeomor\-phism inducing the identity on $S\times Q_B$ (with inverse constructed from $\Phi\inv$).
From the definition it is easy to see that  $\Phi^\rho_t$ is of type $\F$ for $t\neq 0$. For $t=0$, fix a point $x\in p\inv(\Omega')$. We may assume that $\Phi$ has a local holomorphic extension $\Psi(x',y)$ for $x'$ and $y$ in a neighbourhood of $\tau(0,z)\cdot x$. Then $\Psi_t(x',y)=t\inv\cdot\Psi(t\cdot x',t\cdot y)$ corresponds to a matrix $(b_{ij}^t(x',y))$ which is smooth in $(t, x',y)$ and holomorphic in $y$. It follows that  $\Psi_{\tau(0,z')}(x',y)$ is a local holomorphic extension of $\Phi_{\tau(0,z')}(x')$ since $(b_{ij}^{\tau(0,z')}(x',y))$   is holomorphic in $y$. Thus $\Phi^\rho_t$ is a $G$-diffeomorphism of type $\F$ over $p\inv(\Omega')$ for all $t\in[0,1]$.
\end{proof}

 \begin{lemma}\label{lem:Fextension}
Let $K\subset S$ be compact. Suppose that $\Phi$ is a $G$-diffeomorphism of type $\F$ over a neighbourhood $\Omega$ of the closed subset $E\times\{q_0\}$ of $S\times Q_B$.  Also suppose that $\Phi$ is special over  a neighbourhood $U=U'\times U''$ of $K\times\{q_0\}$  where $\overline{U}$ is compact. Then, perhaps shrinking $U'$ and $U''$, there is a homotopy $\Phi_t$ of $\Phi$ with the following properties.
\begin{enumerate}
\item $\Phi_t(x)=\Phi(x)$ for all $t$ if  $x$ is off the inverse image of a compact subset $M$ of $U$.
\item  Over a neighbourhood   of the set $(K\cup(E\cap \overline{U'}))\times U''$, $\Phi_0$ is a $G$-diffeo\-mor\-phism of type $\F$.
\item $\Phi_t=\Phi$ over a neighbourhood of $E\times\{q_0\}$ for all $t$. Hence $\Phi_0$ is a $G$-diffeomor\-phism of type $\F$ over a neighbourhood $\Omega'$ of $(E\cup K)\times\{q_0\}$. 
\item $\Omega'=\Omega$ on the complement of $M$.
\end{enumerate}
 \end{lemma}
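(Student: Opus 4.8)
The plan is to reduce to a local gluing of two homotopies: the one coming from the ``cut-off deformation'' $\Phi^\rho_t$ of Corollary \ref{cor:cutoffdeformation} (which makes $\Phi$ special, hence of type $\F$, near $K\times\{q_0\}$ but may destroy the property elsewhere) and the trivial constant homotopy (which keeps $\Phi$ of type $\F$ near $E\times\{q_0\}$). First I would apply Corollary \ref{cor:cutoffdeformation} and Lemma \ref{lem:type.F.remains} to the given $\Phi$, choosing the cut-off function $\rho$ supported in a compact $M\subset U$ and equal to $1$ on a neighbourhood of $K\times\{q_0\}$; the output is a homotopy $\Phi^\rho_t$ through strong $G$-homeomorphisms, equal to $\Phi$ off $p\inv(M)$, with $\Phi^\rho_0=\Phi_0$ special over a neighbourhood of $K\times\{q_0\}$ and, by Lemma \ref{lem:type.F.remains}, of type $\F$ over $p\inv(\Omega')$ for a shrunken neighbourhood $\Omega'$ of $E_0=M\cap(E\times\{q_0\})$ with $\Omega'=\Omega$ off $M$. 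This immediately gives (1), (3) and (4) with $M$ as chosen and $\Omega'$ the union of $\Omega\setminus p\inv(M)$ with a neighbourhood of $(E\cup K)\times\{q_0\}$.

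The remaining issue is (2): we must know that $\Phi_0$ is of type $\F$ not just near $E\times\{q_0\}$ and near $K\times\{q_0\}$ separately, but over a full neighbourhood of the set $(K\cup(E\cap\overline{U'}))\times U''$. Near $K\times\{q_0\}$ the map $\Phi_0$ is special, and a special strong $G$-homeomorphism is visibly of type $\F$ (it comes from a smooth map $\sigma\colon S\to L_\vb$ via $\Phi_0(s,[g,w])=\sigma(s)([g,w])$, and $L_\vb\subset\Aut_{Q_B}(T_B)^G$ acts holomorphically on the fibres, so one takes $\Psi((s,x),(s',x'))=\sigma(s)(x')$ as local holomorphic extension) — this is exactly the content flagged in the Remark after Lemma \ref{lem:parameters}. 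Here we need $\sigma$ smooth rather than merely continuous, which is where Lemma \ref{lem:parameters.typeF}(2) and Lemma \ref{lem:extension} come in: since $\Phi$ was assumed of type $\F$ over $\Omega$, hence in particular smooth there, and since being special only constrains $\Phi$ on the slice directions, I would arrange (by a prior application of Lemma \ref{lem:extension} on the complement of a neighbourhood of $E\times\{q_0\}$, keeping $\Phi$ fixed where it is already smooth) that the $\sigma$ produced over $U$ is smooth. Shrinking $U'$ and $U''$ so that $\overline{U'}\times\overline{U''}\subset U$ and so that the type-$\F$ neighbourhood $\Omega$ actually contains $(E\cap\overline{U'})\times U''$ (possible because $E\cap\overline{U'}$ is compact and $\Omega$ is open around $E\times\{q_0\}$), the two type-$\F$ regions — the neighbourhood of $K\times\{q_0\}$ on which $\Phi_0$ is special-hence-$\F$, and the part of $\Omega'$ lying over $(E\cap\overline{U'})\times U''$ on which $\Phi_0=\Phi$ is $\F$ — cover a neighbourhood of $(K\cup(E\cap\overline{U'}))\times U''$. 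That yields (2), with $\Omega'$ as in (3) and (4).

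The main obstacle I anticipate is purely bookkeeping about the neighbourhoods: one must simultaneously (a) keep the cut-off support $M$ inside $U$ and away from $E\times\{q_0\}$ so that $\Phi_t=\Phi$ there, (b) ensure the deformed region $\Omega'$ still contains $(E\cup K)\times\{q_0\}$, (c) shrink $U'\times U''$ so the special region over $U$ genuinely extends in the $U''$-direction to give a type-$\F$ neighbourhood of $(K\cup(E\cap\overline{U'}))\times U''$ and not merely of $K\times\{q_0\}$, and (d) invoke Lemma \ref{lem:extension} only on a closed set on which $\Phi$ is already smooth so that the homotopy it produces is compatible with the constant homotopy near $E\times\{q_0\}$. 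None of these steps is deep — each is an open-set-shrinking argument using compactness of $K$ and $E\cap\overline{U'}$ and local finiteness — but they must be done in the right order, and the type-$\F$ property must be tracked through each of Corollary \ref{cor:cutoffdeformation}, Lemma \ref{lem:type.F.remains}, and the special-implies-$\F$ observation. I would organise the proof as: (i) shrink $U=U'\times U''$ and choose $\rho$, $M$; (ii) apply Corollary \ref{cor:cutoffdeformation} and Lemma \ref{lem:type.F.remains} to get $\Phi^\rho_t$ and verify (1),(3),(4); (iii) identify $\Phi_0=\Phi^\rho_0$ as special over a neighbourhood of $K\times\{q_0\}$ via Lemma \ref{lem:parameters.typeF}(1), upgrade $\sigma$ to smooth using Lemma \ref{lem:extension}, and conclude $\Phi_0$ is of type $\F$ there; (iv) combine with the type-$\F$ property inherited from $\Omega$ to get (2).
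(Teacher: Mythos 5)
Your proposal starts from a misreading of the hypothesis and, as a result, misses the construction that makes the lemma work.

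First, the cut-off deformation $\Phi^\rho_t$ of Corollary \ref{cor:cutoffdeformation} is not used in the paper's proof and cannot do anything useful here. The lemma already \emph{assumes} $\Phi$ is special over $U$, i.e.\ $\Phi(s,[g,w])=(s,\sigma(s)([g,w]))$ for a continuous $\sigma\colon U'\to L_\vb$. Since $\sigma(s)$ is a $G$-vector bundle automorphism it commutes with the fibre scaling, so $\Phi_t=\Phi$ on $U$, and with the cutoff supported in $M\subset U$ one gets $\Phi^\rho_t\equiv\Phi$ everywhere. Your step (ii) is a no-op, and your step (iii) would then be asserting that $\Phi^\rho_0=\Phi$ itself is of type $\F$ near $K\times\{q_0\}$, which is exactly what is \emph{not} known: special only gives continuous $\sigma$, not smooth, and your early claim ``special, hence of type $\F$'' is false. (You half-recognise this later, but the organising plan at the end still treats it as if the endpoint of the scalar homotopy were already of type $\F$.)

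Second, the essential device you do not supply is how the Lemma \ref{lem:extension} homotopy of $\sigma$ is \emph{incorporated into} a single homotopy $\Phi_t$ satisfying (1)–(4). The paper first chooses a closed set of the form $E'\times\overline{U''}$ over which $\Phi$ is type $\F$ (so $\sigma$ is smooth on $E'\cap U'$), applies Lemma \ref{lem:extension} to get $\sigma(t,s)$ with $\sigma(1,s)=\sigma(s)$, $\sigma(t,s)=\sigma(s)$ on $E'\cap U'$, and $\sigma(0,\cdot)$ smooth, and then — this is the key point — chooses a smooth function $\alpha(t,s,q)$ equal to $t$ near $K\times\{q_0\}$ and equal to $1$ off a compact $M\subset U$, and sets
$$
\Phi_t(s,x)=\bigl(s,\;\sigma\bigl(\alpha(t,s,p_B(x)),s\bigr)(x)\bigr).
$$
The reparametrisation by $\alpha$ simultaneously produces (1) (support in $p^{-1}(M)$), (3) (constancy over $(E'\cap U')\times U''$, hence over a neighbourhood of $E\times\{q_0\}$), and the type-$\F$ property of $\Phi_0$ on the required region (smoothness of $\sigma(0,\cdot)$ near $K$, smoothness of $\sigma(\cdot)$ on $E'\cap U'$, and $\Phi_0=\Phi$ off $M$). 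Your proposal gestures at needing a cutoff ``away from $E\times\{q_0\}$'' but conflates it with the scalar cutoff $\rho$ from Corollary \ref{cor:cutoffdeformation}, which serves an entirely different purpose (radial rescaling, not truncation of the homotopy parameter). Without the $\alpha$-construction, the smoothing of $\sigma$ cannot be made compactly supported, and (1) and (3) are not achieved.
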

\begin{proof}
We may assume that $\Phi$ is a $G$-diffeomorphism of type $\F$ over a closed neighbourhood  of  the set $(E\cap\overline{U'})\times\{q_0\}$. Since $E\cap\overline{U'}$ is compact, we may assume that the closed neighbourhood is of the form $E'\times\overline{U''}$.  Since $\Phi$ is special over $U$, there is a corresponding continuous $\sigma\colon U'\to L_\vb$ which is   smooth on $E'\cap U'$. By Lemma \ref{lem:extension} there is a homotopy $\sigma(t,s)$  of $\sigma$, $t\in[0,1]$, $s\in U'$, such that $\sigma(1,s)=\sigma(s)$, $\sigma(t,s)=\sigma(s)$ on $E'\cap U'$ and $\sigma(0,s)$ is smooth.
Now choose a smooth function $\alpha(t,s,q)$, $t\in[0,1]$, such that $\alpha(t,s,q)=t$ for $(s,q)$ in a neighbourhood of $K\times\{q_0\}$ and such that $\alpha(t,s,q)=1$ for $(s,q)$ off of a compact subset $M$ of $U'\times U''$. Let $(s,x)\in S\times T_B$. Then $\Phi_t(s,x)=(s,\sigma(\alpha(t,s,p_B(x)),s)(x))$ is a homotopy of strong $G$-homeomorphisms over $U'\times U''$, where $\Phi_1=\Phi$, $\Phi_t=\Phi$ off of the inverse image of  $M$,  and $\Phi_0$ is a $G$-diffeomorphism of type $\F$ over the interior of $E'\times U''$ and over a neighbourhood of $K\times\{q_0\}$, because of smoothness of the corresponding $\sigma$. We set $\Phi_t=\Phi$ off of the inverse image of $M$. By construction, $\Phi_t=\Phi$ over $(E'\cap U')\times U''$ and off the inverse image of $M$,  hence $\Phi_t=\Phi$   over a neighbourhood of $E\times\{q_0\}$ which we can take to be the same as  $\Omega$ when intersected with the complement of $M$.
\end{proof}

\begin{theorem}\label{thm:deformstrong}
Let $X$ and $Y$ be Stein $G$-manifolds locally $G$-biholomorphic over a common quotient $Q$. Let $\Phi\colon X\to Y$ be a strong $G$-homeomorphism. Then there is a homotopy of $\Phi$, through strong $G$-homeomorphisms, to a $G$-diffeomorphism of type $\F$.
\end{theorem}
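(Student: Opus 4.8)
The plan is to globalize the local deformations constructed in Lemmas \ref{lem:type.F.remains} and \ref{lem:Fextension} by an exhaustion argument, stratum by stratum. First I would reduce to the case where $X$ and $Y$ are a single standard neighbourhood $S\times T_B$: since composition with $G$-biholomorphisms inducing $\Id_Q$ preserves type $\F$ and since $X$ and $Y$ are locally $G$-biholomorphic over $Q$, after passing to such a chart we may assume $X=Y=S\times T_B$. Then $\Phi$ is identified with a family $\Phi(s,\cdot)$ as at the end of Section \ref{sec:locallifting}, and the fibre over $q_0\in Q_B$ over the minimal stratum is $S\times F$ with $F=G\times^H\NN(W)$.

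Next I would organize the induction. The Luna strata of $Q$ form a locally finite stratification by locally closed smooth subvarieties; order them so that lower-dimensional strata come first (more precisely, so that the closure of each stratum is contained in the union of itself and earlier strata). The induction hypothesis at stage $k$ is that $\Phi$ has already been deformed, through strong $G$-homeomorphisms, to a map that is of type $\F$ over a neighbourhood of the union of the closures of the first $k$ strata. The base case is the minimal stratum: here Lemma \ref{lem:parameters} shows that after the deformation $\Phi_t(x)=\tau(t,z)\inv\cdot\Phi(\tau(t,z)\cdot x)$ the limit $\Phi_0$ is special over a neighbourhood of any compact $K\times\{q_0\}$, and specialness means it is given by a continuous $\sigma\colon S\to L_\vb$; applying Lemma \ref{lem:extension} (via Lemma \ref{lem:Fextension}) smooths $\sigma$, producing a $G$-diffeomorphism of type $\F$ over such a neighbourhood. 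Exhausting the minimal stratum of $Q_B$ — which is a point, so the issue is really exhausting $S$ by compact sets $K_1\subset K_2\subset\cdots$ — and using the cut-off functions $\rho$ so that each successive deformation is stationary off a compact set and equals the previous map over the already-treated region, one passes to the limit of a locally finite composition of homotopies and obtains a strong $G$-homeomorphism of type $\F$ over a neighbourhood of the whole closed minimal stratum.

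For the inductive step I would work near a point $q_1$ of the $(k{+}1)$st stratum. By the slice theorem a neighbourhood of the fibre over $q_1$ is again a standard neighbourhood $S_1\times T_{B_1}$, whose minimal stratum (the image of $S_1\times\{q_1'\}$) is an open piece of the $(k{+}1)$st stratum of $Q$, and whose other strata correspond to strata of $Q$ that meet the closures of earlier ones. The map $\Phi$ is already of type $\F$ over a neighbourhood $\Omega$ of the closed set $E\times\{q_1'\}$, where $E\subset S_1$ is the (closed) trace of the previously-treated strata. Lemma \ref{lem:Fextension} is exactly tailored to this: given that $\Phi$ is special over a neighbourhood of a compact $K\subset S_1$ (which we arrange by first applying the scaling deformation $\Phi\mapsto\Phi_0$ of Corollary \ref{cor:cutoffdeformation} near $K\times\{q_1'\}$, noting by Lemma \ref{lem:type.F.remains} that this does not destroy type $\F$ over the smaller $\Omega'$ and is stationary off a compact set), it produces a homotopy, stationary off a compact set and stationary near $E\times\{q_1'\}$, after which $\Phi_0$ is of type $\F$ over a neighbourhood of $(E\cup K)\times\{q_1'\}$. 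Exhausting the $(k{+}1)$st stratum by compacta and composing these locally finite homotopies — all stationary outside compact sets and all stationary over the previously-treated region — completes stage $k{+}1$. Since the stratification is locally finite, only finitely many stages meet any compact subset of $Q$, so the infinite composition of all these homotopies is again a homotopy of strong $G$-homeomorphisms, and its endpoint is of type $\F$ everywhere.

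The main obstacle is bookkeeping rather than any new idea: one must check that the successive homotopies, each guaranteed only to be stationary over the already-treated closed set and outside a compact set, can be concatenated into a single well-defined homotopy of strong $G$-homeomorphisms in the sense of Section \ref{sec:strong}. This requires verifying that the associated matrices $b^t_{ij}$ of all the pieces patch to a jointly continuous family in $(t,q)$, which follows from Corollary \ref{cor:cutoffdeformation} and Lemma \ref{lem:type.F.remains} together with local finiteness of the stratification; and one must be careful that when passing from one standard chart to another the map is genuinely of type $\F$, which is a local condition (Definition \ref{def:typeF}) and hence transfers. The case of a strict $G$-diffeomorphism is identical, replacing ``strong $G$-homeomorphism'' by ``strict $G$-diffeomorphism'' throughout and using the smooth versions of Corollary \ref{cor:cutoffdeformation} and Lemma \ref{lem:parameters.typeF}(2).
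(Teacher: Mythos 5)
Your proposal is correct and follows essentially the same route as the paper: an outer finite induction over the dimensions of the Luna strata, with an inner countable induction exhausting each stratum by compacta, using the cutoff deformation (Corollary \ref{cor:cutoffdeformation}), Lemma \ref{lem:type.F.remains}, and Lemma \ref{lem:Fextension} to produce local homotopies stationary off compacts and over previously-treated regions, and then passing to the limit by local finiteness. The only caveat is presentational: your opening sentence reads as though one can globally replace $X$ and $Y$ by a single standard neighbourhood, which is of course only a local reduction performed chart by chart inside each $U_j$ (as your subsequent paragraphs make clear); and the paper uses a locally finite collection of compacta covering $Z_k$ rather than a nested exhaustion, but the two devices are interchangeable here.
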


\begin{proof}
Consider the stratification of $Q$ by the connected components of the Luna strata. There are at most countably many strata. Let $Z_k$ denote the union of the strata of dimension $k$.  We will inductively find homotopies of $\Phi$ such that it becomes a $G$-diffeomorphism of type $\F$ over a  neighbourhood $\Omega$ of $Z_0 \cup \cdots \cup Z_{k}$.    Each step of the finite induction will be done by a  countable induction.   
 
 Let $k\geq 0$. Then $R_{0}=Z_0\cup\cdots\cup Z_{k-1}$ is closed (and perhaps empty). Suppose by induction we have shown that, modulo a homotopy of strong $G$-homeomorphisms, $\Phi$ is a $G$-diffeomorphism of type $\F$ over a neighbourhood $\Omega_0$ of $R_0$. 
 Let $K_1$, $K_2,\dots$ be a locally finite collection of compact connected subsets of $Z_k$ whose union is $(R_0\cup Z_k)\setminus\Omega_0$. Let $U_j$ be a neighbourhood of $K_j$ in $Q$. We may assume that $X_{U_j}\simeq Y_{U_j}\simeq S_j\times T_{B_j}$ is a standard neighbourhood. Let $p_j\colon T_{B_j}\to Q_{B_j}$ denote the quotient mapping and let $q_j=p_j([e,0])$. We may assume that   $S_j\times\{q_j\}$ is the stratum of $S_j\times Q_{B_j}$ containing $K_j$. We may assume that we have a $G$-biholomorphism of $X_{U_j}$ and $Y_{U_j}$ inducing the identity on 
 $U_j$. So we consider the restriction of $\Phi$ to $X_{U_j}$ to be a strong $G$-homeomorphism of $X_{U_j}$. We may assume that the $\overline{U_j}$ are locally finite on $R_\infty=R_0\cup Z_k$ and that no $\overline U_j$ intersects $R_{0}$. By induction assume that $\Phi$ is a $G$-diffeomorphism of type $\F$ over a neighbourhood $\Omega_{n-1}$ (in $Q$) of $R_{n-1}=R_0\cup K_1\cup\cdots\cup K_{n-1}$. We have the first step of the induction with $\Omega_0$ our neighbourhood of $R_0$. 
Using Corollary \ref{cor:cutoffdeformation} and Lemmas \ref{lem:type.F.remains} and \ref{lem:Fextension} we can find a homotopy $\Phi_t$ of $\Phi$ which equals $\Phi$ off of the inverse image of a compact subset $M_n\subset U_n$ such that $\Phi_0$ is a $G$-diffeomorphism of type $\F$ over a neighbourhood $\Omega_n$ of $R_n$, where $\Omega_n=\Omega_{n-1}$ on the complement of $M_n$. We can consider our homotopy as taking place in the space of strong $G$-homeomorphisms of $X$ and $Y$.   Clearly, by local finiteness of the $\overline{U}_m$, in the limit we   construct  a homotopy $\Phi_t$ of $\Phi$ where $\Phi_0$ is a $G$-diffeomorphism of type $\F$ over a neighbourhood of $R_\infty$. This completes the induction.
 \end{proof}
 
Our procedure of deforming strong $G$-homeomorphisms to $G$-diffeomorphisms of type $\F$, applied to a strict $G$-diffeomorphism, gives a homotopy of  strict $G$-diffeomorphisms to a $G$-diffeomorphism of type $\F$.  The key technical point is Lemma \ref{lem:delta-strict}. Hence we have:
\begin{theorem} \label{thm:deformstrict}
Let $X$ and $Y$ be Stein $G$-manifolds locally $G$-biholomorphic over a common quotient $Q$. Let $\Phi\colon X\to Y$ be a strict $G$-diffeomorphism. Then there is a homotopy of $\Phi$, through strict $G$-diffeomorphisms, to a $G$-diffeomorphism   of type $\F$.
\end{theorem}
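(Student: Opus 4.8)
The plan is to imitate the proof of Theorem \ref{thm:deformstrong} essentially verbatim, replacing ``strong $G$-homeomorphism'' by ``strict $G$-diffeomorphism'' at every occurrence and using the smooth versions of the auxiliary statements in place of the merely continuous ones. Recall the shape of that argument: one stratifies $Q$ by the connected components of the Luna strata, lets $Z_k$ be the union of those of dimension $k$, and inductively deforms $\Phi$, through maps of the class in question, so that after the $k$-th stage the map is of type $\F$ over a neighbourhood of $Z_0\cup\cdots\cup Z_k$; each stage is itself a countable induction over a locally finite family of compact connected pieces of $Z_k$, and the deformations are cut off to have compact support so that local finiteness makes the resulting infinite composition converge. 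I would run this entire scheme unchanged; what has to be verified is that the class of strict $G$-diffeomorphisms is preserved by each operation that occurs.

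The heart of the matter is the local deformation step. Working in a standard neighbourhood and using that $X$ and $Y$ are locally $G$-biholomorphic over $Q$, together with the fact that composing with a $G$-biholomorphism inducing $\Id_Q$ takes strict maps to strict maps and type-$\F$ maps to type-$\F$ maps, I reduce to the case $X=Y=S\times T_B$ with $\Phi$ a strict $G$-diffeomorphism inducing $\Id_Q$. I then form the scaling homotopy $\Phi_t(x)=t\inv\cdot\Phi(t\cdot x)$ for $t\in(0,1]$. For each $t>0$ the map $\Phi_t$ is conjugate to $\Phi$ by the $\Id_Q$-inducing scaling diffeomorphism of $S\times T_B$, hence is again a strict $G$-diffeomorphism; by Remark \ref{rem:fundam} the family depends smoothly on $(t,x)$ and extends smoothly across $t=0$. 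The crucial point --- this is where Lemma \ref{lem:delta-strict} enters, and the reason the statement is not automatic --- is that the limit $\Phi_0=\delta\Phi$ is the normal derivative of $\Phi$ along the closed orbit $Z$ and lies in $\Aut_\vb(T_W)^G$, i.e.\ it is genuinely \emph{complex}-linear on the fibres rather than merely a real-linear vector bundle map. Combining this with the smoothness from Remark \ref{rem:fundam}, the map $\Phi_0$ is given by a smooth map $S\to L_\vb$, hence is a $G$-vector bundle automorphism and in particular is of type $\F$. In contrast to the strong case there is no need to invoke Lemma \ref{lem:extension} to smooth a merely continuous parameter map.

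It remains to record the strict analogues of the patching lemmas of Section \ref{sec:reductiontoF}. Corollary \ref{cor:cutoffdeformation} and Lemmas \ref{lem:type.F.remains} and \ref{lem:Fextension} carry over with the same proofs: the matrices $b_{ij}^{\tau(t,z)}(x)$ that appear are now smooth rather than just continuous (Remark \ref{rem:fundam}), each cut-off scaled map is conjugate to $\Phi$ by an $\Id_Q$-inducing diffeomorphism and hence strict, restriction of a strict map is strict, and over the region where type $\F$ has already been achieved the limiting map is of type $\F$ and so strict. Feeding these into the global induction exactly as in the proof of Theorem \ref{thm:deformstrong} yields a homotopy of strict $G$-diffeomorphisms from $\Phi$ to a $G$-diffeomorphism of type $\F$. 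As indicated, the one genuinely new ingredient compared with Theorem \ref{thm:deformstrong} is Lemma \ref{lem:delta-strict}; once complex-linearity of the normal derivative of a strict $G$-diffeomorphism is available, everything else is bookkeeping parallel to the strong case, and I expect the only mild subtlety to be the routine but slightly tedious verification that strictness survives each step.
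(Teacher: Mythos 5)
Your proposal matches the paper's own (very terse) proof: the paper literally states that one runs the procedure of Theorem~\ref{thm:deformstrong} with strict $G$-diffeomorphisms in place of strong $G$-homeomorphisms, and that the key technical input is Lemma~\ref{lem:delta-strict}, which is exactly what you identify. Your observation that the smoothing step via Lemma~\ref{lem:extension} becomes superfluous in the strict case is also correct.

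One caveat worth flagging, since you invoke Remark~\ref{rem:fundam} and the matrices $b_{ij}^{\tau(t,z)}$: a strict $G$-diffeomorphism that is not yet of type~$\F$ need not admit a smooth (or even continuous) matrix $(a_{ij})$ with $\Phi^*f_i=\sum a_{ij}f_j$ --- strictness only controls the reduced fibres, whereas the $f_i$ live on the possibly non-reduced fibres, and Example~\ref{ex:not strong} shows the strong-style matrix representation can genuinely fail for a strict map. So the verifications that the scaling and cut-off deformations are smooth across $t=0$ (resp.\ $\tau=0$) should not be routed through Remark~\ref{rem:fundam}; instead one argues directly: $\Phi_t^\rho$ is given by the explicit formula $\tau(t,z)^{-1}\cdot\Phi(\tau(t,z)\cdot x)$, whose smooth extension across the locus $\tau=0$ follows from an ordinary Taylor expansion of the smooth map $\Phi$ along the zero section, with the limit equal to the normal derivative $\delta\Phi$, whose membership in $\Aut_\vb(T_W)^G$ is precisely Lemma~\ref{lem:delta-strict}. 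On the region where $\Phi$ is already of type~$\F$, the matrices do exist smoothly by Proposition~\ref{prop:aij-exist}, and there Remark~\ref{rem:fundam} and Lemma~\ref{lem:type.F.remains} apply as you say. With that adjustment your argument is complete and coincides with the paper's.
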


  \section{NHC-sections}\label{sec:NHC}
 We work towards proving the following theorem which, in light of Theorems \ref{thm:deformstrong} and \ref{thm:deformstrict}, completes our proof  of Theorem  \ref{thm:main4}.
\begin{theorem}\label{thm:main5} Let $X$ and $Y$ be Stein $G$-manifolds locally $G$-biholomorphic over a common quotient $Q$. 
Suppose that $\Phi\colon X\to Y$ is a $G$-diffeomorphism of type $\F$. Then there is a homotopy $\Phi_t$ of $G$-diffeomorphisms of type $\F$  where $\Phi_0=\Phi$ and $\Phi_1\colon X\to Y$ is a $G$-biholomorphism.
\end{theorem}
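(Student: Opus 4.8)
The plan is to run a Grauert--Cartan induction over a normal exhaustion of $Q$, using the Cartan-pair machinery developed in Section \ref{sec:NHC} and Section \ref{sec:Grauert}. The key point that circumvents the usual non-existence of a group scheme for $X$ is that one works throughout with the sheaf $\F$ of type-$\F$ diffeomorphisms and its ``Lie algebra sheaf'' $\LF$, and that over a standard neighbourhood $S\times T_B$ the scaling homotopy of Lemma \ref{lem:parameters.typeF} retracts $\F(S\times Q_B)$ onto the \emph{special} sections, which are just smooth maps $S\to L_\vb$ into the reductive, hence Oka, group $L_\vb$ (Corollary \ref{cor:Lvbreductive}).

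\textbf{Step 1 (local Oka lemma).} On a standard neighbourhood $X=S\times T_B$ I would show that every $\Phi\in\F(S\times Q_B)$ is homotopic, through type-$\F$ diffeomorphisms, to a $G$-biholomorphism; moreover the homotopy can be taken small when $\Phi$ is already close to a $G$-biholomorphism, and relative to a neighbourhood of a compact set on which $\Phi$ is already holomorphic. Indeed, $\Phi$ is joined to the special section $\Phi_0$ determined by a smooth $\sigma\colon S\to L_\vb$ by Lemma \ref{lem:parameters.typeF}, and since $S$ is Stein and $L_\vb$ is a complex reductive group (hence Oka), the classical Oka--Grauert principle deforms and approximates $\sigma$ by a holomorphic map, in a relative fashion; the resulting special section is a $G$-biholomorphism. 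Combined with the standard Oka--Weil approximation for maps of a Stein manifold into a reductive group, this also lets one prescribe that the resulting $G$-biholomorphism approximate $\Phi$ well on a given Runge compact subset where $\Phi$ is holomorphic.

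\textbf{Step 2 (linear, then non-linear, Cartan splitting).} For a Cartan pair $(A,B)$ in $Q$ with $A\cup B$ inside a Stein set, I would first prove an additive splitting with estimates: every $D\in\LF$ on a neighbourhood of $A\cap B$ can be written $D=D_A-D_B$ with $D_A\in\LF$ near $A$, $D_B\in\LF$ near $B$, the assignment $D\mapsto(D_A,D_B)$ being a bounded linear operator. This is where the main work lies: $\LF$ is not a coherent sheaf of $\O_Q$-modules but only the $\ci(X)^G$-module generated by the coherent sheaf $\Der_Q(X)^G$ (Lemma \ref{lem:sheaves-derivations}), so one cannot invoke Cartan's Theorem B directly; instead one combines coherence of $\Der_Q(X)^G$, Cartan's Theorem B on the Stein base, the closedness result of Section \ref{sec:topologies} (Theorem \ref{thm:closed}) which makes $\LF(Q)$ a Fr\'echet nuclear space, topological tensor product arguments to pass between $\O$ and $\ci$, and the open mapping theorem to obtain boundedness. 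Exponentiating via Theorem \ref{thm:Dexists} and Corollary \ref{cor:expLFistypeF} and running a Newton-type iteration then upgrades this to the non-linear Cartan lemma: a section $\gamma\in\F$ sufficiently close to the identity over a neighbourhood of $A\cap B$ factors as $\gamma=\alpha\,\beta^{-1}$ with $\alpha\in\F$, $\beta\in\F$ close to the identity over neighbourhoods of $A$, $B$ respectively.

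\textbf{Step 3 (the induction and the obstacle).} Fix the exhaustion of $Q$ by a locally finite family of Cartan pairs, with the $B$-parts contained in standard neighbourhoods. Assume inductively that $\Phi$ has been deformed, through type-$\F$ diffeomorphisms, to a $G$-biholomorphism over a neighbourhood of $A$. Apply Step 1 over the standard neighbourhood containing $B$ to deform $\Phi|_B$ through type-$\F$ diffeomorphisms to a $G$-biholomorphism $\Phi_B$ that approximates the (holomorphic) $\Phi$ on $A\cap B$; then the discrepancy $\gamma$ between the holomorphic germ of $\Phi$ over $A$ and $\Phi_B$ is close to the identity over a neighbourhood of $A\cap B$, and Step 2 splits it as $\alpha\beta^{-1}$. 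Gluing $\Phi|_A\circ\alpha$ with $\Phi_B\circ\beta$ over $A\cup B$ yields a $G$-biholomorphism over a neighbourhood of $A\cup B$, close to $\Phi$ over $A$; interpolating via $\exp$ of the relevant logarithms and patching with $G$-invariant cut-off functions (Lemma \ref{lem:pi}) produces a homotopy of type-$\F$ diffeomorphisms from $\Phi$ to this new map, constant off the inverse image of a compact subset near $\overline{B\setminus A}$. Since the modifications are supported in a locally finite family and can be made to decay, iterating over the exhaustion and passing to the limit gives the desired homotopy of type-$\F$ diffeomorphisms ending at a global $G$-biholomorphism $\Phi_1\colon X\to Y$. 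I expect the bounded-linear Cousin/Cartan splitting for $\LF$ in Step 2 to be the principal difficulty, precisely because $\LF$ fails to be $\O_Q$-coherent and one must route everything through the closedness theorem of Section \ref{sec:topologies} and topological tensor products in order to obtain estimates strong enough for both the non-linear iteration and the convergence of the outer induction.
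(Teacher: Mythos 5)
Your overall strategy — a Grauert--Cartan argument running through $\exp$/$\log$ and the Fr\'echet machinery of Sections \ref{sec:typeF}--\ref{sec:topologies} — is the right one, but you follow the direct (Forstneri\v{c}-style) splitting-and-exhaustion iteration, whereas the paper routes the entire homotopy requirement through a cohomological vanishing statement. Concretely, the paper introduces the sheaves $\FF$ and $\LFF$ of \emph{parameterized} sections over a triple $N\subset H\subset C$ (with $N$ a deformation retract of $C$), proves pathwise connectedness, Runge density, and $H^1(Q,\FF)=0$ simultaneously by induction on special compact sets (Theorem \ref{thm:main}), and then derives Theorem \ref{thm:main5} by taking $C=[0,1]$, $N=\{1\}$, $H=\{0,1\}$: the cocycle $\Phi_i(t,\cdot)^{-1}\circ\Phi_j(t,\cdot)$ built from the local homotopies vanishes in $H^1$, and the splitting cochain $c_i(t,\cdot)$ produces in one stroke both the global $G$-biholomorphism (at $t=0$) and the connecting homotopy of type-$\F$ diffeomorphisms (over $t\in[0,1]$). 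Your direct iteration must instead build and patch the homotopy by hand at every stage of the exhaustion, which is why you are forced to load the local Oka lemma in Step 1 with relative and approximation requirements that the paper's formulation never needs.

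That approximation requirement in Step 1 is the concrete gap. You assert that, over a standard neighbourhood, the deformation of $\Phi$ to a $G$-biholomorphism can be made to approximate $\Phi$ on a Runge compact set where $\Phi$ is already holomorphic, by composing the scaling homotopy of Lemma \ref{lem:parameters.typeF} with Oka--Weil for maps $S\to L_\vb$. But the scaling homotopy $\Phi_t(x)=t^{-1}\cdot\Phi(t\cdot x)$ does not fix $\Phi$ anywhere where $\Phi$ is holomorphic; it moves $\Phi$ everywhere, and its limit $\Phi_0$ is the section determined by the normal-derivative map $\sigma\colon S\to L_\vb$, which in general differs from $\Phi$ even on sets where $\Phi$ was holomorphic (a holomorphic $\Phi$ satisfies $\Phi_t=\Phi$ only if $\Phi$ is quasilinear). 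So approximating $\sigma$ does not give approximation of $\Phi$. What is actually needed is a separate Runge-type density statement for type-$\F$ sections, namely the analogue of the paper's Proposition \ref{prop:1} and of statement (ii)$_k$ inside Theorem \ref{thm:main}; and for the Cartan splitting in your Step 2 you need the paper's Proposition \ref{prop:2}, proved via the NHC-function machinery, the adjoint-action computation in Lemma \ref{lem:liegroup}, and the matrix-splitting Lemma \ref{lem:matrixsplitting}. None of these come for free from Oka--Weil for $L_\vb$. Also a minor point: for the bare local Oka lemma the paper only uses that $S_i$ is smoothly contractible (a ball), so $\sigma$ can be contracted to a constant — which is already holomorphic; invoking the Oka property of $L_\vb$ here is unnecessary.
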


We  now consider parameterised families of automorphisms of $G$-saturated open subsets of $X$. We use the notation of \cite{Cartan58}. Let $C$ be a compact Hausdorff space with closed subsets $N\subset H\subset C$. We define a corresponding sheaf $\FF$ on   $Q$ as follows. Let $U\subset Q$ be open and consider the group $\FF(U)$ of $G$-diffeomorphisms $\Phi(t,x)$ of $X_U$, $t\in C$, such that:
\begin{enumerate}
\item $\Phi(t,x)$ is a continuous family of $G$-diffeomorphisms  of $X_U$ of type $\F$.  
\item For $t\in N$, $\Phi(t,x)$ is the identity, i.e., $\Phi(t,x)=x$ for all $x\in X_U$.
\item For $t\in H$, $\Phi(t,x)$ is holomorphic in $x$.
\end{enumerate}
Note that condition (1) is the same as saying that the   partial derivatives of $\Phi$ in $x$ are continuous in 
$(t,x)$. The topology on $\FF(U)$ is uniform convergence of partial derivatives on compact sets.
Similarly we  define the sheaf $\LFF$ of continuous families   of  $G$-invariant vector fields of type $\LF$ on open subsets $U$ of  $Q$. The vector fields are zero for  $t\in N$,  holomorphic for $t\in H$ and of type $\LF$ for all $t\in C$. The topology on $\LFF(U)$ is again uniform convergence of derivatives on compact sets and it is a Fr\'echet space. We can also view $\LFF(U)$ as a closed subspace of  $\CC(C)\comptensor\LF(U)$. Since $\LF(U)$ is a vector subspace of a nuclear space, hence nuclear,   the topology on the tensor product and the completion are unique (take the $\pi$ or $\varepsilon$ topology).  
Since $\LF(U)$ is Fr\'echet (Theorem \ref{thm:closed}), so is $\LFF(U)$.

We say that a continuous function $f(t,x)$ on $C\times X_U$ is an \emph{NHC-function\/} if it is $G$-invariant, zero for $t\in N$, holomorphic for $t\in H$ and smooth for all $t\in C$. The NHC-functions form a Fr\'echet space with the topology of uniform convergence of partial derivatives on compact sets. It is a closed subspace of $\CC(C)\comptensor\ci(X)^G$. We may consider  an NHC-function $f(t,x)$ as a function $\tilde f(t,q)$ for $q\in Q$. But then  $\tilde f$ may not be smooth in $q$ (see Example \ref{ex:not-smooth}).  

We now quote a lemma about surjections of Fr\'echet spaces from \cite[Appendix]{Cartan58}.
\begin{lemma}\label{lem:Cartan}
Let $\pi\colon E\to E'$ be a continuous linear surjection of Fr\'echet spaces. Let $B$ be a closed subset of the compact Hausdorff space $A$. Suppose that we have  continuous mappings $f'\colon A\to E'$ and $h\colon B\to E$ such that $f'$ agrees with $\pi\circ h$ on $B$. Then there is a continuous map $f\colon A\to E$ which extends $h$ such that $\pi\circ f=f'$.
\end{lemma}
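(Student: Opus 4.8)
The plan is to deduce the lemma from two standard facts about Fr\'echet spaces: the open mapping theorem and a continuous selection theorem. Since $\pi\colon E\to E'$ is a continuous linear surjection of Fr\'echet spaces, the open mapping theorem shows that $\pi$ is an \emph{open} map, and this is the only place where completeness of $E$ and $E'$ enters.

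Next I would recast the problem as a selection problem. Define a set-valued map $\Phi$ on $A$ by $\Phi(b)=\{h(b)\}$ for $b\in B$ and $\Phi(a)=\pi\inv(f'(a))$ for $a\in A\setminus B$. Each value is a nonempty closed convex subset of $E$: a singleton, or --- by surjectivity of $\pi$ --- an affine translate of the closed subspace $\ker\pi$. The crucial point is that $\Phi$ is lower semicontinuous. Over the open set $A\setminus B$ this is immediate from continuity of $f'$ together with openness of $\pi$. At a point $b_0\in B$: given an open $V\ni h(b_0)$, pick an open $V'$ with $h(b_0)\in V'\subseteq V$; then $\pi(V')$ is an open neighbourhood of $\pi(h(b_0))=f'(b_0)$, using $f'=\pi\circ h$ on $B$ and openness of $\pi$, so for $a$ near $b_0$ we get $f'(a)\in\pi(V')$, whence $V'$ --- and hence $V$ --- meets $\pi\inv(f'(a))=\Phi(a)$; when moreover $a\in B$, continuity of $h$ gives $h(a)\in V$ as well. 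Since $A$ is compact Hausdorff, hence paracompact, Michael's selection theorem, in the form valid for lower semicontinuous maps with closed convex values into a Fr\'echet space, yields a continuous selection $f\colon A\to E$ with $f(a)\in\Phi(a)$ for all $a$. Then $f|_B=h$ by construction, and $\pi\circ f=f'$ everywhere: for $a\notin B$ because $f(a)\in\pi\inv(f'(a))$, and for $b\in B$ because $\pi(f(b))=\pi(h(b))=f'(b)$. This $f$ is the desired extension.

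The only nontrivial inputs are the open mapping theorem and the Fr\'echet-valued selection theorem, and the step that needs care is the verification of lower semicontinuity of $\Phi$ --- this is exactly where the compatibility hypothesis $f'=\pi\circ h$ on $B$ and the openness of $\pi$ are used together. Should one wish to avoid Michael's theorem in this generality, an equivalent two-step route works: first produce, by a Bartle--Graves type argument, a continuous (generally nonlinear) section $\sigma\colon E'\to E$ of $\pi$ and set $g=\sigma\circ f'$; then $h-g|_B$ takes values in the closed Fr\'echet subspace $\ker\pi$, so it extends to a continuous $k\colon A\to\ker\pi$ by a Tietze--Dugundji extension theorem for Fr\'echet-space-valued maps on the paracompact space $A$, and $f=g+k$ satisfies $\pi\circ f=f'$ and $f|_B=h$. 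Either way the lemma follows; this is in the spirit of \cite[Appendix]{Cartan58}.
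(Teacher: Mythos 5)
Your argument is correct, but note that the paper does not actually prove this lemma: it is quoted verbatim from the appendix of \cite{Cartan58}, where Cartan establishes it by successive approximation --- using the open mapping theorem to lift $f'$ approximately at each stage and completeness of $E$ to sum the corrections into a convergent series. Your route is genuinely different and arguably cleaner: you package the problem as a continuous selection problem for the lower semicontinuous carrier $a\mapsto\{h(a)\}$ (on $B$) and $a\mapsto\pi^{-1}(f'(a))$ (off $B$), whose values are nonempty closed convex subsets of the Fr\'echet space $E$, and invoke Michael's convex-valued selection theorem over the paracompact space $A$; your verification of lower semicontinuity, which is where the compatibility $f'=\pi\circ h$ on $B$ and the openness of $\pi$ enter, is complete and correct. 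What Cartan's approach buys is self-containedness (only the open mapping theorem and a translation-invariant metric are used); what yours buys is brevity and a transparent reduction to a single standard theorem. Two small points of care: Michael's theorem must be taken in its version for completely metrizable locally convex range spaces (Fr\'echet spaces), not just Banach spaces --- this version is standard; and in your alternative Bartle--Graves route, the classical Dugundji extension theorem assumes a metrizable domain, so for a general compact Hausdorff $A$ the extension of the $\ker\pi$-valued map should itself be justified via Michael's theorem (or one notes that in all applications in this paper $C$, hence $A$, is metrizable). Neither point is a gap, only a matter of citing the right form of the selection theorem.
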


\begin{lemma}\label{lem:NHC}
Let $A(t,x)$ be in $\LFF(U)$ where $U$ is a Stein open subset of $Q$. Suppose that $A_1,\dots,A_k$ generate $\Der_U(X_U)^G$ over $\O(U)$. Then there are   NHC-functions $a_i(t,x)$, $x\in X_U$,   such that 
$$
A(t,x)=\sum_i a_i(t,x)A_i(x)\text{ for $x\in X_U$.}
$$
\end{lemma}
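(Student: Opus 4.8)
The statement is essentially a parameterised and ``NHC-flavoured'' version of the fact that $\Der_Q(X)^G$ is a coherent sheaf of $\O_Q$-modules whose global sections over a Stein open set are generated by any fixed finite generating set. The plan is to set this up as an application of the Fr\'echet-space surjection lemma (Lemma \ref{lem:Cartan}), exactly as Cartan does in \cite{Cartan58}, with $A=C$, $B=N\cup H$, and the right choice of Fr\'echet spaces $E\to E'$.

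First I would fix the algebraic input. By Lemma \ref{lem:sheaves-derivations}(1) the sheaf $U\mapsto\Der(X_U)^G$ is coherent, hence so is its subsheaf $U\mapsto\Der_U(X_U)^G=\ker(p_*)$, whose sheaf of relations is also coherent. Over a Stein open $U$, Cartan's Theorem B gives that the $\O(U)$-linear map
\begin{equation*}
\pi\colon\O(U)^k\longrightarrow\Der_U(X_U)^G,\qquad (a_1,\dots,a_k)\mapsto\sum_i a_iA_i,
\end{equation*}
is surjective (this is exactly the hypothesis that $A_1,\dots,A_k$ generate $\Der_U(X_U)^G$ over $\O(U)$). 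Both source and target carry natural Fr\'echet topologies as sections of coherent sheaves \cite[Ch.\ V, \S 6]{SteinSpaces}, and $\pi$ is continuous; by the open mapping theorem it is a topological surjection. Now I want to pass to the ``smooth, type $\LF$'' level. The key structural observation is that a vector field of type $\LF$ on $X_U$ is, by Definition \ref{def:typeLF} together with coherence, globally of the form $\sum_i a_iA_i$ with $a_i\in\ci(X_U)^G$: indeed $\LF(U)=\ci(X_U)^G\cdot\Der_Q(X_U)^G$, and locally over $Q$ the $A_i$ generate, so a partition of unity by smooth $G$-invariant functions (Lemma \ref{lem:pi}) patches local expressions into a global one. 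This yields a continuous surjection $\ci(X_U)^G\otimes_{\O(U)}\O(U)^k\to\LF(U)$; after completing tensor products, and using that $\ci(X_U)^G$, $\O(U)$ and $\LF(U)$ are all Fr\'echet nuclear (the last by Theorem \ref{thm:closed}), \cite[Proposition 43.9]{Treves} gives that
\begin{equation*}
\tilde\pi\colon\ci(X_U)^G\comptensor\O(U)^k\longrightarrow\LF(U)
\end{equation*}
is a continuous surjection of Fr\'echet spaces — in fact I would phrase the source concretely as $k$-tuples $(a_1,\dots,a_k)$ of smooth $G$-invariant functions, with $\tilde\pi$ again given by $\sum a_iA_i$.

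Next I would incorporate the NHC constraints. An element $A(t,x)\in\LFF(U)$ is, by definition, a continuous map $C\to\LF(U)$ that is zero on $N$ and holomorphic (i.e.\ lands in $\Der_U(X_U)^G$) on $H$. So I set $E'=\LF(U)$, $E=$ the space of smooth-in-$x$ $G$-invariant $k$-tuples as above, $A=C$, $B=N\cup H$. On $N$ define $h\equiv 0$; on $H$, $A(t,\cdot)\in\Der_U(X_U)^G$ and I use a continuous lift through the holomorphic surjection $\pi$ — this lift exists over $H$ by applying Lemma \ref{lem:Cartan} first in the purely holomorphic category (with $A=H$, $B=\emptyset$), or simply by noting that $\pi$ admits a continuous linear right inverse after restricting to the closed subspace, since surjections of Fr\'echet spaces split continuously is \emph{not} automatic — so instead I would apply Lemma \ref{lem:Cartan} directly with $A=H$ and $B=N\cap H=N$ (where $h=0$) to get a continuous $H\to(\text{holomorphic }k\text{-tuples})$ lifting $A|_H$ and vanishing on $N$. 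This produces the required $h\colon B=N\cup H\to E$ with $\tilde\pi\circ h = A|_B$ and $h$ valued in holomorphic tuples on $H$, zero on $N$. Then one more application of Lemma \ref{lem:Cartan}, now with $\tilde\pi\colon E\to E'$, $A=C$, $B=N\cup H$, $f'=A(t,x)$, yields a continuous $f\colon C\to E$ extending $h$ with $\tilde\pi\circ f=A$. Writing $f(t)=(a_1(t,\cdot),\dots,a_k(t,\cdot))$, the components $a_i(t,x)$ are continuous in $t$, smooth and $G$-invariant in $x$ (so their $x$-partials are jointly continuous), vanish for $t\in N$ by $h|_N=0$, and are holomorphic in $x$ for $t\in H$ by $h|_H$ taking values in holomorphic tuples; i.e.\ the $a_i$ are NHC-functions, and $A(t,x)=\sum_i a_i(t,x)A_i(x)$.

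\textbf{Main obstacle.} The routine algebra (coherence, Theorem B, the topological-tensor-product surjection \cite[Prop.\ 43.9]{Treves}) is standard; the real content is organising the two-stage application of Lemma \ref{lem:Cartan} so that the lift over the ``boundary'' $B=N\cup H$ is genuinely \emph{continuous on all of $B$} and simultaneously holomorphic on $H$ and zero on $N$ — the subtlety being that $N$ and $H$ overlap (one must check compatibility $h|_N = 0$ is consistent with the holomorphic lift on $H$, which forces choosing the lift on $H$ to vanish on $N\cap H=N$, itself an instance of Lemma \ref{lem:Cartan}). Once the spaces and the closed pair $(B,A)$ are correctly identified and one confirms that membership in $\LFF(U)$, $\FF(U)$, and the NHC-function space are all \emph{closed} conditions compatible with these Fr\'echet structures, the result follows formally. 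I expect no genuine difficulty beyond this bookkeeping, since Theorem \ref{thm:closed} supplies exactly the Fr\'echet/nuclearity facts needed to invoke the cited functional-analytic results.
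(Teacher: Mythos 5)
Your proof is correct and takes essentially the same approach as the paper: two applications of Lemma \ref{lem:Cartan}, first with $A=H$, $B=N$, $h=0$ and the holomorphic surjection $\O(U)^k\to\Der_U(X_U)^G$, then with $A=C$, $B=H$ and the smooth surjection $(\ci(X_U)^G)^k\to\LF(U)$. (Since $N\subset H$, the set you write as $N\cup H$ is just $H$, so the compatibility worry you raise is automatically absorbed into the first step; the paper's proof is identical except that it asserts the surjectivity of $(\ci(X)^G)^k\to\LF(Q)$ without the partition-of-unity detail you spell out.)
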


\begin{proof}
For notational convenience we may suppose that $U=Q$. Let $E=\O(Q)^k$ and $E'=\Der_Q(X)^G$. Then we have the surjection $\pi$ which sends $(h_1,\dots,h_k)\in E$ to $\sum h_i A_i\in E'$. Now $A(t,x)$ is the zero mapping from $N$ to $E'$, and it lifts to  the zero mapping $h$ of $N$ to $E$. By   Lemma \ref{lem:Cartan}, $h$ extends to a continuous mapping $(h_i(t,x))$ of $H$ to $E$ which covers $A(t,x)$. Now consider the surjection of $E=(\ci(X)^G)^k$ onto $E'$, the space of smooth vector fields of type $\LF$,  sending $(a_1,\dots,a_k)$ to $\sum a_i A_i$. The $h_i(t,x)$, considered as smooth functions, cover $A(t,x)\colon H\to\Der_Q(X)^G\subset E'$. By Lemma \ref{lem:Cartan} we can find extensions of the $h_i(t,x)$ to NHC-functions $a_i(t,x)$ such that $A=\sum_i a_i(t,x)A_i$. 
\end{proof}
 
 Using the open mapping theorem one obtains:

\begin{corollary}\label{cor:nearzero}
Let $\Omega'$ be a neighbourhood of zero in the space of NHC-functions over $U$. Then there is a neighbourhood $\Omega$ of zero in $\LFF(U)$ such that any $A(t,x)\in\Omega$   is $\sum_i a_i(t,x)A_i(x)$ where  $a_i(t,x)\in\Omega'$, $i=1,\dots,k$.
\end{corollary}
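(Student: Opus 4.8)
The plan is to read this off from Lemma \ref{lem:NHC} together with the open mapping theorem for Fréchet spaces. Consider the linear map $\mu$ from the $k$-fold product of the space of NHC-functions over $U$ into $\LFF(U)$ defined by $\mu(a_1,\dots,a_k)=\sum_i a_i A_i$, where $A_1,\dots,A_k$ are the fixed generators of $\Der_U(X_U)^G$ over $\O(U)$ from Lemma \ref{lem:NHC}. This map is continuous for the topologies of uniform convergence of partial derivatives on compact sets, since multiplication of an NHC-function by the fixed vector field $A_i$ and finite summation are continuous operations; and it indeed takes values in $\LFF(U)$, because each $a_i A_i$ is $G$-invariant, of type $\LF$, vanishes for $t\in N$ and is holomorphic in $x$ for $t\in H$, these properties being inherited from the $a_i$ and the $A_i$.

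The essential input is that $\mu$ is surjective, which is exactly the assertion of Lemma \ref{lem:NHC}: every $A(t,x)\in\LFF(U)$ is of the form $\sum_i a_i(t,x)A_i(x)$ with the $a_i$ NHC-functions. Both the source and target of $\mu$ are Fréchet spaces: the space of NHC-functions over $U$ is Fréchet (it is a closed subspace of $\CC(C)\comptensor\ci(X_U)^G$), hence so is its finite product, and $\LFF(U)$ is Fréchet by Theorem \ref{thm:closed} as recorded in Section \ref{sec:NHC}. The open mapping theorem therefore applies to the continuous linear surjection $\mu$, so $\mu$ is an open map.

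It then suffices to set $\Omega=\mu\big((\Omega')^{\,k}\big)$ (or any neighbourhood of $0$ in $\LFF(U)$ contained in it): since $(\Omega')^{\,k}$ is a neighbourhood of $0$ in the product space, its image is a neighbourhood of $0$ in $\LFF(U)$, and by construction every $A(t,x)\in\Omega$ can be written as $\sum_i a_i(t,x)A_i(x)$ with each $a_i\in\Omega'$. There is no serious obstacle in this argument; the only points needing a moment's care are verifying that $\mu$ lands in $\LFF(U)$ and is continuous for the correct Fréchet topologies, and invoking the surjectivity provided by Lemma \ref{lem:NHC}, after which the open mapping theorem finishes the proof.
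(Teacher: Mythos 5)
Your proof is correct and follows exactly the route the paper indicates: the paper introduces Corollary \ref{cor:nearzero} with the single remark ``Using the open mapping theorem one obtains,'' and you have carried out precisely that argument, exhibiting $\mu(a_1,\dots,a_k)=\sum_i a_iA_i$ as a continuous linear surjection of Fr\'echet spaces (surjectivity by Lemma \ref{lem:NHC}, Fr\'echetness from Theorem \ref{thm:closed} and the closedness of the NHC-functions) and then invoking the open mapping theorem. The details you supply — that $\mu$ lands in $\LFF(U)$ because the $a_iA_i$ inherit the $N$, $H$, $C$ conditions, and that $\Omega=\mu((\Omega')^k)$ works — are exactly the checks needed.
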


Here is a basic result about sections of $\FF$ and $\LFF$.

\begin{theorem}\label{thm:logarithmPhi}
Let $K\subset Q$ be compact and  $U$  a neighbourhood of $K$. Let $U'$ be a neighbourhood of $K$ in $U$ with compact closure in $U$. Then there is a neighbourhood $\Omega$ of the identity family in $\FF(U)$ and a continuous mapping $\log\colon \Omega\to\LFF(U')$ such that $\exp\log\Phi=\Phi|_{C\times U'}$ for $\Phi\in\Omega$.  
\end{theorem}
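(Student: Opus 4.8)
The plan is to mimic, in the parameterised NHC-setting, the proof of Theorem~\ref{thm:Dexists} (which handles the unparameterised case) and its ingredients Proposition~\ref{prop:all-type-LF} and Theorem~\ref{thm:log-holomorphic-diffeomorphism}, now keeping careful track of the parameter space $C$ with its closed subsets $N\subset H\subset C$. First I would reduce to a local statement on $Q$: since the Luna stratification is locally finite, $\overline{U'}$ meets only finitely many strata, and by Lemma~\ref{lem:embeds-over-K} there is a standard generating set $\{f_i\}$ of $\O_\gf(X_U)$ (after shrinking $U$ to a relatively compact Stein neighbourhood of $K$). Fix such an $\{f_i\}$. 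For $\Phi\in\FF(U)$ close to the identity family, Proposition~\ref{prop:aij-exist} applied fibrewise-in-$t$ (via the topological tensor product $\CC(C)\comptensor(\ \cdot\ )$, exactly as in the proof of that proposition) produces a matrix $(a_{ij}(t,x))$ of NHC-functions with $\Phi(t,\cdot)^*f_i=\sum_j a_{ij}(t,\cdot)f_j$; moreover $a_{ij}=\delta_{ij}$ for $t\in N$ and the $a_{ij}(t,\cdot)$ are holomorphic in $x$ for $t\in H$. Shrinking $\Omega$ and passing to a compact $M\subset X$ with image $\overline{U'}$, Lemma~\ref{lem:series} gives that $(d_{ij}(t,x)):=\log(a_{ij}(t,x))$ converges, is again a matrix of NHC-functions (zero on $N$, holomorphic in $x$ on $H$), and $\log\colon(a_{ij})\mapsto(d_{ij})$ is continuous.

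The substance is then to produce, for each $t$, a vector field $D(t,\cdot)\in\LF(U')$ with $D(t,\cdot)(f_i)=\sum_j d_{ij}(t,\cdot)f_j$, depending continuously on $t$, vanishing for $t\in N$, and holomorphic in $x$ for $t\in H$ — i.e.\ an element of $\LFF(U')$. Here I would run the argument of Proposition~\ref{prop:local-LF-implies-global}/Theorem~\ref{thm:Dexists} with parameters. Cover $\overline{U'}$ by finitely many standard neighbourhoods; by Proposition~\ref{prop:all-type-LF} every fibre has property (LF), so over each such neighbourhood, for $t$ fixed, a logarithm $D(t,\cdot)\in\LF$ exists and is \emph{uniquely determined} by $\Phi(t,\cdot)$ restricted to the relevant tube (Remark~\ref{rem:matrix-not-matter}, Remark~\ref{rem:exp-D=Phi}). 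Uniqueness is what lets the local pieces patch into a single $D(t,\cdot)\in\LF(U')$. Continuity in $t$: the local construction in the proof of Proposition~\ref{prop:all-type-LF} is, at the level of coefficient matrices, built out of the continuous maps $(a_{ij})\mapsto(b_{ij}^s)$ (Lemma~\ref{lem:Phi_t-is-continuous}, Corollary~\ref{cor:fundam-with-S}), the $\ci$-continuous logarithm (Lemma~\ref{lem:series}), the open mapping theorem via the Fr\'echet surjection $E_0\to E$ (Proposition~\ref{prop:equivalent-topologies-phi}), and the Campbell--Hausdorff series, all of which survive tensoring with $\CC(C)$; so $t\mapsto D(t,\cdot)$ is continuous into $\LF(U')$, hence lands in the closed subspace cut out by the NHC-conditions. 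The condition ``$D(t,\cdot)=0$ for $t\in N$'' is immediate from $\Phi(t,\cdot)=\id$ and uniqueness of logarithms. The condition ``$D(t,\cdot)$ holomorphic in $x$ for $t\in H$'' follows from Theorem~\ref{thm:log-holomorphic-diffeomorphism}: for $t\in H$, $\Phi(t,\cdot)\in\Aut_U(X_U)^G$ with matrix satisfying the smallness bound, so its logarithm lies in $\Der_{U'}(X_{U'})^G$, and by uniqueness this agrees with the $D(t,\cdot)$ just constructed.

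Finally, $\exp D(t,\cdot)=\Phi(t,\cdot)$ over $U'$ for every $t\in C$ by Remark~\ref{rem:exp-D=Phi} (the matrix of $D(t,\cdot)$ is $(d_{ij}(t,\cdot))=\log(a_{ij}(t,\cdot))$ with norm $<\log 2$ on $\overline{U'}$, and it is the unique such matrix exponentiating to $(a_{ij}(t,\cdot))$). This gives the required continuous $\log\colon\Omega\to\LFF(U')$.

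I expect the main obstacle to be bookkeeping rather than a genuinely new idea: one must check that each step of the unparameterised proofs of Proposition~\ref{prop:all-type-LF} and Theorem~\ref{thm:Dexists} is \emph{uniform in $t$} and respects the three side conditions ($=\id$/$=0$ on $N$, holomorphic on $H$, smooth on $C$). The delicate point is that the local existence of the logarithm in the proof of Proposition~\ref{prop:all-type-LF} uses a local holomorphic extension $\Psi(x,y)$ of $\Phi$ and a radius/neighbourhood of validity that could a priori depend on $t$; one must shrink to a compact-in-$t$ neighbourhood (using compactness of $C$ and of $K$, together with Theorem~\ref{thm:log-holomorphic-diffeomorphism}'s uniform $\epsilon$ and $K'$) so that a single $U'$ works for all $t\in C$. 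Once uniqueness of logarithms and the open-mapping/Fr\'echet-tensor-product machinery are in place, the patching and the continuity of $\log$ follow as in the absolute case, the NHC-conditions being inherited automatically by closedness of the relevant subspaces.
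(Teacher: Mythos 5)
Your proof is correct, but it takes a much longer path than needed. The paper's proof is essentially a one-step application of Theorem~\ref{thm:Dexists}, treated as a black box: that theorem already supplies, independently of any parameter, a neighbourhood $\Omega'$ of the identity in $\F(U)$, a fixed smaller neighbourhood $U'$ of $K$, and a \emph{continuous} map $\log\colon\Omega'\to\LF(U')$. One then defines $\Omega$ to consist of families $\Phi(t,x)\in\FF(U)$ with $\Phi(t,\cdot)\in\Omega'$ for all $t\in C$. Since $t\mapsto\Phi(t,\cdot)$ is a continuous map $C\to\Omega'$ and $\log$ is continuous, the composite $t\mapsto\log\Phi(t,\cdot)$ is automatically a continuous family in $\LF(U')$; no parametric re-run of the proofs of Proposition~\ref{prop:all-type-LF} or Theorem~\ref{thm:Dexists} is required. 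The two side conditions are then cheap: $\log\Phi(t,\cdot)=0$ for $t\in N$ because $\Phi(t,\cdot)=\id$ and logarithms near the identity are unique (Remark~\ref{rem:exp-D=Phi}); and $\log\Phi(t,\cdot)$ is holomorphic for $t\in H$ by Theorem~\ref{thm:log-holomorphic-diffeomorphism} (or Remark~\ref{rem:aij-holomorphic}) together with uniqueness. In particular, the ``delicate point'' you flag --- that the neighbourhood of validity of the local logarithm might depend on $t$ --- is a non-issue: $\Omega'$ and $U'$ in Theorem~\ref{thm:Dexists} are fixed once and for all, hence apply uniformly across $t\in C$.

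What you did is re-derive, with parameters, the uniformity and continuity that Theorem~\ref{thm:Dexists} already packages. The key ingredients you invoke (uniqueness of logarithms to patch local pieces and to force the $N$- and $H$-conditions, Lemma~\ref{lem:series} for the matrix logarithm, topological tensor products for continuity) are all present in the paper's route, but the paper uses them once, inside the proof of Theorem~\ref{thm:Dexists}, rather than a second time here. Your proof works, but it is not the intended argument and carries unnecessary bookkeeping.
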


\begin{proof}  
We may assume that we have a standard generating set $\{f_i\}$ for $\O_\gf(X)$.  By Theorem \ref{thm:Dexists} there is   a neighbourhood $\Omega'$ of the identity in $\F(U)$ such that 
any $\Psi\in\Omega'$  admits a logarithm $\log\Psi\in\LF(U')$. The mapping $\Omega'\ni\Psi\mapsto\log\Psi$ is continuous. Let $\F(C\times U)$ denote continuous families of elements of $\F(U)$. Let
$\Omega$ be the open subset of $\F(C\times U)$ of families $\Phi(t,x)$ such that $\Phi(t,x)\in\Omega'$ for all $t$. Then $\Omega$ is open in $\F(C\times U)$ and the family $t\mapsto\log\Phi(t,x)$ is a continuous family in $\LF(U')$. Now the intersection of $\Omega$ with $\FF(U)$ is open, and if $\Phi(t,x)\in\Omega\cap\FF(U)$,  then
 $\log\Phi(t,x)$ is zero if $t\in N$ and is holomorphic if $t\in H$. Hence $\log\Phi(t,x)\in\LFF(U')$. Clearly $\log\colon\FF(U)\to\LFF(U')$ is continuous.
 \end{proof}

If $U$, $U'$ and $\Omega$ are as above, we say that  every $\Phi\in\Omega$ \emph{admits a logarithm in $\LFF(U')$}.

\begin{corollary}\label{cor:small-coefficients}
Suppose that $A_1,\dots,A_k$ generate  $\Der_{U'}(X_{U'})^G$ 
over $\O(U')$. Let $\Omega'$ be a neighbourhood of zero in the space of NHC-functions over $U'$. Then there is a neighbourhood $\Omega$ of the identity family in $\FF(U)$ such that for any $\Phi\in\Omega$, $\log\Phi=\sum a_i(t,x)A_i(x)$ where the $a_i\in\Omega'$.
\end{corollary}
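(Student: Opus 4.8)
The plan is to obtain Corollary~\ref{cor:small-coefficients} as a purely formal consequence of Theorem~\ref{thm:logarithmPhi} together with Corollary~\ref{cor:nearzero}, by chaining the two continuity/open-mapping statements. No new analysis is needed; everything has been prepared in this section and in Section~\ref{sec:typeF}.

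First I would invoke Theorem~\ref{thm:logarithmPhi} for the compact set $K$, the neighbourhood $U$ and the neighbourhood $U'$ of $K$ with compact closure in $U$: this gives a neighbourhood $\Omega_0$ of the identity family in $\FF(U)$ and a \emph{continuous} map $\log\colon\Omega_0\to\LFF(U')$ with $\exp\log\Phi=\Phi|_{C\times U'}$ for all $\Phi\in\Omega_0$. Next, applying Corollary~\ref{cor:nearzero} with $U$ replaced by $U'$ (the hypothesis that $A_1,\dots,A_k$ generate $\Der_{U'}(X_{U'})^G$ over $\O(U')$ is exactly what that corollary requires), I obtain from the given neighbourhood $\Omega'$ of zero in the Fr\'echet space of NHC-functions over $U'$ a neighbourhood $\Omega_1$ of zero in $\LFF(U')$ such that every $A(t,x)\in\Omega_1$ can be written $A=\sum_i a_i(t,x)A_i(x)$ with each $a_i\in\Omega'$.

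Finally I would set $\Omega=\log^{-1}(\Omega_1)\cap\Omega_0$. Since $\log$ is continuous and $\Omega_1$ is open, $\Omega$ is a neighbourhood of the identity family in $\FF(U)$; and for $\Phi\in\Omega$ we have $\log\Phi\in\Omega_1$, hence $\log\Phi=\sum_i a_i(t,x)A_i(x)$ with $a_i\in\Omega'$, as required. Note that the $a_i$ are automatically NHC-functions (zero on $N$, holomorphic on $H$, smooth throughout), since this is built into the conclusion of Corollary~\ref{cor:nearzero}, so there is nothing further to verify.

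There is no real obstacle here; the only bookkeeping point is to make sure the two inputs are stated over the \emph{same} open set $U'$, so that $\log\Phi$ — produced by Theorem~\ref{thm:logarithmPhi} as an element of $\LFF(U')$ — lands in the domain where Corollary~\ref{cor:nearzero} applies. Since Theorem~\ref{thm:logarithmPhi} already delivers the logarithm over $U'$ and we may invoke Corollary~\ref{cor:nearzero} with that same $U'$, the composition of the two neighbourhood-shrinking steps is legitimate and the proof is complete.
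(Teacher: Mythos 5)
Your proof is correct and is exactly the intended argument: the paper states this corollary without a separate proof precisely because it is the formal composition of the continuous $\log$ map from Theorem~\ref{thm:logarithmPhi} with the open-mapping estimate of Corollary~\ref{cor:nearzero} applied over $U'$, which is what you do. (The one implicit point worth flagging is that Corollary~\ref{cor:nearzero} inherits from Lemma~\ref{lem:NHC} the hypothesis that the open set is Stein, so $U'$ should be taken Stein; in context this is always arranged.)
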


Using topological tensor products one establishes the following variant  of Lemma \ref{lem:bij}.  

\begin{lemma}\label{lem:bij-with-parameters}
 Let $S\times T_B$ be a standard neighbourhood in $X$ and let  $K$ be a compact subset of $S\times Q_B$. 
 Let $\{f_i\}$ be a standard generating set for $S\times T_B$. Then there is a neighbourhood $\Omega$ of the identity in $\FF(S\times Q_B )$ such that any $\Phi\in\Omega$  
 corresponds to a matrix $(b_{ij}(t,s,q))$, $t\in C$, $s\in S$, $q\in Q_B$, where $||(b_{ij}^u)-I||_{C\times K}<1/2$, $0\leq u\leq 1$.
 \end{lemma}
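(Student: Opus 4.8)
\textbf{Proof proposal for Lemma \ref{lem:bij-with-parameters}.}

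The plan is to imitate the proof of Lemma \ref{lem:bij}, replacing the space $E_0$ of matrices of invariant smooth functions on $S\times T_B$ by the corresponding space of continuous families of such matrices parameterised by $C$, subject to the NHC-type vanishing/holomorphy constraints. Concretely, let $E_0$ be the closed subspace of $\mathrm{M}(n,\ci(S\times T_B)^G)$ used before Proposition \ref{prop:equivalent-topologies-phi} (with $X=S\times T_B$), and let $E_0'\subset E_0$ be the closed subspace of matrices $(a_{ij})$ satisfying the scaling relation \eqref{eq:limit-zero}. Any $\Phi\in\FF(S\times Q_B)$ is a continuous family of type-$\F$ diffeomorphisms, so by Proposition \ref{prop:aij-exist} it has an associated family of matrices $(a_{ij}(t,s,x))$ which is continuous in $t\in C$, $G$-invariant in $x$, zero for $t\in N$ and holomorphic in $x$ for $t\in H$; that is, it lies in the closed subspace $\FF E_0\subset\CC(C)\comptensor E_0$ cut out by these conditions, and in fact in $\FF E_0'$ since \eqref{eq:limit-zero} holds fibrewise for each fixed $t$. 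First I would build the matrix $(b_{ij})$ from the $(a_{ij})$ exactly as in Corollary \ref{cor:fundam-with-S}, i.e.\ via the polynomials $P_{uv}$ of Lemma \ref{lem:Puv} (setting $b_{ij}=a_{ij}$ for $i\le m$ and $b_{uv}(t,s,x)=P_{uv}(x,a_{ij}(t,s,x))$ for $u>m$). By Remark \ref{rem:fundam}, applied for each fixed $t$, the functions $b_{ij}^u(t,s,x)$ are polynomials in $u$, $x$ and the functions $\tilde a_{ij}(u,t,s,x)$, where $\tilde a_{ij}(u,t,s,x)=a_{ij}(t,s,u\cdot x)$ for the "good" index ranges and $a_{ij}(t,s,u\cdot x)=u^2\tilde a_{ij}(u,t,s,x)$ with $\tilde a_{ij}$ smooth in the remaining range; the dependence on $t$ is inherited continuously, and the vanishing on $N$ and holomorphy on $H$ are preserved by these polynomial operations.

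Next, exactly as in Lemma \ref{lem:Phi_t-is-continuous}, the assignment $(a_{ij})\mapsto(b_{ij}^u)$ defines a continuous map $\rho\colon \FF E_0'\to \FF E_0\comptensor\ci([0,1])$ for the relevant $\ci$-topologies, the only nontrivial point being continuity of $(a_{ij})\mapsto u^{-2}a_{ij}(t,s,u\cdot x)$ in the range $\ell<i\le m$, $1\le j\le\ell$, which follows from Taylor's theorem with parameters (the parameter $t$ just rides along through the completed tensor product with $\CC(C)$). By construction $\rho(I)=I$. Set
$$
\Gamma=\{(b_{ij}^u)\in \FF E_0\comptensor\ci([0,1]) : \|(b_{ij}^u)-I\|_{C\times K}<1/2,\ 0\le u\le 1\},
$$
an open neighbourhood of the identity; by continuity of $\rho$ there is a neighbourhood $\Delta'$ of the identity in $\FF E_0'$ with $\rho(\Delta')\subset\Gamma$. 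Choose a neighbourhood $\Delta$ of the identity in $\FF E_0$ with $\Delta\cap \FF E_0'\subset\Delta'$. Finally, by the open mapping theorem applied to the surjection of Fr\'echet spaces from $\FF E_0$ onto the space $E$ of families of module/$G$-module endomorphisms (the parameterised analogue of Proposition \ref{prop:equivalent-topologies-phi}, obtained by tensoring with $\CC(C)$ and restricting to the closed NHC-subspace using Proposition \ref{prop:43.9}-type arguments as in Theorem \ref{thm:logarithmPhi}), there is a neighbourhood $\Omega$ of the identity in $\FF(S\times Q_B)$ such that every $\Phi\in\Omega$ admits an associated $(a_{ij})\in\Delta$. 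Since such an $(a_{ij})$ automatically lies in $\FF E_0'$, it lies in $\Delta'$, so $\rho(a_{ij})=(b_{ij}^u)\in\Gamma$, which is the asserted inequality.

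The main obstacle, as in the non-parameterised case, is bookkeeping rather than a genuinely new idea: one must check that each construction in Section \ref{sec:strong} and Section \ref{sec:typeF} that was carried out for a single type-$\F$ diffeomorphism goes through continuously in the parameter $t\in C$ \emph{and} respects the two NHC side conditions (vanishing on $N$, holomorphy on $H$). The polynomial nature of the $P_{uv}$ (Lemma \ref{lem:Puv}) and the Campbell--Hausdorff/Taylor-series machinery make the holomorphy-preservation automatic — polynomials and convergent power series in holomorphic functions are holomorphic — and vanishing on $N$ is preserved because $P_{uv}(x,0)=0$ when $b_{ij}=a_{ij}=0$ for $i\le m$ forces $b_{uv}=0$ for $u>m$ as well (using that the $P_u$ have no constant term, being honest polynomial identities among the $f_i$). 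So the only real work is verifying the continuity of $\rho$ with the parameter present, which reduces, via the completed tensor product with $\CC(C)$ and the fact that $\LF(U)$ and hence all the ambient spaces are nuclear Fr\'echet (Theorem \ref{thm:closed}), to the parameter-free estimate already established in Lemma \ref{lem:Phi_t-is-continuous}.
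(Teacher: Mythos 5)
Your proposal is correct and follows exactly the route the paper intends: the paper's proof of this lemma is literally the single sentence preceding it (``Using topological tensor products one establishes the following variant of Lemma~\ref{lem:bij}''), and you have supplied precisely the details that implements — replacing $E_0, E_0'$ by their $\CC(C)$-completed-tensor analogues with the NHC constraints, re-running the $\rho\colon(a_{ij})\mapsto(b_{ij}^u)$ construction of Lemma~\ref{lem:bij}, and closing with nuclearity and the open mapping theorem. One small slip worth noting (immaterial to the conclusion, since the lemma only asserts the norm bound): you write that the identity-on-$N$ condition corresponds to $a_{ij}=0$ and invoke $P_{uv}(x,0)=0$, whereas the matrix attached to the identity diffeomorphism is $I$, not $0$; the correct normalisation $\rho(I)=I$, which you also state, is what is actually used.
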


\begin{proposition}\label{prop:local-log-Phi-families}
Let $U\subset Q$ be open, let $q\in U$ and   $F=X_{q}$. Suppose that  $\Phi\in\FF(U)$  is the identity when restricted to $F$. Then there is a neighbourhood $U'$ of $q$ such that $\Phi$ admits a logarithm in $\LFF(U')$.
\end{proposition}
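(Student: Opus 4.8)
The plan is to follow the same pattern used throughout Section~\ref{sec:typeF} to establish property (LF): localise near $q$, use the $\C^*$-deformation together with the fact that a holomorphic family of automorphisms fixing the fibre $F$ admits a logarithm on a neighbourhood of $F$, and then transport this back to the family $\Phi$. First I would reduce, via the slice theorem and Remark~\ref{rem:LF-problem-local}, to the case $X=S\times T_B$ is a standard neighbourhood, $q=q_0=p_B([e,0])$, and the fibre $F=\{s_0\}\times G\times^H\NN(W)$; we may also fix a standard generating set $\{f_i\}$ of $\O_\gf(X)$ with the $f_i$ linearly independent on $F$ and homogeneous on $T_W$. Since $\Phi(t,\cdot)$ restricts to the identity on $F$ for every $t\in C$, the associated matrix $(a_{ij}(t,s,q))$ is the identity over $\{s_0\}\times\{q_0\}$ for all $t$; hence by continuity (Proposition~\ref{prop:aij-exist} applied to the family, using the topological tensor product $\CC(C)\comptensor\ci(X)^G$) we may shrink $U$ and obtain $\|(a_{ij}(t,\cdot))-I\|<1/2$ uniformly in $t$ on a $G$-saturated neighbourhood of $F$.

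Next I would use the $\C^*$-deformation $\Phi_u(t,x)=u\inv\cdot\Phi(t,u\cdot x)$, $u\in(0,1]$, which by Corollary~\ref{cor:fundam-with-S} (applied fibrewise and in families, exactly as in Lemma~\ref{lem:Phi_t-is-continuous} and Lemma~\ref{lem:bij-with-parameters}) extends continuously to $u=0$ through a family $\Phi_0(t,\cdot)$ of $G$-biholomorphisms; since $\Phi(t,\cdot)$ is the identity on $F$, each $\Phi_0(t,\cdot)$ restricted to $F$ is the identity, so $\Phi_0(t,\cdot)\in L_\vb$ with trivial action on $\O_\alg(W)^H$, i.e.\ $\Phi_0(t,\cdot)$ lies in the subgroup $L_\vb^0$ fixing the invariants. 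By Lemma~\ref{lem:bij-with-parameters} we may choose the neighbourhood $\Omega$ of the identity in $\FF(U)$ so small that every $\Phi\in\Omega$ has a matrix $(b_{ij}(t,s,q))$ (the one furnished by Corollary~\ref{cor:fundam-with-S}) with $\|(b_{ij}^u)-I\|_{C\times \overline{U_0}}<1/2$ for all $u\in[0,1]$ on some compact $\overline{U_0}$ mapping onto a neighbourhood of $K'$, $K'$ a compact neighbourhood of $q_0$.

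Now I would transfer the argument of Lemma~\ref{lem:LF-in-neighbourhood} and Proposition~\ref{prop:all-type-LF} into the NHC/$\LFF$ setting. By Corollary~\ref{cor:fundam-with-S} and Proposition~\ref{prop:local-LF-implies-global} applied to the family --- using Theorem~\ref{thm:logarithmPhi}, which already gives a continuous $\log\colon\Omega\to\LFF(U')$ once $\Phi$ lands in a small enough neighbourhood of the identity in $\FF$ \emph{over a neighbourhood of a compact set disjoint from $q_0$} --- each $\Phi_u(t,\cdot)$ for $u\in(0,1]$ admits a logarithm $D_u(t,\cdot)\in\LFF(U_0)$ with matrix $\log(b_{ij}^u)$. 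Writing $D_u=\sum_i a_i(u,t,x)A_i(x)$ in terms of homogeneous generators $A_i$ of $\Der_{Q,\alg}(T_W)^G$ with $u\inv\cdot A_i(u\cdot x)=u^{m_i}A_i(x)$, the rescaled vector field $D_0(t,x):=u_0\cdot D_{u_0}(t,u_0\inv\cdot x)$ is of type $\LF$ in families (i.e.\ in $\LFF$) over $U''_{u_0}=\{q\in U''\mid u_0\inv\cdot q\in U_0\}$ for each fixed small $u_0>0$, and it has the \emph{same} matrix $(d_{ij}(t,\cdot))=\log(b_{ij}(t,\cdot))$, which is independent of $u_0$. Since $U''=U'\setminus\{q_0\}$ is the union of the $U''_{u_0}$, uniqueness of logarithms (Remarks~\ref{rem:matrix-not-matter}, \ref{rem:exp-D=Phi}) gives a well-defined $D(t,\cdot)\in\LFF(U'\setminus\{q_0\})$ with $\exp D=\Phi$ there.

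The final and main obstacle is extending $D$ across the origin $q_0$, i.e.\ from $\LFF(U'\setminus\{q_0\})$ to $\LFF(U')$ --- precisely the point that required the full force of Proposition~\ref{prop:all-type-LF} in the unparameterised case. I would argue as there: because $\Phi$ is a family of type-$\F$ diffeomorphisms, near $q_0$ it has a local holomorphic extension $\Psi(t,x,y)$ jointly continuous in $(t,x)$ and holomorphic in $y$, which is a continuous family (in $(t,x)$) of elements of $\Aut_{Q_{B'}}(T_{B'})^G$; applying the deformation in the $y$-variable and invoking Theorem~\ref{thm:log-holomorphic-diffeomorphism} (whose constants $\epsilon,K'$ are uniform and whose hypothesis holds for $(t,x)$ in a neighbourhood of $C\times\{x_0\}$ by the $u=0$, $x=x_0$ bound), one gets that $\Psi^x(t,\cdot)$ admits a logarithm in $\Der_{U_0}(X_{U_0})^G$ for $(t,x)$ in a neighbourhood $C\times\Delta$ of $C\times\{x_0\}$, depending continuously on $(t,x)$; since $\Psi$ is $G$-invariant in $x$ we may take $\Delta$ $G$-saturated, and then $D''(t,x):=\log\Psi^x(t,x)$ is in $\LFF(\Delta)$. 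Combining the original reduction $\Phi=\exp D'\cdot(\text{identity on }F)$-type splitting with the Campbell--Hausdorff series --- convergent because both coefficient matrices have norm $<(\log 2)/2$ near $F$, and termwise in $\LFF$ because $\LF(U')$ is closed in the smooth vector fields by Theorem~\ref{thm:closed} (so $\LFF(U')$ is closed in $\CC(C)\comptensor\LF(U')$) --- yields $D\in\LFF(U')$ with $\exp D=\Phi$, which by the uniqueness of logarithms agrees with the $D$ already constructed on $U'\setminus\{q_0\}$. I expect the only genuinely delicate points to be the joint continuity in the parameter $t$ at each stage (handled uniformly by the topological-tensor-product formalism and Lemma~\ref{lem:Cartan}) and keeping the various shrinking neighbourhoods $\Omega$ of the identity in $\FF(U)$ consistent so that a single $\Omega$ works; everything else is a parameterised rerun of Section~\ref{sec:typeF}.
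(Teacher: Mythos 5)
Your first two paragraphs are the right setup and match the paper: reduce via the slice theorem to $X_U=S\times T_B$ with $q=(s_0,q_0)$, form the deformation $\Phi_u(t,s,x)=u\inv\cdot\Phi(t,s,u\cdot x)$, note that $\Phi_0(t,s_0,\cdot)$ is the identity, and control the matrices $(b_{ij}^u)$ uniformly via Lemma~\ref{lem:bij-with-parameters}.

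The divergence --- and the genuine gap --- is your claim that Theorem~\ref{thm:logarithmPhi} only furnishes a logarithm ``over a neighbourhood of a compact set \emph{disjoint from} $q_0$,'' which leads you to construct $D$ on the punctured set $U'\setminus\{q_0\}$ and then propose to extend across $q_0$ by re-running the argument of Proposition~\ref{prop:all-type-LF} in the $\LFF$-setting. That premise is wrong. Theorem~\ref{thm:logarithmPhi} is stated for an \emph{arbitrary} compact $K\subset Q$; it rests on Theorem~\ref{thm:Dexists}, which in turn rests on Proposition~\ref{prop:all-type-LF}, i.e.\ the fact that \emph{every} fibre of $X$ --- including $F=X_{q_0}$ --- already has property (LF). So one may take $K$ to be a compact neighbourhood of $(s_0,q_0)$ directly. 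This is exactly what the paper does: apply Theorem~\ref{thm:logarithmPhi} together with Lemma~\ref{lem:bij-with-parameters} to get $\Omega\subset\FF(S\times Q_B)$ and a neighbourhood $U_0$ of $(s_0,q_0)$ such that every $\Psi\in\Omega$ admits a logarithm in $\LFF(U_0)$; then, after shrinking $S$, the hypothesis $\Phi|_F=\id$ forces $\Phi_u\in\Omega$ for $u$ near $0$, so $\Phi_u$ has a logarithm $D_u\in\LFF(U_0)$; finally the rescaling $D(t,x):=u\cdot D_u(t,u\inv\cdot x)$ (exactly the computation in the proof of Lemma~\ref{lem:LF-in-neighbourhood}) has matrix $(d_{ij})=\log(b_{ij})$ and therefore is a logarithm of $\Phi$ in $\LFF(u\cdot U_0)$. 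Since $q_0$ is fixed by the $\C^*$-action, $u\cdot U_0$ is still a neighbourhood of $(s_0,q_0)$, and the proof is finished. There is no punctured neighbourhood to extend across and no need for a parameterised Campbell--Hausdorff/local-holomorphic-extension argument.

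The entire fourth paragraph of your proposal is therefore addressing a problem that does not arise. Beyond being unnecessary, it would be substantially harder to make rigorous: you would need to redo the local-holomorphic-extension and Campbell--Hausdorff estimates of Proposition~\ref{prop:all-type-LF} while maintaining joint continuity in $t\in C$, vanishing on $N$ and holomorphy on $H$, at every stage --- exactly the kind of bookkeeping the paper avoids by invoking Theorem~\ref{thm:logarithmPhi} as a black box over a compact set containing $q_0$. Once you correct the scope of Theorem~\ref{thm:logarithmPhi}, the argument collapses to the short one above.
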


\begin{proof}  
We may assume that we have a standard generating set for $\O_\gf(X_U)$.
Shrinking $U$ we are in the situation where $X_U=S\times T_B$ is a standard neighbourhood and $q=(s_0,q_0)$ where $s_0\in S$ and $q_0$ is the image of $[e,0]\in T_B$. We have the action of $[0,1]$ on $X_U$ sending $(s,x)$ to $(s,u\cdot x)$, $s\in S$, $x\in T_B$, $u\in[0,1]$. Let $\Phi_u$ be the corresponding deformation of $\Phi$. Then $\Phi_u$ is a continuous family of elements of $\FF(S\times Q_B)$, $u\in[0,1]$. Since $\Phi$ is the identity on $F$,  $\Phi_0(t,s_0,x)$  is the identity, $t\in C$, $x\in T_B$. By Theorem \ref{thm:logarithmPhi} and Lemma \ref{lem:bij-with-parameters} there is a neighbourhood $\Omega$ of the identity section in $\FF(S\times Q_B)$ and a neighbourhood $U_0$ of $(s_0,q_0)$ such that every $\Psi\in\Omega$ admits a logarithm in $\LFF(U_0)$ and has a matrix $(b_{ij})$ such that $||(b_{ij}^u)-I||_{C\times \overline {U_0}}<1/2$, $u\in[0,1]$. For $u$ in a neighbourhood of $0$,  $\Phi_u$  lies in $\Omega$, hence  admits a logarithm $D_u\in \LFF(U_0)$. 
As in the proof of Lemma \ref{lem:LF-in-neighbourhood} this implies that $\Phi$ admits a logarithm over $U'= u\cdot U_0$.
\end{proof}

\begin{remark}\label{rem:NH-remark}
Let $K\subset Q$ be compact. Let $U$ be a neighbourhood of $K$ and let $U'$ be a relatively compact neighbourhood of $K$ in $U$. Let $\Phi\in\FF(U)$ and suppose that $\Phi$ is represented by the matrix $(a_{ij}(t,q))$ and admits a logarithm $D\in\LFF(U')$ with matrix $(d_{ij}(t,q))$ where the $d_{ij}$ are close to zero. There is no a priori guarantee that the $d_{ij}(t,q)$   are holomorphic for $t\in H$ or zero for $t\in N$. However, by Lemma \ref{lem:NHC} and Corollary \ref{cor:nearzero}, we can arrange this to be true. It follows that we can arrange that,  over $U'$,     $(a_{ij}(t,q))$ is holomorphic for $t\in H$ and the identity matrix for $t\in N$.
\end{remark}

\section{Grauert's proof}\label{sec:Grauert}
  We now show how one can modify Cartan's version \cite{Cartan58} of the proof of Grauert's Oka principle \cite{GrauertApproximation, GrauertLiesche, GrauertFaserungen} to obtain Theorem \ref{thm:main5}.  Let $N\subset H\subset C$ be compact Hausdorff spaces as before, and we consider the corresponding sheaves $\FF$ and $\LFF$ defined in the previous section, together with the topologies on their sections. We 
  recall  that an open subset $U\subset Q$ is a \emph{Runge domain\/} if it is Stein and $\O(Q)$ is dense in $\O(U)$. Here is the main theorem.
  
\begin{theorem}\label{thm:main}
Suppose that $N$ is a deformation retract of $C$. Then the following hold.
\begin{enumerate}
\item The topological group $H^0(Q,\FF)$ is pathwise connected.
\item If $U\subset Q$ is  a Runge  domain, then $H^0(Q,\FF)$ is dense in $H^0(U,\FF)$.
\item $H^1(Q,\FF)=0$.
\end{enumerate}
\end{theorem}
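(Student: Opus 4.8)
\textbf{Proof plan for Theorem \ref{thm:main}.}

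The plan is to follow Cartan's scheme from \cite{Cartan58}, adapting it to the nonabelian, non-Lie-group setting provided by the sheaves $\FF$ and $\LFF$. The three statements are proved together by an induction over a finite or countable cover of $Q$ by Runge domains adapted to the Luna stratification, exactly as in Grauert's original argument. The engine of the whole proof is the local existence of logarithms (Theorem \ref{thm:logarithmPhi}, Corollary \ref{cor:small-coefficients}, Proposition \ref{prop:local-log-Phi-families}, Remark \ref{rem:NH-remark}), which reduces every nonlinear step to a linear problem about sections of $\LFF$, where we may invoke the Fréchet-space surjectivity results (Lemma \ref{lem:NHC}, Corollary \ref{cor:nearzero}) together with Cartan's extension lemma for Fréchet surjections (Lemma \ref{lem:Cartan}). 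The coherence package for the relevant sheaves of $\O_Q$-modules (Lemma \ref{lem:sheaves-derivations}, the Closedness Theorem) supplies Cartan's Theorems A and B in the form needed on the quotient.

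First I would set up the induction. Using Lemma \ref{lem:embeds-over-K}, cover $Q$ by relatively compact Runge domains $U_1,U_2,\dots$, locally finite, each carrying a standard generating set for $\O_\gf(X_{U_i})$, and arranged so that at each stage the union $V_k=U_1\cup\cdots\cup U_k$ is again Runge and $V_{k-1}\cup U_k$ is a Runge pair of the type Cartan calls an ``H-Runge'' pair (one can thicken and shrink to make the overlaps satisfy the usual Oka--Weil hypotheses). The key local input, for each $i$, is: any $\Phi\in\FF(V_{k-1}\cap U_k)$ sufficiently close to the identity factors as $\Phi=\Phi'\circ\Phi''$ with $\Phi'\in\FF$ defined over (a shrinking of) $V_{k-1}$ and $\Phi''\in\FF$ defined over $U_k$, both close to the identity. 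This Cartan splitting lemma is proved by taking the logarithm $D=\log\Phi\in\LFF$ over a slightly smaller overlap (Theorem \ref{thm:logarithmPhi}), writing $D=\sum a_i A_i$ with $A_i$ generating $\Der(X)^G$-fibrewise and $a_i$ NHC-functions (Lemma \ref{lem:NHC}), splitting each NHC-function additively via an Oka--Weil/Cousin-I argument on the quotient with a holomorphic cutoff (here the Runge property and Theorem B for the coherent kernel sheaf enter, and Remark \ref{rem:NH-remark} guarantees the holomorphic/zero constraints on $H$ and $N$ are preserved), exponentiating the pieces, and correcting the resulting error term by a standard iteration --- the error at each round lies deeper in a neighbourhood of the identity because of the continuity of $\log$ and $\exp$ (Lemma \ref{lem:series}, Theorem \ref{thm:Dexists}), so the product converges. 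With the splitting lemma in hand, (3) follows by the usual Grauert argument: a cocycle for $\FF$ on the cover is trivialised over $V_k$ step by step, the obstruction at step $k$ being a transition section over $V_{k-1}\cap U_k$ which, after shrinking and an initial coarse approximation by something close to the identity (using that $H^0$ connectivity lets one deform the cocycle to one with small coclasses --- this is where (1) feeds back into (3), so the three statements are genuinely proved simultaneously), is split and absorbed. Statement (2), density of global sections in sections over a Runge $U$, is the analogous approximation statement and is proved by the same splitting-and-iteration machine, now approximating a section over $U$ by sections over larger and larger $V_k$ using the Runge pair structure. Statement (1), path-connectedness of $H^0(Q,\FF)$, uses the hypothesis that $N$ is a deformation retract of $C$: given $\Phi\in H^0(Q,\FF)$, one first deforms it (using the retraction, reparameterising the $C$-variable) to a section that is close to the identity over compacta, then, by the splitting/iteration argument applied fibrewise over the cover, deforms it all the way to the identity through sections of $\FF$; the deformation parameter is folded into $C$, which is legitimate precisely because of the retraction hypothesis. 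Finally, Theorem \ref{thm:main5}, hence Theorem \ref{thm:main4}, is deduced from (1) by the standard device: a $G$-diffeomorphism $\Phi\colon X\to Y$ of type $\F$, together with the local $G$-biholomorphisms coming from local $G$-biholomorphicity over $Q$, defines a section of $\FF$ for a suitable choice of $(C,H,N)$ (with $C=[0,1]$, $H=\{1\}$, $N=\{0\}$, say, encoding ``deform $\Phi$ to a holomorphic map keeping track of the constraints''), and connectivity of $H^0(Q,\FF)$ produces the desired homotopy through type-$\F$ maps to a $G$-biholomorphism.

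The main obstacle, and the place where genuine care is needed, is the Cartan splitting lemma over a Runge overlap in the presence of the parameter space $C$ with its constraints on $H$ and $N$. The difficulty is threefold: (i) $\FF$ is a sheaf of nonabelian groups with no ambient complex-space structure, so the splitting must be bootstrapped from the additive splitting of NHC-functions via $\log/\exp$ and a convergent iteration --- one must check that the neighbourhoods of the identity shrink geometrically, which rests on the continuity statements in Theorem \ref{thm:Dexists} and Lemma \ref{lem:series} and on Corollary \ref{cor:small-coefficients} keeping the matrix coefficients controlled in the $\ci$-topology, not merely $\CC$; (ii) the additive splitting itself is a Cousin-I problem on the \emph{quotient} $Q$, for the coherent sheaf $U\mapsto\Der_U(X_U)^G$ (or rather for the NHC-valued version of it), and one needs the Runge hypothesis plus the Closedness Theorem to run Theorem B with continuous dependence on the parameter $t\in C$ --- this is exactly the content packaged into Lemma \ref{lem:Cartan} and Lemma \ref{lem:NHC}, but one must verify the hypotheses of Lemma \ref{lem:Cartan} are met at each iteration, i.e.\ that the lifts over $N$ and $H$ can be chosen compatibly (Remark \ref{rem:NH-remark}); and (iii) propagating all of this through the countable induction over the Luna strata without losing the type-$\F$ property or the constraints --- here the bookkeeping from Section \ref{sec:reductiontoF}, especially the cutoff constructions of Corollary \ref{cor:cutoffdeformation} and Lemma \ref{lem:type.F.remains}, is what makes the limiting section well defined. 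Everything else --- the retraction argument for (1), the Oka--Weil approximation for (2), the cocycle-trivialisation for (3) --- is then formally the same as in \cite{Cartan58} and \cite{GrauertFaserungen}, with $\FF$, $\LFF$ in place of the sheaf of holomorphic maps to a complex Lie group and its Lie algebra.
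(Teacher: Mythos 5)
Your proposal correctly identifies the Cartan machinery at the heart of the argument (logarithms via Theorem~\ref{thm:logarithmPhi}, reduction to an additive splitting problem for NHC-coefficients via Lemma~\ref{lem:NHC} and Corollary~\ref{cor:nearzero}, the iterative splitting to solve the fundamental problem, and the role of the Runge hypothesis for approximation), and this matches Propositions~\ref{prop:1} and~\ref{prop:2} of the paper. But there is a genuine gap in the organisation of the proof and, in particular, in the argument for statement (1).

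The paper's proof is an induction on the \emph{dimension} $k$ of special compact subsets of $Q$ (compacts cut out by cubes in $\C^m$ under an embedding), simultaneously establishing (i)$_k$ (path-connectedness of $H^0(K,\FF)$ for $K$ $k$-special), (ii)$_k$ (density of global sections), and (iii)$_k$ (the splitting property for special configurations with $k$-dimensional overlap), via the cycle (i)$_k\Rightarrow$(ii)$_k\Rightarrow$(iii)$_k\Rightarrow$(i)$_{k+1}$. Your proposal drops this scaffolding entirely and replaces it with an exhaustion by Runge domains alone; without the dimension induction there is no base case, and the base case (i)$_0$ is precisely where the deformation-retract hypothesis is used in a controlled way. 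Over a point $q_0$, a section of $\FF$ restricts on the fibre $F=X_{q_0}$ to a continuous family $\lambda\colon C\to L_\hr$ that is the identity on $N$; the retraction forces $\lambda$ to land in the identity component of $L_\hr$, the structure theorem $L_\hr\cong L_\vb\ltimes R$ (Theorem~\ref{thm:reachable}) and the extension homomorphism $\nu$ (Corollary~\ref{cor:extendL0}) allow $\lambda$ to be extended to a neighbourhood and pulled off, and Proposition~\ref{prop:local-log-Phi-families} then supplies the logarithm. None of this appears in your proposal.

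Your direct argument for (1) is not correct as stated. Reparameterising the $C$-variable by the retraction $r_s$ and forming $\widetilde\Phi(t,s,x)=\Phi(r_s(t),x)$ does give the identity on $N$ (since $r_s|_N=\id$), but it does \emph{not} preserve the constraint that $\Phi(t,\cdot)$ be holomorphic for $t\in H$: the retraction moves points of $H$ outside $H$ in general (and certainly does so in the application, where $C=[0,1]$, $N=\{1\}$, $H=\{0,1\}$, and any retraction of $[0,1]$ onto $\{1\}$ pushes $0$ through points where no holomorphy is imposed). So $\widetilde\Phi(\cdot,s,\cdot)$ is not a section of $\FF$ for $s\in(0,1)$, and the proposed homotopy lives outside $H^0(Q,\FF)$. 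The deformation-retract hypothesis must be exploited fibrewise in $L_\hr$, not globally by precomposition in $C$. Finally, a small point: the paper deduces Theorem~\ref{thm:main5} from statement (3), not (1) — the local biholomorphisms $\Gamma_i$ produce a cocycle $\Phi_i\inv\circ\Phi_j$ in $H^1(Q,\FF)$, and it is the vanishing of $H^1$ that produces the splitting; one cannot build a global section of $\FF$ directly from $\Phi\colon X\to Y$ because $\FF$ is a sheaf of automorphisms of $X$.
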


We actually only need (3), but the  proof of the theorem   is by an induction involving all three statements.
\begin{proof}[Proof of Theorem   \ref{thm:main5}] Let $\Phi\colon X\to Y$ be a $G$-diffeomorphism of type $\F$. We have an open cover $\{U_i\}$ of $Q$   and  $G$-biholomorphisms $\Gamma_i\colon X_i=X_{U_i}\to Y_i=Y_{U_i}$ covering $\Id_{U_i}$.  We may assume that $X_i\simeq S_i\times T_{B_i}\subset S_i\times T_{W_i}$ is a standard neighbourhood in $X$ where $S_i$ is smoothly contractible, 
say a ball. Let $\Psi=\Gamma_i\inv\circ\Phi$.
By Lemma \ref{lem:parameters.typeF} we have a homotopy $\Psi_t$ of $G$-diffeomorphisms of type $\F$ where $\Psi_0$ is special and corresponds to  a smooth mapping $\sigma\colon S_i\to L_\vb$. Here   $L_\vb$ is the subgroup of  $\Aut_\vb(T_{W_i})^G$ fixing the invariants. Since $S_i$ is smoothly contractible, we have a smooth homotopy of $\sigma$  to a constant mapping to $L_\vb$, hence we have a homotopy of $\Psi_0$ to a $G$-biholomorphism inducing the identity on $S_i\times Q_{B_i}$. Combining the two homotopies we obtain  a homotopy $\Psi_i(t,x)$ of $\Gamma_i\inv\circ\Phi$ which equals $\Gamma_i\inv\circ\Phi$ when $t=1$ and is holomorphic when $t=0$. Moreover, all the $\Psi_i(t,x)$ are of type $\F$. Let $\Phi_i(t,x)=\Gamma_i\circ\Psi_i(t,x)$. Then $\Phi_i(1,x)=\Phi$ on $X_i$ and $\Phi_i(0,x)\colon X_i\to Y_i$ is a $G$-biholomorphism. Now $\Phi_i(t,x)\inv\circ\Phi_j(t,x)$ gives an element of $H^1(Q,\FF)$ where $C=[0,1]$, $N=\{1\}$ and $H=\{0,1\}$. By (3) of Theorem \ref{thm:main} there are sections   $c_i(t,x)\in H^0(U_i,\FF)$ such that $\Phi_i(t,x)\inv\circ \Phi_j(t,x)=c_i(t,x)\inv \circ c_j(t,x)$. Then the $\Phi_i(t,x)\circ c_i(t,x)\inv$ combine to give a homotopy $\Phi(t,x)$ with $\Phi(1,x)=\Phi(x)$ and $\Phi(0,x)\colon X\to Y$ a $G$-biholomorphism inducing $\Id_Q$.
\end{proof}

We recall some definitions from \cite{Cartan58}. 
Let $a_j\leq b_j$ and $c_j\leq d_j$ be real numbers, $j=1,\dots,m$. Let $z_j=x_j+iy_j$ be the usual coordinate functions on $\C^m$. The corresponding  \emph{cube\/} in $\C^m$ is the  subset defined by the  inequalities $a_j\leq x_j\leq b_j$ and $c_j\leq y_j\leq d_j$, $j=1,\dots,m$. 
 Let $K\subset Q$ be compact and let $U$ be a neighbourhood of $K$. Suppose that we have a holomorphic mapping $f\colon Q\to\C^m$ which restricts to a biholomorphism of $U$ onto an analytic subset of a neighbourhood of a cube $\Gamma$ where $\Gamma$ has real dimension $k$. We say that $K$ is $k$-\emph{special\/} or \emph{special of dimension $k$\/} if $K=U\cap f\inv(\Gamma)$. Special compact sets $K$ have nice properties. For example, 
 $K$ is holomorphically convex, so every holomorphic function on a neighbourhood of $K$ can be uniformly approximated (on a neighbourhood of $K$) by functions holomorphic on $Q$. Also, $Q$ is the union of special compact sets $K_n$ where $K_n$ lies in the interior of $K_{n+1}$ for all $n$.
 
 Let $K\subset Q$ be a special compact set, so that we can think of it as a subset of a cube $\Gamma$ corresponding to real numbers $a_j\leq b_j$ and $c_j\leq d_j$, $j=1,\dots,m$. Let $c\in[a_1,b_1]$. Let $\Gamma'$ be the points of $\Gamma$ where $x_1\leq c$ and let $\Gamma''$ be the points where $x_1\geq c$. Let $K'$ denote $K\cap \Gamma'$ and let $K''$ denote $K\cap \Gamma''$. Then $K'$, $K''$ and $K'\cap K''$ are special. We say that the triple $(K,K',K'')$ is a \emph{special configuration\/}.
 
Let $K$ be a  compact set in $Q$. Define $H^0(K,\FF)$ to be the direct limit (with the direct limit topology) of the groups $H^0(U,\FF)$ for $U$ an open set containing $K$. Our proof of Theorem \ref{thm:main} uses the following two key results.
 
\begin{proposition}\label{prop:1}
Let $K$ be a  special compact subset of $Q$. Then the image of $H^0(Q,\FF)$ in $H^0(K,\FF)$ is dense in a neighbourhood of the identity element of $H^0(K,\FF)$.
\end{proposition}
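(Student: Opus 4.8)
\textbf{Proof proposal for Proposition \ref{prop:1}.}

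The plan is to follow the approximation step in Cartan's treatment \cite{Cartan58}, transporting it to our sheaf $\FF$ by means of the logarithm map constructed in Section \ref{sec:NHC}. The key point is that near the identity the nonabelian sheaf $\FF$ is modelled by the \emph{linear} sheaf $\LFF$ of vector fields of type $\LFF$, which is a sheaf of Fr\'echet spaces, and for linear objects density of global sections in local sections over a Runge domain is a consequence of Cartan's Theorem B together with the standard approximation results for coherent sheaves. Concretely: since $K$ is special, it has a fundamental system of Runge neighbourhoods in $Q$. Fix such a neighbourhood $U$ of $K$ and a smaller relatively compact neighbourhood $U'$ of $K$. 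By Theorem \ref{thm:Dexists} (or its family version Theorem \ref{thm:logarithmPhi} with $C$ a point) there is a neighbourhood $\Omega$ of the identity in $\FF(U)$ (here $C=N=H=\mathrm{pt}$, so $\FF(U)=\F(U)$ and $\LFF(U')=\LF(U')$) and a continuous map $\log\colon\Omega\to\LF(U')$ with $\exp\log\Phi=\Phi$ on $X_{U'}$.

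First I would show that $\LF(Q)$ is dense in $\LF(U')$. Choosing a standard generating set $\{f_i\}$ for $\O_\gf(X_U)$ and using Propositions \ref{prop:equivalent-topologies-phi} and \ref{prop:equivalent-topologies-D}, it suffices to approximate, in the $E_0$-topology, the matrix $(d_{ij})$ of a given $D\in\LF(U')$ by matrices coming from global vector fields of type $\LF$. Cover a neighbourhood of $\overline{U'}$ by finitely many tubes and, using Lemma \ref{lem:NHC}'s non-parametric predecessor, write $D$ locally as $\sum a_i A_i$ with $A_i\in\Der_{Q}(X)^G$ generating the coherent sheaf $U\mapsto\Der_U(X_U)^G$ (coherence is Lemma \ref{lem:sheaves-derivations}(1), or rather its relative version via \cite{Roberts1986}). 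The coefficient functions $a_i$ are $G$-invariant holomorphic functions on the relevant tubes, hence descend to holomorphic functions on open subsets of $Q$; since $U$ is Runge in $Q$ one can approximate these, patch with a $G$-invariant partition of unity (Lemma \ref{lem:pi}) plus Cartan's Theorem B for the coherent sheaf of relations — exactly as in the linear approximation lemma of \cite[Appendix]{Cartan58} — to obtain $D_n\in\LF(Q)$ with $D_n\to D$ in $\LF(U')$; one has to be a little careful that the generators $A_i$ chosen globally restrict to generators over the finitely many strata that $\overline{U'}$ meets (the Luna stratification is locally finite), which is why a special, hence holomorphically convex, $K$ is used.

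Finally I would transport this back through $\exp$. Given $\Phi\in H^0(K,\FF)$ close to the identity, it lies in the image of $H^0(U,\FF)$ for $U$ as above, and shrinking $\Omega$ it admits a logarithm $D=\log\Phi\in\LF(U')$ with $\|(d_{ij})-0\|$ small; choose $D_n\in\LF(Q)$ with $D_n\to D$ in $\LF(U')$. By Corollary \ref{cor:expLFistypeF} and Lemma \ref{lem:LFcomplete}, $\exp D_n\in\F(Q)=H^0(Q,\FF)$, and by the continuity of $\exp$ (Lemma \ref{lem:series}, Proposition \ref{prop:equivalent-topologies-D} and Proposition \ref{prop:equivalent-topologies-phi}) $\exp D_n\to\exp D=\Phi$ in $H^0(U',\FF)$, hence in $H^0(K,\FF)$. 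Thus the image of $H^0(Q,\FF)$ is dense in a neighbourhood of the identity of $H^0(K,\FF)$.

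\textbf{Main obstacle.} The delicate point is the linear approximation over the Runge domain $U$: namely that the $\O(U)$-module of globally-defined-type-$\LF$ vector fields, described through a standard generating set, is approximated by $\O(Q)$-combinations. This requires simultaneously (i) that $\LF(U')$ is genuinely Fr\'echet (Theorem \ref{thm:closed}), so that the topological tensor product and open-mapping arguments apply, and (ii) that the sheaf of relations among the chosen generators $A_i$ of $\Der_Q(X)^G$ over $Q$ is coherent, so Theorem B gives the patching; handling the passage between "$G$-invariant holomorphic on a tube" and "holomorphic on an open subset of $Q$" uniformly over the finitely many strata meeting $\overline{U'}$ is the part that demands the most care. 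Everything else is bookkeeping with the logarithm and the equivalent topologies of Section \ref{sec:typeF}.
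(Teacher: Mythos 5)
Your overall strategy --- take $\log\Phi$, approximate the resulting vector field of type $\LF$ by global ones over a Runge neighbourhood, and exponentiate back --- is the same as the paper's, and your use of Theorem~\ref{thm:logarithmPhi} and the reduction to a linear approximation problem are on target. The essential gap is that your argument only treats the special case $C=N=H=\{\mathrm{pt}\}$ (you say as much in the parenthetical remark), so that $\FF=\F$ and $\LFF=\LF$; but Proposition~\ref{prop:1} is stated and needed for the parametrised sheaves $\FF$, $\LFF$ with arbitrary compact $N\subset H\subset C$, and it is applied in exactly that generality in the induction proving Theorem~\ref{thm:main}. After writing $\log\Phi=D\in\LFF(U')$, the approximating vector fields must again be sections of $\LFF$ over $Q$, i.e.\ zero for $t\in N$, holomorphic in $x$ for $t\in H$, and smooth in $x$ with continuous dependence on $t\in C$. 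Your linear approximation of $\LF(Q)$ inside $\LF(U')$ ignores these constraints, so what you actually produce are approximations by elements of $\LF(Q)$, not of $\LFF(Q)$, and the resulting $\exp D_n$ need not lie in $H^0(Q,\FF)$.

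The paper handles the parametrised case by combining Lemma~\ref{lem:NHC} with a partition of unity on $C$. Lemma~\ref{lem:NHC} gives a decomposition $D(t,x)=\sum_i a_i(t,x)A_i(x)$ in which the $A_i$ are \emph{global} elements of $\Der_Q(X)^G$, chosen via coherence to restrict to $\O(U)$-module generators of $\Der_U(X_U)^G$, and the $a_i$ are NHC-functions on $U$. Since the $A_i$ are already global, all that remains is to approximate the scalar coefficients $a_i$ by global NHC-functions; a partition of unity on $C$ reduces this to Runge approximation for the holomorphic values ($t\in H$) together with trivial smooth $G$-invariant approximation for the remaining $t$, and the vanishing on $N$ is preserved throughout. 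This also shows that the tube-by-tube patching and Theorem~B argument for the relation sheaf that you flag as the main obstacle are not needed: once the $A_i$ are taken globally there is nothing to glue, and those concerns are absorbed into Lemma~\ref{lem:NHC} together with the Runge property of $U$.
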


\begin{proposition}\label{prop:2}
Let $(K,K',K'')$ be a special configuration in $Q$. Then any element $\Phi\in H^0(K'\cap K'',\FF)$, sufficiently close to the identity, can be written in the form
$$
\Phi=\Phi'\circ (\Phi'')\inv
$$
where $\Phi'\in H^0(K',\FF)$ and $\Phi''\in H^0(K'',\FF)$.
\end{proposition}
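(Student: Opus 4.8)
This is Proposition \ref{prop:2}, a Cousin-type splitting lemma for sections of the sheaf $\FF$ over a special configuration. I would follow Cartan's original strategy \cite{Cartan58} for the Oka principle for principal bundles, adapting it to the nonlinear, non-group-valued setting of $\FF$-sections.

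\smallskip\noindent\textbf{Proof plan.}
The plan is to linearise the problem using the logarithm from Theorem \ref{thm:logarithmPhi} (equivalently Theorem \ref{thm:Dexists} applied fibrewise in the parameter $t$), reducing the multiplicative splitting $\Phi = \Phi'\circ(\Phi'')^{-1}$ to an additive splitting of a section of $\LFF$, solve the additive problem by a classical Cousin argument on the cube, and then run a rapidly convergent iteration (Newton-type scheme) to pass from the infinitesimal splitting back to the exact one. In more detail:

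\begin{enumerate}
\item \emph{Setup and logarithm.} Since $K'\cap K''$ is special, hence holomorphically convex, choose a Runge neighbourhood $U$ of $K'\cap K''$ together with relatively compact neighbourhoods $U'$ of $K'\cap K''$ so that Theorem \ref{thm:logarithmPhi} gives a neighbourhood $\Omega$ of the identity in $\FF(U)$ and a continuous $\log\colon\Omega\to\LFF(U')$ with $\exp\log\Phi = \Phi|_{C\times U'}$. For $\Phi$ sufficiently close to the identity we obtain $D = \log\Phi\in\LFF(U')$ which is small.
\item \emph{Additive Cousin step.} We must split $D = D' - D''$ with $D'\in\LFF(\text{nbhd of }K')$ and $D''\in\LFF(\text{nbhd of }K'')$. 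Locally on the standard neighbourhoods covering $X$, write $D = \sum_i a_i(t,x)A_i(x)$ as in Lemma \ref{lem:NHC}, with $a_i$ NHC-functions; then we are reduced to splitting finitely many NHC-functions across the hyperplane $x_1 = c$ inside the cube. This is the classical one-variable Cousin-I splitting: use a Cauchy integral / smooth cutoff along $\{x_1 = c\}$ to write each $a_i$ as a difference of functions holomorphic (in $x$) on the two halves; the NHC-conditions ($G$-invariance, vanishing for $t\in N$, holomorphy in $x$ for $t\in H$) are preserved because the splitting operators act only in the $x$-variables and linearly. Here one uses that $K'$, $K''$, $K'\cap K''$ are all special so that the splitting can be taken with good bounds, and the Fréchet-space estimates of Lemma \ref{lem:Cartan} / Corollary \ref{cor:nearzero} to keep everything small and continuous in the parameter $t$.
\item \emph{Exponentiate and estimate the error.} Set $\Phi'_0 = \exp D'$, $\Phi''_0 = \exp D''$ (defined on slightly smaller neighbourhoods of $K'$, $K''$, using Corollary \ref{cor:expLFistypeF}), and consider the new discrepancy $\Phi_1 = (\Phi'_0)^{-1}\circ\Phi\circ\Phi''_0$ on a neighbourhood of $K'\cap K''$. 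By the Campbell--Hausdorff formula (as used in the proof of Proposition \ref{prop:all-type-LF}) and the additive identity $D = D' - D''$, the logarithm of $\Phi_1$ is of second order in $D$, i.e.\ $\|\log\Phi_1\|$ is bounded by a constant times $\|D\|^2$ on a slightly shrunk neighbourhood.
\item \emph{Iteration.} Repeat steps 1--3 with $\Phi_1$ in place of $\Phi$, shrinking neighbourhoods of $K'$, $K''$, $K'\cap K''$ at a controlled geometric rate. Quadratic convergence beats the geometric shrinking of domains, so the infinite products $\Phi' = \Phi'_0\circ\Phi'_1\circ\cdots$ and $\Phi'' = \Phi''_0\circ\Phi''_1\circ\cdots$ converge in $H^0(K',\FF)$ and $H^0(K'',\FF)$ respectively (they converge uniformly with all $x$-derivatives, uniformly in $t\in C$, and the limits are still of type $\F$ by Corollary \ref{cor:typeFclosed} and preserve the $N$- and $H$-conditions), and yield $\Phi = \Phi'\circ(\Phi'')^{-1}$ on a neighbourhood of $K'\cap K''$.
\end{enumerate}

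\smallskip\noindent\textbf{Main obstacle.}
The hard part is making the Newton iteration actually converge in this nonlinear, infinite-dimensional setting: one must track a shrinking nested sequence of (standard) neighbourhoods of $K'$, $K''$ and $K'\cap K''$ and show that the quadratic gain in step 3 dominates the loss of domain at each stage, uniformly in the compact parameter space $C$ and in all the $\ci$-seminorms, while staying inside the neighbourhood $\Omega$ where $\log$ is defined. Two technical points make this delicate here, beyond Cartan's original argument: first, $\FF$-sections are only smooth (not holomorphic) in $x$, so the estimates must be in the $\ci$-topology and rely on Lemma \ref{lem:series} to control $\exp$, $\log$ and Campbell--Hausdorff series; second, the functions $a_i$, $d_{ij}$ descend to $Q$ but need not be smooth there (Example \ref{ex:not-smooth}), so the Cousin splitting in step 2 must be carried out upstairs on the standard neighbourhoods $S\times T_B$ in $X$ and the closedness theorem (Theorem \ref{thm:closed}) is what guarantees the relevant spaces are Fréchet so that the open mapping theorem and Lemma \ref{lem:Cartan} apply. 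Everything else — the one-variable Cousin splitting, the Campbell--Hausdorff estimate, the passage to the limit via Corollary \ref{cor:typeFclosed} — is routine once the bookkeeping is set up.
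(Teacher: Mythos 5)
You take a Newton-iteration route, in the style of the modern Forstneri\v c treatment of the Oka principle: linearize with $\log$, split additively, exponentiate, observe the error is quadratic, iterate over a nested shrinking family of neighbourhoods. The paper instead follows Cartan--Grauert more closely: it deforms $\Phi$ to $\Phi(t,u,x)=\exp(uD(t,x))$ and uses the Lie-group ODE Lemma~\ref{lem:liegroup} to reduce the multiplicative splitting to a single \emph{non}-iterative linear Cousin problem $b_k'-b_k''=b_k$, after first absorbing the nonlinearity into the coefficient matrix via Grauert's matrix-valued Cousin Lemma~\ref{lem:matrixsplitting}. No quadratic scheme appears; the iteration is hidden inside the matrix lemma. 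So the two strategies are genuinely different, and your global structure is reasonable and would look right in the purely holomorphic setting.

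The genuine gap is in your step (2), the ``additive Cousin step'' for NHC-functions. You assert that the classical one-variable Cousin splitting of the coefficients $a_i(t,q)$ preserves the NHC conditions ``because the splitting operators act only in the $x$-variables and linearly.'' That is not right. For $t\in H$ the $a_i(t,\cdot)$ are holomorphic and the Cauchy-kernel splitting applies; for $t\in C\setminus H$ they are merely smooth, and the Cauchy-integral operator does not reproduce a non-holomorphic function, so it is not a splitting at all there. A partition-of-unity splitting handles the smooth case but destroys holomorphy for $t\in H$. There is no single continuous linear operator that does both. This is exactly why the paper's proof of Proposition~\ref{prop:2} proceeds in two stages: it first solves the ``fundamental problem'' purely over $t\in H$ (where holomorphy in $x$ holds and Cousin splitting is available), then it separately extends $D'$, $D''$ from $H$ to all of $C$ using surjectivity of $\CC(C)\comptensor\O(U')\to\CC(H)\comptensor\O(U')$, tames the extensions with cut-off functions $g'$, $g''$, and finally corrects the resulting mismatch $\Psi=\widetilde\Phi'(\widetilde\Phi'')^{-1}\Phi^{-1}$ (which is the identity for $t\in H$) by taking $\log\Psi$, cutting it off by a smooth invariant bump, and folding the correction into $\Phi'$. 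Your proposal has no analogue of that second stage.

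Secondarily, your step (4) leaves the convergence of the Newton scheme entirely to the reader. In this mixed smooth/holomorphic Fr\'echet setting you would have to establish that the constants coming from the Cousin splitting, from the open mapping theorem applied to $(a_1,\dots,a_k)\mapsto\sum a_iA_i$, and from the $\log$/$\exp$/Campbell--Hausdorff estimates of Lemma~\ref{lem:series} all remain bounded along a fixed geometric shrinking of the special neighbourhoods of $K'$, $K''$, $K'\cap K''$ — a ``loss of domain'' argument that is not automatic. The paper sidesteps all of this by not iterating. If you want to salvage the Newton approach here, you should first restrict to $t\in H$ (where holomorphic Cousin splitting with uniform bounds on special configurations is classical), carry out the iteration there, and only afterwards extend to $t\in C$ by the mechanism the paper uses in the second half of its proof.
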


 We assume   the propositions for now. By induction on 
 $k\geq 0$ we prove the following statements.
 \begin{enumerate}
\item [(i)$_k$] If $K$ is a $k$-special compact set,   then $H^0(K,\FF)$ is pathwise connected.
\item[(ii)$_k$] If $K$ is a $k$-special compact set,   then $H^0(Q,\FF)\to H^0(K,\FF)$ has dense image.
\item[(iii)$_k$] If $(K,K',K'')$ is a special configuration where $K'\cap K''$ is  $k$-special, then every $\Phi\in H^0(K'\cap K'',\FF)$ can be written in the form $\Phi'\circ(\Phi'')\inv$ where $\Phi'\in H^0(K',\FF)$ and $\Phi''\in H^0(K'',\FF)$.
\end{enumerate}

\begin{proof} 
Consider (i)$_0$. We have that $K=\{q_0\}\subset Q$ and we need to show that $H^0(\{q_0\},\FF)$ is pathwise connected. Let $\Phi(t,x)$ be a section of $\FF$ defined in a neighbourhood of $p\inv(q_0)=F$ and let  $\lambda(t)$ denote the restriction of $\Phi(t,x)$ to $F$. Then for every 
$t_0\in C$, $\lambda(t_0)\in L_\hr$, the algebraic subgroup of $\Aut(F)^G$ of holomorphically reachable points (see  Theorem \ref{thm:reachable} and discussion preceding it). Since $N$ is a deformation retract of $C$, 
$\lambda$ has values  in the identity component of $L_\hr$ and we have a homotopy of $\lambda(t)$ to the identity section on $F$. By Remark \ref{rem:extendL0} we have a homomorphism $\nu\colon L_\hr\to\Aut_U(X_U)^G$ for a neighbourhood $U$ of $q_0$ such that $\nu(\ell)$  restricted to $F$  is $\ell$ for $\ell\in L_\hr$. This allows us to reduce to the case that $\Phi(t,x)$ restricts to the identity section on $F$.   Then it follows from Proposition \ref{prop:local-log-Phi-families} that the  logarithm $D(t,x)$ of $\Phi(t,x)$   exists  in a neighbourhood of $F$, and $D(t,x)$ is a section of $\LFF$. Then we have the homotopy $\Phi(t,u,x)=\exp(uD(t,x))$, $0\leq u\leq 1$. This is a homotopy of $\Phi(t,x)$ to the identity map on a neighbourhood of $F$.  Hence we have (i)$_0$.

Suppose that we have (i)$_k$. Let $K$ be compact and $k$-special. Then (i)$_k$ implies that every element of $H^0(K,\FF)$ is a product of finitely many elements of $H^0(K,\FF)$ which are arbitrarily close to the identity. Apply Proposition \ref{prop:1} to the elements close to the identity. Then one obtains (ii)$_k$.

 Suppose that we have (ii)$_k$. Let $(K,K',K'')$ be a special configuration such that $K'\cap K''$ is  special of dimension $k$. Let $\Phi\in H^0(K'\cap K'',\FF)$. By (ii)$_k$ one can write $\Phi=\Psi\cdot \Phi_1$ where $\Phi_1\in H^0(K'\cap K'',\FF)$ is close to the identity section and $\Psi\in H^0(K',\FF)$. By Proposition \ref{prop:2}, we can write $\Phi_1=\Phi'\cdot (\Phi'')\inv$ where $\Phi'\in H^0(K',\FF)$ and $\Phi''\in H^0(K'',\FF)$. Then $\Phi=(\Psi\cdot \Phi')\cdot(\Phi'')\inv$ and we have (iii)$_k$.
 
 Suppose that we have  (i)$_k$, 
 hence (iii)$_k$. Let $K$ be compact and   $(k+1)$-special. Let $\Phi\in H^0(U,\FF)$ for $U$ a neighbourhood of $K$. Let the  cube in $\C^m$ corresponding to $K$ be $\Gamma$ where the condition on $x_1$ is $a_1\leq x_1\leq b_1$. Let $\lambda\in[a_1,b_1]$ and let $K_\lambda$ denote the special compact subset of $K$ defined by $x_1=\lambda$. We may assume that $a_1<b_1$ so that each $K_\lambda$ has dimension $k$. By (i)$_k$, $K_\lambda$ has a neighbourhood $V_\lambda$ in $K$ (we may choose closed neighbourhoods) such that the restriction of $\Phi$ to $V_\lambda$ is homotopic to the identity section. There are a finite number of $K_{\lambda_i}$ (with $\lambda_1<\lambda_2<\cdots$) such that the corresponding $V_{\lambda_i}$ cover $K$. We can assume that the $V_{\lambda_i}$ are   $(k+1)$-special such that   $V_{\lambda_i}\cap V_{\lambda_{i+1}}$ is  $k$-special for each $i$. Let $K_i$ denote $V_{\lambda_i}$ and let $\Phi_i(u)\in H^0(K_i,\FF)$ be a section in a neighbourhood of $K_i$, depending continuously on the parameter $u\in[0,1]$, such that $\Phi_i(0)$ is the section induced by $\Phi$ and $\Phi_i(1)$ is the identity section $e$.  
 
 The homotopies $\Phi_i(u)$ do not have to agree on the intersections $K_i\cap K_{i+1}$. We now modify them to agree. One is reduced to what Cartan calls an \emph{elementary problem\/}. Let $(K,K_1,K_2)$ be a special configuration where $K_1\cap K_2$ is special of dimension $k$. Let $\Phi\in H^0(K,\FF)$ and let $\Phi_i(u)\in H^0(K_i,\FF)$ be homotopies such that $\Phi_i(0)$ is the restriction of $\Phi$ and $\Phi_i(1)$ is   $e$. Then one wants to find $\Psi(u)\in H^0(K,\FF)$ such that $\Psi(0)=\Phi$ and $\Psi(1)=e$.
 
 Now $\Phi_1(u)\inv \Phi_2(u)\in H^0(K_1\cap K_2,\FF)$ is a homotopy from the identity section of $H^0(K_1\cap K_2,\FF)$ to itself. It is an element of $H^0(K_1\cap K_2,\FF')$ where $\FF'$ is the sheaf of groups relative to the compact sets
 $$
 C'=C\times [0,1],\ N'=(N\times[0,1])\cup(C\times\{0\})\cup(C\times\{1\}),\ H'=(H\times [0,1])\cup N'
 $$
 where $N'$ is still a deformation retract of $C'$. Thus applying 
 (iii)$_k$ to $\FF'$ we have that 
 $$
 \Phi_1(u)\inv \Phi_2(u)=\Phi'(u)\Phi''(u)\inv
 $$
 where $\Phi'(u)\in H^0(K_1,\FF)$ and $\Phi''(u)\in H^0(K_2,\FF)$ depend continuously upon the parameter $u\in[0,1]$. They are the identity element for $u=0$ and $u=1$. Let $\Psi(u)=\Phi_1(u)\Phi'(u)$ in a neighbourhood of $K_1$ and let it equal $\Phi_2(u)\Phi''(u)$ in a neighbourhood of $K_2$. Then the definitions agree on the overlaps and we have $\Psi(u)\in H^0(K,\FF)$ giving the desired deformation. This solves the elementary problem and shows that we have (i)$_{k+1}$.
 \end{proof}
 
 We have established (i), (ii) and (iii) which are the statements (i)$_k$, etc.\ with $k$ omitted.
\begin{proof}[Proof of Theorem \ref{thm:main}]
Note that (2) follows from (ii). Now we consider (1). We know that there are special compact sets $K_1\subset K_2\subset  \cdots$ where $K_i$ is contained in the interior $V_{i+1}$ of $K_{i+1}$ for all $i$ and where $Q=\cup K_i$. Let $\Phi\in H^0(Q,\FF)$. By (i), the image of $\Phi$ in $H^0(K_n,\FF)$ is homotopic to the identity section. Thus  the image $\Phi_n$ of $\Phi$ in $H^0(V_n,\FF)$ is homotopic to the identity section. Let $\Phi_n(u)$ be such a homotopy with $\Phi_n(0)=\Phi_n$ and $\Phi_n(1)=e$. Then $\Phi_n(u)\inv \Phi_{n+1}(u)$, over $V_n$, is an element of $H^0(V_n,\FF')$ where $\FF'$ is the sheaf of groups associated to the sets $N'\subset H'\subset C'$ given above. Applying (ii) to $\FF'$ we see that $\Phi_n(u)\inv \Phi_{n+1}(u)$ may be   approximated over $K_{n-1}\subset V_n$ by elements of $H^0(V_{n+1},\FF')$. Hence, without changing $\Phi_n(u)$, we can modify $\Phi_{n+1}(u)$ such that $\Phi_n(u)\inv \Phi_{n+1}(u)$ is arbitrarily close to the identity section over $K_{n-1}$. Thus one can arrange that the sequence $\Phi_n(u)$ converges  over every compact subset of $Q$. Let $\Phi(u)$ denote the limit. Then $\Phi(u)$ is a homotopy of elements of $H^0(Q,\FF)$ with $\Phi(0)=\Phi$ and $\Phi(1)$ the identity section. Here we have used Corollary \ref{cor:typeFclosed}. This proves (1).

Finally, we want to prove (3), that $H^1(Q,\FF)=0$. Let $K$ be a special compact subset of $Q$. As in \cite[Section 5]{Cartan58}, using (iii) and an induction over a decomposition of $K$ into a union of smaller special compact sets, one shows that  $H^1(K,\FF)=0$.  Let $K_n$ be a sequence of special compact sets as above, with $V_n$ denoting the interior of $K_n$. Let $\{U_i\}$ be  an open cover of $Q$ and let $\{\Phi_{ij}\}$ be a cocycle   with values in $\FF$. Then for each $n$ we have sections $c_i^n\in H^0(U_i\cap V_n,\FF)$ such that $\Phi_{ij}=(c_i^n)\inv c_j^n$ on $U_i\cap U_j\cap V_n$. Hence we have 
$$
c_i^{n+1}(c_i^n)\inv=c_j^{n+1}(c_j^n)\inv\text{ on }U_i\cap U_j\cap V_n.
$$
Thus the $c_i^{n+1}(c_i^n)\inv$ define a section $\Phi_n\in H^0(V_n,\FF)$. Using (ii) as above, we can arrange that the  $c_i^n$ converge   on compact sets as $n\to\infty$. The limiting  sections $c_i\in H^0(U_i,\FF)$ have the property that 
$$
\Phi_{ij}=c_i\inv c_j\text{ on } U_i\cap U_j.
$$
Hence $H^1(Q,\FF)=0$.
 \end{proof}

We are left with the proofs of the two propositions.
 
 \begin{proof}[Proof of Proposition \ref{prop:1}]
Let $\Phi\in H^0(K,\FF)$ where $K$ is special and $\Phi$ is  near to the identity section. By Theorem \ref{thm:logarithmPhi}, there is a relatively compact neighbourhood $U$ of $K$ such that $D =\log\Phi$ exists and is in $\LFF(U)$.   We may suppose that $U$ is a Runge domain. By   Lemma \ref{lem:NHC}   we can write $D=\sum a_i(t,x) A_i$ where the $a_i$ are NHC   and the $A_i\in\Der_Q(X)^G$ generate $\Der_U(X_U)^G$. Thus it suffices to approximate NHC-functions on $U$ by global ones. Using a partition of unity on $C$ one is reduced to the problem of approximating holomorphic (resp.\ smooth $G$-invariant) functions on $U$ (resp.\ $X_U$) by functions holomorphic  on $Q$ (resp.\ smooth and $G$-invariant on $X$). There is no problem for smooth functions, and for holomorphic functions approximation is possible because $U$ is a Runge domain.
\end{proof}

  Before giving the proof of Proposition \ref{prop:2} we need some preliminary results. First we consider a result on differential equations on Lie groups from \cite[p.\ 115]{Cartan58}. Let $L$ be a Lie group and let $f(u)$, $f'(u)$ and $f''(u)$ be elements of $L$ which are $\mathcal C^1$ in $u\in [0,1]$ such that $f'(u)=f(u)  f''(u) $ and $f(0)=f'(0)=f''(0)=e$, the identity of $L$. Then $ {df}/{du}$ is a tangent vector at $f(u)$, hence it is of the form $d\rho(f(u))\cdot a(u)$ where $a(u)$ is a continuous family of elements of $T_e(L)$ and $\rho$ denotes right multiplication.   We write this as
 $$
 \frac{df}{du}=a(u)\cdot f(u).
 $$ 
 Similarly we have continuous $a'(u)$ and $a''(u)$ with
\begin{equation}\label{eq:lie}
 \frac{df'}{du}=a'(u)\cdot f'(u)\text{ and }\frac {df''}{du}=a''(u)\cdot f''(u)
\end{equation}
 Consider $a(u)$, $a'(u)$ and $a''(u)$ as elements of $\lie l$, the Lie algebra of $L$.

 \begin{lemma}\label{lem:liegroup}
 Let $f(u)$, $a(u)$ etc.\ be as above. Then we have 
\begin{equation}\label{eq:Ad}
 a'(u)=a(u)+\Ad(f(u))\cdot a''(u).
\end{equation}
 Conversely, given $f(u)$ which is $\mathcal C^1$ in $u$ with $f(0)=e$, define $a(u)$ as above and suppose that there are continuous $a'(u)$ and $a''(u)$ with values in $\lie l$ satisfying \eqref{eq:Ad}.
 Then integrating the equations \eqref{eq:lie} with the conditions that $f'(0)=f''(0)=e$ we obtain $f'(u)$ and $f''(u)$ such that $f'(u)=f(u)f''(u)$.
 \end{lemma}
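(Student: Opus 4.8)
The plan is to reduce the statement to a standard computation with logarithmic derivatives (Maurer--Cartan forms) on the Lie group $L$. I will work with the right logarithmic derivative $\delta f(u) = \frac{df}{du}\cdot f(u)\inv \in\lie l$, which is exactly the element $a(u)$ in the statement; likewise $a'(u)=\delta f'(u)$ and $a''(u)=\delta f''(u)$.

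First I would establish the forward direction. Differentiate the relation $f'(u)=f(u)f''(u)$ using the product rule for the right logarithmic derivative, namely the identity
$$
\delta(gh)(u) = \delta g(u) + \Ad(g(u))\cdot\delta h(u),
$$
which is itself an easy consequence of the Leibniz rule for the derivative of $u\mapsto g(u)h(u)$ and the definitions of left/right translation. Applying this with $g=f$ and $h=f''$ gives precisely
$$
a'(u) = a(u) + \Ad(f(u))\cdot a''(u),
$$
which is \eqref{eq:Ad}. The initial conditions $f(0)=f'(0)=f''(0)=e$ are consistent and play no role here.

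For the converse, suppose $f(u)$ is given (with $f(0)=e$), set $a(u)=\delta f(u)$, and suppose continuous $a'(u)$, $a''(u)$ satisfying \eqref{eq:Ad} are given. I would first solve the ODEs \eqref{eq:lie}, i.e. $\frac{df'}{du}=a'(u)\cdot f'(u)$ and $\frac{df''}{du}=a''(u)\cdot f''(u)$, with $f'(0)=f''(0)=e$; these are linear (in the sense of time-dependent right-invariant vector fields) ODEs on $L$ and have unique solutions on $[0,1]$ since $a'$, $a''$ are continuous on the compact interval. It then remains to show $f'(u)=f(u)f''(u)$. For this I would compute the right logarithmic derivative of $u\mapsto f(u)f''(u)$ using the product rule above:
$$
\delta\bigl(f\cdot f''\bigr)(u) = a(u) + \Ad(f(u))\cdot a''(u) = a'(u) = \delta f'(u),
$$
where the middle equality is the hypothesis \eqref{eq:Ad}. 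Thus $f\cdot f''$ and $f'$ have the same right logarithmic derivative and agree at $u=0$ (both equal $e$), so by uniqueness for the ODE $\frac{dg}{du}=a'(u)\cdot g(u)$, $g(0)=e$, they coincide for all $u\in[0,1]$.

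The only mildly delicate point is bookkeeping with the product rule for logarithmic derivatives — getting the $\Ad$ term on the correct side and with the correct argument — and the standard existence/uniqueness theorem for ODEs of the form $\dot g = a(u)\cdot g$ on a Lie group, which I would either quote or note follows by embedding a neighbourhood of the relevant piece of the orbit in a chart (or, in the case at hand, by noting $L$ is a linear algebraic group so the equation is linear in matrix entries). No genuine obstacle arises; everything is at the level of the Maurer--Cartan calculus.
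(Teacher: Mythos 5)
Your proof is correct, and it is the standard Maurer--Cartan argument. The paper does not give a proof of this lemma at all; it simply cites \cite[p.\ 115]{Cartan58}, and the computation you give is exactly the one that proof must consist of. The key identity is the product rule for the right logarithmic derivative $\delta(gh) = \delta g + \Ad(g)\cdot\delta h$, which you state and apply correctly: the forward direction is a direct differentiation of $f' = f f''$, and the converse follows by checking that $u\mapsto f(u)f''(u)$ and $u\mapsto f'(u)$ satisfy the same ODE $\dot g = a'(u)\cdot g$ with the same initial condition and invoking uniqueness. Your remark that $L$ can be taken to be a linear algebraic group (so the ODEs are linear in matrix entries and existence/uniqueness on $[0,1]$ is elementary) is also an apt observation in the context where the lemma is applied, namely to the groups $L_\hr \subset \Aut(F)^G$. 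No gap.
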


Let $U\subset Q$ be open.
Let $\Phi(t,x)\in\FF(U)$ and let $A(t,x)$, $D(t,x)\in \LFF(U)$. Let $F$ be a fibre of $p\colon X_U\to U$. Then the restriction of $\Phi(t,x)$ to $F$ is a family $\Phi(t)|_F$ in $L_\hr$, the  reachable automorphisms in $\Aut(F)^G$. We have the adjoint action $\Ad \Phi(t)|_F$    on   $A(t)|_F\in\lie l_\hr$.  If
$\Phi=\exp D$, then $\Ad\Phi(t)|_F$ is   the exponential of the action of $D(t)|_F$ by $\ad$.  It is   not hard to see that  the $\Ad\Phi(t)|_F$ combine to give an action $\Ad\Phi$ of $\Phi$ on $\LFF(U)$  and, of course, the $\ad D(t)|_F$ give us an action $\ad D$ of $D$ on $\LFF(U)$ which is just bracket with $D$.

 \begin{proposition}\label{cor:Adclosetozero}
 Let $K\subset Q$ be compact and let $U$ be a relatively compact neighbourhood of $K$.
 If $\Phi\in\FF(U)$ is sufficiently close to the identity, then for any $A\in\LFF(U)$, $\Ad\Phi(A)$ is close to $A$ over a neighbourhood of $K$. In particular, if $A$ is close to zero, so is $\Ad\Phi(A)$.
 \end{proposition}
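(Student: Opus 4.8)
The plan is to reduce the statement to the matrix picture of Section~\ref{sec:typeF} and then to an elementary bound on conjugation of matrices of smooth functions. No deep new input is needed: everything rests on Theorem~\ref{thm:logarithmPhi}, the topology‑equivalence statements of Section~\ref{sec:typeF}, and the estimates of Lemma~\ref{lem:series}.

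First I would fix a relatively compact Stein neighbourhood $U'$ of $K$ with $\overline{U'}\subset U$ and, after shrinking $U$, a standard generating set $\{f_i\}_{i=1}^n$ of $\O_\gf(X_U)$ (Lemma~\ref{lem:embeds-over-K}). Then $A\in\LFF(U)$ is recorded by a matrix $(c_{ij}(t,x))$ with $A(t,\cdot)(f_i)=\sum_j c_{ij}(t,\cdot)f_j$, the $c_{ij}$ being continuous in $t\in C$, smooth and $G$-invariant in $x$, zero for $t\in N$ and holomorphic in $x$ for $t\in H$; similarly each $\Phi\in\FF(U)$ has, depending continuously on $t$, a matrix $(a_{ij}(t,\cdot))$ of the same regularity (the parametrised form of Proposition~\ref{prop:aij-exist}). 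By the $\FF$, $\LFF$ analogues of Propositions~\ref{prop:equivalent-topologies-phi} and~\ref{prop:equivalent-topologies-D}, the $\ci$-topologies on $\FF(U)$ and $\LFF(U)$ are the topologies carried by these matrices; in particular, if $\Phi$ is close enough to the identity one may choose $(a_{ij})$ with $\|(a_{ij})-I\|_{\overline{U'},k}$ as small as desired for any prescribed $k$, uniformly in $t$, and then $(a_{ij})$ is invertible over a neighbourhood of $\overline{U'}$, with $(\Phi^{-1})^*f_i=\sum_j(a^{-1})_{ij}f_j$ there.

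The next step is to express $\Ad\Phi$ in these terms. Since $A$ is of type $\LF$ it annihilates $\ci(X)^G$, so $A(a_{ij})=0$; combining this with $\Phi^*f_i=\sum a_{ij}f_j$, $(\Phi^{-1})^*f_i=\sum(a^{-1})_{ij}f_j$, and the fact that $\Phi$ induces $\Id_Q$, the identity $(\Ad\Phi(A))^*=(\Phi^{-1})^*\circ A\circ\Phi^*$ gives over $X_{U'}$ a matrix for $\Ad\Phi(A)$ which is the conjugate of $(c_{ij})$ by $(a_{ij})$ (the precise order of conjugation is convention‑dependent and immaterial below). This at the same time shows $\Ad\Phi(A)\in\LFF(U')$ — of type $\LF$ for every $t$, holomorphic in $x$ for $t\in H$, zero for $t\in N$ — which is the assertion in the paragraph preceding the Proposition that the fibrewise $\Ad\Phi(t)|_F$ patch to an action on $\LFF$. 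Writing $a$, $c$ for $(a_{ij})$, $(c_{ij})$, one has $aca^{-1}-c=[a-I,c]\,a^{-1}$ over $\overline{U'}$; the Leibniz rule in the coordinate charts covering $\overline{U'}$ — the same kind of estimate as in Lemma~\ref{lem:series} — together with $\|a^{-1}\|_{\overline{U'},k}$ being bounded once $\|a-I\|_{\overline{U'},k}<1/2$, yields $\|aca^{-1}-c\|_{\overline{U'},k}\le\kappa_k\,\|a-I\|_{\overline{U'},k}\,\|c\|_{\overline{U'},k}$ uniformly in $t$, with $\kappa_k$ independent of $\Phi$ and $A$. Hence, $A$ being fixed, taking $\Phi$ sufficiently close to the identity makes $\Ad\Phi(A)-A$ arbitrarily small over $U'$, which is the first assertion; and since $\|aca^{-1}\|_{\overline{U'},k}\le\kappa_k'\|c\|_{\overline{U'},k}$ once $\|a-I\|_{\overline{U'},k}<1/2$, the field $\Ad\Phi(A)$ is small whenever $A$ is.

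The estimate itself is routine; the part that needs care is the bookkeeping in the first two steps — matching the (non‑unique) matrices $(a_{ij})$, $(c_{ij})$ to the $\ci$-topologies in the parametrised $\FF$, $\LFF$ setting and checking that $(a_{ij})$ can be chosen invertible while retaining continuity in $t$, holomorphy in $x$ for $t\in H$, and vanishing for $t\in N$. For this the $\FF$, $\LFF$ analogues of Propositions~\ref{prop:equivalent-topologies-phi}, \ref{prop:equivalent-topologies-D}, of Lemma~\ref{lem:NHC} and of Corollary~\ref{cor:nearzero} suffice; alternatively one can package it through the logarithm, noting that $\Ad\Phi=\exp(\ad\log\Phi)$ on $\LFF(U')$ by Theorem~\ref{thm:logarithmPhi}, though the direct matrix computation above avoids ever needing $\log\Phi$.
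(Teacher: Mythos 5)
Your proposal is correct, and it takes a genuinely different route from the paper. The paper's proof is short and goes through the logarithm: by Theorem~\ref{thm:logarithmPhi} and Corollary~\ref{cor:small-coefficients} one writes $\Phi=\exp D$ with $D=\sum a_iD_i$ and the NHC-functions $a_i$ small over a neighbourhood of $K$, and then observes that $\Ad\Phi(A)=\bigl(\exp\sum a_i\,\ad D_i\bigr)(A)$ is close to $A$ because the operator $\exp\bigl(\sum a_i\,\ad D_i\bigr)$ on $\LFF(U')$ depends continuously on the $a_i$. Your route instead stays entirely in the matrix picture of Section~\ref{sec:typeF}: from $\Phi^*f_i=\sum a_{ij}f_j$ and $A(f_i)=\sum c_{ij}f_j$, using that a type-$\LF$ vector field annihilates $\ci(X)^G$ (not just $\O(X)^G$, though this deserves a sentence: it follows because the local flow of any $B\in\Der_Q(X_U)^G$ covers $\Id_U$, hence fixes every smooth $G$-invariant function) and that $(\Phi\inv)^*$ fixes $\ci(X)^G$, one gets that $\Ad\Phi(A)$ has the matrix $aca\inv$ over $X_{U'}$, and the identity $aca\inv-c=[a-I,c]\,a\inv$ together with Lemma~\ref{lem:series}-type seminorm estimates finishes the job. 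One small point of bookkeeping you treat a bit quickly: since the $f_i$ need not restrict to a linearly independent family on every fibre, the matrix $(b_{ij})$ with $(\Phi\inv)^*f_i=\sum b_{ij}f_j$ is a priori only \emph{some} matrix, not $a\inv$; but from $\sum_k(ab-I)_{ik}f_k=0$ and $a$ invertible near $I$ one does get $\sum_k b_{ik}f_k=\sum_k(a\inv)_{ik}f_k$, so $aca\inv$ is indeed a valid matrix for $\Ad\Phi(A)$ and the estimate stands. Both proofs rely on the (unproved, ``not hard to see'') assertion in the paragraph before the Proposition that $\Ad\Phi$ preserves $\LFF$; your matrix computation is consistent with that but, as you note, does not by itself establish it. What each approach buys: the paper's proof is conceptually cleaner once Theorem~\ref{thm:logarithmPhi} and Corollary~\ref{cor:small-coefficients} are in place, whereas your proof avoids the logarithm entirely and gives a concrete quantitative bound $\|\Ad\Phi(A)-A\|_{\overline{U'},k}\le\kappa_k\|a-I\|_{\overline{U'},k}\|c\|_{\overline{U'},k}$, at the cost of invoking the parametrised ($\FF$/$\LFF$) analogues of Propositions~\ref{prop:equivalent-topologies-phi} and~\ref{prop:equivalent-topologies-D} and of Proposition~\ref{prop:aij-exist}, which the paper does not state explicitly but which follow by the same topological tensor product arguments.
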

 \begin{proof}
 Let $D_1,\dots,D_k\in\Der_U(X_U)^G$ generate $\Der_U(X_U)^G$ as an  $\O(U)$-module. Let $U'$ be a neighbourhood of $K$ relatively compact in $U$. By Theorem \ref{thm:logarithmPhi} and Corollary \ref{cor:small-coefficients}, by choosing $\Phi\in\FF(U)$ close to the identity, we have  $\Phi=\exp D$, $D\in\LFF(U')$, where $D=\sum a_iD_i$ and the NHC functions $a_i$ can be chosen close to zero. Then $\Ad\Phi(A)=(\exp\sum a_i \ad D_i)(A)$ is close to $A$ in a neighbourhood of $K$.
 \end{proof}
 
The following is  essentially Hilfssatz 1 in \cite{GrauertLiesche}.  

\begin{lemma}\label{lem:matrixsplitting}
Let $(K,K',K'')$ be a special configuration. For every $t\in H$ suppose that we have a square matrix $M(t,q)$ which is holomorphic and invertible for $q$ in a neighbourhood 
$U$ of $K'\cap K''$, and suppose that $M(t,q)$ is continuous in $(t,q)$. If $M(t,q)$ is sufficiently close to the identity 
on $H\times (K'\cap K'')$, then  
$$
M(t,q)=M'(t,q)\cdot M''(t,q)\inv
$$
where $M'(t,q)$ (resp.\ $M''(t,q)$) is continuous in 
$(t,q)$, and  for fixed $t\in H$ is holomorphic and invertible for  $q$ in a neighbourhood of $K'$ (resp.\ $K''$). Moreover, $M'(t,q)$ and $M''(t,q)$ can be chosen in any given neighbourhood of the identity if $M(t,q)$ is sufficiently close to the identity.
\end{lemma}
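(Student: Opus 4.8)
The plan is to reduce this statement to the classical Cartan lemma on splitting matrix-valued holomorphic functions over a special configuration (the version behind Grauert's Oka principle), carrying the parameter $t \in H$ along by means of topological tensor products. Concretely, I would view the assignment $t \mapsto M(t,\cdot)$ as a continuous map from $H$ into the Fr\'echet space $\GL(n,\O(U))$ of invertible holomorphic matrix functions on a fixed neighbourhood $U$ of $K' \cap K''$, lying in a small neighbourhood of the constant identity map; equivalently it is an element of $\CC(H) \comptensor \GL(n,\O(U))$ close to $I$. The Cartan splitting lemma produces, for each individual holomorphic $M$ close to $I$, a factorisation $M = M' \cdot (M'')^{-1}$ with $M'$ holomorphic near $K'$ and $M''$ holomorphic near $K''$, and — this is the crucial refinement that is standard in the literature (e.g.\ \cite[Hilfssatz 1]{GrauertLiesche} or \cite[Section 4]{Cartan58}) — this factorisation can be taken to depend continuously (indeed holomorphically) on $M$ in a neighbourhood of $I$, with $M', M''$ landing in any prescribed neighbourhood of $I$.

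The key steps, in order, would be: (1) fix neighbourhoods $U' \supset K'$, $U'' \supset K''$ relatively compact in suitable neighbourhoods, and a neighbourhood $U_0$ of $K' \cap K''$ so small that the classical splitting operator $\mathcal S$ is defined and continuous on $\{M \in \GL(n,\O(U_0)) : \|M - I\|_{K'\cap K''} < \delta\}$, taking values in $\GL(n,\O(U')) \times \GL(n,\O(U''))$ and with $\mathcal S(I) = (I,I)$; (2) observe that the parameter space $H$ is compact and that $\CC(H)\comptensor(-)$ is an exact functor on the relevant Fr\'echet/nuclear spaces, so the fibrewise application of $\mathcal S$ to $M(t,\cdot)$ yields a genuinely continuous family $(M'(t,\cdot), M''(t,\cdot))$; alternatively, and more elementarily, one simply notes that continuity in $(t,q)$ together with continuity of $\mathcal S$ forces $(t,q) \mapsto M'(t,q)$ and $(t,q)\mapsto M''(t,q)$ to be continuous, since $t \mapsto M(t,\cdot)$ is continuous into the Fr\'echet space and $\mathcal S$ is continuous there; (3) record that the holomorphy and invertibility in $q$ for each fixed $t$, and the openness of the image under shrinking $\delta$, are inherited directly from the corresponding properties of $\mathcal S$.

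I should be careful about one point of bookkeeping: the hypothesis only asserts $M(t,q)$ is defined for $t \in H$ (not all of $C$), so I am genuinely only splitting a family parameterised by the compact space $H$, and there is no extra condition (such as vanishing on some $N$) to preserve here — this statement is purely about the holomorphic splitting with a compact parameter, which is exactly the setting of the classical Grauert--Cartan lemma. The main obstacle, such as it is, is not analytic novelty but making precise that "the classical splitting depends continuously on data" in the form needed: one must either cite the explicit dependence in \cite{GrauertLiesche, Cartan58} or re-derive it by noting that Cartan's construction of $M', M''$ proceeds through a convergent iteration of bounded linear operations (Cauchy-type integrals over the cube faces and additive Cousin-I solutions on the pieces), each of which is continuous on the relevant function spaces, so the limit depends continuously — indeed real-analytically — on the input matrix $M$. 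Once that is in hand, carrying the compact parameter $t$ through is immediate, and the "any given neighbourhood of the identity" clause follows because $\mathcal S$ is continuous at $I$ with $\mathcal S(I) = (I,I)$.
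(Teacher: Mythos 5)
The paper offers no proof of this lemma: it is preceded only by the remark ``The following is essentially Hilfssatz 1 in \cite{GrauertLiesche},'' so the intended ``proof'' is a citation to Grauert. Your proposal is thus not competing with an argument in the paper but reconstructing what the cited result must contain, which is a legitimate thing to do and is, in the main, correct: the heart of the matter is that Cartan's matrix-splitting construction, applied to a matrix $M$ close to $I$, is built from bounded linear operations (Cousin-type additive splittings, Cauchy integrals over faces of the cube) followed by a geometrically convergent iteration, so the resulting splitting operator $\mathcal S$ is continuous near $I$ with $\mathcal S(I)=(I,I)$; once one has this, joint continuity of $M'(t,q)$ and $M''(t,q)$ follows because $t\mapsto M(t,\cdot)$ is continuous from the compact $H$ into $\GL(n,\O(U))$ and $\mathcal S$ is continuous there. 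You also correctly observe that the statement only involves $t\in H$ and carries no extra condition relative to $N$, so this really is the classical splitting with a compact parameter.

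One step in your write-up is off and should be dropped: the claim that ``$\CC(H)\comptensor(-)$ is an exact functor'' does the job is a non sequitur here, because $\mathcal S$ is a \emph{nonlinear} map defined only on a neighbourhood of $I$; exactness (or preservation of surjectivity) of completed tensor products is a statement about continuous linear maps and buys you nothing for a nonlinear operator. Fortunately you immediately give the elementary alternative --- uniform continuity of $M$ on $H\times K$ for $K$ compact, hence continuity of $t\mapsto M(t,\cdot)$ into the Fr\'echet space, hence joint continuity of $\mathcal S(M(t,\cdot))$ --- and that argument is correct and is the one to keep. With that replacement your proof is sound and is a reasonable self-contained substitute for the citation to Grauert.
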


\begin{proof}[Proof of Proposition \ref{prop:2}]
Let $\Phi\in H^0(K'\cap K'',\FF)$ be close to the identity. Then we want $\Phi'\in H^0(K',\FF)$ and $\Phi''\in H^0(K'',\FF)$ so that 
 $$
 \Phi(t,x)=\Phi'(t,x)\circ \Phi''(t,x)\inv\text{ for $p(x)$ close to }K'\cap K''.
 $$
 We first do this for $t\in H$, where the sections have to be the identity on $N$. Cartan calls this the \emph{fundamental problem}.  
If $\Phi$ is sufficiently close to the identity, then Theorem \ref{thm:logarithmPhi} shows that  $\Phi(t,x)=\exp(D(t,x))$ for $t\in H$ where $D$ vanishes on $N$ (since $\Phi$ is the identity on $N$) and is holomorphic on a relatively compact neighbourhood $U$ of $K'\cap K''$. Set $\Phi(t,u,x)=\exp(u\cdot D(t,x))$, $0\leq u\leq 1$.  
Then $\pt\Phi/\pt u=D(t,x)\cdot\Phi(t,u,x)$.
 Suppose that we have  parameterised $G$-invariant holomorphic vector fields $D'(t,u,x)$ and $D''(t,u,x)$ over the inverse image of neighbourhoods of $K'$ and $K''$, respectively, zero on $N$, annihilating the invariants, such that
 $$
 D'(t,u,x)=D(t,x)+\Ad \Phi(t,u,x)\cdot D''(t,u,x)\text{ for $p(x)$ close to }K'\cap K''. 
 $$
  By Lemma \ref{lem:liegroup},  integrating in $u$ we obtain holomorphic parameterised sections $\Phi'(t,u,x)$ and $\Phi''(t,u,x)$ such that 
 $$
 \Phi(t,u,x)=\Phi'(t,u,x)\circ \Phi''(t,u,x)\inv
 $$
For $u=1$ we get a solution of the fundamental problem. We will construct $D'$ and $D''$ close to zero so that $\Phi'$ and $\Phi''$ are close to the identity.
 
 Let $\{A_i\}$ be $\O(U)$-module generators of $\Der_{U}(X_U)^G$.  Then
 $$
\Ad  \Phi(t,u,x)\cdot A_i=\sum m_{ij}A_j
 $$
 where the coefficients $m_{ij}(t,u,q)$ are holomorphic for $q\in U$, and continuous for $t\in H$ and $u\in [0,1]$. This uses  Lemma \ref{lem:NHC}. If $\Phi(t,x)$ is sufficiently close to the identity section, then   $\Phi(t,u,x)\cdot A_i$ is close to $A_i$ and   we may choose  $(m_{ij})$ close to the identity. By Lemma \ref{lem:matrixsplitting}  we have
 $$
 \sum_i m_{ji}m'_{ik}=m''_{jk}
 $$
 where the matrices $m'_{ij}(t,u,q)$ and $m''_{ij}(t,u,q)$ are invertible and depend continuously on $t\in H$ and $u\in[0,1]$ and are close to the identity. The first is holomorphic near $K'$ and the second near $K''$. We have
 $$
 D(t,x)=\sum a_i(t,q)A_i
 $$
 where the $a_i(t,q)$ are holomorphic near $K'\cap K''$. By Corollary \ref{cor:nearzero} we may assume that the $a_i(t,q)$ are near zero. It suffices to find $a'_i(t,u,q)$ and $a''_i(t,u,q)$ holomorphic near $K'$ and $K''$, respectively, near zero, such that
 $$
 a'_i(t,u,q)=a_i(t,q)+\sum_j m_{ji}a''_j(t,u,q)
 $$
 for $q$ near $K'\cap K''$. For this it suffices that
 $$
 \sum_i m'_{ik}a'_i=\sum_i m'_{ik}a_i+\sum_i m''_{ik}a_i''.
 $$
 Since our matrices are invertible, in place of the $a'_i$ and $a''_i$ one can take  as unknowns the terms
 $$
 b'_k=\sum_i m'_{ik}a'_i\text{ and } b''_k=\sum_i m''_{ik}a''_i.
 $$
  Set
 $$
 b_k=\sum m'_{ik}(t,u,q)a_i(t,q).
 $$
 Then our equation becomes 
 $$
 b'_k(t,u,q)-b_k''(t,u,q)=b_k(t,u,q)
 $$
 which we can always solve with $b'_k(t,u,q)$ and $b''(t,u,q)$ small if $b_k(t,u,q)$ is small \cite[footnote p.\ 117]{Cartan58}.
 
 So we have solved the fundamental problem. Given $\Phi(t,x)$ near the identity we find $\Phi'(t,x)$ and $\Phi''(t,x)$, near the identity, such that $\Phi=\Phi'(\Phi'')\inv$ for $t\in H$.    Since $\Phi'$ and $\Phi''$ are near the identity  we may write them as $\exp D'$ and $\exp D''$, respectively, where $D'$ and $D''$ are near zero.   We have   continuous maps $a_i'(t,q)$ and $a''_i(t,q)$ from $H$ to holomorphic functions which are small on neighbourhoods $U'$ of $K'$ and $U''$ of $K''$, respectively, zero on $N$, such that 
 $$
 D'=\sum_i a_i'(t,q)A_i\text{ and }D''=\sum a_i''(t,q)A_i.
 $$
 Now the  space of holomorphic functions on $U'$ is nuclear. Since $\CC(C)\to \CC(H)$ is surjective, we find that
 $$
 \CC(C,\O(U'))\simeq \CC(C)\comptensor \O(U')\to \CC(H)\comptensor \O(U')\simeq \CC(H,\O(U'))
 $$ 
 is surjective. See \cite[Proposition 43.9]{Treves}.
 Thus we have 
 an extension  of $D'$   to an  invariant holomorphic vector field  annihilating the invariants for all $t\in C$, and similarly for $D''$. The extensions may not have coefficients close to zero for $t\in C$, but we can arrange this by multiplying by continuous functions $g'(t,x)$ and $g''(t,x)$  which are $1$ for $t\in H$ and smooth and invariant in $x$. Thus we have smooth extensions $D'\in\LFF(U')$  and $D''\in\LFF(U'')$ which have  exponentials 
 $\widetilde \Phi'(t,x)$ and $\widetilde \Phi''(t,x)$, respectively, 
 near the identity. Consider the product
 $$
 \Psi(t,x) =\widetilde\Phi'(\widetilde\Phi'')\inv \Phi\inv.
 $$
  Then $\Psi$ is a section of $\FF$ on a neighbourhood of $K'\cap K''$ which is the identity for $t\in H$. Since $\Psi$ is near to the identity,  $\log \Psi$ exists for $p(x)$ near $K'\cap K''$. Multiply $\log\Psi(t,x)$ by a smooth invariant cutoff function which is 1 on a neighbourhood of $K'\cap K''$   such that the closure of the support is  compact in our original neighbourhood of $K'\cap K''$. Then the corresponding automorphism $\widetilde \Psi(t,x)$ is defined everywhere, in particular, on a neighbourhood of $K'$. Now we set
  $$
  \Phi'(t,x)=\widetilde \Psi(t,x)\inv \widetilde \Phi'(t,x)\text{ for $p(x)$ in a neighbourhood of $K'$,}
  $$
  $$
  \Phi''(t,x)=\widetilde \Phi''(t,x) \text{ for $p(x)$ in a neighbourhood of $K''$.}
  $$
  Then on a neighbourhood of $K'\cap K''$ we have
  $$
  \Phi(t,x)=\Phi'(t,x)\cdot \Phi''(t,x)\inv\text{ for all $t\in C$.}
  $$
  This completes the proof of the proposition.
\end{proof}

\bibliographystyle{amsalpha}
\bibliography{Oka.paperbib}

\end{document}